\numberwithin{equation}{section}
\newtheorem{thm}{Theorem}[section]
\newtheorem{cor}[thm]{Corollary}
\newtheorem{lem}[thm]{Lemma}
\newtheorem{prop}[thm]{Proposition}
\theoremstyle{definition} 
\newtheorem{defn}[thm]{Definition}
\newtheorem{rem}[thm]{Remark}
\newtheorem{exam}[thm]{Example}
\newcommand{\bC}{\mathbb{C}}
\newcommand{\bK}{\mathbb{K}}
\newcommand{\bH}{\mathbb{H}}
\newcommand{\bP}{\mathbb{P}} 
\newcommand{\bQ}{\mathbb{Q}}
\newcommand{\bR}{\mathbb{R}}
\newcommand{\bZ}{\mathbb{Z}}
\newcommand{\bN}{\mathbb{N}} 
\newcommand{\bT}{\mathbb{T}}
\newcommand{\cF}{\mathcal{F}}
\newcommand{\ccL}{\mathcal{L}}
\newcommand{\cM}{\mathcal{M}} 
\newcommand{\cO}{\mathcal{O}} 
\newcommand{\cP}{\mathcal{P}}
\newcommand{\ccR}{\mathcal{R}} 
\newcommand{\cS}{\mathcal{S}} 
\newcommand{\cT}{\mathcal{T}}
\newcommand{\fS}{\mathfrak{S}}
\newcommand{\Hom}{\operatorname{Hom}}
\newcommand{\Ext}{\operatorname{Ext}}
\newcommand{\End}{\operatorname{End}}
\newcommand{\Sym}{\operatorname{Sym}}
\newcommand{\GL}{\operatorname{GL}}
\newcommand{\PGL}{\operatorname{PGL}}
\newcommand{\fsl}{\mathfrak{s} \mathfrak{l}}
\newcommand{\Vect}{\text{\bf Vect}}
\newcommand{\Hilb}{\operatorname{Hilb}}
\newcommand{\Pic}{\operatorname{Pic}}
\newcommand{\Cl}{\operatorname{Cl}}
\newcommand{\Mov}{\operatorname{Mov}}
\newcommand{\Proj}{\operatorname{Proj}}
\newcommand{\sm}{\operatorname{sm}}
\newcommand{\sing}{\operatorname{sing}}
\newcommand{\Sp}{\operatorname{Sp}}
\newcommand{\SL}{\operatorname{SL}}
\newcommand{\SO}{\operatorname{SO}}
\newcommand{\gen}{\operatorname{gen}}
\newcommand{\Ex}{\textbf{Ex}}
\newcommand{\Id}{\operatorname{Id}}
\newcommand{\tr}{\text{tr}}
\title{Crepant resolutions of  stratified varieties via gluing}
\author{Daniel Kaplan\footnote{Universiteit Hasselt, daniel.kaplan@uhasselt.be} \  and Travis Schedler\footnote{Imperial College London, t.schedler@imperial.ac.uk}}
\date{February 2024}
\begin{document}

\maketitle
\tableofcontents

\abstract{
Let $X$ be a variety with a stratification $\cS$ into smooth locally closed subvarieties such that $X$ is locally a product along each stratum (e.g., a symplectic singularity). We prove that  assigning to each open subset $U \subset X$ the set of isomorphism classes of locally projective crepant resolutions of $U$ defines an $\cS$-constructible sheaf of sets. %1
Thus, for each stratum $S$ and basepoint $s \in S$,  the fundamental group acts on the set of germs of projective crepant resolutions at $s$, leaving invariant the germs extending 
to the entire stratum.   Global locally projective crepant resolutions correspond to compatible such choices for all strata. %3  
For example, if the local projective crepant resolutions are unique, they automatically glue uniquely.

We give criteria for a locally projective crepant resolution $\rho: \tilde X \to X$ to be globally projective. We show that the sheafification of the presheaf $U \mapsto \Pic(\rho^{-1}(U)/U)$ of relative Picard classes is also constructible. The
resolution is globally projective only if there exist local relatively ample bundles whose classes glue to a global section of this sheaf.  The obstruction to lifting this section to a global ample line bundle is encoded by a gerbe on the singularity $X$. %6
We show the gerbes are automatically trivial if $X$ is a symplectic quotient singularity. %7

Our main results hold in the more general setting of partial crepant resolutions, that need not have smooth source.

We apply the theory to symmetric powers and Hilbert schemes of surfaces with du Val singularities, finite quotients of tori, multiplicative and Nakajima quiver varieties, as well as to canonical threefold singularities.  
}

%Abstract comments / material from previous drafts
%1: %This establishes a parallel transport type procedure to extend local resolutions uniquely along a stratum. 
%2: (by which we mean equivalence classes of projective crepant resolutions of analytic neighborhoods of $s$ under restriction to a smaller neighborhood).
%2.5 (uniquely)
%3: which can be interpreted as an obstruction to extending a resolution in a neighborhood of $s$ to a neighborhood of $S$. Consequently, we obtain an obstruction to extending compatible local resolutions around basepoints in the stratification to global resolutions of $X$. 
%4: for $\tilde U \to U$ a locally projective crepant resolution. 
%5: locally projective crepant
%6: Conversely, the obstruction to gluia global section defining defines a gerbe on the singularity $X$ which is trivial if and only if the section is realized by a global, relatively ample line bundle on $\tilde X$, i.e., $\tilde X \to X$ is globally projective. 
%7 in some settings including if $X$ has a finite stratification with symplectic, $\bQ$-factorial singularities (e.g., it is a symplectic quotient singularity), so that $\tilde X$ is globally projective if and only if it comes from a global section of the constructible sheaf of local relative Picard classes.

\section{Introduction}

Let $X$ be a variety.
Consider the classification of all isomorphism classes of pairs $(Y, \rho: Y \rightarrow X)$ where $Y$ is 
%an irreducible 
a variety and $\rho$ is a birational morphism. The category of such pairs $(Y, \rho)$ is discrete:  given $(Y_1, \rho_1)$ and $(Y_2, \rho_2)$, if there is an isomorphism $\varphi: Y_1 \rightarrow Y_2$ satisfying $\rho_1 = \rho_2 \circ \varphi$ then $\varphi$ is unique. Consequently, the set of isomorphism classes forms a sheaf (whereas, a priori, one might only expect the categories of pairs to form a stack). %I changed resolutions to pairs

In more detail, suppose we have an open covering $\bigsqcup U_i \rightarrow X$ of $X$ and birational morphisms $V_i \to U_i$.  We seek a variety $Y$ with a map to $X$ and  an open covering $\bigsqcup V_i \to Y$ such that the diagram
\[
\xymatrix{ V_i \ar[d] \ar[r] & Y \ar[d] \\ U_i \ar[r] & X
}
\]
commutes for all $i$. If such a $Y$ exists, it is unique up to unique isomorphism. Indeed, for each $i,j$, if $V_i \to U_i$ and $V_j \to U_j$ are isomorphic over a nonempty overlap $U_i \cap U_j$, then they uniquely glue, by the preceding paragraph. So the only condition for the existence of $Y$ is that the maps are pairwise compatible, i.e., every pair $V_i \to U_i$ and $V_j \to U_j$ restrict to isomorphic maps over the intersection $U_i \cap U_j$.

We will be especially interested in the case where each $V_i \to U_i$ is (locally) projective and crepant.  In this case, $Y \to X$ is locally projective and crepant.  As a special case, suppose that each $U_i$ admits a unique locally projective crepant resolution $V_i \to U_i$.   Then the gluing condition is automatic, and we conclude that $X$ admits a unique locally projective crepant resolution $Y \to X$.  The most well-known example of this phenomenon is if $X$ has only du Val surface singularities, in which case the resolution $Y \to X$ is projective and minimal. We give more interesting examples below.\\
\\
In this paper, we classify locally projective (partial) crepant resolutions from local data, under certain hypotheses. Our primary setting of interest is that of symplectic resolutions.  In order for $X$ to admit a symplectic resolution, it must be a symplectic singularity in the sense of Beauville \cite{Beauville}. According to Kaledin \cite{Kaledin06}, algebraic symplectic singularities have a %locally 
finite  stratification by symplectic leaves.

Our main theorem shows that isomorphism classes of locally projective symplectic resolutions of $X$ are in bijection with compatible, monodromy-free choices of local symplectic resolutions at  basepoints of the strata. 
We also give criteria for when these resolutions are globally projective.
%Then,  globally projective resolutions with fixed relatively ample line bundle on $X$ are in bijection with compatible systems of local resolutions with fixed relatively ample line bundle. 

We actually prove a much more general statement for (partial) crepant resolutions, that does not require the symplectic structure, at the price of assuming the existence of a nice stratification. \\
  
 We provide two examples of quotient singularities 
% (the first  straightforward, the second less so) 
where the utility of this approach is visible.
%without the need to develop the general obstruction theory. 

%\begin{exam} \label{ex: sympl_C2}
%Let $C_2$ act on $(\bC^\times)^2$ by $(z, w) \mapsto (z^{-1}, w^{-1})$. The singularities of $(\bC^\times)^2 / C_2$ are the four fixed points $(\pm 1, \pm 1)$. Each singularity is analytically locally the $A_1$-singularity $\bC^2/ C_2$, which has a unique crepant resolution given by blowing up the singularity. The set of unique analytic-local crepant resolutions of a neighborhood of each singularity glue to a unique global crepant resolution: compatibility is automatic since each overlap has a unique crepant resolution. Additionally, the invariance condition described below (see e.g., Corollary \ref{c:main}) is trivially satisfied as the fundamental group is acting on a singleton set. The resulting resolution is, of course, the blowup of the singular locus.
%\end{exam}

\begin{exam}
    Let $\fS_n$ act on $(\bC^\times)^{2n}$ by permuting the $n$ copies of $(\bC^\times)^2$.  Take the quotient $(\bC^\times)^{2n}/\fS_n \cong \Sym^n((\bC^\times)^2)$. Then the singular locus is the image of 
 the diagonal (where two pairs are equal).  At a singular point, there is a local neighborhood isomorphic to a product of singularities $\bC^{2m}/\fS_m$. These are well-known to admit a unique projective crepant resolution, by the Hilbert--Chow morphism (and this is also a special case of \cite[Proposition 1.2]{Bellamy16}, which we use below). So there is a unique locally projective crepant resolution, gluing these together.  This resolution is in fact globally projective, given by the Hilbert--Chow morphism $\Hilb^n((\bC^\times)^2) \to \Sym^n((\bC^\times)^2)$.  
\end{exam}

\begin{exam}
Fix $n \in \bN$. The group $B_n := C_2^n \rtimes \fS_n$ acts on $(\bC^\times)^{2n}$ where $\fS_n$ denotes the symmetric group permuting the $n$ copies of $(\bC^\times)^2$ and $C_2^n$ acts diagonally with $C_2$ acting on $(\bC^\times)^2$ via $(z,w) \mapsto (z^{-1},w^{-1})$.
%acting as in Example \ref{ex: sympl_C2}. 
The singular locus of the quotient $X := (\bC^\times)^{2n} / B_n$ is the union of the diagonal %(where two pairs are equal) 
with the locus of $n$-tuples of pairs of complex numbers containing a pair of the form  $( \pm 1, \pm 1)$. Along a point of a diagonal-type stratum---that is, a stratum where some pairs are equal, but none are $(\pm 1, \pm 1)$---the singularity is a product of singularities $\bC^{2m}/ \fS_m$, which each admit a unique crepant resolution given by $\Hilb^{2m}(\bC^2)$.
%, the subschemes of $\bC^2$ of length $2m$ with center of mass at the origin). 
Singularities where some number of pairs are one of the four pairs $(\pm 1, \pm 1)$ are the same as in a product of singularities of the form $\Sym^m(\bC^2/C_2)$, with $C_2$ now acting by $\pm I$.  Our results then imply the formula:
\begin{align*}
\# \text{\{isomorphism} & \text{ classes of } \text{locally projective crepant resolutions of } X \} \\
&= (\# \text{\{isomorphism classes of projective crepant resolutions of } \Sym^n(\bC^2 / C_2 ) \})^4 = n^4,
\end{align*}
where the last equality is a special case of Bellamy's formula\footnote{Bellamy is counting symplectic resolutions but the notions of symplectic and crepant agree whenever a symplectic resolution exists, 
see e.g. \cite[Proposition 3.2]{Kaledin03}.} in \cite[Proposition 1.2]{Bellamy16}. 
To construct the resolutions, independently pick a local projective crepant resolution around each of the most singular points (all pairs are equal and are one of the four pairs $(\pm 1, \pm 1)$), and these spread out uniquely to a global resolution. In fact, we can see that all such resolutions are \emph{globally} projective. See Section \ref{ss: symmetric_powers} for more details.
\end{exam}

Our key condition on the singularities is the following. Let $X$ be a complex analytic variety equipped with a locally finite stratification $\mathcal{S} = \{S_i\}$ by Zariski locally closed, connected, smooth subvarieties. (Note that, if $X$ is in fact a complex \emph{algebraic} variety, by which we mean a separated, reduced scheme of finite type over $\bC$, then the stratification must be finite.)

\begin{defn}\label{d:local product} 
$X$ is said to be \emph{locally a product along $S_i$} if, for every $s \in S_i$, there is a connected neighborhood $U \ni s$ in $X$, a pointed stratified variety $F \ni 0$, and a stratified isomorphism $\varphi: (U \cap S_i) \times F \to U$  which restricts to the inclusion on $(U \cap S_i) \times \{0\}$. We call the triple $(U, F, \varphi)$ a \emph{local product neighborhood} and refer to $F$ as a \emph{local slice} to $S_i$.
\end{defn}
Note that the condition implies that the pointed analytic germs of $X$ along $s \in S_i$ are locally constant in $s$. In particular, since $S_i$ is connected, all germs of $X$ along points of $S_i$ are isomorphic. %We further impose the condition $(*)$ to ensure that the map restricting resolutions of $U$ to the local slice, $F$, is an isomorphism. This condition is automatic if $X$ admits a smooth crepant resolution, or if $X$ has symplectic singularities. See Remark \ref{r:bchm} for more details. 

Our main results concern varieties $X$ with a locally finite stratification $\cS$ by such strata, such that $F$ is stratified and the isomorphism $\varphi$ respects the stratification. We are interested in crepant resolutions of $X$, so we assume that $X$ is normal and that its canonical divisor is $\bQ$-Cartier. For technical purposes we will also need to assume that $F$ admits a projective 
%relative minimal model 
crepant resolution $\tilde F \to F$  with $H^1(\tilde F, \cO)=0$ (see Definition \ref{d: prmm}), in other words local resolutions can be obtained by resolving the slice.  This holds in all our examples; for the general case, see Remark \ref{r:bchm}.

Since $X$ is assumed to be normal, and our neighborhoods $U$ are connected, they are irreducible. 
\begin{defn}
For an open set $U \subset X$, let $\ccR^s(U)$ denote the set of isomorphism classes of (smooth) locally projective, crepant resolutions of $U$. 
%Similarly, let $\widetilde{\cC}_U$ denote the set of isomorphism classes of pairs of a projective crepant resolution of $U$ with a relatively ample line bundle. 
\end{defn}

Note that an isomorphism of \emph{resolutions} is stronger than that of abstract varieties as it is required to induce the identity map over the base. 

In the main body of the text, we will relax the condition that the source is smooth and study partial crepant resolutions locally dominated by a terminal crepant resolution. 

The assignment $U \mapsto \ccR^s(U)$ 
%and $U \mapsto \widetilde{\cC}_U$ are presheaves 
is a presheaf of sets. In fact, is is a sheaf, thanks to the uniqueness of gluing. Our main result is then:
%These examples suggest the following theorem, %\dan{What did you want to write here?} %whi $\cP$ of relative Picard groups, ch is 
%which we prove in this article:
\begin{thm}\label{t:constr}
For a variety $X$ with stratification as above, the sheaf $\ccR^s$ %and $\tilde{\ccR}$ are 
is $\cS$-constructible. 
%The same is true for the subsheaves of smooth crepant resolutions.
\end{thm}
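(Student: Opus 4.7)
The plan is to show that $\ccR^s|_{S_i}$ (viewing $\ccR^s$ via pullback along $i : S_i \hookrightarrow X$) is a locally constant sheaf for each stratum $S_i$, which is the content of $\cS$-constructibility. Fix $s \in S_i$ and, using Definition \ref{d:local product}, pick a local product neighborhood $(U, F, \varphi)$ with $\varphi$ identifying $U \cong V_0 \times F$, where $V_0 := U \cap S_i$ is a connected open neighborhood of $s$. Let $\ccR^s_0 := \varinjlim_{F' \ni 0} \ccR^s(F')$ denote the stalk at $0 \in F$ of the analogous sheaf on $F$. The aim is to show that $(i^{-1}\ccR^s)|_{V_0}$ is canonically the constant sheaf $\underline{\ccR^s_0}$; since local product neighborhoods cover $S_i$, this yields local constancy along $S_i$.

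The crux is the following \emph{product lemma}: every locally projective crepant resolution $\tilde Y \to V \times F'$, for $V \subseteq V_0$ connected open and $F' \subseteq F$ an open neighborhood of $0$, is isomorphic as a resolution to a product $V \times \tilde F' \to V \times F'$ for some locally projective crepant resolution $\tilde F' \to F'$. Granted this, restriction to a slice induces a bijection $\ccR^s(V \times F') \xrightarrow{\sim} \ccR^s(F')$, independent of the chosen slice (surjectivity by pullback, injectivity because once both resolutions are of product form their common slice determines them up to isomorphism). Taking the colimit over shrinking $F'$ and extending by sheafification to disconnected $V$,
\[
(i^{-1}\ccR^s)(V) \;=\; \varinjlim_{F' \ni 0} \ccR^s(V \times F') \;\cong\; \varinjlim_{F' \ni 0} \ccR^s(F')^{\pi_0(V)} \;=\; (\ccR^s_0)^{\pi_0(V)}
\]
for any open $V \subseteq V_0$, which identifies $(i^{-1}\ccR^s)|_{V_0}$ with $\underline{\ccR^s_0}$ and concludes the argument.

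The main obstacle is the product lemma itself. Given $\tilde Y \to V \times F'$, the natural sequence of steps would be: (i) restrict to slices $\{v\}\times F'$ to obtain a $V$-family of locally projective crepant resolutions of $F'$ (using that $V$ is smooth so the slice embedding is regular and crepancy is preserved); (ii) establish a rigidity statement that the isomorphism class of the fibers is locally constant in $v$, hence, by connectedness of $V$, constant; (iii) trivialise the family using the uniqueness-of-gluing principle from the introduction to produce a global isomorphism $\tilde Y \cong V \times \tilde F'$. Steps (ii) and (iii) require a rigidity result for projective crepant resolutions of $F$ in smooth families, which is exactly the content of the hypothesis that $F$ admits a projective crepant resolution $\tilde F \to F$ with $H^1(\tilde F, \cO) = 0$: the excerpt itself remarks that this hypothesis is equivalent to ``local resolutions can be obtained by resolving the slice'', i.e.\ to the product lemma. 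All other manipulations in the argument are formal (sheaf colimits, disconnected unions, and the uniqueness of isomorphisms between birational morphisms), so this deformation-theoretic rigidity is the technical heart of the theorem.
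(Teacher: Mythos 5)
Your reduction of constructibility to the ``product lemma'' (every locally projective crepant resolution of $V \times F'$ is a product $V \times \tilde F'$) matches the paper's strategy: this is exactly Proposition \ref{p: key}, and the formal colimit/slice-independence manipulations you perform afterwards are the content of Corollary \ref{c:local-product-res}. The problem is that you do not prove the product lemma, and your justification for skipping it rests on a misreading. The hypothesis $(*)$ --- that $F$ admits \emph{one} projective relative minimal model $\tilde F \to F$ with $H^1(\tilde F,\cO)=0$ --- only guarantees that \emph{a single} resolution of $U \cong (U\cap S)\times F$ of product form exists (namely $(U\cap S)\times\tilde F$). It is emphatically not ``equivalent to the product lemma''; the paper states explicitly that the passage from ``one resolution is a product'' to ``\emph{all} resolutions are products'' is a consequence requiring proof, and that proof is the technical heart of the theorem. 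It proceeds by: (a) computing $\Pic((U\cap S)\times\tilde F)\cong\Pic(\tilde F)$ via the exponential sequence and K\"unneth, using $H^1(\tilde F,\cO)=0$ and the polydisc hypothesis on $U\cap S$; (b) invoking \cite[Theorem 3.52]{KollarMori} to identify any other terminal crepant partial resolution with the product minimal model in codimension one, hence identifying Weil class groups; and (c) realizing every other crepant partial resolution as $\Proj$ of the section ring of a movable class on the minimal model --- since all such classes are pulled back from $\tilde F$, the resulting modification is again a product. None of this appears in your argument.

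Your alternative sketch (i)--(iii) for the product lemma would also not go through as stated. In step (i), restricting an arbitrary resolution $\tilde Y \to V\times F'$ to a slice $\{v\}\times F'$ does not obviously yield a crepant resolution of $F'$: without already knowing the product structure, the scheme-theoretic preimage of a slice need not be irreducible, reduced, or birational onto $F'$, and crepancy of the restriction is not automatic. Steps (ii) and (iii) then appeal to a ``rigidity result for projective crepant resolutions in smooth families'' that is never formulated or proved, and which is precisely the nontrivial assertion at stake. To repair the proof you would need to supply the birational-geometry argument of Proposition \ref{p: key} (or an equivalent deformation-theoretic rigidity statement with proof), not merely cite the hypothesis $(*)$.
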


%Our main result is that this sheaf is constructible along $\cS$.

%We first explain the case of a variety with just two strata:

\begin{exam}
Suppose that $X$ has two strata: the open stratum $S_0$ (the smooth locus), and one singular connected stratum  $S_1$, along which $X$ is locally a product. Pick a basepoint $s_1 \in S_1$.
%%\travis{I think we need to restrict to analytic neighborhoods now: so I got rid of ``formal'' here and elsewhere. I don't see if Artin approximation gives that resolutions are the same for both.} 
The theorem then implies that
%We will prove that 
the fundamental group $\pi_1(S_1, s_1)$ acts on the stalk $\ccR^s_{s_1}$ of the sheaf $\ccR^s$ at $s_1$.
%$\ccR_{s_1}$. 
The fixed points of this action are the resolutions that extend to a (locally projective, crepant) resolution on a neighborhood $U_{S_1}$ of all of $S_1$.
Since these resolutions are isomorphisms over $S_0 \cap U_{S_1}$, they glue to the identity resolution $S_0 \to S_0$, to give a global locally projective crepant resolution of $X$.
%Since $U_{S_0} \cap U_{S_1}$ is nonempty, a resolution on $U_{S_1}$ restricts to a resolution on the intersection, which in turn extends to a resolution on $U_{S_0}$ if fixed by the action of the fundamental group. 
So $\ccR^s(X)$ can be viewed as the subset of fundamental group invariant resolutions of $\ccR^s_{s_1}$.  %Similarly $\widetilde{\ccR_X}$ can be viewed as the subset of fundamental group invariant elements of $\widetilde{\ccR_{s_1}}$.
\end{exam}

\begin{exam}
Let $X$ be singular with stratification $\cS := \{ S_0, S_1, S_2 \}$, such that $\overline{S_i} \supset S_{i+1}$ for $i=0, 1$.
Again we have a minimum stratum, $S_2$, and so can pick a crepant resolution in a neighborhood of some $s_2 \in S_2$ and then ask if it extends to neighborhoods $U_{S_2}$, $U_{S_1}$, and then $U_{S_0}$. In this way $\ccR^s(X)$ is again the subset of $\ccR^s_{s_2}$ invariant under the fundamental groups of $S_2$ and $S_1$. In words, crepant resolutions are determined by local crepant resolutions around their ``most singular" points.  
\end{exam}

%It turns out that Theorem \ref{t:constr} %result 
%follows from some basic birational geometry.  
The sheaf $\ccR^s$ is governed by
%Under the surface  is
%In the course of the proof, which uses some birational geometry, we consider 
a fundamental object:
%prove that a more fundamental object is a constructible sheaf, 
the presheaf $\cP$ of relative rational Picard groups, $U \mapsto \Pic(\rho^{-1}(U)/U) \otimes_{\bZ} \bQ$ (see Theorem \ref{t:pic} below).   
Restricted to the codimension two strata (where there can only be du Val singularities), its sheafification  recovers the local systems defined by Namikawa to understand  the deformation theory of symplectic singularities in \cite{Namikawa11}; these local systems determine the Namikawa Weyl (or symplectic Galois) group \cite{Namikawa10}, with monodromy indicating the presence of non-simply laced types. This group was recently shown to act on cohomology of fibers of symplectic resolutions, defining a vast generalization of Springer theory \cite{McN-Springer}.

The theorem implies the following description of the possible projective crepant resolutions:

\begin{cor} \label{c:main}
Fix $(X$, $\cS)$ a stratified variety as above. Pick a basepoint $s_i \in S_i$ for each stratum $S_i \in \cS$. Then isomorphism classes of resolutions in  $\ccR^s(X)$ 
%and $\widetilde{\ccR}_X$ 
are in bijection, via restriction, with
the set of locally projective, crepant, compatible, $\pi(S_i, s_i)$-invariant local resolutions of neighborhoods of the $s_i$.
\end{cor}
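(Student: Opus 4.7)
The plan is to deduce the corollary from Theorem \ref{t:constr} using general properties of constructible sheaves of sets on a stratified space. Since $\ccR^s$ is $\cS$-constructible, its restriction to each connected stratum $S_i$ is a local system of sets, classified by the stalk $\ccR^s_{s_i}$ together with the monodromy representation of $\pi_1(S_i, s_i)$. Hence sections of $\ccR^s|_{S_i}$ are in bijection with $\pi_1(S_i, s_i)$-invariant elements of $\ccR^s_{s_i}$, i.e., with $\pi_1$-invariant germs of locally projective crepant resolutions at $s_i$.

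Restriction to the basepoints $s_i$ defines a map from $\ccR^s(X)$ to the set of such tuples, which lands in the subset of compatible tuples because restrictions of a global resolution to germs at different $s_i$ agree up to the specializations induced by the stratification. For injectivity, if two global resolutions restrict to the same tuple, then by local constancy of $\ccR^s|_{S_i}$ their germs agree throughout a tubular open neighborhood of each stratum; these neighborhoods cover $X$, so the sheaf property established in the introduction (uniqueness of gluing for birational morphisms over $X$) forces them to be isomorphic as global resolutions.

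For surjectivity, given a compatible $\pi_1$-invariant tuple, I would use local constancy along each $S_i$ to spread the germ at $s_i$ out to an honest section of $\ccR^s$ on a tubular open neighborhood of $S_i$. The compatibility hypothesis ensures that when $S_j \subset \overline{S_i}$ these sections agree as germs at points of $S_j$, and hence they agree on actual open overlaps since $\ccR^s$ is a sheaf of sets (stalks are discrete). The sheaf property then glues the resulting open cover of local sections into a single element of $\ccR^s(X)$ whose restriction recovers the original tuple.

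The main obstacle is justifying the ``spreading out'' step: one must show that a $\pi_1(S_i, s_i)$-invariant germ at $s_i$ genuinely extends to an analytic local resolution on a tubular open neighborhood of the entire stratum $S_i$, rather than merely a compatible family of germs indexed by points of $S_i$. This is precisely where the local product structure of Definition \ref{d:local product} is essential: along $S_i$ the variety $X$ is analytically a product with the constant slice $F$, so $\pi_1$-invariance of the germ combined with the constructibility asserted in Theorem \ref{t:constr} guarantees that the germs at nearby points of $S_i$ are canonically identified, and thus patch to a genuine section over a neighborhood of $S_i$. Once this spreading step is granted, the remaining arguments are formal consequences of $\ccR^s$ being a constructible sheaf of sets.
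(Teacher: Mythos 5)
Your overall route is the same as the paper's: deduce the corollary from Theorem \ref{t:constr} by the dictionary between constructible sheaves of sets and exit-path-compatible choices of stalks, with $\pi_1(S_i,s_i)$-invariance accounting for sections of the local system $\ccR^s|_{S_i}$ and ``compatibility'' accounting for the cross-strata conditions. The paper makes this precise in Section \ref{ss: exit paths} (the parallel-transport proposition and Proposition \ref{p:compatibility-limited-exitpaths}), which is exactly the machinery your sketch needs.

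Two steps as written do not quite hold up. First, the spreading-out step: passing from a $\pi_1$-invariant element of the stalk (equivalently, a section of $\iota^{-1}\ccR^s$ over $S_i$, i.e.\ a compatible family of germs) to an honest section of $\ccR^s$ on an open neighborhood of $S_i$ in $X$ is not a formal consequence of constructibility; it uses the local product structure concretely via Proposition \ref{p: key} and Corollary \ref{c:local-product-res}, which show that on each product neighborhood $(V\cap S_i)\times F$ the sheaf is \emph{constant} (every resolution there is $\Id\times\rho_F$), so a single germ determines a section on the whole product neighborhood and these glue. You flag this, but the justification you give is essentially a restatement of what must be proved. Second, the overlap step is misstated: if $S_j\subset\overline{S_i}$, a tubular neighborhood $U_i$ of $S_i$ need not contain any point of $S_j$, so ``the sections agree as germs at points of $S_j$'' does not parse for $\sigma_i$; the correct check is that the germ of $\sigma_j$ at points of $S_i$ (and intermediate strata) inside $U_i\cap U_j$, reached from $s_j$ by simplified exit paths, agrees with $\sigma_i$ there. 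Moreover, agreement at those points propagates to all of $U_i\cap U_j$ by uniqueness of parallel transport along exit paths (the locus of agreement of two sections is open for any sheaf, so discreteness of stalks is not the relevant point; what is needed is that every point of the overlap is reached by an exit path from a point where agreement is known). With these two steps replaced by the arguments of Section \ref{ss: exit paths}, your proof becomes the paper's.
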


The second goal of this paper is to express the aforementioned compatibility conditions in terms of linear algebra, thereby establishing a framework to build global resolutions from local ones.

 It is well known that a local system of sets on a space is equivalent to a functor from the fundamental groupoid to sets; by choosing a basepoint this is then equivalent to an action of the fundamental group on the fiber at the basepoint. In the stratified  setting, MacPherson explained that constructible sheaves can similarly be defined as functors from the exit path category (see \cite[Theorem 1.2]{TrEP}). Let us recall the statement.
 
 \begin{defn}
 Given a locally finite stratified topological space $X = \sqcup_i S_i$ with $S_i$ locally closed smooth manifolds %\travis{and with $X$ locally a product along $S_i$} %\dan{why do we need this for the definition of exit path?}, 
 an \emph{exit path} is a path $\gamma: [0,1] \to X$ such that, for $0 \leq t_1 < t_2 \leq 1$, the dimension of the stratum containing $\gamma(t_1)$  is less than or equal to the dimension of the stratum containing $\gamma(t_2)$. 
 \end{defn}
  %One can see that this is equivalent to another definition, that the stratum containing $\gamma(t_1)$ is in the closure of the stratum containing $\gamma(t_2).$
 Exit paths are all concatenations of paths of a simpler form: 
 
 \begin{defn}%\cite{TrEP}
 A \emph{simplified exit path} is $\gamma: [0,1] \to X$ such that $\gamma((0,1])$ all lies in the same stratum. 
 \end{defn}
 
 So, a simplified exit path either lies in a single stratum, or immediately exits one stratum to another.  All exit paths are finite concatenations of simplified exit paths (up to parameterization).  
 
 \begin{defn} Let $\Ex(X,\cS)$ be the category whose objects are points $x \in X$ and whose morphisms are tame homotopy classes of exit paths, through exit paths.  
 \end{defn}
 
 Here,  following \cite{TrEP}, a tame homotopy is a map $H: [0,1] \times [0,1] \to X$ such that the source can be continuously triangulated with faces mapping to the same stratum. We will not need this notion in this paper.
 %\footnote{In \cite{TrEP}, it is conjectured that the tame homotopy classes are the same as arbitrary homotopy classes, in which case we can work with the latter.}

\begin{thm}[MacPherson; see \cite{TrEP}]
There is an equivalence between $\cS$-constructible sheaves and functors $F: \bf{Ex}(X,\cS) \to \bf{Sets}$. This equivalence sends a functor $F$ to the sheaf $\cF$ given by:
 \[
 \cF(U) = \Gamma(U,F) := \left \{ (\rho_x)_{x \in U} \in \prod_{x \in U} F(x) \ \middle | \  \rho_y = F([\gamma])(\rho_x),  \text{ for all exit paths $\gamma: x \to y$ in $U$} \right \}.
 \]
 \end{thm}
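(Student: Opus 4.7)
The plan is to prove the stated equivalence by constructing functors in both directions and then checking they are mutually inverse. The first functor, from $\Ex(X,\cS)$-functors to sheaves, is exactly the one in the statement: given $F$, define $\cF(U)$ by the displayed formula. First I would verify this is a sheaf: both the pointwise membership condition and the exit-path compatibility condition are local in $U$, so a family of compatible local sections on an open cover glues uniquely. To check $\cS$-constructibility one notes that on a small connected open $V \subset S_i$ every exit path is an ordinary path, so $\cF(V)$ reduces to the set of $\pi_1(V, x)$-invariant elements of $F(x)$ for any basepoint $x \in V$; this is precisely the local system on $S_i$ classified by the $\pi_1(S_i, x)$-action on $F(x)$.

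For the inverse functor, from an $\cS$-constructible sheaf $\cF$ to an exit-path functor, the assignment on objects is $F(x) := \cF_x$. The action on morphisms is built stratum-by-stratum. A path lying entirely in a single stratum $S_j$ acts via parallel transport in the local system $\cF|_{S_j}$. A simplified exit path $\gamma$ with $\gamma(0) = x \in S_i$ and $\gamma((0,1]) \subset S_j$ produces the composite $\cF_x \to \cF_{\gamma(\epsilon)} \to \cF_{\gamma(1)}$, where the first map is the \emph{generization} obtained by choosing a small connected neighborhood $U \ni x$ with $\gamma(\epsilon) \in U$ and restricting a germ at $x$ to its value at $\gamma(\epsilon)$, and the second is parallel transport along $\gamma|_{[\epsilon,1]}$. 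Independence of the choice of $\epsilon$ follows because, for $0 < \epsilon' < \epsilon$ small, the two generizations differ by parallel transport along the short arc $\gamma|_{[\epsilon',\epsilon]} \subset S_j$. Finite concatenation handles general exit paths, and the assignment descends to tame homotopy classes because a tame homotopy triangulates into moves each of which the definition respects by construction (reparametrization within a stratum, cancellation of inverse generizations, and small deformations inside a local product chart).

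Next I would verify that these two constructions are mutually inverse. Starting from a constructible $\cF$, taking its stalk functor, and then applying the formula yields $\widetilde{\cF}(U)$ equal to the set of compatible families of stalks; constructibility on basic open sets—where $\cF$ is determined by its stalks together with the generization data—identifies this with $\cF(U)$. Starting instead from a functor $F$, forming $\cF$, and taking stalks recovers $F(x)$, because the exit-path category of a basic neighborhood of $x$ deformation-retracts onto the object $x$ (contracting each stratum of $U$ toward its closest approach to $x$ along the product structure), so a compatible family over $U$ is determined by its $x$-component.

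The hard part will be this last contractibility statement, which underlies both the well-definedness of the generization map and the stalk computation above: one needs every point to have arbitrarily small neighborhoods whose exit-path categories are equivalent to the terminal category based at $x$. In the setting of this paper this is guaranteed by the local-product structure along strata (Definition \ref{d:local product}), as a product neighborhood $(U \cap S_i) \times F$ retracts onto $\{s\} \times F$ within $\Ex$ and then $F$ retracts onto its cone point; in the general topological setting one instead uses conic/Whitney stratifications, which is where the real work of Treumann's proof in \cite{TrEP} is carried out. Once this local contractibility is in hand, the remaining verifications—naturality, behavior under restriction, and the unit/counit of the adjunction—are formal.
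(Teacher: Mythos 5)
The first thing to say is that the paper does not prove this statement at all: it is quoted as MacPherson's theorem with proof deferred to \cite{TrEP}, and the authors explicitly announce that they will \emph{not} rely on the full equivalence. What they actually prove, in Section \ref{ss: exit paths}, is only the ``easier part'': for an already-given constructible sheaf $\cF$ there is a unique parallel transport along exit paths, and global sections are the same as exit-path-compatible families of stalks. Even there they remark that the fact that this transport assembles into an honest functor out of $\Ex(X,\cS)$ --- independence of choices and invariance under tame homotopy --- is a consequence of \cite[Theorem 1.2]{TrEP} which they do not reprove. So there is no proof in the paper to compare yours against; the relevant benchmark is Treumann's argument.

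Judged on its own terms, your outline has the right skeleton: the displayed formula defines a sheaf because compatibility can be checked on a cover (any exit path is a finite concatenation of exit paths each contained in a member of the cover), and the inverse assignment $F(x)=\cF_x$ with morphisms given by generization followed by parallel transport is exactly the construction the paper uses for its weaker proposition. But there is a genuine gap, which you yourself flag: well-definedness of the morphism maps, descent to tame homotopy classes, the constructibility of $\cF$, and both directions of the equivalence all ultimately rest on the claim that every point has a cofinal system of neighborhoods whose exit-path categories are equivalent to the terminal category. Your parenthetical argument for this (retract $(U\cap S_i)\times F$ onto $\{s\}\times F$, then retract $F$ onto its basepoint) assumes $F$ is conical, which Definition \ref{d:local product} does not give you: $F$ is merely a pointed stratified variety, and the local product structure alone does not produce the required contraction. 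That step is precisely where the real content of \cite{TrEP} lies (tameness and triangulability of the stratification), so as written the proposal is an outline that black-boxes the theorem's hardest ingredient. A secondary soft spot: your constructibility check evaluates $\cF$ on opens $V\subset S_i$, but $\cF|_{S_i}$ is the pullback to the stratum, whose sections over $V$ form a colimit over opens of $X$ containing $V$; one must verify that exit paths leaving $S_i$ inside such opens impose no extra conditions in the colimit, which again requires the local structure of neighborhoods rather than only paths within the stratum.
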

 
Thus, Theorem \ref{t:constr} can be rephrased as the following:

 \begin{cor}\label{c:exitp}
 There is a functor $R: 
 %and $\widetilde{C}:$
 {\bf Ex}(X,\cS) \to \bf{Sets}$, sending $x \in X$ to the stalk $\ccR^s_x$,
 %and $\widetilde{\ccR}_x$, 
 such that $\ccR^s(U)=\Gamma(U,R)$.
 %and $\widetilde{\ccR}(U) = \Gamma(U, \widetilde{C})$. 
 The same is true for the functor giving only the smooth resolutions.
 \end{cor}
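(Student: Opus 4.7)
The plan is to deduce this corollary directly from MacPherson's theorem (stated just above) applied to the sheaf $\ccR^s$, whose $\cS$-constructibility is exactly the content of Theorem \ref{t:constr}. Once that constructibility is in hand, $R$ is simply the image of $\ccR^s$ under MacPherson's equivalence, and the identity $\ccR^s(U)=\Gamma(U,R)$ is the defining property of that equivalence.

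In more detail, I would proceed in three short steps. First, invoke Theorem \ref{t:constr} to obtain that $\ccR^s$ is $\cS$-constructible, and let $R$ be the corresponding functor $\Ex(X,\cS)\to {\bf Sets}$ under MacPherson's equivalence. Second, identify $R(x) = \ccR^s_x$ on objects; this holds on the nose since MacPherson's equivalence sends a constructible sheaf to its stalk functor. The action of $R$ on an exit path lying entirely in a stratum $S_j$ is the monodromy isomorphism of the local system $\ccR^s|_{S_j}$, while on a simplified exit path from $x \in S_i$ with $\gamma((0,1]) \subset S_j$ of strictly larger dimension, $R$ assigns the specialization map $\ccR^s_x \to \ccR^s_{\gamma(1)}$ induced by restricting a germ of a local resolution at $x$ (defined on a local product neighborhood) to a smaller neighborhood inside $S_j$ around $\gamma(1)$. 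Third, the formula $\ccR^s(U) = \Gamma(U,R)$ is literally the statement of MacPherson's theorem. For the variant giving only smooth resolutions, observe that smoothness of the source is preserved under restriction, monodromy, and specialization, so smooth resolutions form an $\cS$-constructible subsheaf, and the same argument applies.

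The main potential obstacle is verifying that MacPherson's abstract specialization maps along simplified exit paths agree with the geometrically natural restriction maps on germs of resolutions. But this agreement is automatic: both are determined by the underlying sheaf $\ccR^s$, and MacPherson's equivalence is a tautological reformulation of a constructible sheaf as a functor on exit paths, so once $\ccR^s$ is known to be constructible, its presheaf structure already encodes exactly the data of the functor $R$.
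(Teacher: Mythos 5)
Your proposal is correct and follows the same route as the paper: the corollary is presented there precisely as a rephrasing of Theorem \ref{t:constr} via MacPherson's equivalence \cite[Theorem 1.2]{TrEP}, with $R$ the functor corresponding to the constructible sheaf $\ccR^s$ and the identity $\ccR^s(U)=\Gamma(U,R)$ being the defining property of that equivalence. (The paper merely adds the caveat that elsewhere it avoids relying on the full equivalence, using only the elementary parallel-transport statement proved in Section \ref{ss: exit paths}.)
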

 
 To keep the prerequisites to a minimum, we will not rely on  MacPherson's equivalence.
 %, and will instead prove Theorem \ref{t:constr} on each stratum separately. This only requires  the  equivalence between local systems and actions of the fundamental groupoid. 
 We will instead recall and use  the following easier part of MacPherson's result:
 There is a unique parallel transport on exit paths, such that global sections of $\cF$ are the same as exit path compatible choices of local sections.  A proof is provided for the reader's convenience in Section \ref{ss: exit paths}.
 
 If we label by $S_i$ the strata of $X$ and choose basepoints $s_i \in S_i$, it is clear that $f \in \Gamma(X,R)$ is uniquely determined by its values in the stalks $R(s_i)$: we take the ones compatible under exit paths with endpoints among the $s_i$. We deduce a more precise version of Corollary \ref{c:main}, where the compatibility between local resolutions is given by one being sent to another (up to isomorphism) via exit paths with endpoints the $s_i$.
  Note that everything is determined by the local resolutions on the closed strata (although to express compatibility we need to consider exit paths to non-closed strata). 
 %actually a global section $f \in \Gamma(X,R)$ is determined by its value on the subset of stalks $R(s_i)$  such that the strata $S_i$ are closed 

%\dan{Below is an old comment that was referring to a since deleted paragraph. But I revived it to make the point that we perhaps don't explain the role of the ample line bundle anywhere in the paper.}
%\travis{This paragraph has to be rewritten: we want to talk about the local resolutions and look at Mori chambers and pairs of a Mori chamber and a line bundle inside it, and if these are stable under exit paths we get a global locally projective resolution or globally projective resolution with ample line bundle.}

To express the isomorphism classes of local projective crepant resolutions, we use that two such local resolutions are related by a birational transformation given by a line bundle.  
%recall that every open affine subset $U \subset X$ of a complex algebraic variety has finitely many projective crepant resolutions of singularities by \cite[Theorem 1.3.2]{BCHM10}. 
Fix $\tilde{X}$ a projective crepant resolution of $X$ (if one exists). 
Then isomorphism classes of projective crepant resolutions correspond to Mori chambers in the movable cone $\text{Mov}(\tilde{X}/X)\subseteq \Pic(\tilde X/X)\otimes_{\bZ} \bQ$, associating each resolution to its ample
cone. (Note that, when $X$ is affine algebraic, it is a consequence of \cite[Corollary 1.3.2]{BCHM10} that there are only finitely many such chambers; see Remark \ref{r:bchm} below for more details.)
%, whose fixed chambers give the extendable resolutions. 

Applying this to strata, we can express parallel transport along exit paths in terms of linear monodromy on the relative Picard group. Given a locally projective crepant resolution $\rho: \tilde X \to X$, let $\cP^{\tilde X}_\bZ$ be the presheaf of relative Picard groups, $\cP^{\tilde X}_\bZ(U) := \Pic(\rho^{-1}(U)/U)$, and let $\cP := \cP^{\tilde X}_{\bZ} \otimes_{\bZ} \bQ$ be the associated presheaf of rational vector spaces. 
\begin{rem}
    Since any two crepant resolutions coincide outside codimension two subsets, these presheaves do not depend on the choice of $\tilde X$: any other locally projective smooth crepant resolution $\tilde X' \to X$ will canonically have the same class group of Weil divisors up to linear equivalence, hence the same Picard group, and then the same relative Picard group since pullbacks of line bundles on $X$ agree.  So we can omit the $\tilde X$. However, for  consistency with notation in the main body of the article, we keep the notation in the integral case (it becomes important when we allow $\tilde X$ to be singular).
    \end{rem}
%We obtain the folthat this does not depend on theX .choice of $\tilde $lowing:
%, which is a consequence of the proof of Theorem \ref{t:constr}:

\begin{thm}\label{t:pic}
Assume that $X$ is equipped with a stratification satisfying our hypotheses, and that $\rho: \tilde X \to X$ is a locally projective crepant resolution.
%Assume that $X$ admits a locally projective crepant resolution $\rho: \tilde X \to X$, which is locally given over product neighborhoods $(U \cap S_i) \times F$ of strata $S_i$ by a resolution $\widetilde F \to F$ with $H^1(\widetilde F, \mathcal{O})=0$. 
Then the presheaf $\cP$ of local relative rational Picard groups
%, $\cP(U)=\Pic(\rho^{-1}(U)/U)\otimes_\bZ \bQ$, 
has
sheafification $\cP^\sharp$ which is a constructible sheaf of rational vector spaces. %The sheaf $\widetilde{\ccR}_X$ is the subsheaf of movable bundles (a subsheaf of sets). 
The sheaf $\ccR^s$ is the sheafification of the associated presheaf of sets of Mori chambers.
\end{thm}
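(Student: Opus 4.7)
The strategy is to show that, at each point $s \in S_i$, the stalk of $\cP$ is identified with the rational Picard group of a fixed crepant resolution of an analytic slice, and that this identification is locally constant as $s$ varies in $S_i$.

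For $s \in S_i$, fix a local product neighborhood $(U, F, \varphi)$ with $\varphi \colon (U \cap S_i) \times F \xrightarrow{\sim} U$. Pulling back the slice resolution $\tilde F \to F$ along the projection yields a projective crepant resolution of $U$; by the sheaf property of $\ccR^s$ (uniqueness of gluing), possibly after shrinking $U$, this product must be isomorphic to $\rho^{-1}(U) \to U$ as resolutions. So I would identify $\rho^{-1}(U) \cong (U \cap S_i) \times \tilde F$. Next, after further shrinking $U \cap S_i$ to a simply connected open $V$, the hypothesis $H^1(\tilde F, \cO) = 0$ implies $R^1(\pi_V)_* \cO = 0$ for $\pi_V \colon V \times \tilde F \to V$, so the relative Picard scheme is étale over $V$; combined with the simple connectedness of $V$, its sections are constant and
\[
\Pic\bigl((V \times \tilde F)/(V \times F)\bigr) \cong \Pic(\tilde F/F).
\]
Tensoring with $\bQ$, this shows $\cP$ takes the constant value $\Pic(\tilde F/F) \otimes \bQ$ on such neighborhoods, so $\cP^\sharp$ is locally constant along each stratum. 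For an exit path from $s \in S_i$ to a nearby $s' \in S_j$ with $\dim S_j > \dim S_i$, the slice $F_{s'}$ embeds into an open subset of $F_s$ via the iterated product structure, and the transition map on stalks is the pullback of relative line bundles under this inclusion. This establishes $\cS$-constructibility of $\cP^\sharp$.

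For the second claim, I would define a map of presheaves sending a Mori chamber in $\Pic(\rho^{-1}(U)/U) \otimes \bQ$ to the isomorphism class of the projective crepant resolution of $U$ whose relative ample cone is the given chamber (obtained by running a relative minimal model program from any model with an ample class in the chamber). To show the induced map on sheafifications $\to \ccR^s$ is an isomorphism, I would verify it is a bijection stalkwise; by the analysis above this reduces to the classical bijection between isomorphism classes of projective crepant resolutions of the slice $F$ and Mori chambers in $\Pic(\tilde F/F) \otimes \bQ$, which is a consequence of the finite generation results cited in Remark \ref{r:bchm}.

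The main difficulty I anticipate lies in the first step: carefully identifying $\rho^{-1}(U) \to U$ with the product resolution $(U \cap S_i) \times \tilde F \to U$ as an element of $\ccR^s(U)$. Since $\rho$ is only locally projective one must argue analytically-locally, and one must ensure the implicit choice of $\tilde F$ is consistent across basepoints of the same stratum so that stratum monodromy acts on a single well-defined group $\Pic(\tilde F/F) \otimes \bQ$ rather than on a family of non-canonically isomorphic ones. The vanishing $H^1(\tilde F,\cO)=0$ is precisely the hypothesis that rigidifies these identifications.
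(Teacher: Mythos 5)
There is a genuine gap at precisely the step you flag as the main difficulty. You assert that, after shrinking, $\rho^{-1}(U) \to U$ ``must be isomorphic'' to the product resolution $(U \cap S_i) \times \tilde F \to (U\cap S_i)\times F$ ``by the sheaf property of $\ccR^s$ (uniqueness of gluing).'' Uniqueness of gluing only says that two resolutions agreeing over an open cover of the base are isomorphic; it gives no information about whether an arbitrary crepant resolution of a product neighborhood is itself a product, and in fact $\rho^{-1}(U)$ need not be isomorphic to the product with your \emph{fixed} $\tilde F$ --- only with the product of $U\cap S_i$ and \emph{some} crepant resolution of $F$, since $F$ may admit several. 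Supplying this is exactly the content of Proposition \ref{p: key}, and it is where the real work lies: one first builds the single product minimal model $(U\cap S_i)\times \tilde F$, shows via the exponential sequence and K\"unneth (using $H^1(\tilde F,\cO)=0=H^1(U\cap S_i,\cO)$ and the topological triviality of the polydisc $U\cap S_i$) that \emph{every} line bundle on it is pulled back from $\tilde F$, and then invokes birational geometry: any two terminal crepant partial resolutions of $U$ agree in codimension one \cite[Theorem 3.52]{KollarMori}, so any other one is the $\Proj$ of the section algebra of a movable bundle on the product model, hence is again a product because that bundle comes from $\tilde F$. Without this argument the identification of the stalks of $\cP$ along $S_i$ with $\Pic(\tilde F/F)\otimes_{\bZ}\bQ$, and hence constructibility, is not established.

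A secondary problem: your appeal to the relative Picard scheme being \'etale is made for the projection $V\times\tilde F\to V$, which is not proper ($\tilde F$ is proper only over the Stein slice $F$), so the representability and \'etaleness statements do not apply as stated; moreover simple connectedness of $V$ does not by itself kill the K\"unneth cross-term $H^1(V,\bZ)\otimes H^1(\tilde F,\bZ)$ nor the contribution of $H^2(V,\bZ)$ --- the paper instead takes $V$ to be a polydisc and separately proves $H^1(\tilde F,\bZ)=0$. Your treatment of the Mori-chamber statement (Proj of a movable class, stalkwise reduction to the slice) is essentially the paper's and would go through once Proposition \ref{p: key} is in place.
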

As a result of this, we can give a necessary condition for a locally projective crepant resolution to be globally projective: the section of Mori chambers should be represented by  a global section of  $\cP^\sharp$, i.e., consistent choice of %isomorphism class of 
local relatively ample rational class.
%line bundle. 
%a section which locally gives the class of a relatively ample line bundle. 
In many cases we will see that this is almost a biconditional; in particular we will show:
\begin{thm}\label{t:qfact-symp-intro} Let $\tilde X \to X$ be a locally projective crepant resolution of a $\bQ$-factorial symplectic singularity (e.g., a symplectic quotient singularity) with a finite symplectic stratification. Then $\tilde X$ is globally projective if and only if there exists an associated section $\sigma \in \Gamma(X,\cP^\sharp)$ of relatively ample
 classes. Such a section uniquely determines a relatively ample class in $\Pic(\tilde X/X)\otimes_\bZ \bQ$.
\end{thm}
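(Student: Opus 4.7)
The forward direction is formal: a $\rho$-ample line bundle on $\tilde X$ has a class in $\Pic(\tilde X/X) \otimes_\bZ \bQ$ that restricts to a locally relatively ample section of $\cP^\sharp$.

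For the converse, the plan is to show that the restriction map
\[
\iota \colon \Pic(\tilde X/X) \otimes_\bZ \bQ \longrightarrow \Gamma(X, \cP^\sharp)
\]
is a bijection carrying $\rho$-ample classes onto sections of locally relatively ample classes. Once this is in hand, $\tilde\sigma := \iota^{-1}(\sigma)$ is the desired ample lift, since relative ampleness is local on the base $X$ and the restrictions of $\tilde\sigma$ to a suitable cover are ample by hypothesis; the ``uniquely determines'' clause is precisely the injectivity of $\iota$.

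The key input is $\bQ$-factoriality, which (combined with normality of $X$) implies that $\Pic(\tilde X/X) \otimes \bQ$ is spanned by the classes $[E_1], \ldots, [E_n]$ of the $\rho$-exceptional prime divisors: any Weil divisor on $\tilde X$ decomposes as non-exceptional plus exceptional, the non-exceptional part pushes forward to a $\bQ$-Cartier divisor on $X$, and subtracting its pullback reduces us modulo $\rho^*\Pic(X)\otimes\bQ$ to an exceptional divisor. Analogously, $\Pic(\rho^{-1}(U)/U) \otimes \bQ$ is spanned by the restrictions of those $E_i$ meeting $\rho^{-1}(U)$. Injectivity of $\iota$ then follows by evaluating at a generic point of each $\rho(E_i)$: if $\sum a_i [E_i]$ has germ zero at the generic point of $\rho(E_j)$, then $a_j = 0$. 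For surjectivity, given $\sigma \in \Gamma(X, \cP^\sharp)$, constructibility (Theorem \ref{t:pic}) together with exit-path compatibility reduces $\sigma$ to a choice of rational coefficient on each $E_i$ that is locally constant along $\rho(E_i)$, hence constant by irreducibility; these constants assemble into a class $\sum a_i [E_i] \in \Pic(\tilde X/X) \otimes \bQ$ mapping to $\sigma$.

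The main obstacle is that the $[E_i]$ may satisfy nontrivial $\bQ$-linear relations (coming from pullbacks of $\bQ$-Cartier divisors supported on the singular locus), so the ``coefficient'' data above is only defined modulo these relations; equivalently, I must verify that the gerbe obstruction to lifting sections of $\cP^\sharp$ to global Picard classes vanishes. Here the symplectic hypothesis is essential: on codimension-two symplectic strata, $\cP^\sharp$ is identified with Namikawa's local system of ADE root lattices, and together with the finiteness of the symplectic stratification and $\bQ$-factoriality this reduces the gluing to the monodromy invariance built into $\Gamma(X, \cP^\sharp)$ by definition, producing a canonical lift.
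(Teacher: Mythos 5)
Your overall strategy is the paper's: both arguments come down to showing that the restriction map from global divisor classes on $\tilde X$ to $\Gamma(X,\cP^\sharp)$ is surjective (the map $r$ of \eqref{e:rest-cl} in Theorem \ref{t: quotient-sing-globally-projective}), using $\bQ$-factoriality to see that the local relative Picard groups are spanned by exceptional divisors (Lemma \ref{l:qfact-divs}). Your injectivity argument by evaluation at generic points of the $\rho(E_j)$ is sound, and it already disposes of the ``nontrivial relations among the $[E_i]$'' worry you raise later. The forward direction and the reduction of relative ampleness to a local check are fine.

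The gap is in surjectivity, which is where all the content lives. The assertion that a global section of $\cP^\sharp$ ``reduces to a choice of rational coefficient on each $E_i$, locally constant along $\rho(E_i)$'' needs three inputs you do not supply. (i) You need every $\rho(E_i)$ to be (the closure of) a codimension-two stratum, so that its generic point has du Val transverse slice and a coefficient can be read off there; this is Namikawa's theorem on images of exceptional divisors of symplectic partial resolutions (Proposition \ref{p:ssing-edivs}), and it is exactly where the symplectic hypothesis enters. (ii) Over a codimension-two stratum $S$ the monodromy of $\cP^\sharp|_S$ may permute the local exceptional curves, so a section is not a coefficient per local curve but per monodromy orbit; the identification of orbits with global irreducible divisors over $\overline S$, hence of $\Gamma(S,\cP^\sharp|_S)$ with the span of the $[E_i]|_S$, is \cite[Proposition 4.2]{Namikawa11} and must be invoked, not just gestured at. (iii) Having matched $\sigma$ with $\sum_i a_i[E_i]$ on codimension-two strata you must still show they agree at deeper strata; this needs injectivity of restriction from deeper stalks to the codimension-two ones (again Lemma \ref{l:qfact-divs} plus Proposition \ref{p:ssing-edivs}), which the paper packages as an induction on the support of $\sigma - r(D)$ in Lemma \ref{l:glob-crit}. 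Finally, your last paragraph misidentifies the residual obstruction: the gerbe of Theorem \ref{t:gerbe-intro} obstructs lifting an \emph{integral} class to an honest line bundle and plays no role in the rational statement here --- since $\tilde X$ is smooth every Weil divisor is Cartier, so $\sum_i a_i[E_i]$ already lies in $\Pic(\tilde X/X)\otimes_\bZ\bQ$ and a suitable integer multiple is the required relatively ample bundle (compare Proposition \ref{p:r-surjective}, which handles the singular terminal case).
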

\begin{rem}
For general $X$, it is not true that the existence of a consistent section of  
%$\sigma \in \Gamma(X,\cP^\sharp)$ of 
local relatively ample classes is enough to have a relatively ample bundle on all of $\tilde X$, i.e., for $\tilde X \to X$ to be globally projective. See Example \ref{rem:example_globally_proj}.
\end{rem}
For general $X$, we will prove:
\begin{thm}\label{t:gerbe-intro}
    Given a locally projective crepant resolution $\tilde X \to X$ and a section $\tilde \sigma \in \Gamma(X,\cP_\bZ^{\tilde X,\sharp})$ of relatively ample classes for $\tilde X \to X$, %there is an integer $N\geq 1$ for which 
    there is a canonical gerbe $\ccL_{\tilde \sigma}$ on $X$. Then $\tilde X \to X$ is globally projective with  relatively ample line bundle locally in the class $\tilde \sigma$ if and only if $\ccL_{\tilde \sigma}$ is trivial.
    %some power $\ccL_{mN\sigma}$ of $\ccL_{N\sigma}$ is trivial.
    %bundle locally in the class $N\sigma$. %class $\tilde \sigma \in \Pic(\tilde X/X)\otimes_\bZ \bQ$ which locally restricts to $\sigma$. The class $\tilde \sigma$ is then unique.
\end{thm}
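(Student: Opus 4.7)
The plan is to identify the obstruction to lifting $\tilde\sigma$ to a genuine line bundle on $\tilde X$ as a class in $H^2(X,\cO_X^\times)$ via the Leray spectral sequence for $\rho_*\cO_{\tilde X}^\times$, and to define $\ccL_{\tilde\sigma}$ to be the corresponding $\cO_X^\times$-gerbe.

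First I would verify two sheaf identifications. Since $\rho$ is proper birational to the normal variety $X$, we have $\rho_*\cO_{\tilde X}^\times = \cO_X^\times$. Moreover, $R^1\rho_*\cO_{\tilde X}^\times$ is the sheafification of $U \mapsto \Pic(\rho^{-1}(U))$, and this is naturally isomorphic to $\cP^{\tilde X,\sharp}_\bZ$. Indeed, every $x\in X$ has arbitrarily small Stein contractible neighborhoods $U$ (intersections of $X$ with small balls in a local embedding), and for such $U$ the exponential sequence together with $H^1(U,\cO)=0$ (Cartan B) and $H^2(U,\bZ)=0$ forces $\Pic(U)=0$; hence $\rho^*\Pic(U)$ vanishes stalkwise and the natural map from $R^1\rho_*\cO_{\tilde X}^\times$ to $\cP^{\tilde X,\sharp}_\bZ$ is an isomorphism.

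Next, from the Leray spectral sequence $E_2^{p,q} = H^p(X,R^q\rho_*\cO_{\tilde X}^\times) \Rightarrow H^{p+q}(\tilde X,\cO_{\tilde X}^\times)$ I would extract the five-term exact sequence
\[
0 \to \Pic(X) \xrightarrow{\rho^*} \Pic(\tilde X) \xrightarrow{\mathrm{cl}} \Gamma(X,\cP^{\tilde X,\sharp}_\bZ) \xrightarrow{d_2} H^2(X,\cO_X^\times),
\]
where $\mathrm{cl}$ sends a line bundle on $\tilde X$ to its class in the relative Picard sheaf. I would then define $\ccL_{\tilde\sigma}$ to be the $\cO_X^\times$-gerbe classified by $d_2(\tilde\sigma) \in H^2(X,\cO_X^\times)$. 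Concretely, one can describe it as the stack whose sections over an open $U \subseteq X$ are line bundles on $\rho^{-1}(U)$ whose $\cP^{\tilde X,\sharp}_\bZ$-class restricts to $\tilde\sigma|_U$, with automorphisms acting via $\rho_*\cO_{\tilde X}^\times = \cO_X^\times$; the \v{C}ech cocycle representing its class is $\phi_{ki}\phi_{jk}\phi_{ij}$ for chosen isomorphisms $\phi_{ij}:L_i|_{\rho^{-1}(U_{ij})}\xrightarrow{\sim}L_j|_{\rho^{-1}(U_{ij})}\otimes\rho^*M_{ij}$, where $\{U_i\}$ is a cover on which $\tilde\sigma$ is represented by line bundles $L_i$ and the $M_{ij}\in\Pic(U_{ij})$ exist (after refinement) because the classes of $L_i$ and $L_j$ agree in $\cP^{\tilde X,\sharp}_\bZ$.

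Exactness at $\Gamma(X,\cP^{\tilde X,\sharp}_\bZ)$ directly yields the biconditional: a line bundle $\tilde L \in \Pic(\tilde X)$ with $\mathrm{cl}(\tilde L)=\tilde\sigma$ exists if and only if $d_2(\tilde\sigma)=0$, equivalently $\ccL_{\tilde\sigma}$ is trivial. For the final claim on relative ampleness, I would use that this notion is local on the base: given $\tilde L$ as above, on each $U_i$ we have $\tilde L|_{\rho^{-1}(U_i)} \cong L_i \otimes \rho^* N_i$ for some $N_i \in \Pic(U_i)$; since $L_i$ is relatively ample by hypothesis on $\tilde\sigma$ and pullbacks along $\rho$ are relatively trivial, $\tilde L$ is relatively ample on each $\rho^{-1}(U_i) \to U_i$, hence on $\tilde X \to X$, so $\tilde X \to X$ is globally projective. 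I expect the main obstacle to be cleanly justifying the geometric description of the gerbe (as opposed to just its cohomology class in $H^2(X,\cO_X^\times)$) and the identification $R^1\rho_*\cO_{\tilde X}^\times\cong\cP^{\tilde X,\sharp}_\bZ$; once these are in place, the rest is a routine application of the five-term exact sequence.
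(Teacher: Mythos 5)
Your argument is correct, but it runs along a genuinely different track from the paper's. The paper (Theorem \ref{t:locproj-gone-global}) takes the concrete stack $U \mapsto \{\text{line bundles on }\rho^{-1}(U)\text{ in the class }\tilde\sigma\}$ as the definition of $\ccL_{\tilde\sigma}$ and verifies directly that it is a gerbe: the only nontrivial point is that it is locally equivalent to the Picard stack of $X$, which is proved by the computation $\Hom_{\tilde U}(L\otimes\rho_U^*E, L\otimes\rho_U^*F)\cong\Hom_U(E,F)$, reduced via the projection formula to $(\rho_U)_*\cO_{\tilde U}\cong\cO_U$ (Zariski's main theorem for a proper birational map to a normal base). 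The biconditional is then immediate: a trivialization is the same as a global object of the stack, i.e., a global line bundle in the class $\tilde\sigma$. You instead route everything through the Leray spectral sequence for $\rho_*\cO_{\tilde X}^\times$ and its five-term exact sequence, defining the gerbe by the class $d_2(\tilde\sigma)\in H^2(X,\cO_X^\times)$ and reading the biconditional off from exactness at $\Gamma(X,\cP^{\tilde X,\sharp}_\bZ)$. Both arguments hinge on the same two inputs --- $\rho_*\cO_{\tilde X}^\times=\cO_X^\times$ and the identification of $\cP^{\tilde X,\sharp}_\bZ$ with the sheafification of the absolute Picard presheaf over a basis of contractible Stein neighborhoods (the paper records the latter in Remark \ref{r:absolute-relative}) --- so neither step is a gap. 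What your route buys: the uniqueness of the lift up to $\rho^*\Pic(X)$ and the paper's subsequent corollary (triviality whenever $H^2(X,\cO_X^\times)=0$) fall out of the same exact sequence for free. What it costs: as you anticipate, matching your explicit \v{C}ech model of the gerbe (including the bookkeeping of the bundles $M_{ij}$ on double overlaps) with the transgression class $d_2(\tilde\sigma)$ is the fiddly part, whereas the paper's direct stack-theoretic definition sidesteps that identification entirely; strictly speaking your five-term sequence already proves the theorem if one is content to define $\ccL_{\tilde\sigma}$ as the gerbe classified by $d_2(\tilde\sigma)$, so the concrete model is optional. Your treatment of the final ampleness claim (relative ampleness is local on the base, and $\tilde L|_{\rho^{-1}(U_i)}\cong L_i\otimes\rho^*N_i$ with $L_i$ relatively ample) matches the intended reading of the statement.
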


Finally,
returning to arbitrary  $X$, suppose that a point $x \in X$ has a neighborhood isomorphic to a neighborhood of a Nakajima quiver variety.
%(\'{e}tale locally a) quiver variety 
Then, under mild conditions, \cite[Theorem 1.2]{BCS22} implies that the Mori chamber structure of its crepant resolutions is given by a hyperplane arrangement coming from geometric invariant theory (GIT). Varieties that locally have this structure
%are formally (equivalently, analytically) locally quiver varieties 
include multiplicative quiver varieties and character varieties of Riemann surfaces \cite[Theorem 5.4]{KS20}, and more generally moduli of objects in 2-Calabi--Yau categories by Davison \cite[Theorem 5.11]{Davison21}, which include  Higgs bundle moduli spaces and moduli of sheaves on K3 or abelian surfaces.  In all of these cases, to classify crepant resolutions, we are reduced to determining the linear monodromies on the aforementioned hyperplane arrangements given by a set of generating exit paths.

Specializing Theorem \ref{t:constr} allows us to prove classification results in the following cases, constituting our third goal:
\begin{itemize}
    \item[(1)] varieties with each singularity locally having a unique projective crepant resolution,
    \item[(2)] symmetric powers of surfaces with du Val singularities,
    \item[(3)] certain multiplicative quiver varieties, and
    \item[(4)] finite symplectic quotients of symplectic tori.
\end{itemize}
We can relax (1) to also allow isolated singularities which may have multiple resolutions;  it remains true that locally projective crepant resolutions are classified by arbitrary choices of local projective crepant resolutions at the isolated singularities.  This includes the case where all singularities are either du Val or isolated, such as for canonical three-folds and symplectic singularities of dimension four.

Finally, in the main body of the paper, we work in the greater generality of partial crepant resolutions (not requiring that the source be smooth); note that these always exist, unlike smooth crepant resolutions.

The paper is structured as follows: 
Section \ref{s: general results} gives background results on stratified varieties and in particular establishes a large class of stratified spaces with the local product condition.
Section \ref{s: main results} contains the main results of the paper and useful corollaries.
Section \ref{s: examples} applies the main results to the examples listed above. 

\subsection{Conventions}\label{ss:conv}
We will be working with analytic varieties over the complex numbers. By a stratification, we mean a locally finite stratification by smooth, connected, locally closed subvarieties.

We denote the symmetric group on $n$ letters by $\fS_n$ and the cyclic group of order $n$ by $C_n$. For a finite set $S$ we denote its cardinality by $\#S$. We use bold to indicate categories such as $\bf{Sets}$ for the category of sets. %We use capital caligraphic letters $\cS$ for a stratification, $\cF$, $\cR$, $\cP$ for (presheaves), $\cU$ for an open cover, and $\cM$ for an unframed Nakajima quiver variety 

For a space $X$ we use $\tilde{X}$ to denote (the source of) a resolution, and $\overline{X}$ to denote its closure. Additionally, $X^{\sing}$ denotes its singular locus and $X^{\sm}$ its smooth locus. 

We refer to a not-necessarily isolated singularity as du Val if it is analytically locally a product of a du Val singularity with a polydisc. 

\subsection*{Acknowledgments}
We would like to thank Richard Thomas for his interest, discussions, and the observation that led to Remark \ref{r: richard}.
%and the useful observation that monodromy can only exchange resolutions whose sources are isomorphic varieties. 
Thanks to Paolo Cascini for useful discussions, comments on the appendix, and pointing us to \cite{Fujino22}.  We thank Osamu Fujino for answering our questions on his work on the analytic minimal model program and providing statements of  relevant consequences, and Shinnosuke Okawa for additional clarifications.  Thanks to Mirko Mauri for pointing out the example in Remark \ref{rem:example_globally_proj}, useful discussions, sharing a manuscript on terminalizations of analytically $\bQ$-factorial singularities, and carefully reading our manuscript.  We thank Pavel Safronov for pointing out the reference \cite{LNY23}. Thanks to Dmitry Kaledin, Mykola Matviichuk, Navid Nabijou, Brent Pym, Eric Rains, Michel Van den Bergh, and Michael Wemyss for stimulating conversations. Finally, the anonymous referee provided many useful suggestions and corrections.
%\dan{I put this in paragraph format (and added Michel because we spoke about the project a few times) but feel free to edit. I didn't know how to acknowledge Brent since I'm not sure what you asked him about.} 

The first-named author was supported by the Engineering and Physical Sciences Research Council (EPSRC) grant EP/S03062X/1 and subsequently by the European Research Council (ERC) grant ERC-2019-ADG. He additionally thanks the University of Birmingham, the Isaac Newton Institute, and Universiteit Hasselt where the work was carried out.

\section{Stratifications with product singularities} \label{s: general results}

Let $\mathcal{S}$ denote a stratification of $X$ by (Zariski) locally closed subvarieties. Let $S \in \mathcal{S}$ denote a stratum. 
\begin{defn}
A stratification, $\mathcal{S}$, of $X$ is \emph{locally finite} if each $x \in X$ lies in the closure of only finitely many strata.
\end{defn}
Throughout the paper we restrict to locally finite stratifications by smooth, connected, locally closed subvarieties, and simply refer to these as stratifications.   Note that, in the case $X$ is actually a complex algebraic variety (meaning, a reduced, separated scheme of finite type over $\bC$) and the stratification is algebraic, then the stratification must be finite.

We will be interested in stratifications for  which $X$ decomposes as a product along each stratum (Definition \ref{d:local product}). For such stratifications, $X$ is homogeneous along strata:

\begin{prop}\label{p:iso-germs}
Suppose that $X$ is locally a product along a connected, locally closed, smooth subvariety $S$. Then for any $s,s' \in S$, the germs of $X$ at $s$ and at $s'$ are isomorphic.
\end{prop}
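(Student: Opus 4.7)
The plan is to show that having a fixed isomorphism type of germ is an open condition on $S$, so that by connectedness of $S$ all germs agree. Concretely, define an equivalence relation on $S$ by $s \sim s'$ if the germs of $X$ at $s$ and at $s'$ are isomorphic as pointed (stratified) germs. I will show each equivalence class is open; since $S = \bigsqcup [\text{classes}]$ is a partition into open sets and $S$ is connected, there is exactly one class, which is the statement.

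The core step is to check that within a single local product neighborhood the germ is constant. Let $s \in S$ and choose a local product neighborhood $(U, F, \varphi)$ at $s$, so that $\varphi: (U \cap S) \times F \to U$ is a stratified isomorphism restricting to the inclusion on $(U \cap S) \times \{0\}$. For any $s' \in U \cap S$, the point $(s', 0)$ maps under $\varphi$ to $s'$, so the germ of $X$ at $s'$ is identified with the germ of $(U \cap S) \times F$ at $(s', 0)$. This germ factors as a product of the germ of $U \cap S$ at $s'$ with the germ of $F$ at $0$. Because $S$ is smooth and connected, every germ of $U \cap S$ at a point is isomorphic to a polydisc of dimension $\dim S$; in particular the germs at $s$ and at $s'$ are isomorphic. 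Multiplying by the common factor (germ of $F$ at $0$), the germs of $X$ at $s$ and at $s'$ are isomorphic, so $s \sim s'$. Thus the entire neighborhood $U \cap S$ of $s$ in $S$ lies in the equivalence class of $s$, showing each class is open.

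Connectedness of $S$ now forces the partition into equivalence classes to consist of a single class, and hence every two points $s, s' \in S$ have isomorphic germs in $X$. I do not expect any step here to present a real obstacle; the only minor point requiring care is that the local product isomorphism $\varphi$ must be used to transfer the germ-type statement through an honest pointed isomorphism rather than just a set-theoretic identification, which is why I invoke $\varphi(s', 0) = s'$ explicitly. The argument adapts verbatim to pointed stratified germs as long as one observes that $\varphi$ was required to be a stratified isomorphism.
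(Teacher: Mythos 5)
Your proof is correct and follows essentially the same route as the paper's: both show that the set of points with a fixed germ type is open using the local product structure (germs at $(u_1,0)$ and $(u_2,0)$ in $(U\cap S)\times F$ agree because $U\cap S$ is smooth, so its germs are all polydiscs), observe that the complement is likewise a union of open sets indexed by the remaining isomorphism classes, and conclude by connectedness. Your explicit phrasing via an equivalence relation and the identification $\varphi(s',0)=s'$ is just a slightly more detailed write-up of the paper's argument.
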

\begin{proof}
Let $T \subseteq S$ be the set of all points whose germ is isomorphic to the one at $s$. This is open by the local product condition: germs at two points $(u_1, 0), (u_2, 0) \in (U \cap S) \times F \cong U$ are isomorphic given that $U \cap S$ is smooth.  But now, the complement of $T$ is also a union of open subsets, one for each isomorphism class of germs.  Since $S$ is connected, and $T$ is nonempty, $T=S$.
\end{proof}

There are many nice examples of stratifications along which $X$ is locally a product. One of the main cases we will consider is that of Poisson varieties with finitely many symplectic leaves: Recall that a variety $X$ is \emph{Poisson} if $\cO_X$ has a Lie bracket $\{-,-\}: \cO_X \times \cO_X \to \cO_X$ which is a derivation in each component. To each local section $f \in \Gamma(U,\cO_X)$ we obtain a vector field $\xi_f$ given by $\xi_f(g) = \{f,g\}$. These vector fields form an integrable distribution (local analytic foliation) since $[\xi_f, \xi_g]=\xi_{\{f,g\}}$.  A \emph{symplectic leaf} $S$ of $X$ is a leaf of this foliation, i.e., a locally closed subvariety $S$ of $X$ such that $T_s S = \text{Span}(\xi_f|_s: f \in \Gamma(U,\cO_X), \text{ $U$ a neighborhood of $x$})$; we require that $S$ be a maximal connected such subvariety. This ensures uniqueness of a symplectic leaf $S$ through $s \in X$, if it exists.

By the Weinstein splitting theorem, if $X$ is Poisson and $S \subseteq X$ is a symplectic leaf, then $X$ decomposes locally as a product along $S$, see Appendix \ref{s:appendix_Weinstein}.  %\travis{[CHECK!!]}

As a consequence we have the following standard result:
\begin{prop}
    If $X$ is a Poisson variety with a (locally finite) stratification by symplectic leaves, then $X$ is locally a product along this stratification.
\end{prop}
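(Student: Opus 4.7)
The plan is to derive the proposition as a direct consequence of the Weinstein splitting theorem proved in Appendix \ref{s:appendix_Weinstein}, plus a short verification that the resulting local product decomposition respects the stratification by symplectic leaves.

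First, fix a stratum $S \in \cS$ and a point $s \in S$. Since $S$ is (by hypothesis) a symplectic leaf of $X$, the Weinstein splitting theorem produces an open neighborhood $U$ of $s$ in $X$, an open neighborhood $S_0$ of $s$ in $S$, a pointed transverse Poisson slice $(F,0)$, and a Poisson isomorphism $\varphi: S_0 \times F \to U$ sending $(s,0)$ to $s$ and restricting to the inclusion on $S_0 \times \{0\}$. Shrinking $U$ if necessary, we may assume $U \cap S = S_0$ and that $U$ meets only the finitely many strata of $\cS$ whose closures contain $s$ (using local finiteness).

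Next, I would check that $\varphi$ is a \emph{stratified} isomorphism with respect to the leaf stratifications on the source and target. On a product of Poisson varieties, Hamiltonian vector fields decompose as sums of Hamiltonian vector fields on each factor; since $S_0$ is symplectic (a single leaf), the symplectic leaves of $S_0 \times F$ are precisely the subsets of the form $S_0 \times L$, where $L$ ranges over symplectic leaves of $F$. Because $\varphi$ is Poisson, it maps these to symplectic leaves of $U$, that is, to connected components of intersections $U \cap S'$ for $S' \in \cS$. This matches up the stratification of $S_0 \times F$ (by the product decomposition just described, which we take as its definition) with that of $U$, giving the required stratified isomorphism. The basepoint condition is immediate from $\varphi(s,0) = s$ and $\varphi|_{S_0 \times \{0\}} = \id$.

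Finally, since $s \in S$ was arbitrary, Definition \ref{d:local product} is satisfied for $S$, and since $S \in \cS$ was arbitrary the conclusion follows. The only nontrivial point is the compatibility with the stratification, which amounts to the elementary observation about symplectic leaves of a product recorded above; the genuinely substantive input, namely the analytic Weinstein splitting for singular Poisson varieties, is deferred to Appendix \ref{s:appendix_Weinstein}.
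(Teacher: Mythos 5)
Your proof is correct and takes essentially the same route as the paper, which presents the proposition as an immediate consequence of the Weinstein splitting theorem proved in the appendix; the only thing you add is the (routine but worth recording) verification that the splitting is a stratified isomorphism, via the standard fact that symplectic leaves of a product Poisson variety are products of leaves. No gaps.
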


%, and this is actually finite when $X$ is algebraic. 
A symplectic singularity, following \cite{Beauville}, is a normal variety $X$ such that:
\begin{itemize}
    \item the smooth locus $X^{\sm}$ of $X$ has a symplectic form $\omega$;
    \item for some (equivalently, every) projective resolution of singularities $\rho: \tilde X \to X$, the pullback $\rho^* \omega$ extends to a regular two-form on $\tilde X$.
\end{itemize}
%In fact, 
%%for \cite[Theorem 2.5]{Kaledin06},
%one can drop the assumption that $X$ is normal: if $X'$ is Poisson, then it satisfies the first two conditions if and only if the normalization $X \to X'$ satisfies all three conditions, since the normalization map induces a bijection between resolutions of singularities of $X$ and of $X'$.
By \cite[Theorem 2.5]{Kaledin06}, every algebraic symplectic singularity has a finite stratification by symplectic leaves.

\begin{prop} \label{prop:stratification_X/G}
Let $X$ be a smooth variety and $G$ a finite group of automorphisms. Then $X/G$ has a stratification along which $X$ is locally a product.
\end{prop}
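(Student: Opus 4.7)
The plan is to stratify $X/G$ by the $G$-conjugacy class of the stabilizer group (the Luna stratification), and to verify the local product property via analytic linearization of the group action at each point.

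First, for $x \in X$ let $G_x \leq G$ denote its stabilizer. I would define the strata of $X/G$ to be the connected components of the locally closed subsets $\{[y] \in X/G : G_y \sim_G H\}$, indexed by $G$-conjugacy classes $(H)$ of subgroups of $G$. Local closedness follows from upper semi-continuity of $|G_y|$ together with the observation that the conjugacy class of $G_y$ is locally constant on $\{|G_y| = |H|\}$; smoothness and connectedness of strata will come from the local model below. Since $G$ has only finitely many subgroups, this stratification is finite.

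Next, fix $\bar x \in X/G$ with lift $x \in X$ and set $H := G_x$. I would use the analytic linearization theorem for finite group actions on complex manifolds (proved by averaging an arbitrary local biholomorphism over $H$) to obtain an $H$-stable open analytic neighborhood $U_x$ of $x$ and an $H$-equivariant biholomorphism $U_x \xrightarrow{\sim} W \subseteq T_x X$ sending $x$ to $0$. Shrinking $U_x$ so that the translates $\{g U_x : g \in G/H\}$ are pairwise disjoint, the quotient $U_x/H$ becomes an analytic open neighborhood of $\bar x$ in $X/G$. Since $H$ is finite and we are in characteristic zero, I decompose $T_x X = (T_x X)^H \oplus V$ as $H$-modules with $V^H = 0$. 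Shrinking $W$ to a product $W_1 \times W_2 \subseteq (T_x X)^H \oplus V$ then yields $U_x/H \cong W_1 \times (W_2/H)$. Setting $F := W_2/H$ (with its own Luna stratification) gives the candidate local product decomposition, with basepoint the image of $0$.

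The key identification, which I expect to be the main obstacle, is verifying that the stratum through $\bar x$ corresponds precisely to $W_1 \times \{0\}$ in this model. The observation is that for $y' \in U_x$, any $g \in G$ with $g y' = y'$ satisfies $g U_x \cap U_x \neq \emptyset$, which forces $g \in H$ by the disjointness of translates; hence $G_{y'} \leq H$, and by a cardinality argument $G_{y'} \sim_G H$ if and only if $G_{y'} = H$. In the linearization, the locus $\{v \in T_x X : H_v = H\}$ is exactly $(T_x X)^H$, so the stratum through $\bar x$ locally equals $W_1 \times \{0\}$. Applying the same stabilizer analysis to the other strata of $F$ shows that the induced stratification on $U_x/H$ agrees with the product of the trivial stratification on $W_1$ with the Luna stratification on $F$, verifying the stratified product condition of Definition \ref{d:local product}.
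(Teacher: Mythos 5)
Your proposal is correct and follows essentially the same route as the paper's proof: stratify by (conjugacy classes of) stabilizers, linearize the finite stabilizer action at a point, split $T_xX = (T_xX)^H \oplus V$, and identify the local quotient with $(T_xX)^H \times (V/H)$ with the stratum given by the fixed subspace. Your write-up simply supplies more of the routine details (disjointness of translates, the identification of the stratum with $W_1\times\{0\}$, and compatibility of stratifications) that the paper leaves implicit.
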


\begin{proof}
Stratify $X$ by stabilizer subgroups.  If $S$ is a stratum corresponding to a subgroup $H < G$, then at a point $s \in S$, the action of $H$ at a germ of $s$ linearizes.  
%Let $H' < H$ be the subgroup of $H$ generated by complex reflections in $H$.  Then the quotient of the germ by $H'$ is still smooth.  We now reduce to a smooth germ with action of $H/H'$, which has no complex reflections.  
The quotient germ is equivalent to a quotient $V/K$ for $V$ a vector space and $K< \GL(V)$ a finite group of linear automorphisms. We can write $V = V^K \oplus V'$ for some complementary $K$-representation $V'$ of $V^K$.  Then $V/K \cong V^K \times V'/K$.   The stratum where the stabilizer is $K$ is $V^K$, so we indeed get a product along this. %The result now follows from Lemma \ref{lem: product germ}.
\end{proof}

\begin{prop}\label{p:Poisson-finite} 
Let $X$ be a Poisson variety and $G$ a finite group of Poisson automorphisms.  Let $S$ be a symplectic leaf of $X$. Then the image of $S$ in $X/G$ has a %finite 
stratification by symplectic leaves.
\end{prop}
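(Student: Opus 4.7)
My plan is to reduce the proposition to Proposition~\ref{prop:stratification_X/G} by restricting attention to a single leaf. Let $G_S := \{g \in G : g(S) = S\}$ denote the setwise stabilizer of $S$ in $G$. Since $G$ acts by Poisson automorphisms it permutes the symplectic leaves, so two points of $S$ lie in the same $G$-orbit if and only if they lie in the same $G_S$-orbit. Hence the inclusion $S \hookrightarrow X$ descends to an injection $S/G_S \hookrightarrow X/G$ whose image is precisely $\pi(S)$. Because $G_S$ preserves the leaf $S$ and acts on $X$ Poisson, its restriction to the symplectic manifold $S$ is by symplectomorphisms.

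Since $S$ is smooth, I would then apply Proposition~\ref{prop:stratification_X/G} to the finite group $G_S$ acting on $S$. This yields a stratification of $S/G_S \cong \pi(S)$ by smooth locally closed subvarieties, along which the quotient is locally a product. Concretely, the strata are connected components of the orbit-type loci $\{[s] \in S/G_S : (G_S)_s \text{ is conjugate to } K\}$ as $K$ ranges over subgroups of $G_S$; finiteness is immediate from finiteness of $G_S$. To conclude, I would show that each such stratum is (a connected component of) a symplectic leaf of $X/G$. The local input is this: at $s \in S$ with stabilizer $K = (G_S)_s$, the induced linear action of $K$ on $T_s S$ is symplectic, so there is an equivariant decomposition $T_s S = T_s(S^K) \oplus (T_s(S^K))^\perp$ into symplectic $K$-subrepresentations, with $K$ acting trivially on the first factor and fixing only the origin in the complement. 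The local product of Proposition~\ref{prop:stratification_X/G} then refines to a product of Poisson varieties $S^K \times V'/K$ with $V' = (T_s(S^K))^\perp$, so Hamiltonian vector fields at $\pi(s)$ generated by $K$-invariant functions lie entirely in the $T_s(S^K)$ direction, exhibiting the orbit-type stratum through $\pi(s)$ as a symplectic leaf of $X/G$ near $\pi(s)$.

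The main obstacle I expect is the final step, namely checking that the orbit-type stratification is exactly the symplectic leaf stratification and not a proper refinement. This reduces to the linear algebraic assertion that, for a finite group $K$ acting by symplectomorphisms on a symplectic vector space $V$, the symplectic leaf of $V/K$ through the origin coincides, on a neighborhood, with $V^K$. This follows from the equivariant symplectic splitting $V = V^K \oplus (V^K)^\perp$ above, together with the observation that the Poisson tensor on $V/K$ at the origin is computed from the nondegenerate restriction of $\omega$ to $V^K$ while the transverse Poisson slice $V'/K$ has a zero-dimensional leaf at its base point. Once this is in place, passing to connected components yields the desired stratification of $\pi(S)$ by symplectic leaves of $X/G$.
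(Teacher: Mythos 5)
Your proposal is correct and follows essentially the same route as the paper: stratify the image of $S$ by stabilizer (orbit-type), linearize at a point, and use the key fact that the Hamiltonian vector fields at the origin of $V/K$ span $V^K$, so that the transverse slice $V'/K$ contributes a zero-dimensional leaf. The paper's proof is just a one-line reference back to Proposition~\ref{prop:stratification_X/G} with exactly this observation; your reduction to the setwise stabilizer $G_S$ acting symplectically on $S$ is a harmless elaboration of the same argument.
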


\begin{proof}
The proof follows in the same way as the previous one, noting now that in the case $K$ acts symplectically on $V$, then the Hamiltonian vector fields at the origin of $V/K$ span the subspace $V^K / K$.
\end{proof}

\begin{cor}\label{cor:poisson-fq}
    If $X$ is a Poisson variety with a %finite 
    stratification by symplectic leaves, and $G$ is a finite group of Poisson automorphisms, then $X/G$ also has a %finite 
    stratification by symplectic leaves. 
\end{cor}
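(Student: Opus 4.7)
The plan is to reduce the corollary to Proposition \ref{p:Poisson-finite} by showing that $X/G$ can be written as a disjoint union of pieces of the form $\pi(S)$, where $\pi: X \to X/G$ is the quotient map and $S$ ranges over $G$-orbit representatives of symplectic leaves of $X$. Since $G$ acts by Poisson automorphisms, it permutes the symplectic leaves of $X$: if $S$ is a leaf and $g \in G$, then $g(S)$ is again an integral submanifold of the Hamiltonian distribution, and maximality with connectedness forces $g(S)$ to be a symplectic leaf. Consequently, two leaves $S$ and $S'$ of $X$ either lie in the same $G$-orbit (in which case $\pi(S) = \pi(S')$) or satisfy $\pi(S) \cap \pi(S') = \emptyset$. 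Choosing one leaf from each $G$-orbit, we therefore get a disjoint decomposition $X/G = \bigsqcup_{[S]} \pi(S)$.

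Next, I would apply Proposition \ref{p:Poisson-finite} to each chosen $S$ to obtain a stratification of $\pi(S)$ by symplectic leaves of $X/G$, and take the union over all $G$-orbits. The resulting collection $\mathcal{S}_{X/G}$ of subvarieties of $X/G$ is then a decomposition of $X/G$ into smooth, connected, locally closed subvarieties, each of which is a symplectic leaf.

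It then remains to verify local finiteness. Given $\bar x \in X/G$, its preimage $\pi^{-1}(\bar x)$ is a finite set $\{x_1, \dots, x_k\}$, since $G$ is finite. By the assumed local finiteness of the stratification of $X$, each $x_j$ lies in the closure of only finitely many symplectic leaves of $X$. The closure operator commutes with the finite map $\pi$, so any stratum of $\mathcal{S}_{X/G}$ whose closure contains $\bar x$ comes from a leaf of $X$ whose closure contains some $x_j$; there are only finitely many such leaves, and each contributes only finitely many strata to $\mathcal{S}_{X/G}$ (by the finiteness of the stratifications produced in the proof of Proposition \ref{p:Poisson-finite}, which are built out of a finite linear quotient construction). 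Hence $\mathcal{S}_{X/G}$ is locally finite.

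The main (mild) obstacle I anticipate is checking that the pieces $\pi(S)$ assembled from Proposition \ref{p:Poisson-finite} really do exhaust $X/G$ without overlap and do produce symplectic leaves of $X/G$ (rather than merely symplectic-leaf-like strata of $\pi(S)$ viewed intrinsically); this is essentially a matter of unwinding the local description $V/K \cong V^K \times V'/K$ used in the proof of Proposition \ref{p:Poisson-finite} and observing that the Poisson structure on $X/G$ near $\bar x$ is precisely the induced one, so the leaves of the two descriptions coincide.
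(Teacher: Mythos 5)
Your proposal is correct and matches the paper's (implicit) argument: the paper treats this corollary as an immediate consequence of Proposition \ref{p:Poisson-finite}, obtained by applying that proposition to the image of each symplectic leaf of $X$, exactly as you do. Your additional bookkeeping — that $G$ permutes leaves so the images $\pi(S)$ partition $X/G$, and that local finiteness descends along the finite quotient map — is the right way to spell out what the paper leaves unsaid.
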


In particular, this includes finite quotients of algebraic symplectic singularities, but actually these are themselves symplectic singularities by \cite[Proposition 2.4]{Beauville}.

Given an algebraic  variety $X$ with an action by a reductive group $G$ and a $G$-equivariant ample line bundle $L$, let $X^L \subseteq X$ be the semistable locus. Recall that this open subset consists of points $x \in X$ such that there is a global $G$-equivariant section of $L^{\otimes m}$ for some $m \geq 1$ which does not vanish at $x$. Given a subset $Z \subseteq X$ we will also use the notation $Z^L := X^L \cap Z$. Then $X/\!/_L G := X^L /\!/ G$.
\begin{prop} 
Let $X$ be a smooth algebraic variety and $G \times X \to X$ an action of a reductive group $G$. Assume that $X$  is equipped with a $G$-equivariant ample line bundle $L$ (one can take $L$ trivial if $X$ is affine).  
Then $X/\!/_L G$ has a finite 
stratification along which it is locally a product.
%Let $S \subseteq X$ be a symplectic leaf.  Then the image of $S \cap \mu^{-1}(0)$ in $X/\!/\!/_L G$ has a finite stratification into symplectic leaves.
\end{prop}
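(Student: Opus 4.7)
The plan is to invoke Luna's étale slice theorem to reduce the local structure of $Y := X/\!/_L G$ at any point to a linear quotient $V/\!/H$, and to take for the stratification (a connected refinement of) the Luna stratification by conjugacy class of the stabilizer of the unique closed orbit in the semistable fiber. The product decomposition will come from the $H$-invariant splitting $V = V^H \oplus V'$ afforded by reductivity of $H$.

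In detail, first I would fix $y \in Y$ and lift it to a polystable $x \in X^L$ on the unique closed $G$-orbit in $\pi^{-1}(y)$, where $\pi \colon X^L \to Y$ is the GIT quotient map; the stabilizer $H := G_x$ is then reductive. Next I would apply Luna's slice theorem to produce an $H$-invariant smooth locally closed subvariety $S \subseteq X$ through $x$, étale-locally modeled on the $H$-representation $V := T_x X / T_x(Gx)$, with the induced map $S /\!/ H \to Y$ étale onto an open neighborhood of $y$. In the analytic topology this becomes a local isomorphism, so it suffices to establish the product structure in the linear model $V/\!/H$.

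Using reductivity of $H$, I would split $V = V^H \oplus V'$ as $H$-representations. Since $\cO(V)^H = \cO(V^H) \otimes \cO(V')^H$, this gives
\[
V /\!/ H \;\cong\; V^H \times (V'/\!/H).
\]
The Luna stratum through $[0]$ is exactly $V^H \times \{[0]\}$: for a polystable $v = v^H + v' \in V^H \oplus V'$ the stabilizer $H_v = H_{v'}$ equals $H$ iff $v' = 0$. Transferring through the étale slice model produces the local product neighborhood of Definition \ref{d:local product}, with slice $F := V'/\!/H$ and stratum corresponding to $V^H$.

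For finiteness, I would use that on the finite-type variety $Y$ only finitely many conjugacy classes of reductive stabilizers can arise, so the loci $Y_{(H)}$ of points whose closed-orbit stabilizer lies in conjugacy class $(H)$ form a finite collection of Zariski locally closed smooth subvarieties of $Y$; refining by connected components yields a finite stratification by smooth, connected, locally closed subvarieties. The main obstacle is the bookkeeping to verify that the Luna stratum $V^H$ in the slice model really corresponds to $Y_{(H)}$ locally at $y$, so that the product decomposition in $V/\!/H$ restricts to the inclusion on $(U \cap Y_{(H)}) \times \{0\}$ as demanded by Definition \ref{d:local product}; this reduces to the fact that the étale slice morphism $S /\!/ H \to Y$ preserves closed-orbit stabilizers up to conjugacy, which is part of the standard formulation of Luna's theorem.
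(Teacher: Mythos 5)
Your proposal is correct and follows essentially the same route as the paper: Luna's étale slice theorem, the $H$-invariant splitting $V = V^H \oplus V'$ giving $V/\!/H \cong V^H \times (V'/\!/H)$, and finiteness of the conjugacy classes of reductive stabilizers to get a finite stratification. The only cosmetic difference is that the paper first reduces to $G$ connected via the finite-group case, which your argument does not need.
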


\begin{proof}
It suffices to prove the result for $G$ connected, since by Proposition \ref{prop:stratification_X/G}, we can take a further quotient by any finite group. Thanks to the GIT construction, every point in $Y:= X/\!/_L G$ is the image of a closed $G$-orbit in the semistable locus of $X$, whose stabilizer is a reductive subgroup. By Luna's slice theorem, the number of conjugacy classes of such reductive subgroups is finite, which gives a finite 
stratification of $Y$ by connected components of the image of points with fixed conjugacy class of stabilizer.  

Let $S$ be such a stratum corresponding to the conjugacy class $[H]$ for $H \leq G$ reductive.  We claim that a local slice to $S$ in $Y$ is given by $V/\!/H$, for $V \subseteq X$ a slice guaranteed by Luna's theorem at a point $x \in X$ with closed $G$-orbit in the semistable locus of $X$ which maps to $s$. The action of $H$ fixes the origin $0 \in V$, and we can linearize the action so that $H < \GL(V)$. We then write $V = V^H \oplus V'$ for some $H$-subrepresentation $V'$, with $V^H$ locally giving the stratum $S$.  This implies that the germ of $Y$ at $s$ is indeed a product along $S$, as desired.
\end{proof}

\begin{prop} 
Let $X$ be an algebraic Poisson variety and $G \times X \to X$ a Hamiltonian action of a reductive group $G$. Assume that $X$  is equipped with a $G$-equivariant ample line bundle $L$ (one can take $L$ trivial if $X$ is affine).  
Let $S \subseteq X$ be an algebraic symplectic leaf.  Let $Y$ be  the reduced subscheme of $X/\!/\!/_L G$. Then the image in $Y$ of the closed $G$-orbits in $S \cap \mu^{-1}(0)^L$  are contained in finitely many algebraic symplectic leaves. 
%has a finite  stratification into symplectic leaves.
\end{prop}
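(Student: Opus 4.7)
The plan is to mimic the proof of the preceding proposition, replacing Luna's étale slice theorem by the algebraic Marle--Guillemin--Sternberg symplectic slice theorem (for affine Hamiltonian reductive actions, this is due to Losev; alternatively one works formally and appeals to Artin approximation).

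For a closed $G$-orbit $G \cdot x$ in $S \cap \mu^{-1}(0)^L$ with reductive stabilizer $H \leq G$, the symplectic slice theorem produces an equivariant étale neighborhood of $G \cdot x$ in $X$ of the form $G \times_H (\fh^{\perp} \oplus V)$, where $V$ is a symplectic $H$-representation and the moment map is in standard quadratic form on the slice. A neighborhood of the image of $G \cdot x$ in $Y$ is then isomorphic to the algebraic symplectic reduction $V /\!/\!/ H$. Decomposing $V = V^H \oplus V'$ as a direct sum of $H$-representations, both summands are symplectic (since $H$-invariance of $\omega$ forces $\omega(V^H, V') = 0$ after averaging, and $\omega$ is nondegenerate), so the Poisson structure on $V /\!/\!/ H$ factors as the product of the symplectic form on $V^H$ with the Poisson structure on $V'/\!/\!/ H$. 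Here $V^H$ serves as the local model for the stratum of $Y$ with stabilizer conjugacy class $[H]$, just as in the previous proposition.

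By Luna's theorem and reductivity of $G$, only finitely many conjugacy classes of stabilizers occur among closed $G$-orbits in $X^L$, so $Y$ carries a finite algebraic stratification by stabilizer type. The slice description above shows that $Y$ is locally a symplectic product along each stratum; in particular each stratum carries a symplectic structure, and each connected component (of which there are finitely many in the algebraic setting) is contained in a single symplectic leaf. The image in $Y$ of closed $G$-orbits in $S \cap \mu^{-1}(0)^L$ therefore lies in at most finitely many algebraic symplectic leaves.

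The main obstacle will be securing the algebraic symplectic slice theorem in the present generality, since $X$ is not assumed affine; Luna provides only an étale slice in the algebraic category, and one must additionally arrange that it be the symplectic slice (equivalently, that the restricted moment map has standard form). Once this local description is established, the finiteness follows immediately from the finiteness of stabilizer conjugacy classes and the finiteness of connected components of algebraic varieties.
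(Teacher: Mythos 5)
Your proposal has the same architecture as the paper's proof: stratify the closed orbits in $S \cap \mu^{-1}(0)^L$ by conjugacy class of stabilizer, show that each connected component of a stratum maps into a single symplectic leaf of $Y$ via a local statement at a closed orbit, and deduce finiteness from Luna's theorem (finitely many stabilizer classes, finitely many components). The difference is in how the key local statement is secured, and this is where your argument has a genuine gap. You invoke the Marle--Guillemin--Sternberg symplectic slice model $G \times_H (\fh^{\perp} \oplus V)$ with the moment map in standard quadratic form, but that model presupposes that $X$ is a \emph{smooth symplectic} variety. Here $X$ is an arbitrary (possibly singular) algebraic Poisson variety and $S$ is merely one symplectic leaf; applying the slice model to the leaf $S$ alone is not enough either, because the symplectic leaves of $Y$ are governed by the full Poisson structure induced from $\cO_X$, not just by functions restricted to $S$. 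You flag the non-affineness of $X$ as the obstacle, but the more serious issue is the non-smoothness/non-symplecticity. The paper does not construct a slice model at all: it takes the connected component $Z$ of the locus with stabilizer conjugate to $H$ inside $S \cap \mu^{-1}(0)^L$, uses Luna only to see that $Z$ consists of closed orbits with locally closed image, and then cites Losev (Section 2.3 of \cite{Losev17}) for precisely the statement you are missing, namely that the Hamiltonian vector fields of $\cO_Y$ act transitively on the tangent spaces along the image of $Z$, so that this image lies in a single leaf.

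Two smaller points. First, the paper begins by reducing to $G$ connected (quotienting afterwards by the finite component group and invoking the finite-group version of the statement); your argument skips this reduction, and some of your steps (e.g., that the stabilizer-type strata interact well with the leaf $S$) are cleaner once $G$ is connected and hence preserves $S$. Second, your decomposition $V = V^H \oplus V'$ into symplectic summands and the resulting product structure on $V/\!/\!/H$ is correct in the linear setting and mirrors the paper's earlier propositions, so once the local normal form is available your conclusion would follow; the work is entirely in justifying that normal form, which is what the Losev reference supplies.
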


\begin{proof}  
The proof begins exactly as before: we reduce to the case that $G$ is connected (now using Proposition \ref{p:Poisson-finite}).
%, and obtain a finite 
%stratification of $Y$. We pull this back to a finite 
%stratification of the semistable locus $\mu^{-1}(0)^L \subseteq X$.  Now, 
Since $G$ is connected, it preserves the symplectic leaf $S$.
%, and the image of the reduced subscheme of $\overline{S} \cap \mu^{-1}(0)^L$ is a closed Poisson subvariety of $Y$, 
%which is locally the Hamiltonian reduction of $\overline{S}$ by $G$.
Consider a closed $G$-orbit, $G \cdot s \subseteq S \cap \mu^{-1}(0)^L$.
%, where $\mu^{-1}(0)^L$ is the semistable locus of $\mu^{-1}(0)$. 
Let $H\leq G$ be the stabilizer of $s$, which is reductive. Let $Z \subseteq S \cap \mu^{-1}(0)^L$ be the connected component of the locus with stabilizer conjugate to $H$ which contains $s$. The variety $Z$ is locally closed and $G$ invariant. Thanks to Luna's slice theorem, it consists only of closed $G$-orbits, and its image is a locally closed subvariety of $Y$.
 Applying \cite[Section 2.3]{Losev17}, we see that the image of $Z$ lies in a single symplectic leaf of $Y$, that is, the Hamiltonian vector fields act transitively on the tangent spaces of every point in the image.
  Finally, by Luna's slice theorem, we see that there can only be finitely many such subvarieties $Z$, since they are the connected components of loci labeled by conjugacy classes of reductive subgroups of $H$ occuring as stabilizers of points of $\mu^{-1}(0)^L$. 
  \end{proof}

\begin{cor}
If $X$ is an algebraic Poisson variety with a finite 
stratification by symplectic leaves,  $G$ is a reductive group acting Hamiltonianly on $X$, and $L$ is a $G$-equivariant ample line bundle on $X$, then the reduced subscheme of $X/\!/\!/_L G$ also has a finite 
stratification by symplectic leaves.
\end{cor}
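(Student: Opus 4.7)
The plan is to bootstrap from the preceding proposition, which controls the image in $Y := X/\!/\!/_L G$ of closed orbits in a \emph{single} symplectic leaf of $X$, and combine it with the assumed finiteness of the stratification of $X$.

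First, I would reduce to the case that $G$ is connected, exactly as in the proof of the preceding proposition: given the result for $X/\!/\!/_L G^\circ$, applying Proposition \ref{p:Poisson-finite} (equivalently, Corollary \ref{cor:poisson-fq}) to the induced action of the finite group $G/G^\circ$ yields the result for the full quotient, preserving finiteness of the leaf count. Assume henceforth that $G$ is connected, so $G$ preserves each symplectic leaf of $X$. By hypothesis write $X = \bigsqcup_{i=1}^k S_i$ for the finite decomposition into symplectic leaves, which gives
$$\mu^{-1}(0)^L = \bigsqcup_{i=1}^k \bigl(S_i \cap \mu^{-1}(0)^L \bigr).$$

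Next, let $Y_i \subseteq Y$ denote the image of the closed $G$-orbits contained in $S_i \cap \mu^{-1}(0)^L$. By GIT, every point of $Y$ is the image of a unique closed $G$-orbit in the semistable locus, and that orbit lies in exactly one leaf $S_i$; hence $Y = \bigsqcup_i Y_i$ as sets. The preceding proposition asserts that each $Y_i$ is contained in finitely many symplectic leaves of $Y$. Taking the union over $i$ produces a finite collection of symplectic leaves of $Y$ whose union is all of $Y$; since symplectic leaves partition $Y$, this collection must consist of \emph{all} the leaves of $Y$, which yields the desired finite stratification.

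The main conceptual obstacle, that images of closed orbits from a single $S_i$ might a priori distribute themselves over infinitely many leaves of $Y$, is already resolved inside the preceding proposition, via Luna's slice theorem and the finiteness of conjugacy classes of reductive stabilizers arising on $\mu^{-1}(0)^L$. Given that input, the corollary reduces to bookkeeping across the finite stratification of $X$, together with the straightforward reduction to connected $G$ described above.
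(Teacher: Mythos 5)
Your proposal is correct and matches the paper's (implicit) argument: the paper treats this corollary as an immediate consequence of the preceding proposition, and your write-up just makes the bookkeeping explicit --- every point of $Y$ comes from a closed orbit in some leaf $S_i$, each $S_i$ contributes only finitely many leaves of $Y$ by the proposition, and disjointness of leaves then forces the resulting finite collection to be the full leaf decomposition. The only cosmetic difference is that your reduction to connected $G$ is redundant at this stage, since the proposition is already stated (and proved) for arbitrary reductive $G$.
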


In particular, we could take $X$ to be a symplectic singularity in the corollary.  We remark that, unlike in the case of finite groups, the quotient $X/\!/\!/_L G$ need not itself be a symplectic singularity. %\travis{Add example, e.g., of a non-normal or even nonreduced quotient.}

Summarizing, the following classes of varieties have a %finite 
stratification along which $X$ is locally a product:
\begin{itemize}
    \item[(1)] any Poisson variety with finitely many symplectic leaves;
    %(respecting the Poisson structure); 
    %\item[(2)] any finite or GIT quotient of a variety with finitely many symplectic leaves (this reduces to (1), ; and
    \item[(2)] any finite or GIT quotient of a smooth variety.
\end{itemize} 
We also saw that the varieties in (1) are closed under finite or GIT Hamiltonian reductions, and that there the product decompositions are products of Poisson varieties.
%\travis{It's interesting, it seems like this starts to produce an analogy between smooth varieties and Poisson varieties with finitely many symplectic leaves...}

Additionally, threefolds with canonical singularities $X$ admit a stratification along which $X$ is locally a product by \cite[Corollary 1.14]{Reid-c3f}.
%\travis{Add: cDV singularities, from Reid's pagoda paper...}

\section{Constructibility} \label{s: main results}

The first purpose of this section is to prove Theorem \ref{t:constr}: the restriction of the sheaf $\ccR$ (defined below)
%, $\widetilde{\ccR}$ 
to each stratum $S$ is locally constant. We will then discuss in more detail the consequent description of isomorphism classes of locally projective crepant resolutions.
%and this can be used . 
%The latter condition is well known to be equivalent to forming a functor from the fundamental groupoid of $S$ to sets.  In other words, we only need to show that $\ccR$ arises from a functor $F: \Ex'(X,\cS) \to \bf{Sets}$, where $\Ex'(X,\cS)$ is the subcategory of $\Ex(X,\cS)$ whose morphisms are restricted to homotopy classes of paths on a single stratum, i.e., the invertible morphisms in $\Ex(X,\cS)$.  This is what we will show here. 
Moreover, we will relax the assumption that (the source of) a resolution is smooth, and work with partial resolutions.

\begin{defn}\label{d:ccr}
For an open set $U \subset X$, let $\ccR(U)$ denote the set of isomorphism classes of locally projective, \emph{partial} crepant resolutions of $U$ that are locally dominated by  projective terminal crepant resolutions. 
%Similarly, let $\widetilde{\ccR}_U$ denote the set of isomorphism classes of pairs of such a resolution with a relatively ample line bundle. 
\end{defn}
Here the domination condition means that either $\tilde U \in \ccR(U)$ has terminal singularities, or else there is an open covering $U_i$ of $U$ for which the restrictions $\tilde U_i$ admit a further projective partial crepant resolution which does have terminal singularities.
Note that, under mild conditions on the singularities, \cite{BCHM10} ensures that the domination condition holds,
%is automatic, 
see Remark \ref{r:bchm}.

\subsection{Proof of Theorem \ref{t:constr}: Picard groups and product decompositions}\label{ss:pic-prod}
In this section we will prove Theorem \ref{t:constr} in the more general context of partial crepant resolutions (Definition \ref{d:ccr}).

In order to prove that $\ccR$ is constructible,
%construct parallel transportation for local crepant resolutions and (relatively ample) line bundles, 
we need to study the Picard group of the local crepant resolutions using a bit of birational geometry. An important consequence is that, if a single local projective crepant resolution (or minimal model) can be obtained by resolving the slice, then \emph{all} local projective crepant resolutions (or minimal models, or partial crepant resolutions satisfying a technical condition) are obtained this way. 

\begin{defn} \label{d: prmm}
A \emph{projective (respectively locally projective) relative minimal model}\footnote{This is also called a projective (resp. locally projective) $\bQ$-factorial terminalization.} of $U$ is a projective (resp. locally projective) crepant morphism $\tilde U \to U$ with $\tilde U$ having analytically locally $\bQ$-factorial, terminal singularities. If not specified, we assume projective.
\end{defn}
Here by $\bQ$-factorial we mean that every analytic Weil divisor in $\tilde U$ has a multiple which is Cartier (hence defines a line bundle).  

Recall that our assumption on each stratum $S$ is that every point $s \in S$ has (a) a neighborhood $U$, (b) a pointed stratified variety $F \ni 0$, and (c) a stratified isomorphism $\varphi: (U \cap S) \times F \rightarrow U$. We write this data as a triple $(U, F, \varphi)$. We will restrict to a setting where the restriction map from (projective partial crepant) resolutions of $U$ to $F$ is an isomorphism. For this we will need the following key property of $F$:

\vskip 5 pt
\centerline{\emph{$F$ admits a projective relative minimal model $\tilde F \to F$ with $H^1(\tilde F, \cO)=0$. $\quad (*)$}}

\vskip 5 pt
Observe that, for $F$ Stein, the property (*) implies the same property for any pointed Stein open subset of $F$, because the restriction of a projective relative minimal model is still one, and 
%If we restrict to $F$ Stein,  then this property is inherited by 
%is a local condition, because relative minimal models are local, and if $F$ is Stein then 
$H^1(\tilde F, \cO)=0$ is
equivalent to $R^1 \rho_* \cO_{\tilde F} = 0$ for $\rho: \tilde F \to F$, so this vanishing is also inherited for Stein open subsets. So we can think of the condition (*) as a condition on the germ of a base point of each stratum.
%As a result, if every point admits some product neighborhood satisfying (*), then any open covering by product neighborhoods can be refined to one satisfying (*).

In many situations, we will be given $\tilde F$, so that  $(*)$ holds. In particular, it holds whenever $F$ locally admits a projective crepant resolution, and this is true in all of our main examples. But  
let us take a moment to explain under what conditions $(*)$ is guaranteed to hold, according to the minimal model program.

\begin{defn}
If a variety is isomorphic to an analytic open subset of an (affine) algebraic variety, we call it ``of (affine) algebraic origin''.  A variety of algebraic origin is covered by varieties of affine algebraic origin.
\end{defn}
\begin{rem}\label{r:bchm}
    By \cite[Corollary 1.4.3, Corollary 1.3.2]{BCHM10}, when $F$ has affine algebraic origin, then there exists an algebraic relative minimal model 
    %condition $(*)$ holds 
    whenever $F$ has only
%Kawamata log terminal 
canonical singularities (note that this is a biconditional, since the existence of a partial crepant resolution with terminal singularities shows that the original singularities were canonical). In fact, they also show that in this case  $\tilde F$ is a relative Mori dream space over $F$, hence $F$ has only finitely many partial projective crepant resolutions dominated by terminal ones. If we assume that $X$ itself has canonical singularities
%Kawamata log terminal singularities 
(e.g., if $X$ admits a smooth crepant resolution, or if $X$ has symplectic singularities), then the same holds for $F$, and we then only need to check the algebraicity of $F$ to apply \emph{op.~cit.}   We then obtain an algebraic minimal model. To be an analytic minimal model we need to check if it is analytically $\bQ$-factorial.  This could fail in general, but will hold in special cases. For example, if $X$ is a cone, we can choose $F$ to be a cone, and then all of the analytic Weil divisors through the cone point are linearly equivalent in a small enough neighborhood to an algebraic one, and hence all of the analytic Weil divisors in $\tilde F$ are linearly equivalent in the preimage of a neighborhood of $0$ to an algebraic one (obtained from strict transforms of divisors on $F$ and exceptional divisors).

Moreover, applying \emph{op.~cit.}~to any partial projective crepant resolution of $F$, we get that it is always dominated by a terminal one (in fact an algebraic minimal model).  So the condition of partial projective crepant resolutions being dominated by terminal ones is in fact redundant in this case.
%Note that the condition on having only Kawamata log terminal singularities is local and equivalent to the same condition for $X$ itself along the stratum.
%Also, symplectic singularities are rational Gorenstein, which are Kawamata log terminal. According to \cite[Conjecture 1.8]{KaledinSurvey}, slices $F$ to strata of symplectic singularities should always be conical, and in particular of affine algebraic origin.  This is known for many classes of symplectic singularities, including nilpotent orbit closures, Nakajima's quiver varieties, hypertoric varieties, etc.

The condition of having algebraic origin appears to be unnecessary,
%for the purposes of condition $(*)$, 
by replacing results from the minimal model program used in \cite{BCHM10} by those in \cite[Theorem 1.6]{Fujino22} in the analytic setting.\footnote{Thanks to Paolo Cascini for pointing out this reference, and thanks to Osamu Fujino for multiple clarifications and sending a brief note stating the needed assertions.} Given this,  up to shrinking $F$,
%and using the definition of $\bQ$-factorial in \cite[Definition 2.38]{Fujino22}, 
$(*)$ holds precisely when $X$ has canonical singularities (which includes all symplectic singularities).
%in situations where $X$ has canonical singularities (necessary for a smooth crepant resolution to exist, and implied by having symplectic singularities), we could drop the assumption $(*)$.
%If this is true, then for the purposes of smooth crepant resolutions or when $X$ has symplectic singularities, we could drop the assumption $(*)$. 
Note also that,  if $X$ has symplectic singularities, Kaledin conjectured in  \cite[Conjecture 1.8]{KaledinSurvey}
that $F$ is always conical and in particular of algebraic origin; this is true in all known examples. In this case \cite{BCHM10} will provide the needed relative minimal model.
%it is known that the slices $F$ can be arranged to be of affine algebraic origin, and in fact are conical. 
\end{rem}

Now let us assume (*) holds. We prove the key technical result that implies Theorem \ref{t:constr}:
\begin{prop} \label{p: key} 
Suppose that $(U, F, \varphi)$ is a local product neighborhood along $S$ and $F$ satisfies $(*)$. Assume that $U \cap S$ is a polydisc (i.e., biholomorphic to a unit disc in $\bC^n$). Then every projective partial crepant resolution $\rho_U: \tilde U \to U$ which is dominated by a terminal projective resolution is isomorphic to $\Id \times \rho_F: (U\cap S) \times \tilde F \to (U\cap S) \times F$ for some projective partial resolution $\rho_F$ dominated by a terminal projective resolution. 
%by taking the product of a resolution with $U \cap S$,
%we obtain a bijection
%pulling back resolutions 
%restriction 
%yields a bijection 
%$\ccR(F) \rightarrow \ccR(U)$. In particular, every such resolution $\tilde U \to U$ is isomorphic to $U_S \times \tilde{F}$, for some such resolution $\tilde{F} \to F$. 
Moreover, restriction  induces an isomorphism $\Pic(\tilde U) \otimes_{\bZ} \bQ \to \Pic(\tilde F) \otimes_{\bZ} \bQ$ which maps the ample cone onto the ample cone.
\end{prop}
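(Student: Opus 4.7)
The strategy is to first prove the statement for projective terminal crepant models, then deduce the general case by factoring through the dominating terminal model. Write $D := U \cap S$. By hypothesis $(*)$ and the MMP as recalled in Remark~\ref{r:bchm}, fix a projective $\bQ$-factorial terminal crepant model $\widehat F \to F$ dominating $\tilde F$. Since the induced map $\pi: \widehat F \to \tilde F$ is a proper birational morphism between varieties with rational singularities, $R^i \pi_* \cO_{\widehat F} = 0$ for $i > 0$, and hence $H^1(\widehat F, \cO) = H^1(\tilde F, \cO) = 0$.

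The workhorse is the Picard group of the trivial product $\widehat U := D \times \widehat F$ over $U = D \times F$. Applying the exponential sequence to $\widehat U$ and combining the Künneth formula with the vanishings $H^i(D, \bZ) = 0 = H^i(D, \cO)$ for $i \geq 1$ (since $D$ is Stein and contractible) together with $H^1(\widehat F, \cO) = 0$ yields a restriction isomorphism $\Pic(\widehat U) \otimes \bQ \xrightarrow{\sim} \Pic(\widehat F) \otimes \bQ$. The same argument applied to $U = D \times F$ (using $H^1(F, \cO) = H^1(\tilde F, \cO) = 0$, since $F$ has rational singularities) yields $\Pic(U) \otimes \bQ \cong \Pic(F) \otimes \bQ$, and hence $\Pic(\widehat U/U) \otimes \bQ \cong \Pic(\widehat F/F) \otimes \bQ$. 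Since restriction to a fiber preserves relative ampleness, ample cones correspond.

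For an arbitrary projective terminal model $\rho': \tilde U' \to U$, consider the composition $g: \tilde U' \to D$. Adjunction shows each fiber $g^{-1}(s)$ is a projective terminal crepant model of $F$; by Remark~\ref{r:bchm} there are only finitely many such, and connectedness of $D$ forces all fibers to be isomorphic to a single model $\widehat F$. To upgrade this fiberwise identification to a product decomposition $\tilde U' \cong D \times \widehat F$ over $U$, I would select a relatively ample line bundle $L$ on $\tilde U'/U$, use the Picard isomorphism of the previous paragraph to transport $L|_{\widehat F}$ to a relatively ample $L'$ on $D \times \widehat F/U$, and then realize both models as $\Proj_U$ of the associated pushforward graded $\cO_U$-algebras, which coincide because their fiber data agree. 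This rigidity step---converting fiberwise agreement into a global product structure---is the main technical obstacle.

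Finally, let $\tilde U \to U$ be a projective partial crepant resolution dominated by the terminal model $\tilde U' \cong D \times \widehat F$. The contraction $\tilde U' \to \tilde U$ corresponds to a face of the Mori chamber of $\tilde U'$ in $\Pic(\tilde U'/U) \otimes \bQ \cong \Pic(\widehat F/F) \otimes \bQ$, which in turn defines a contraction $\widehat F \to \tilde F$ onto a projective partial crepant resolution of $F$; by functoriality of $\Proj$ the ambient contraction is $\Id_D \times (\widehat F \to \tilde F)$, yielding $\tilde U \cong D \times \tilde F$. The Picard group isomorphism $\Pic(\tilde U) \otimes \bQ \xrightarrow{\sim} \Pic(\tilde F) \otimes \bQ$ together with the ample cone correspondence then follows by rerunning the second paragraph's argument with $\tilde F$ replacing $\widehat F$, using that $H^1(\tilde F, \cO) = 0$ by $(*)$.
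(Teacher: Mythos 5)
Your first two paragraphs (the Picard computation for the product model $D\times\widehat F$ via the exponential sequence, K\"unneth, and the vanishing $H^1(\widehat F,\cO)=0$) agree with the paper's argument. The genuine gap is in your third paragraph, and you have flagged it yourself: the step converting the fiberwise identification of $\tilde U'\to D$ into a global product decomposition is not carried out, and the sketch you give does not close it. Two problems. First, the claim that \emph{every} fiber $g^{-1}(s)$ is a projective terminal crepant model of $F$ is not justified by adjunction alone; a priori special fibers of $\tilde U'\to D$ could be non-reduced, reducible, or fail to be crepant over $F$, and connectedness of $D$ plus finiteness of models only constrains isomorphism classes where you already know the fibers are models. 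Second, and more seriously, the assertion that the two graded $\cO_U$-algebras ``coincide because their fiber data agree'' is unsupported: agreement of fibers of a pushforward sheaf does not determine the sheaf, and you have not produced any identification between $\Pic(\tilde U')\otimes\bQ$ and $\Pic(D\times\widehat F)\otimes\bQ$ under which $L$ and $L'$ could be compared in the first place.

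The missing ingredient, which is how the paper proceeds, is that any two projective crepant birational morphisms to $U$ from terminal irreducible varieties are isomorphic in codimension one (\cite[Theorem 3.52]{KollarMori}). This canonically identifies their Weil divisor class groups, and $\bQ$-factoriality of the product minimal model then embeds $\Pic(\tilde U')\otimes\bQ$ into $\Pic(D\times\tilde F)\otimes\bQ$. A relatively ample bundle $L$ on $\tilde U'$ becomes a movable class on the product model, its section algebra over $U$ is computed on the complement of a codimension-two set where the two models agree, and $\tilde U'\cong(D\times\tilde F)(L)=\Proj_U\bigoplus_m f_*L^m$ as in Definition \ref{d: mod-mori}. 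Since every line bundle on $D\times\tilde F$ is pulled back from $\tilde F$, this $\Proj$ automatically splits as $D\times\tilde F(L_F)$ --- no fiberwise rigidity argument is needed. Your final paragraph on partial resolutions dominated by a terminal model (nef classes lying in the closure of the movable cone) does match the paper's treatment, but it rests on the unproved product structure of the terminal model, so the proof as written is incomplete.
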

In other words, germs of partial crepant resolutions dominated by terminal ones are isomorphic to products of the stratum with a resolution of the slice, and all line bundles on the resolution are pulled back from the resolution of the slice.
%: a local product neighborhood of $s$ and a birational morphism $\tilde F \to F$. 
%\begin{cor} 
%Assuming that $X$ is locally a product along $S$, then for all $s \in S$ and factors $F$ of a local product neighborhood of $s$, if $F$ admits a relative minimal model, then restriction yields a bijection of isomorphism classes of germs of relative minimal models at $X$ at $s$ and of germs of relative minimal models of $F$ at $0$.
%\end{cor}
%Again, as stated before, if $F$ has rational Gorenstein singularities, then the condition that it admit a relative minimal model is implied by \cite{BCHM10}.

For the proof, recall that a line bundle $L \in \Pic(Y)$ is \emph{movable} relative to a map $f: Y \to Y_0$ if the stable base locus has codimension at least two, i.e., outside a subset of $Y$ of codimension at least two, every point admits a nonzero value on some section $\sigma \in \Gamma(f^{-1}(U),L^n)$ for some $n \geq 1$ and some open set $U \subseteq Y_0$.  
% with $Y_0$ Stein.\travis{add here: we want the pullback to be an isomorphism on global functions... check that $Y(\mathcal{O}) \cong Y_0$...}
\begin{defn}\label{d: mod-mori}
   Let $Y \to Y_0$ be a projective morphism. Given a movable bundle $L$ on $Y$ we can form a birational modification $Y(L) := \Proj_{Y_0} \bigoplus_{m \geq 0} f_* L^m
   %\Gamma(Y, L^m)
   $ over $Y_0$, together with a birational map $\varphi_L: Y \dashrightarrow Y(L)$ over $Y_0$ which is defined outside the stable base locus of $L$. We say that $L, L'$ are \emph{Mori equivalent} over $Y_0$ if $\varphi_L \circ \varphi_{L'}^{-1}$ is an isomorphism of varieties. 
\end{defn}
Typically we will have $Y_0$ Stein and $\Gamma(Y,\cO_Y) = \Gamma(Y_0,\cO_{Y_0})$.
\begin{proof} [Proof of Proposition \ref{p: key}]
Note that, since $U$ is assumed to be irreducible, so is $F$.
Take a relative minimal model $\tilde{F}$ of $F$ with $H^1(\tilde F,\cO)=0$. We then get a relative minimal model $\tilde{F} \times (U \cap S)$ of $U$. One can check that the projection map induces an isomorphism $\Pic(\tilde F \times (U \cap S)) \cong \Pic(\tilde F)$ using the long exact sequence associated to the exponential sequence and K\"{u}nneth formula together with: $H^1(\tilde F,\cO)=0=H^1(U \cap S,\cO)$ and $H_{\text{dR}}^{>0}(U \cap S)=0$.  This pullback clearly preserves the ample cones, and this yields the final statement.

The isomorphism of Picard groups restricts to an identification of movable cones and Mori decompositions. Next, note that any two projective crepant birational morphisms $\tilde U_1, \tilde U_2 \to U$ from  terminal irreducible varieties $\tilde U_1,\tilde U_2$ are isomorphic in codimension one \cite[Theorem~3.52]{KollarMori} (essentially, the crepancy and terminality imply  the two have to extract the same divisors). 
Thus the Weil divisor class groups of $\tilde U_1$ and $\tilde U_2$ are identified. If $\tilde U_1$ is $\bQ$-factorial, then this gives an identification $\Pic \tilde U_2 \otimes_{\bZ} \bQ \subseteq \Pic \tilde U_1 \otimes_{\bZ} \bQ$. 
%If both are $\bQ$-factorial then the Picard groups are canonically isomorphic, as desired.

In this case, letting $L$ be a relatively ample bundle for $\tilde U_2 \to U$, then $L$ is movable for $\tilde U_1$, and we get
%$\Pic \tilde U_1$ and $\Pic \tilde U_2$ are identified, 
%and there is 
a birational map $\tilde U_1 \dashrightarrow \tilde U_2 \cong \tilde U_1(L)$ (working over $U$).
%%defined by a line bundle in $\tilde U_1$ (one which corresponds to a relatively ample bundle for $\tilde U_2 \to U$).
%This allows us to compare terminal projective partial crepant resolutions: 
%which is terminal is related by  a small birational transformation to a minimal model, and minimal models are $\bQ$-factorial, the former resolution is given by a birational transformation corresponding to a line bundle in the movable cone of the minimal model.  
If, instead, $\tilde U_2 \to U$ is not necessarily terminal, but is a partial crepant resolution 
%$\tilde U \to U$ is 
which is dominated by a terminal one $V \to \tilde U_2 \to U$, then $\tilde U_2$  is obtained from $V$ by 
 a birational morphism defined by  a line bundle in the nef cone of $V$, which is well known to lie in the closure of the movable cone. In view of the fact that all line bundles on our minimal model $\tilde F \times (U \cap S) \to U$ are pulled back from one on a minimal model $\tilde F \to F$, we see that all the other partial crepant resolutions are isomorphic to the product of $U \cap S$ and the modification of $\tilde F$ defined by the appropriate line bundle. This yields the first statement. %For the first, we use that the Mori decompositions of the movable cones in $\Pic(\tilde U/U) \otimes_{\bZ} \bQ$ and $\Pic(\tilde F)\otimes_{\bZ} \bQ$ are identical.
\end{proof}

%\begin{rem} 

%\end{rem}
For the next corollary, let $\ccR^F$ denote the sheaf on $F$ defined in the same way as $\ccR$.
\begin{cor}\label{c:local-product-res}
Given a local product neighborhood $(U, F, \varphi)$, taking products of resolutions with $U \cap S$ yields an isomorphism of 
the restriction of $\ccR$ to $U \cap S$ with the constant sheaf with stalk equal to the stalk of $\ccR^F$ at $0$. 
%with stalk equal to the stalk of the sheaf 
%the stalk of $\ccR$ at every $s \in U \cap S \subseteq X$ with the stalk of the corresponding sheaf $\ccR$ on $F$ at $0 \in F$. In  
%we obtain an isomorphism of all the stalks of $U \cap S$ with the stalk of $\ccR_F$ at $0 \in F$.  
\end{cor}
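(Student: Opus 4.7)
The plan is to build a natural isomorphism from the constant sheaf $\underline{\ccR^F_0}$ on $U \cap S$ to $\ccR|_{U \cap S}$ via the product construction, and to verify it on stalks. Since Proposition \ref{p: key} has already carried out the nontrivial geometric work, the remainder is essentially bookkeeping in sheaf theory.

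First I would identify the stalks. Under $\varphi: (U \cap S) \times F \xrightarrow{\sim} U$, a point $s \in U \cap S$ corresponds to $(s, 0)$, and the product opens of the form $W \times F'$, with $W \subset U \cap S$ a polydisc around $s$ and $F' \ni 0$ an open neighborhood in $F$, form a cofinal basis of neighborhoods of $s$ in $U$. For $F'$ small enough to inherit property $(*)$ (as noted in the remarks preceding Proposition \ref{p: key}), that proposition gives a bijection $\ccR(W \times F') \cong \ccR^{F'}(F')$ induced by restriction, independent of $W$. Passing to the direct limit then yields $\ccR_s \cong \varinjlim_{F' \ni 0} \ccR^{F'}(F') = \ccR^F_0$.

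Next I would construct a sheaf map $\Phi: \underline{\ccR^F_0} \to \ccR|_{U \cap S}$ as follows. For connected open $W \subseteq U \cap S$, a section of the constant sheaf is a single class $[\rho_{F'}: \tilde F' \to F'] \in \ccR^F_0$; send it to the class of $\mathrm{Id}_W \times \rho_{F'}: W \times \tilde F' \to W \times F'$, viewed as an element of $\varinjlim_V \ccR(V)$ over open neighborhoods $V$ of $W$ in $U$. Independence of the chosen representative is clear, since shrinking $F'$ merely restricts the product resolution; functoriality in $W$ is immediate; and compatibility across disconnected components or more general open covers follows from the uniqueness-of-gluing principle for resolutions recalled in the introduction. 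By construction the induced map on stalks at $s$ is tautologically the identity $\ccR^F_0 \to \ccR^F_0$ under the identification above, so $\Phi$ is an isomorphism of sheaves.

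The only potential snag in the above is verifying that sufficiently small slices $F' \ni 0$ still satisfy $(*)$, so that Proposition \ref{p: key} actually applies to each product $W \times F'$; but this is already addressed in the remarks preceding that proposition. With that ingredient in hand, the corollary is essentially formal.
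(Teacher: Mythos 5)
Your argument is correct and follows essentially the same route as the paper: both identify a cofinal basis of product neighborhoods $W \times F'$ of points (or opens) of $U \cap S$ and then invoke Proposition \ref{p: key} to see that $\ccR(W \times F') \cong \ccR^{F'}(F')$ is independent of $W$. The extra bookkeeping you supply (the explicit sheaf map $\Phi$, the polydisc restriction on $W$, and the inheritance of $(*)$ by small pointed Stein slices) is exactly the detail the paper leaves implicit.
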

\begin{proof}
    The restriction $\ccR|_{U \cap S}$ takes value on an open subset $V$ given by the limit of $\ccR(U')$ over open neighborhoods of $V$ in $X$. We can take a basis of these to have the form $V \times F'$ for $F' \subseteq F$ containing $0$. Then, the proof follows from Proposition \ref{p: key}.
\end{proof}
 In particular, $\ccR$ is locally constant on $S$, proving Theorem \ref{t:constr}.

\begin{rem}\label{r: richard} \footnote{Thanks to Richard Thomas for the question that led to this remark.}
    Given a path $\gamma: [0,1] \to S$, using local product neighborhoods about points of the path, we can find a non-canonical isomorphism of the germ of $X$ at $\gamma(0)$ and the germ of $X$ at $\gamma(1)$, as in Proposition \ref{p:iso-germs}. Then one can check that the monodromy about the path is given by composing resolutions by this automorphism of the germ.  This gives a restriction on the monodromy action: it can only relate partial crepant resolutions which differ by an automorphism of the germ of the base. In particular, the two local partial crepant resolutions are isomorphic as abstract varieties (at least, after restricting to suitable open neighborhoods of the singularity). 
    
    In more detail, postcomposition with the automorphism group $G_s$ of the germ of $X$ at $s \in S$ gives an action on the set $\mathcal{R}_s$ of local partial crepant resolutions at $s$. This action is determined by the action of $G_s$ on the discrete Picard group of the germ, which 
    %the action 
    factors through $\pi_0(G_s)$. 
    %Since the action on $\mathcal{C}_s$ of automorphisms which lift to 
    The above shows that every monodromy automorphism of $\mathcal{R}_s$ %at $s$ 
    is obtainable from this action. 
\end{rem}

\subsection{The constructible (pre)sheaf of relative Picard groups: Theorem \ref{t:pic}}

Let $X$ be an algebraic variety with canonical divisor $K_X$. Let $\pi: \tilde{X} \rightarrow X$ be a morphism of algebraic varieties. Recall that the relative Picard group is defined as $\Pic(\tilde X/X):=\Pic(\tilde X)/\pi^* \Pic(X)$.
%This applies to situations where there is no smooth crepant resolution.  
We now define the presheaf $\cP$ of relative Picard classes without requiring the existence of a smooth crepant resolution:
%Since we redefined $\ccR$ to deal with partial crepant resolutions rather than smooth ones, we restate Theorem \ref{t:pic} more generally.
\begin{prop} Let $X$ be a normal variety with $K_X$ $\bQ$-Cartier.
%have a finite stratification along which it is locally a product, and suppose that condition (*) of Section \ref{ss:pic-prod} holds. Then 
There is a canonical presheaf $\cP : \textbf{Op}(X) \rightarrow \Vect_{\bQ}$,
\begin{equation}
 \cP(U) = \begin{cases} \Pic(\tilde{U}/U) \otimes_{\bZ} \bQ, &\text{for a relative minimal model $\tilde{U} \to U$}, \\ 0, &\text{if there is no relative minimal model,}
 \end{cases}
\end{equation}
which is well-defined up to canonical isomorphism.
\end{prop}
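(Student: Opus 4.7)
The plan is to define $\cP(U)$ by choosing any relative minimal model $\tilde U \to U$ and setting $\cP(U) := \Pic(\tilde U/U)\otimes_\bZ \bQ$ (or $0$ if none exists), and then verify two things: (i) the resulting vector space is canonically independent of the choice of relative minimal model, and (ii) for any open inclusion $V \subset U$ there is a canonical restriction map $\cP(U) \to \cP(V)$ compatible with composition.

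For (i), suppose $\rho_1: \tilde U_1 \to U$ and $\rho_2: \tilde U_2 \to U$ are two projective relative minimal models. Both are projective crepant birational morphisms from normal, $\bQ$-factorial, terminal varieties. As used in the proof of Proposition~\ref{p: key}, \cite[Theorem~3.52]{KollarMori} then implies $\tilde U_1$ and $\tilde U_2$ are isomorphic in codimension one over $U$, so strict transform of Weil divisors yields a canonical isomorphism $\Cl(\tilde U_1) \cong \Cl(\tilde U_2)$ intertwining the pullbacks from $\Cl(U)$. Because both sources are $\bQ$-factorial, the natural map $\Pic(\tilde U_i)\otimes_\bZ \bQ \to \Cl(\tilde U_i)\otimes_\bZ \bQ$ is an isomorphism, and the pullback of $\Pic(U)\otimes_\bZ \bQ$ corresponds on each side to the image of $\Cl(U)\otimes_\bZ \bQ$ (using that $K_X$, hence $K_U$, is $\bQ$-Cartier). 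Quotienting by this common image yields the desired canonical isomorphism $\Pic(\tilde U_1/U)\otimes_\bZ \bQ \cong \Pic(\tilde U_2/U)\otimes_\bZ \bQ$. I would then verify that for a third relative minimal model the triangle of canonical isomorphisms commutes (again by comparing strict transforms in codimension one), making $\cP(U)$ well-defined as the limit over the groupoid of choices.

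For (ii), given $V \subset U$ open and a relative minimal model $\rho: \tilde U \to U$, the base change $\rho^{-1}(V) \to V$ is again projective, crepant, and has analytically locally $\bQ$-factorial terminal singularities (all three conditions being local on the source and preserved by analytic restriction), so it is a relative minimal model of $V$. Restriction of line bundles defines $\Pic(\tilde U/U) \to \Pic(\rho^{-1}(V)/V)$, and by part (i) this composes canonically with the isomorphism to $\Pic(\tilde V/V)\otimes_\bZ \bQ$ for any other chosen relative minimal model $\tilde V \to V$. This gives the restriction map $\cP(U)\to\cP(V)$; in the degenerate case where $U$ admits no relative minimal model we set the map to be zero, which is consistent because the existence of $\tilde U \to U$ forces the existence of one over every open $V \subset U$. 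Compatibility with further restriction $W \subset V \subset U$ then follows by choosing compatible base changes and invoking (i) to compare with any ambient choices.

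The only real subtlety is the well-definedness in (i): the identification of $\bQ$-Picard groups via codimension-one isomorphism is canonical only because terminality together with crepancy forces the two models to extract the same set of divisors. I expect this is the main (routine) obstacle; once it is in place, the rest of the argument is formal bookkeeping with strict transforms and base change.
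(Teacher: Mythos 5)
Your proposal is correct and takes essentially the same route as the paper's proof: both identify any two relative minimal models in codimension one via \cite[Theorem~3.52]{KollarMori}, transfer Weil divisor class groups by strict transform, use analytic local $\bQ$-factoriality to recover the rational (relative) Picard groups, and obtain the presheaf structure by restricting minimal models and line bundles to open subsets. The paper's argument is simply a terser version of yours.
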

\begin{proof}
Any two choices $\tilde{U}_1$ and $\tilde{U}_2$ of relative minimal models have canonically isomorphic relative rational Picard groups $\Pic(\tilde{U}_1/U)\otimes_{\bZ} \bQ \cong \Pic(\tilde{U}_2/U)\otimes_{\bZ} \bQ$, as explained in the proof of Proposition \ref{p: key}:  the two resolutions are canonically isomorphic in codimension one by \cite[Theorem 3.52]{KollarMori}, so have the same Weil divisor class groups, and then the fact that they are analytically locally $\bQ$-factorial gives the statement. By restricting relative minimal models and line bundles, this forms a presheaf.
%$\Pic(\tilde{F}_1) \cong \Pic(\tilde{F}_2)$. Therefore, 
%$\cP(U \cap S) =  \Pic(\tilde{F}/F) \otimes_{\bZ} \bQ.$
\end{proof}
Now the proof of Theorem \ref{t:constr} yields the following:
\begin{thm}\label{t:P-constr}
    Let $X$ have a %finite 
    stratification satisfying (*) of Section \ref{ss:pic-prod}. Then,
   the sheafification $\cP^\sharp$ of $\cP$ is constructible along the stratification. 
   %The sheaf $\ccR$ is the sheafification of the set of Mori equivalence classes of movable classes in 
\end{thm}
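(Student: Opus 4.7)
The plan is to reduce the statement to Proposition \ref{p: key} in essentially the same way that Theorem \ref{t:constr} was proved, only now tracking rational Picard groups rather than isomorphism classes of resolutions. Because the argument for $\ccR$ was built entirely out of parallel transport induced by local product decompositions on strata, and Proposition \ref{p: key} already provides the Picard-group version of the needed local constancy, the linear version follows by running the same machinery.

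First I would establish the $\cP$-analogue of Corollary \ref{c:local-product-res}: for a local product neighborhood $(U,F,\varphi)$ along a stratum $S$ with $U\cap S$ a polydisc, restriction to a transverse slice gives an isomorphism of the restriction of $\cP$ to $U \cap S$ with the constant presheaf of stalk $\Pic(\tilde F /F)\otimes_\bZ \bQ$. This is immediate from Proposition \ref{p: key}: for any pointed Stein open $F' \subseteq F$ containing $0$ and any polydisc $V \subseteq U \cap S$, the restriction map $\Pic(V \times \tilde F' / V \times F') \otimes_\bZ \bQ \to \Pic(\tilde F'/F') \otimes_\bZ \bQ$ is an isomorphism, and these isomorphisms are compatible under shrinking $V$ and $F'$, since property $(*)$ is inherited by open sub-polydiscs.

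Next I would define parallel transport of sections of $\cP$ along paths $\gamma:[0,1]\to S$ in a stratum. Given a local product neighborhood around a point $\gamma(t_0)$ and a local class in $\cP$ represented on one factor of the product, the previous step pulls it back to a class on the full product, giving a unique extension to a slightly larger interval of parameters along $\gamma$; a standard maximality/connectedness argument, identical to the one used for $\ccR$ in the commented-out Lemmas \ref{lem: lrmm transport} and \ref{l: homotopy-ext} (or obtained by linearizing their conclusions), shows that the extension propagates uniquely along all of $\gamma$ and is homotopy-invariant rel endpoints. This endows each stalk $\cP^\sharp_s$ with a well-defined $\pi_1(S,s)$-action and, more generally, gives a local system on $S$.

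From here, local constancy of $\cP^\sharp$ on each stratum is immediate: on a contractible open subset of a stratum $S$, the sheafification of the constant presheaf above is the constant sheaf of stalk $\Pic(\tilde F /F)\otimes_\bZ \bQ$, so $\cP^\sharp|_S$ is locally constant, i.e.\ $\cP^\sharp$ is $\cS$-constructible. The only subtlety is that sheafification is required because the tautological local identifications depend on a choice of product decomposition, so the presheaf $\cP$ need not glue across an open set that is not contained in a single local product neighborhood; once one sheafifies, however, the local product statement together with parallel transport forces local constancy on each stratum. I expect this sheafification step to be the main (minor) obstacle, since one must check that the candidate local identifications with the constant presheaf patch on overlaps of product neighborhoods \emph{after sheafification} — but this is exactly the content of the homotopy-invariance of parallel transport, which is furnished by the argument in Section \ref{ss:pic-prod}.
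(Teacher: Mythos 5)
Your proposal is correct and rests on the same key input as the paper's proof: Proposition \ref{p: key} identifies $\cP$ on a basis of neighborhoods $(V\cap S)\times F'$ with $\cP(F')$, so the stalks at all points of $U\cap S$ are canonically the stalk of $\cP|_F$ at $0$ and the sheafification is locally constant on each stratum. The additional parallel-transport and homotopy-invariance machinery you invoke is harmless but unnecessary — the paper's proof stops after your first step, since constancy of the presheaf on a neighborhood of each point of the stratum (via the presheaf's own restriction maps) already forces the sheafification to be locally constant there.
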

%We prove this in the setting of a partial projective crepant resolution.
%The construction of the sheaf $\ccR$
%$, \widetilde \ccR$ 
%is related to a fundamental object,
%generalizes to form a more fundamental object, 
%the presheaf of relative Picard groups. Let $X$ be a stratified space satisfying the conditions of Section 
%We maintain the setup of $(X, \cS)$ a stratified space. Define the relative Picard presheaf to be
%\[
%\cP : \textbf{Op}(X) \rightarrow \Vect_{\bQ} \hspace{1cm} \cP(U) = \Pic(\tilde{U}/U) \otimes_{\bZ} \bQ, \quad \text{for a relative minimal model $\tilde{U} \to U$}.
%\].
%For this to be well-defined, 
\begin{proof}
%The proof that $\ccR$ 
%and $\widetilde{\ccR}$ are 
%is constructible actually shows that
 %$\cP$ is an $\cS$-constructible sheaf of $\bQ$-vector spaces: this is because the constructibility comes from a local constancy of 
% paths induce isomorphisms of germs of relative Picard groups along strata. Namely, if $f: B \to S$ is a map, and 
If $(U \cap S) \times F$ is a local product neighborhood of $s$, then  for any $s' \in U \cap S$, and any neighborhood $U' \subseteq (U \cap S) \times F$ of $s'$, we can restrict to a smaller neighborhood of the form $(U' \cap S) \times F' \subseteq U' \subseteq (U \cap S) \times F$. Then given a minimal model $\tilde F' \to F'$ we obtain $\cP((U \cap S) \times F') \cong \cP(F')$, by Proposition \ref{p: key}.  So we see that the stalks of $\cP$ at all $s' \in U \cap S$  are equal to the stalk of $\cP|_{F}$ at the basepoint $0 \in F$. In particular the sheafification of $\cP$ is locally constant with stalk equal to the latter stalk.
 \end{proof}

% Suppose that $L,L'$ are movable bundles on $\tilde U$. We say they are Mori equivalent if the
 \begin{cor}
 The sheaf $\ccR$ is the sheafification of the presheaf $U \mapsto \overline{\Mov(\cP_U)}/\!\!\sim$, where $\Mov(\cP_U)$ is the cone of relative classes of movable bundles on $\tilde U/U$, and $\sim$ is the Mori equivalence relation.   
 \end{cor}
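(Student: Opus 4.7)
The plan is to exhibit a morphism of presheaves $\Phi \colon \mathcal{Q} \to \ccR$, where $\mathcal{Q}(U) := \overline{\Mov(\cP(U))}/\!\sim$, and then verify that $\Phi$ induces a bijection on stalks at every $x \in X$. Since $\ccR$ is already a sheaf (by the discreteness of gluing established in the introduction), such a stalkwise bijection implies that the induced map $\mathcal{Q}^\sharp \to \ccR$ is an isomorphism of sheaves, which is the content of the corollary.

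To define $\Phi(U)$ when $U$ admits a projective relative minimal model $\pi \colon \tilde U \to U$, send a class $[L] \in \overline{\Mov(\cP(U))}$ to the modification $\tilde U(L) = \Proj_U \bigoplus_{m \geq 0} \pi_* L^{\otimes m}$ of Definition \ref{d: mod-mori}. This is a projective partial crepant resolution of $U$ dominated by $\tilde U$, hence an element of $\ccR(U)$. By construction, Mori equivalent classes give isomorphic modifications, so $\Phi(U)$ descends to $\mathcal{Q}(U)$; functoriality in $U$ is clear from the compatibility of relative $\Proj$ with restriction to open subsets. When $U$ admits no relative minimal model both sides are trivial, so the map is vacuous.

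To verify bijectivity on stalks, fix $x \in X$ and a local product neighborhood $(U, F, \varphi)$ with $U \cap S$ a polydisc. By Proposition \ref{p: key} and the proof of Theorem \ref{t:P-constr}, the stalks of $\mathcal{Q}$ and $\ccR$ at $x$ are each canonically computed on the slice: $\cP_x$ identifies with the colimit of $\Pic(\tilde F'/F') \otimes_\bZ \bQ$ over pointed neighborhoods $F' \subseteq F$ of $0$, and $\ccR_x$ is the corresponding colimit of germs of projective partial crepant resolutions of $F'$ dominated by terminal ones. Under these identifications, $\Phi_x$ sends $[L]$ to $\tilde F'(L)$, and the required bijection is the birational-geometric fact already invoked in Proposition \ref{p: key}: by \cite[Theorem~3.52]{KollarMori}, every $\tilde F_2' \to F'$ in $\ccR(F')$ is isomorphic in codimension one to $\tilde F'$, so pulling back a relatively ample class on $\tilde F_2'$ produces a movable class $[L]$ on $\tilde F'$ with $\tilde F'(L) \cong \tilde F_2'$ (surjectivity), while conversely two classes producing isomorphic modifications lie in the same Mori chamber by the definition of $\sim$ (injectivity). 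I expect the main subtlety to be organisational rather than deep: one must check that the stalk identifications for $\cP$ and $\ccR$ commute with the assignment $[L] \mapsto \tilde U(L)$, but this is precisely the content of the product-decomposition statement inside Proposition \ref{p: key} that line bundles and their corresponding modifications on $(U \cap S) \times \tilde F'$ are pulled back along the projection to $\tilde F'$.
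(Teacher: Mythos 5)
Your proposal is correct and follows essentially the same route as the paper: the paper likewise pairs the $\Proj$ construction $[L]\mapsto \tilde U(L)$ with the inverse assignment sending a partial resolution to the Mori class of a relatively ample bundle on it, and reduces everything to the slice via Proposition \ref{p: key}; you have merely repackaged this as a presheaf morphism checked to be a stalkwise bijection. One small imprecision in your surjectivity step: a non-terminal $\tilde F_2'\in\ccR(F')$ is generally \emph{not} isomorphic in codimension one to the minimal model $\tilde F'$ (the crepant map from its dominating terminal resolution may contract divisors), so one should pull the relatively ample class back to the dominating terminal resolution $V\to\tilde F_2'$, identify $V$ with $\tilde F'$ in codimension one via \cite[Theorem~3.52]{KollarMori}, and observe that the resulting nef class lies in $\overline{\Mov}$ --- which is exactly how the paper phrases it and why the closure of the movable cone appears in the statement.
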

 \begin{proof}
    Given a local section $\sigma \in \Gamma(U,\ccR)$, first shrink $U$ so that $\tilde U \to  U$ is either terminal or dominated by a terminal resolution.  Letting $\tilde U \to U$ be a relative minimal model, we can take a relatively ample bundle to get an associated class in $\Pic(\tilde U/U)$; the Mori equivalence class of this determines a local section of $\overline{\Mov(\cP_U)}/\!\!\sim$. (This section is in the movable cone if $\sigma$ is terminal, and in the boundary if $\sigma$ is merely dominated a terminal resolution.) 
    
    Conversely, given a local section  $L \in \overline{\Mov(\cP_U)}/\!\!\sim$ we get by the Proj construction $\tilde U(L)$ a local projective partial crepant resolution in $\ccR(U)$. Now, the condition for local sections to be compatible is that they restrict to isomorphic resolutions on overlaps, which is equivalent to having sections that are Mori equivalent on the overlap.  Note that in the case of $\ccR$,  compatibility implies they glue to a global (locally projective partial crepant) resolution, hence a section of $\ccR$. But for $\overline{\Mov(\cP_U)}/\!\!\sim$ local sections only glue in general to a section of \emph{the sheafification}. (To make sense of a global section of the original presheaf, the locally projective resolution would have to be globally projective, and we would need to have a globally defined relative minimal model).
 \end{proof}
 As a result, given a section $\sigma \in \Gamma(X, \cP^\sharp)$ whose values are locally in $\overline{\Mov(\cP_U)}$, we get an associated partial crepant resolution $\tilde X_\sigma \in \ccR(X)$. Conversely, given $\tilde X \in \ccR(X)$, we define an associated section $\sigma \in \Gamma(X,\cP^{\sharp})$ to be one which is locally the class of a relatively ample bundle on $\tilde X$.

 \begin{rem} \label{r:absolute-relative}
     For a contractible Stein neighborhood $U \subseteq X$, we have $H^1(U, \cO)=0=H^2(U,\bZ)$. So the exponential sequence gives $\Pic(U) \cong H^1(U,\cO^\times)=0$.  By \cite{Gilmartin-loc-contr}, see also \cite[2.10]{Milnor-singular-hypersurfaces}, every analytic variety has a basis of such neighborhoods.  Hence, $\cP^\sharp$ is also the sheafification of the presheaf of absolute (not relative) Picard groups. %We use this observation, in the proof of Lemma \ref{l:Pic_product}. 
     However, $\cP^\sharp$ is in general much closer to $\cP$ than to the presheaf of absolute groups; for instance, $\cP^\sharp$ is unaffected by the presence of nontrivial  line bundles on $X$, unlike the presheaf of absolute Picard groups.
 \end{rem}
 
 \subsection{Criteria for locally projective crepant resolutions to be globally projective}
 Given a locally projective partial crepant resolution $\tilde X \to X$, note that $\tilde X$ is globally projective if and only if there exists a relatively ample line bundle on $\tilde X$.  Such a bundle would give rise to a global section of $\cP^\sharp$ which lies in the section of Mori equivalence classes of cones determined by the global section of $\ccR$ defining $\tilde X$.  We obtain:
\begin{cor} \label{c:glob-obstr}
Given a locally projective partial crepant resolution $\tilde X \to X$, the resolution is globally projective  only if there exists a global section of $\cP^\sharp$ lying in the section of Mori equivalence classes determined by $\tilde X$.
\end{cor}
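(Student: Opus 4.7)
The plan is to extract the required global section of $\cP^\sharp$ directly from a hypothetical relatively ample line bundle on $\tilde X$, and to check that its local classes land in the correct Mori chambers.

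First I would unpack the hypothesis: if $\rho : \tilde X \to X$ is globally projective, then by definition there exists a $\rho$-ample line bundle $L$ on $\tilde X$, giving a class $[L] \in \Pic(\tilde X/X) \otimes_\bZ \bQ$. For each open $U \subseteq X$ small enough that $\tilde U := \rho^{-1}(U) \to U$ is a relative minimal model (or dominated by one), the restriction $L|_{\tilde U}$ defines an element of $\cP(U) = \Pic(\tilde U/U) \otimes_\bZ \bQ$. These classes are manifestly compatible under restriction, so they assemble into a global section $\sigma_L \in \Gamma(X, \cP)$, and in particular a global section of the sheafification $\cP^\sharp$.

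Next I would verify that $\sigma_L$ lies in the section of Mori equivalence classes determined by $\tilde X$. By Theorem \ref{t:pic} and the corollary immediately preceding, the section of $\cP^\sharp$ (up to Mori equivalence) determined by $\tilde X \in \ccR(X)$ is locally the class of a relatively ample bundle on a relative minimal model dominating $\tilde U$, viewed in $\overline{\Mov(\cP_U)}/\!\!\sim$. Since $L|_{\tilde U}$ is relatively ample on $\tilde U$, its image in $\Pic(\tilde U/U)\otimes_\bZ \bQ$ lies in the relatively ample cone, hence in the Mori chamber whose associated birational modification of the relative minimal model recovers $\tilde U$. This is exactly the local condition defining the section associated to $\tilde X$, so $\sigma_L$ is a witness to the necessary condition.

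There is no serious obstacle here since the direction is the easy one: the content of the statement is that global projectivity \emph{implies} the existence of such a section, and the converse (which would require building a global ample bundle from local data, and is obstructed by the gerbe $\ccL_{\tilde\sigma}$ of Theorem \ref{t:gerbe-intro}) is not asserted. The only minor point to be careful about is passing from $\cP$ to its sheafification $\cP^\sharp$, but since $\sigma_L$ already defines a compatible system of local sections of the presheaf $\cP$, it automatically determines a global section of $\cP^\sharp$, completing the proof.
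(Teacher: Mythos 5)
Your proposal is correct and follows essentially the same route as the paper: the paper derives the corollary in the two sentences preceding its statement, by noting that global projectivity is equivalent to the existence of a relatively ample line bundle on $\tilde X$, whose local relative classes assemble into a global section of $\cP^\sharp$ lying in the prescribed Mori equivalence classes. Your write-up simply makes explicit the restriction-compatibility and the placement of the ample class in the chamber (or boundary face) of the dominating relative minimal model, which is a faithful elaboration of the paper's argument.
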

\begin{rem} \label{rem:example_globally_proj} In general, the condition here is not sufficient.  For example\footnote{Thanks to Mirko Mauri for pointing out this example.}, let $X \subseteq \bP^4$ be a hypersurface of degree $d > 2$ with at most $\frac{(d-1)^2}{4}$ ordinary double points.  Then by \cite{Cheltsov-factoriality}, $X$ is $\bQ$-factorial (Cheltsov is working in the algebraic category, but by GAGA the same statement holds in the analytic category). Moreover, $X$  admits small (hence crepant) locally projective resolutions $\rho: \tilde X \to X$, obtained by blowing up each singularity, yielding $\bP^1 \times \bP^1$, then contracting one of the rulings (there are two choices of local resolution at each singularity). We claim that $\tilde X$ is not globally projective.  

To see this, since the resolution is small, the exceptional locus $\rho^{-1}(X^{\text{sing}})$ has codimension at least two. Let $\Cl$ denote the Weil divisor class group. %Note that in this case, since $X$ is projective, analytic and algebraic line bundles and divisors are the same, 
%and $\Pic_{\text{alg}}$, $\Cl_{\text{alg}}$ the algebraic version, 
We obtain 
\[
\Pic(\tilde X) \otimes \bQ = \Pic(\tilde X \setminus \rho^{-1}(X^{\text{sing}})) \otimes \bQ = \Cl(X \setminus X^{\text{sing}}) \otimes \bQ = \Pic(X) \otimes \bQ,
\]
the last equality following from $\bQ$-factoriality of $X$. That is, the relative rational Picard group is trivial.  So there can be no relatively ample global algebraic line bundle, as the map $\tilde X \to X$ is not an isomorphism.
%and the same is true analytically since, if $\tilde X \to X$ were projective, then $\tilde X$ itself would be projective, and then the GAGA theorem implies that an analytic line bundle is algebraic \cite{GAGA}. 
%On the other hand, $\tilde X$ is locally projective, and glues the local projective crepant resolutions at the singularities with 
%, and is the unique way to glue the unique local projective crepant resolutions of singularities of ordinary double point singularities. 

Furthermore,  a global section $\sigma \in \Gamma(X,\cP^\sharp)$ associated to $\tilde X$ is equivalent to a local choice, at each singularity, of relative ample class, which are all independent; in particular such $\sigma$ exist.
\end{rem}
We strengthen Corollary \ref{c:glob-obstr} to a biconditional giving the obstruction for a locally projective partial crepant resolution to be globally projective. For this it is necessary to work with an integral
version of $\cP^\sharp$:
\begin{defn}
Given $\rho: \tilde X \to X$ in $\ccR(X)$, we let $\cP^{\tilde X}_\bZ$ be the presheaf on $X$ assigning to open subsets $U \subseteq X$ the integral relative Picard group, $\cP^{\tilde X}_\bZ(U)=\Pic(\rho^{-1}(U)/U)$, and let $\cP^{\tilde X,\sharp}_\bZ$ be its sheafification.
\end{defn}
Note that, in order to define this presheaf in the integral context, we need to fix a resolution $\tilde X$ in $\ccR(X)$, because two different local minimal models only canonically have the same rational Picard group.

Pullback of local relative classes defines a canonical map $\cP^{\tilde X,\sharp}_\bZ \to \cP^\sharp$. We call a preimage of $\sigma$ under this map an integral lift.
\begin{rem}
There is no guarantee that a given section $\sigma \in \Gamma(X, \cP^\sharp)$ defining a partial resolution $\tilde X \in \ccR(X)$ admits an integral lift. However, under mild conditions, some multiple does. Suppose that the stratification of $X$ is finite (e.g., if $X$ is an algebraic variety with algebraic stratification).  Then we can take a multiple of $\sigma$ such that over some local product neighborhood at each stratum, $N\sigma$ is actually represented by a line bundle. If furthermore $m\geq 1$ kills the torsion in the relative Picard groups over these local product neighborhoods, then $mN\sigma$ has a canonical integral lift (given by $m$ times an arbitrary choice of local integral lift of $N\sigma$). It follows 
that these canonical integral lifts extend to a global integral lift of $mN\sigma$.
\end{rem}
\begin{rem}
$\cP^{\tilde X, \sharp}_\bZ$ is also constructible thanks to Corollary \ref{c:local-product-res}.
\end{rem}
\begin{thm}\label{t:locproj-gone-global}
Given a section $\sigma \in \Gamma(X,\cP^\sharp)$ which defines a locally projective partial crepant resolution $\tilde X \in \ccR(X)$,  and an integral lift $\tilde \sigma
\in \Gamma(X,\cP_{\bZ}^{\tilde X,\sharp})$, there is a canonical gerbe $\ccL_{\tilde \sigma}$ on $X$ which is trivial if and only if there exists a relatively ample line bundle on $\tilde X$ with associated local relative  class $\tilde \sigma$. Such a bundle is unique up to tensoring by the pullback of a line bundle on $X$.
\end{thm}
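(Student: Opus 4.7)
The plan is to realize $\ccL_{\tilde\sigma}$ as the $\cO_X^\times$-gerbe classified by the obstruction appearing in the Leray spectral sequence for $\rho: \tilde X \to X$ with coefficients in the sheaf of units $\cO_{\tilde X}^\times$. The relevant direct images are straightforward to identify: since $\rho$ is birational and $X$ is normal, $\rho_* \cO_{\tilde X} = \cO_X$ yields $\rho_* \cO_{\tilde X}^\times = \cO_X^\times$. For $R^1 \rho_* \cO_{\tilde X}^\times$, use the basis of contractible Stein open subsets $U \subseteq X$ from Remark \ref{r:absolute-relative}: on such $U$, $\Pic(U) = 0$, so the natural map $\Pic(\rho^{-1}(U)) \to \Pic(\rho^{-1}(U)/U) = \cP_{\bZ}^{\tilde X}(U)$ is an isomorphism, and sheafifying gives $R^1 \rho_* \cO_{\tilde X}^\times \cong \cP_{\bZ}^{\tilde X, \sharp}$.

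The five-term exact sequence of the Leray spectral sequence then reads
\[
0 \to \Pic(X) \xrightarrow{\rho^*} \Pic(\tilde X) \xrightarrow{\mathrm{cl}} \Gamma(X, \cP_{\bZ}^{\tilde X, \sharp}) \xrightarrow{d_2} H^2(X, \cO_X^\times),
\]
where $\mathrm{cl}$ sends a line bundle to its associated section of local relative Picard classes. I would then define $\ccL_{\tilde\sigma}$ as the $\cO_X^\times$-gerbe on $X$ classified by $d_2(\tilde\sigma) \in H^2(X, \cO_X^\times)$, noting that this group classifies $\cO_X^\times$-gerbes up to equivalence on the paracompact analytic variety $X$. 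For a concrete description, one can give a \v{C}ech cocycle representative: choose a cover $\{U_i\}$ of $X$ with all finite intersections contractible Stein, pick line bundles $L_i$ on $\rho^{-1}(U_i)$ with local class $\tilde\sigma|_{U_i}$ (possible since $\Pic(U_i) = 0$ makes $\cP_{\bZ}^{\tilde X,\sharp}(U_i) \cong \Pic(\rho^{-1}(U_i))$), pick isomorphisms $\phi_{ij}: L_i|_{U_{ij}} \xrightarrow{\sim} L_j|_{U_{ij}}$ (possible for the same reason), and form $c_{ijk} = \phi_{ik}^{-1} \circ \phi_{jk} \circ \phi_{ij} \in \Gamma(\rho^{-1}(U_{ijk}), \cO_{\tilde X}^\times) = \Gamma(U_{ijk}, \cO_X^\times)$, which is a \v{C}ech 2-cocycle representing $d_2(\tilde\sigma)$.

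By construction, $\ccL_{\tilde\sigma}$ is trivial iff $d_2(\tilde\sigma) = 0$, which by exactness of the five-term sequence occurs iff $\tilde\sigma$ lifts to some $L \in \Pic(\tilde X)$; exactness also yields that any such lift is unique modulo the image of $\rho^*$, i.e., up to tensoring by the pullback of a line bundle on $X$. For the ampleness assertion, recall that $\tilde X \to X$ is the locally projective partial crepant resolution associated to $\tilde\sigma$, so by construction $\tilde\sigma$ is locally represented by a relatively ample line bundle. Any global lift $L$ agrees, on each patch, with such a local relatively ample line bundle up to pullback from $X$, and tensoring by a pullback preserves relative ampleness on fibers; since relative ampleness is local on the base for (locally) proper morphisms, $L$ is relatively ample on $\tilde X/X$, making $\tilde X \to X$ globally projective. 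The main obstacle will be carefully justifying the identification $R^1 \rho_* \cO_{\tilde X}^\times \cong \cP_{\bZ}^{\tilde X, \sharp}$ in the analytic setting and verifying that the \v{C}ech cocycle above is independent of the choices of $L_i$ and $\phi_{ij}$; the latter reduces to the standard check that these choices alter $c_{ijk}$ by a \v{C}ech 2-coboundary in $\cO_X^\times$.
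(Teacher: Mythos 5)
Your proposal is correct, but it takes a genuinely different route from the paper. The paper defines $\ccL_{\tilde\sigma}$ directly as a stack: $\ccL_{\tilde\sigma}(U)$ is the category of line bundles on $\rho^{-1}(U)$ whose relative class is $\tilde\sigma$, acted on by line bundles on $X$ via pullback and tensor; the banding by $\cO_X^\times$ is verified by the projection formula together with $(\rho_U)_*\cO_{\tilde U}\cong\cO_U$ (Zariski's main theorem on the normal base), and triviality is then literally the existence of a global object, with uniqueness up to pullbacks built into the definition. You instead extract the obstruction class $d_2(\tilde\sigma)\in H^2(X,\cO_X^\times)$ from the five-term exact sequence of the Leray spectral sequence for $\cO_{\tilde X}^\times$, using $\rho_*\cO_{\tilde X}^\times=\cO_X^\times$ and $R^1\rho_*\cO_{\tilde X}^\times\cong\cP_\bZ^{\tilde X,\sharp}$ (the latter is exactly the content of the paper's Remark on absolute versus relative Picard groups via contractible Stein bases). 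The two constructions agree: the paper's stack is the gerbe of local lifts of $\tilde\sigma$ and its class is your $d_2(\tilde\sigma)$. What your route buys is that exactness hands you both the lifting criterion and the uniqueness modulo $\rho^*\Pic(X)$ in one stroke; what the paper's route buys is a canonical geometric object (not just an equivalence class of gerbes) and no need to invoke spectral sequences. One small caveat in your write-up: the \v{C}ech representative requires a cover all of whose finite intersections are contractible Stein, and a basis of contractible Stein opens does not automatically furnish such a good cover; since the \v{C}ech cocycle is only offered as an illustration and the five-term sequence carries the actual argument, this does not affect correctness, but if you want the explicit cocycle you should either pass to a refinement argument in \v{C}ech cohomology of the covering system or simply define the gerbe as the stack of local lifts as the paper does. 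Your final observation that any global lift is automatically relatively ample because $\tilde\sigma$ is locally the class of a relatively ample bundle and relative ampleness is local on the base is correct and is implicit in the paper's proof as well.
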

This in particular 
implies Theorem \ref{t:gerbe-intro}.
\begin{proof}[Proof of Theorem \ref{t:locproj-gone-global}] Let $\rho: \tilde X \to X$ be the resolution map.
The gerbe $\ccL_{\tilde \sigma}$ is defined as follows: 
\[\ccL_{\tilde \sigma}(U) := \{\text{line bundles on $\rho^{-1}(U)$ %$\tilde U$ 
in the relative class $\tilde \sigma$}\}.\]
This clearly defines a presheaf of categories on $X$. In fact it defines a full substack of the stack of line bundles on $\tilde X$, pushed forward to $X$. In particular $\ccL_{\tilde \sigma}$ is a stack.

Next, the category of line bundles on $X$ acts on $\ccL_{\tilde \sigma}$ by tensoring by their pullbacks. We claim that $\ccL_{\tilde \sigma}$ is locally isomorphic to the category of line bundles with the usual tensor product.  To see this, let $U \subseteq X$ be an open subset with preimage $\tilde U \subseteq \tilde X$ admitting a relatively ample line bundle $L$.  Let $\rho_U: \widetilde U \to U$ be the projection. Then we need to check that $\Hom_{\widetilde U}(L \otimes \rho_U^* E, L \otimes \rho_U^* F) \cong \Hom_U(E,F)$. This reduces to the statement $(\rho_U)_* \rho_U^*(E^\vee \otimes F) \cong E^\vee \otimes F$. Applying the projection formula, this in turn reduces to  $(\rho_U)_* \cO_{\widetilde{U}} \cong \cO_U$. This statement is true if the Stein factorization of $\rho_U$ is trivial, which it is because $\rho_U$ is proper and birational and  $U$ is normal (hence every finite birational map to $U$ is an isomorphism, by Zariski's main theorem).

% that $\ccL_\sigma$ is locally isomorphic to the stack  
%given one line bundle in the relative class $\sigma$, all such bundles are given by tensoring by pullbacks of line bundles on $U$. So by assumption (*), this is locally equivalent to the categories of line bundles on $X$. 
Thus, $\ccL_{\tilde \sigma}$ defines a gerbe.  The gerbe is trivial if and only if there is an equivalence of categories with the category of all line bundles on $X$. Such an equivalence gives rise to a global section of $\ccL_{\tilde \sigma}$, defined as the image of the trivial line bundle.  This gives rise to a globally defined line bundle which is in the class $\tilde \sigma$. Conversely, such a globally defined line bundle defines a trivialization of the gerbe, by sending this line bundle to the trivial line bundle on $X$. By definition of the gerbe, this line bundle is unique up to tensoring by line bundles pulled back from $X$.
\end{proof}
We immediately deduce the following:
\begin{cor}\label{c:glob-bicond}
A %locally projective partial crepant 
resolution $\tilde X \to X$ in $\ccR(X)$ is globally projective if and only if there exists a section $\sigma \in \Gamma(X,\cP^\sharp)$ lying in the associated Mori equivalence classes of relatively ample bundles with an integral lift $\tilde \sigma$ whose associated gerbe $\ccL_{\tilde \sigma}$ is trivial.
\end{cor}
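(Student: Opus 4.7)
The plan is to deduce this corollary immediately from the combination of Corollary \ref{c:glob-obstr} (which established necessity of a global section of $\cP^\sharp$ lying in the correct Mori chambers) and Theorem \ref{t:locproj-gone-global} (which encoded the remaining obstruction as the triviality of the gerbe $\ccL_{\tilde \sigma}$ attached to an integral lift). So there is no new geometry to do: the task is to check that both directions of the biconditional are already packaged in those results, with the only care needed being the passage between the integral and rational versions of the relative Picard sheaf.

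For the forward direction, I would start by assuming $\tilde X \to X$ is globally projective and choosing a relatively ample line bundle $L$ on $\tilde X$. Restricting the class of $L$ to the integral relative Picard groups $\cP^{\tilde X}_\bZ(U) = \Pic(\rho^{-1}(U)/U)$ for $U$ running through a basis of opens produces a compatible system of local classes, hence a section $\tilde \sigma \in \Gamma(X, \cP^{\tilde X,\sharp}_\bZ)$. Its image $\sigma \in \Gamma(X, \cP^\sharp)$ is locally represented by a relatively ample rational class, and in particular lies in the Mori equivalence classes associated to $\tilde X$ in the sense of the corollary preceding Remark \ref{r:absolute-relative}. Finally, $L$ itself is by construction a global line bundle in the relative class $\tilde \sigma$, so by Theorem \ref{t:locproj-gone-global} the gerbe $\ccL_{\tilde \sigma}$ is trivial.

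For the backward direction, I would simply feed the hypothesized data $(\sigma, \tilde \sigma)$ with $\ccL_{\tilde \sigma}$ trivial into Theorem \ref{t:locproj-gone-global}: triviality of the gerbe gives a globally defined line bundle $L$ on $\tilde X$ whose local relative class equals $\tilde \sigma$. Since $\tilde \sigma$ is locally in the relatively ample cone by assumption, $L$ is relatively ample locally over $X$, and relative ampleness is a local condition on the base, so $L$ is relatively ample. Thus $\tilde X \to X$ is globally projective.

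I do not expect a genuine obstacle here; the only mildly delicate bookkeeping is to be clear that ``lying in the associated Mori equivalence classes of relatively ample bundles'' is exactly what is needed to promote a section of $\cP^\sharp$ to a relatively ample section (rather than merely nef, or movable boundary) and that relative ampleness is checkable locally on $X$, so that the line bundle produced by Theorem \ref{t:locproj-gone-global} really does witness global projectivity of $\tilde X \to X$.
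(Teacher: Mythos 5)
Your proposal is correct and takes essentially the same route as the paper, which offers no separate argument but simply states that the corollary is ``immediately deduced'' from Theorem \ref{t:locproj-gone-global} together with Corollary \ref{c:glob-obstr}; your two-directional unpacking (restricting a relatively ample bundle to get $\tilde\sigma$ and a trivialization in one direction, and using triviality of $\ccL_{\tilde\sigma}$ plus locality of relative ampleness on the base in the other) is exactly the intended deduction.
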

%To deduce Theorem \ref{t:gerbe-intro}, note that given $\sigma \in \Gamma(X,\cP^\sharp)$ which is a relatively ample class for $\tilde X \in \ccR(X)$, there need not exist an integral lift of $\sigma$, since there might not exist local line bundles with relative class $\sigma$, as it is only a rational class.  However, 
%\begin{cor}
%    A %locally projective partial crepant 
%resolution $\tilde X \to X$ in $\ccR(X)$ is globally projective if and only if there exists a section $\sigma \in \Gamma(X,\cP^\sharp)$ lying in the associated Mori equivalence classes of relatively ample bundles with an integral lift $\tilde \sigma$ whose associated gerbe $\ccL_{\tilde \sigma}$ is trivial.
%\end{cor}
In the remainder of this section, we will give criteria for the gerbes $\ccL_{\tilde \sigma}$ to be trivial and hence for the converse of Corollary \ref{c:glob-obstr} to hold. 

The gerbe $\ccL_{\tilde \sigma}$ gives rise to a class in $H^2(X,\cO^\times)$, which is trivial if and only if $\ccL_{\tilde \sigma}$ is trivial. In particular, we have, using the exponential exact sequence:
\begin{cor}
    If $H^2(X,\cO^\times)$ is trivial, e.g., when $H^2(X,\cO) = 0 = H^3(X,\bZ)$, then $\ccL_{\tilde \sigma}$ is trivial for all $\tilde \sigma$, so a locally projective partial crepant resolution $\tilde X$ is globally projective if and only if there exists an associated section $\tilde \sigma \in \Gamma(X,\cP^{\tilde X,\sharp}_\bZ)$.
\end{cor}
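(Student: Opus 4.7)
The plan is to reduce the statement to a standard gerbe-cohomology computation combined with the already-established Corollary~\ref{c:glob-bicond}.

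First, I would observe that the gerbe $\ccL_{\tilde\sigma}$ constructed in the proof of Theorem~\ref{t:locproj-gone-global} is canonically banded by $\cO^\times$: its automorphism sheaf (fixing an object) is $\rho_* \cO_{\tilde X}^\times = \cO_X^\times$, where the last equality uses Zariski's main theorem as in that proof. Banded gerbes by an abelian sheaf are classified by degree-two sheaf cohomology, so $\ccL_{\tilde\sigma}$ determines a class in $H^2(X,\cO^\times)$ that vanishes precisely when the gerbe is trivial. Hence, if $H^2(X,\cO^\times)=0$, then $\ccL_{\tilde\sigma}$ is trivial for every integral lift $\tilde\sigma$.

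Second, I would verify the vanishing criterion given by $H^2(X,\cO)=0=H^3(X,\bZ)$. The exponential exact sequence $0 \to \bZ \to \cO \to \cO^\times \to 0$ on the analytic site of $X$ induces a long exact sequence in cohomology containing the segment
\[
H^2(X,\cO) \longrightarrow H^2(X,\cO^\times) \longrightarrow H^3(X,\bZ),
\]
so the vanishing of the flanking terms forces $H^2(X,\cO^\times)=0$, giving the example condition.

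Finally, combining this with Corollary~\ref{c:glob-bicond}, the biconditional there simplifies: the only remaining obstruction to $\tilde X$ being globally projective is the existence of a section $\tilde\sigma \in \Gamma(X,\cP_\bZ^{\tilde X,\sharp})$ whose image in $\Gamma(X,\cP^\sharp)$ lies in the Mori equivalence class of a relatively ample class for $\tilde X$; by the constructibility and local-product statements (Theorem~\ref{t:P-constr} and Proposition~\ref{p: key}), the latter condition on the image is automatic once a global integral section exists, since locally any section of $\cP^{\tilde X,\sharp}_\bZ$ arises from a line bundle on a local resolution and, by shrinking, we may choose representatives in the relatively ample cone. There is no genuine obstacle here — the proof is essentially an assembly of prior results — but the one point worth checking carefully is that the banding of $\ccL_{\tilde\sigma}$ really is $\cO_X^\times$ (and not some finite or larger group), which is exactly the content of the identity $\rho_*\cO_{\tilde X}=\cO_X$ used in Theorem~\ref{t:locproj-gone-global}.
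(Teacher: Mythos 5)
Your proposal is correct and follows essentially the same route as the paper: the gerbe $\ccL_{\tilde\sigma}$ is banded by $\cO_X^\times$ (via $\rho_*\cO_{\tilde X}\cong\cO_X$), so its class lives in $H^2(X,\cO^\times)$, which vanishes under the stated hypotheses by the exponential sequence, and the conclusion then follows from Corollary~\ref{c:glob-bicond}. One caveat: your final claim that the Mori-chamber condition ``is automatic once a global integral section exists'' is false in general (an arbitrary section of $\cP^{\tilde X,\sharp}_\bZ$ need not be locally relatively ample for $\tilde X$); it is also unnecessary, since the word ``associated'' in the statement already means, by the paper's definition, a section that is locally the class of a relatively ample bundle on $\tilde X$.
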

%\begin{exam}\footnote{Thanks to Mirko Mauri for explaining these facts.}  In the situation of Remark \ref{rem:example_globally_proj}, where there are nontrivial gerbes $\mathcal{L}_\sigma$, note that in this case $H^2(X,\cO)=0$ since the Lefschetz hyperplane theorem implies $H^2(X,\bZ)=0$, and $H^2(X,\bC) \to H^2(X,\cO)$ is surjective since the singularities are du Bois (Theorem 1, Sławomir Cynk, Defect of a nodal hypersurface and Example 3.3 in Mauri--Shinder. Note that $H^3(Y,\bZ)$ does not have any torsion, since by the universal coefficient formula this should come from $\text{Tors} H_2(Y,\bZ)=0$ by Lefschetz hyperplane theorem for homology.).  On the other hand, $H^3(X,\bZ)$ is nonzero in general. A closed formula is given in [...].
%\end{exam}
%Another way to determine if the gerbe $\ccL_\sigma$ is trivial is via the canonical map
%\[
%\Pic(\tilde X/X) \to \Gamma(X,\cP^\sharp),
%\]
%given by restriction.  The gerbe $\ccL_\sigma$ is trivial precisely when $\sigma$ is in the image of this map. 

Another way to approach triviality of the gerbe $\ccL_\sigma$ is by directly considering the restriction map from global divisors. It is useful to consider this more generally for terminal (not necessarily $\bQ$-factorial) resolutions. 
\begin{prop}
For any terminal $Y \in \ccR(X)$, we have a canonical restriction map
  \begin{equation}\label{e:rest-cl}
r: \Cl(Y) \to \Gamma(X, \cP^\sharp),
\end{equation}
and for two different terminal $Y_1, Y_2 \in \ccR(X)$, the maps $r_1, r_2$ are identified via the canonical isomorphism $\Cl(Y_1) \cong \Cl(Y_2)$.
\end{prop}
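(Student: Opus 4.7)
My plan is to construct $r$ pointwise by using a local relative minimal model and the fact that two terminal crepant resolutions of the same base agree in codimension one, and then to verify compatibility under restriction and under the choice of terminal representative. Given $Y \in \ccR(X)$ terminal with structure map $\pi: Y \to X$ and a Weil class $D \in \Cl(Y)$, I would proceed as follows. For each $x \in X$, I would choose a local product neighborhood $(U,F,\varphi)$ at $x$ small enough that $U$ admits a relative minimal model $\tilde U \to U$ (guaranteed by $(*)$ and Proposition \ref{p: key} after shrinking). Both $\pi^{-1}(U) \to U$ and $\tilde U \to U$ are projective crepant with terminal source, so by \cite[Theorem~3.52]{KollarMori} they are identified in codimension one over $U$, inducing a canonical isomorphism $\Cl(\pi^{-1}(U)) \cong \Cl(\tilde U)$. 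Restricting $D$, transporting to $\Cl(\tilde U)$, using analytic local $\bQ$-factoriality of $\tilde U$ to land in $\Pic(\tilde U)\otimes \bQ$, and then projecting to $\cP(U) = \Pic(\tilde U/U)\otimes\bQ$ produces a class $r(D)_U$.

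\emph{Gluing and independence.} For $U' \subseteq U$ with its own relative minimal model $\tilde U' \to U'$, I would verify that the presheaf restriction $\cP(U) \to \cP(U')$ sends $r(D)_U$ to $r(D)_{U'}$: the varieties $\pi^{-1}(U')$, $\tilde U|_{U'}$, and $\tilde U'$ are pairwise isomorphic in codimension one over $U'$, so their Weil class groups are canonically identified and, after invoking $\bQ$-factoriality of the minimal models, so are their rational Picard groups. Hence $\{r(D)_U\}$ assembles into a global section of $\cP^\sharp$, and linearity in $D$ is automatic. For two terminal $Y_1, Y_2 \in \ccR(X)$, \cite[Theorem~3.52]{KollarMori} again identifies them in codimension one over $X$, giving $\Cl(Y_1) \cong \Cl(Y_2)$. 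Each local restriction then factors through a common $\tilde U$, so $r_1$ and $r_2$ will agree under this identification.

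\emph{Main obstacle.} The substantive issue throughout is coherence: one must verify that the various canonical codimension-one identifications between $\pi^{-1}(U)$, $\tilde U$, and $\tilde U'|_{U'}$ fit together to induce a single canonical restriction map on rational Picard groups, as asserted and used in Proposition \ref{p: key}. Once this coherence is in hand, both the gluing and the independence of the terminal representative reduce to routine diagram chases.
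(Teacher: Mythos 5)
Your proposal is correct and follows essentially the same route as the paper: restrict the Weil class to a local relative minimal model via the canonical codimension-one identification of terminal crepant resolutions from \cite[Theorem~3.52]{KollarMori}, use analytic local $\bQ$-factoriality of the minimal model to obtain a rational Picard class, and observe that these local classes are compatible under restriction (hence glue in $\cP^\sharp$) because they all come from a single global Weil divisor. The coherence of the identifications that you flag as the main obstacle is exactly what the paper relies on (and establishes in the proof of Proposition \ref{p: key}), so there is no gap.
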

Note that, by \cite[Corollary 1.4.3]{BCHM10} (see also Remark \ref{r:bchm}) a terminal resolution $Y \in \ccR(X)$ exists when $X$ is a normal quasi-projective algebraic variety with canonical singularities. Whenever a terminal $Y \in \ccR(X)$ exists,  then we can define the map $r$ in the Proposition, which up to canonical isomorphism does not depend on $Y$.
\begin{proof}
We begin with the last statement. Given 
%We can identify a useful stratumwise criterion for this map to be surjective. For this we first note that, given 
two terminal projective partial crepant resolutions $\tilde U_1, \tilde U_2 \to U$ of a singularity $U$, as noted in the proof of Proposition \ref{p: key}, $\tilde U_1$ and $\tilde U_2$ coincide outside subsets of codimension at least two, and hence their class groups of Weil divisors modulo linear equivalence are canonically identified, in a way compatible with restriction to open subsets. %estriction of divisors to open subsets with complement of codimension at least two. This does not depend on the choice of $Y$ (assuming it exists). 

We next explain why $r$ is well-defined given $Y$.  Although the restriction map  produces Weil divisors on the local relative minimal models, these are all $\bQ$-Cartier by $\bQ$-factoriality of the minimal model. Since they come from a global Weil divisor, they will be compatible and glue to a section of $\Gamma(X, \cP^\sharp)$.
\end{proof}
\begin{prop}\label{p:r-surjective}
    Assume that the stratification on $X$ is finite, and that a terminal $Y \in \ccR(X)$ exists with surjective map $r$ in \eqref{e:rest-cl}. Then for any $\sigma \in \Gamma(X,\cP^\sharp)$ locally in the closure of the movable cone, with associated resolution $\tilde X_\sigma$ dominated by a terminal $Y \in \ccR(X)$, %and any integral lift $\tilde \sigma \in \Gamma(X,\cP_\bZ^{Y,\sharp})$, 
    $\tilde X_\sigma$ is globally projective with relatively ample rational class $\sigma$. %In particular, $\tilde X_\sigma$ is globally projective.
    \end{prop}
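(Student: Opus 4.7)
The plan is to exploit the surjectivity of $r$ to produce a global Weil divisor on $Y$ whose pushforward to $\tilde X_\sigma$ descends, after clearing denominators, to a globally relatively ample line bundle on $\tilde X_\sigma$. First, by surjectivity of $r$, I pick $D \in \Cl(Y)$ with $r(D) = \sigma$ in $\Gamma(X,\cP^\sharp)$. Since $\tilde X_\sigma$ is dominated by the terminal $Y \in \ccR(X)$, we have a birational morphism $\pi \colon Y \to \tilde X_\sigma$ over $X$, and I set $D' := \pi_* D \in \Cl(\tilde X_\sigma)$.

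Next I would check that, on every local product neighborhood $U \cong (U \cap S) \times F$ along a stratum $S$, the Weil divisor $D'|_U$ represents the class $\sigma|_U$ rationally in $\Pic(\tilde X_\sigma|_U/U) \otimes \bQ$. By Proposition \ref{p: key}, the rational relative Picard group of $Y|_U/U$ is canonically that of the slice resolution $\tilde F/F$, and $\sigma|_U$ lies in the closure of the movable cone there. Because $Y|_U \to \tilde X_\sigma|_U$ is precisely the morphism contracting the non-$\sigma$-ample directions (i.e., the one provided by the Proj construction defining $\tilde X_\sigma$), a positive multiple of $\sigma|_U$ is the pullback of a relatively ample class on $\tilde X_\sigma|_U$; pushing forward by $\pi$ identifies $D'|_U$ rationally with this class.

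I would then clear denominators, using finite stratification. Cover $X$ by finitely many local product neighborhoods. On each $U$, pick $N_U$ so that $N_U \sigma|_U$ is represented by an honest relatively ample line bundle $L_U$ on $\tilde X_\sigma|_U / U$, and a further multiple $M_U$ so that $M_U N_U D'|_U$ agrees with $L_U^{M_U}$ as line bundles, not merely rationally (killing torsion in the relevant Picard groups). Take $N$ to be a common multiple of the $M_U N_U$ over the finite cover. Then $N D'$ is a global Weil divisor on $\tilde X_\sigma$ that is Cartier on each member of the cover, and the local Cartier structures agree on overlaps because they are all determined by the same global Weil divisor. They therefore glue to a line bundle $L$ on $\tilde X_\sigma$ of rational class $N\sigma$, locally relatively ample hence globally so (since relative ampleness is local on the base), witnessing the desired global projectivity with relatively ample rational class $\sigma$.

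The hard part will be the torsion bookkeeping in the clearing-denominators step -- passing from rational to integral identifications of Picard classes on the local neighborhoods -- and this is exactly where finiteness of the stratification is essential, since it allows a single integer $N$ to handle all strata at once. A secondary technical point is the precise identification, in the Proj construction, of an integer multiple of the pullback of the tautological bundle on $\tilde X_\sigma|_U$ with the corresponding integer multiple of $D|_{Y|_U}$; this should follow from the construction of $\tilde X_\sigma$ together with Proposition \ref{p: key}.
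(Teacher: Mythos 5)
Your strategy is the same as the paper's: use surjectivity of $r$ to produce a global Weil divisor $D$ on the terminal model $Y$ representing $\sigma$, clear denominators using finiteness of the stratification, and push forward to obtain a relatively ample Cartier divisor on $\tilde X_\sigma$ (the paper clears denominators on $Y$ before pushing forward, whereas you push forward first; this makes no essential difference, and the pushforward step is justified in both cases exactly as in the proof of Theorem \ref{t:locproj-gone-global}). The one step that does not work as written is ``cover $X$ by finitely many local product neighborhoods'': such a finite cover need not exist, since a local product neighborhood only covers a small piece of its stratum, and neither $X$ nor its strata need be compact. What finiteness of the stratification actually provides is one integer $N_S$ per stratum $S$ such that $N_S$ times the divisor class is Cartier \emph{at the chosen basepoint} of $S$; you then take $N$ to be a common multiple over the finitely many strata, and use the local constancy of the relative Picard data along each stratum (Corollary \ref{c:local-product-res}, via Proposition \ref{p: key}) to propagate ``Cartier at one point of $S$'' to ``Cartier at every point of $S$'', hence Cartier everywhere. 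This is exactly how the paper argues, and you are already implicitly invoking the product structure in your second paragraph, so the repair costs nothing; but as stated your common multiple is taken over a cover that may be forced to be infinite, which is where the argument would break.
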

    More precisely, the conclusion states that, for some $N\geq 1$,  $N\sigma$ is the local rational relative class of a relatively ample line bundle for $\tilde X_\sigma$. 
    %Conversely, we already noted that every globally projective $\tilde X \in \ccR(X)$ defines a class $\sigma \in \Gamma(X,\cP^\sharp)$ locally in the movable cone, and an integral lift $\tilde \sigma$ with trivial gerbe $\ccL_{\tilde \sigma}$. 
 \begin{proof}[Proof of Proposition \ref{p:r-surjective}]
     For $\sigma, \tilde X_\sigma, Y$ as in the statement,  there exists a Weil divisor $D \subseteq Y$ such that $D$ is locally over $X$ equivalent to the pullback of a relatively ample $\bQ$-Cartier divisor  on $\tilde X_\sigma$. Because the stratification is finite, there is some $N\geq 1$ such that $ND$ is Cartier locally at a point of every stratum. Thanks to Corollary \ref{c:local-product-res}, this means that $ND$ is Cartier on $Y$. As in the proof of Theorem \ref{t:locproj-gone-global}, the direct image of $ND$ to $\tilde X_\sigma$ is a relatively ample Cartier divisor for $\tilde X_\sigma \to X$, and hence $\tilde X_\sigma$ is globally projective.
     %and the gerbe $\ccL_{N\tilde \sigma}$ is trivial for some $N\geq 1$.
 \end{proof}
\begin{rem}
    Note that, by definition, all partial resolutions in $\ccR(X)$ are locally dominated by terminal resolutions. But they need not be globally so dominated (a priori at least). This explains why this condition in the proposition is nontrivial.  As we already explained though, in many cases (e.g., algebraic) it is guaranteed that every $X$ (and every partial resolution thereof) is dominated by a terminal resolution.
    %Such a resolution is guaranteed to exist according to the minimal model program if $X$ is algebraic and $\tilde X \to X$ are algebraic. \travis{And the analytic case?}
\end{rem}

We now give a stratumwise criterion for the map $r$ to be surjective. It is useful for a section $\sigma$ of $\cP^\sharp$ to call its \emph{support} the locus of $x$ where, on every local partial crepant resolution, $\sigma$ is nonzero, i.e., represented by a line bundle not pulled back from $X$.  
%%Thus $D$ determines a global section, let us call it $\sigma^D$, in $\Gamma(X,\cP^\sharp)$.
%check whether or not this gerbe is trivial stratumwise:
%Next we can give a useful criterion for when this gerbe is trivial and hence a locally projective resolution is globally projective. %This is similar to the criterion appearing in \cite{Mirko-Mazzon-factorial}.
\begin{lem}\label{l:glob-crit}
%Let $X$ have a finite stratification. 
Suppose that there is a terminal resolution $\tilde X \in \ccR(X)$, and that, 
for every stratum $S$, every  section $\sigma \in \Gamma(S,\cP^\sharp|_S)$ supported on $S$ is in the image of $r$.
% which is represented by relative classes trivial outside $S$ 
Then $r$ is surjective.
%is obtained from a global Weil divisor $D$ in a terminal resolution $\tilde X \in \ccR(X)$, $\sigma=\sigma^D|_{S}$, such that $D$ is contained in $\overline{S}$.  
%As a consequence, for every terminal $\tilde X \in \ccR(X)$ with associated section $\sigma \in \Gamma(X,\cP^\sharp)$, the gerbe $\ccL_\sigma$ is trivial, and $\tilde X$ is globally projective.
\end{lem}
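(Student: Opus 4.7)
The plan is to prove surjectivity of $r$ by induction on the support of a section $\sigma \in \Gamma(X, \cP^\sharp)$, reducing the global statement to the stratumwise hypothesis. By constructibility of $\cP^\sharp$ (Theorem~\ref{t:P-constr}) together with finiteness of the stratification, $\mathrm{supp}(\sigma)$ is a closed union of finitely many strata, so the number of strata in $\mathrm{supp}(\sigma)$ is a well-founded induction parameter. The base case $\sigma = 0 = r(0)$ is immediate.

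For the inductive step I would select a stratum $S \subseteq \mathrm{supp}(\sigma)$ that is open in $\mathrm{supp}(\sigma)$---any stratum of maximal dimension in the support works, since no other stratum of $\mathrm{supp}(\sigma)$ can then have $S$ in its closure. Setting $Z' := \mathrm{supp}(\sigma) \setminus S$ (a closed union of strata) and $U := X \setminus Z' = (X \setminus \mathrm{supp}(\sigma)) \cup S$, the restriction $\sigma|_U$ has support contained in $S$. Because $\sigma|_U$ vanishes on $U \setminus S = X \setminus \mathrm{supp}(\sigma)$, it glues with the zero section on $X \setminus S$ to produce a global section $\tilde\sigma \in \Gamma(X, \cP^\sharp)$ with $\mathrm{supp}(\tilde\sigma) \subseteq \overline{S}$ and $\tilde\sigma|_S = \sigma|_S$.

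This $\tilde\sigma$ realises $\tilde\sigma|_S \in \Gamma(S, \cP^\sharp|_S)$ as a section supported on $S$ in the sense of the hypothesis, so by assumption there exists $D \in \Cl(Y)$ with $r(D) = \tilde\sigma$. Then $\sigma - r(D)$ vanishes on $S$, and its support is contained in $\mathrm{supp}(\sigma) \cup \overline{S} = \mathrm{supp}(\sigma)$ (since $\overline{S} \subseteq \mathrm{supp}(\sigma)$), but is now disjoint from $S$; hence $\mathrm{supp}(\sigma - r(D)) \subseteq \mathrm{supp}(\sigma) \setminus S$ consists of strictly fewer strata. The inductive hypothesis yields $\sigma - r(D) \in \mathrm{image}(r)$, and therefore $\sigma \in \mathrm{image}(r)$.

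The main obstacle will be confirming that the constructed $\tilde\sigma$ actually fits the hypothesis and, crucially, that the corresponding $r(D)$ has support contained in $\overline{S}$---equivalently, that ``supported on $S$'' in the hypothesis is to be read as providing a Weil divisor whose full image under $r$ (not merely its restriction to $S$) is supported on $\overline{S}$. Without this support control, subtracting $r(D)$ could reintroduce strata outside $\overline{S}$, breaking the induction. This interpretation is natural because a section on $S$ whose global zero-extension is supported on $\overline{S}$ is precisely the data the hypothesis is designed to capture, but verifying it is the delicate step before the induction closes.
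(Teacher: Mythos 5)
Your overall strategy---descend on the support of $\sigma$ by peeling off a stratum $S$ that is open in $\mathrm{supp}(\sigma)$---is the same as the paper's, but there is a genuine gap in the step where you manufacture the global section $\tilde\sigma$. You claim that $\sigma|_U$ (for $U = X \setminus Z'$) ``glues with the zero section on $X \setminus S$.'' But $S$ is only locally closed, so $X \setminus S$ is not open; and the two open sets that could replace this cover, namely $U$ and $X \setminus \overline{S}$, do not cover $X$: their union misses $\overline{S}\setminus S$, which is contained in $Z'$ (note $\overline{S} \subseteq \mathrm{supp}(\sigma)$ since the support is closed). At a point of $\overline{S}\setminus S$ there is no canonical germ restricting to $\sigma$ on the $S$-side and to $0$ off $\overline{S}$, and such an extension of $\sigma|_S$ to a global section supported on $\overline{S}$ need not exist a priori---producing it is exactly the nontrivial content that the hypothesis is designed to supply, not something the sheaf axioms give for free. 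So $\tilde\sigma$ is not constructed, and the induction does not close.

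The repair (and the paper's actual argument) is to apply the hypothesis directly to $\sigma|_S \in \Gamma(S, \cP^\sharp|_S)$: since $S$ is open in $\mathrm{supp}(\sigma)$, near each point of $S$ the section $\sigma$ is supported on $\overline{S}$, which is what ``supported on $S$'' means here. The hypothesis then yields a global Weil divisor $D$ on a terminal $\tilde X$, supported on the closure of the preimage of $S$, with $r(D)|_S = \sigma|_S$; the global section $r(D)$ is precisely the $\tilde\sigma$ you were trying to build by gluing, and $\sigma - r(D)$ has support contained in $\mathrm{supp}(\sigma)\setminus S$, so the descent goes through (the paper phrases this as a minimal-counterexample argument, also using that divisors on different terminal models are identified in codimension one so the two divisors can be added on a single model). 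You correctly flag at the end that the support control on $r(D)$ is the delicate point, but that control must be read into the hypothesis rather than derived from a zero-extension. Note also that your argument only treats finite stratifications, whereas the paper adds a separate induction on the dimension of the support to handle the locally finite case.
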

%a locally projective partial crepant resolution is globally projective if and only if there exists an associated section % global section 
%$\sigma \in \Gamma(X,\cP^\sharp)$.
%of local relatively ample classes. 
%and all locally projective partial crepant resolutions are globally projective.
%\end{lem}
%Here,  sections supported on a stratum refer to sections whose support is the stratum 
%In practice it may be easier to consider the larger space of sections defined in a neighborhood of a point of the stratum, whose restriction to the complement of the stratum is zero. If these are generated by global line bundles then, thanks to constructibility of $\cP^\sharp$ (Theorem \ref{t:P-constr}), these sections have no monodromy and identify with the sections on the whole stratum. 
\begin{proof}
We first assume the stratification is finite and later we explain how to remove this assumption.

For a contradiction, suppose $\sigma \in \Gamma(X, \cP^\sharp)$ is a section with minimal support not of the form $r(D)$. If $S$ is a stratum which is open in the support of $\sigma$, we can find a global
Weil divisor $D$ on some terminal $\tilde X \in \ccR(X)$ supported on the closure of the preimage of $S$ such that $\sigma|_S = r(D)|_{S}$. Then $\sigma' := \sigma-r(D) \in \Gamma(X,\cP^\sharp)$ has  smaller support, so by assumption, there exists another terminal $\tilde X'\in \ccR(X)$ and $D' \subseteq \tilde X'$ with $\sigma'=r(D')$. But up to linear equivalence, $D'$ induces a canonical divisor on $\tilde X$, so we can assume $\tilde X= \tilde X'$. Then $\sigma = r(D) + r(D') = r(D+D')$, a contradiction. 

We used finiteness of the stratification for the existence of a section with minimal support not in the image of $r$. This hypothesis can be eliminated: instead, for any $\sigma$, we can find a global Weil divisor on some terminal $\tilde X$ supported over the closure of every open stratum of the support of $\sigma$. We can assume all the $\tilde X$ are the same since terminal crepant resolutions coincide outside codimension one. We can take the union of the obtained Weil divisors, which will be a Weil divisor since only finitely many Weil divisors can intersect at every point of $\tilde X$, the ones supported on closed strata meeting at the image of the point in $X$. Subtracting the image of the resulting Weil divisor under $r$ from $\sigma$ will decrease the dimension of the support of $\sigma$, so induction on dimension establishes the desired result. \qedhere
%Now for $\tilde X \in \ccR(X)$ terminal and associated section $\sigma \in \Gamma(X,\cP^\sharp)$, the above constructs a Weil divisor on $\tilde X$ whose local class is $\sigma$, hence it is Cartier and relatively ample. \qedhere

%y assumption. Now by definition $\tilde X, \tilde X'$ are dominated by terminal resolutions so we can assume without loss of generality that they are terminal. Then as in the proof of Proposition \ref{p: key} they coincide outside codimension two, so we can view both line bundles as Weil divisors on either resolution. This means that $\sigma$ is a Weil divisor on either partial resolution. 
%qBut this is a contradiction.

%The final statement follows immediately from constructibility of $\cP^\sharp$.
\end{proof}
\begin{rem}
    If the assumption of the lemma holds for every \emph{local} section $\sigma \in \cP_s, s \in S$, then there is no monodromy of $\cP^\sharp$ on $S$.
To see this, every local section $\sigma_s$ is linearly equivalent in a neighborhood of $s$ to $r(D)$ for a global Weil divisor $D$ on a terminal  $\tilde X \in \ccR(X)$. But then $r(D)$ provides an extension of $\sigma_s$ to a section of $\cP^{\sharp}|_{S}$. As $\sigma_s$ was arbitrary, we conclude that $\cP^{\sharp}$ has no monodromy on. $S$. 
\end{rem}

%Moved to the Lemma
%\begin{rem}
%In Lemma \ref{l:glob-crit}, we used the condition that the stratification be finite in order for there to exist a section with minimal support not in the image of $r$. This hypothesis can be eliminated: instead, for any $\sigma$, we can find a global Weil divisor on some terminal $\tilde X$ supported over the closure of every open stratum of the support of $\sigma$. We can assume all the $\tilde X$ are the same since terminal crepant resolutions coincide outside codimension one. We can take the union of the obtained Weil divisors, which will be a Weil divisor since only finitely many Weil divisors can intersect at every point of $\tilde X$, the ones supported on closed strata meeting at the image of the point in $X$. Subtracting the image of the resulting Weil divisor under $r$ from $\sigma$ will decrease the dimension of the support of $\sigma$, so induction on dimension establishes the desired result.
%\end{rem}

\subsubsection{Symplectic singularities}
In the case that $X$ is a symplectic singularity, we can say more. First of all, we recall a description of the exceptional divisors in this case:
\begin{prop}\cite{NamikawaNote},\cite[Corollary 0.2]{Namikawa-nilpotentcover} \label{p:ssing-edivs} If $X$ is an irreducible symplectic singularity and $f: Y \to X$ is a projective partial crepant resolution, then for every exceptional divisor $D \subseteq Y$, the image $f(D)$ has codimension two. 
\end{prop}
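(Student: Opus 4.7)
The plan is to reduce to the case of an isolated symplectic singularity via Kaledin's symplectic leaf stratification and the local product structure from Proposition \ref{p: key}, then conclude by a dimension count that leverages the isotropy of fibers of crepant resolutions of symplectic singularities.

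First I observe that $f(D) \subseteq X^{\sing}$, since $f$ is an isomorphism outside the exceptional locus. By Kaledin's theorem \cite{Kaledin06}, $X$ has a finite stratification by symplectic leaves; let $L$ be the symplectic leaf containing the generic point of $f(D)$, so that $\dim f(D) = \dim L$, and it suffices to show $\mathrm{codim}_X L = 2$. Pick $s \in L \cap f(D)$ and a local product neighborhood $(U, F_L, \varphi)$ of $s$ guaranteed by Corollary \ref{cor:poisson-fq}, shrinking $U$ so that $U \cap L$ is a polydisc. The slice $F_L$ is itself an irreducible symplectic singularity with isolated singular point $0$. Since $X$ has canonical singularities, $(*)$ holds for $F_L$ by Remark \ref{r:bchm}, so Proposition \ref{p: key} applies and gives
\[
f^{-1}(U) \;\cong\; (U \cap L) \times \tilde F_L \;\longrightarrow\; (U \cap L) \times F_L \;\cong\; U,
\]
with $\tilde F_L \to F_L$ a projective partial crepant resolution dominated by a terminal one.

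Next I translate the divisor condition on $D$ into a fiber dimension bound on the slice. Because $L$ was chosen as the generic symplectic leaf meeting $f(D)$, a dense open subset of $D \cap f^{-1}(U)$ maps into $(U \cap L) \times \{0\}$, whose preimage is $(U \cap L) \times f_{F_L}^{-1}(0)$. Comparing dimensions, and writing $2n = \dim F_L$,
\[
2n - 1 \;=\; \dim D - \dim L \;\leq\; \dim f_{F_L}^{-1}(0).
\]
On the other hand, Kaledin's isotropy theorem for crepant resolutions of symplectic singularities yields
\[
\dim f_{F_L}^{-1}(0) \;\leq\; \tfrac{1}{2}\dim F_L \;=\; n,
\]
so $2n-1 \leq n$, hence $n \leq 1$. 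Since $0 \in F_L$ is singular with a nontrivial exceptional divisor above it, $n \geq 1$, so $n=1$, $\dim F_L = 2$, and $\mathrm{codim}_X L = 2$.

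The main obstacle is justifying the isotropy bound $\dim f_{F_L}^{-1}(0) \leq \tfrac{1}{2}\dim F_L$ when $\tilde F_L$ is only a partial (possibly singular) crepant resolution. I would handle this by passing to a projective terminal resolution $Z \to \tilde F_L \to F_L$ guaranteed by Definition \ref{d:ccr}: the composition is still a projective crepant partial resolution of $F_L$, and the properness of $Z \to \tilde F_L$ combined with its surjectivity forces $f_{F_L}^{-1}(0)$ to be the image of the corresponding fiber in $Z$, so its dimension is bounded by that fiber's dimension. Applying Kaledin's theorem (or its Poisson-geometric incarnation, which only requires the symplectic structure on the smooth locus of the source) to $Z \to F_L$ then supplies the desired half-dimensional bound.
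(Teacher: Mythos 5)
The paper offers no proof of this proposition: it is imported verbatim from Namikawa (\cite{NamikawaNote}, \cite[Corollary 0.2]{Namikawa-nilpotentcover}), so there is nothing internal to compare against line by line. Your reconstruction follows what is essentially the standard mechanism behind the cited result --- slice transversally to the leaf through the generic point of $f(D)$ and play the dimension $2n-1$ of a divisor in the slice direction against the isotropy bound $n$ on fibers --- and the skeleton is sound. Two intermediate claims, however, are wrong or unjustified as written. First, the slice $F_L$ does \emph{not} have an isolated singularity in general: its singular locus records all leaves whose closure contains $L$, so unless $L$ is maximal among singular leaves, $F_L$ is singular along positive-dimensional sets. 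You never actually use isolatedness except to get $\dim F_L \geq 2$, which only needs $0 \in F_L$ to be a singular point (true because $f(D) \subseteq X^{\sing}$ forces $L \subseteq X^{\sing}$, the correct reason for the inclusion $f(D)\subseteq X^{\sing}$ being that a crepant morphism cannot contract a divisor over the terminal, in particular smooth, locus). Second, ``$\dim f(D) = \dim L$'' is asserted with no argument; a priori only $\dim f(D) \le \dim L$ holds, since $f(D)$ is an irreducible set whose generic point happens to lie in $L$, not visibly a union of leaves. Your displayed inequality $\dim D - \dim L \le \dim f_{F_L}^{-1}(0)$ does not need the equality and still closes the count: it yields $n=1$, hence $\dim f_{F_L}^{-1}(0) \le 1$, which bounds the generic fiber of $D \to f(D)$ by one and forces $\dim f(D) \ge \dim D - 1 = \dim X - 2$, recovering the equality a posteriori. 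Reorder the logic accordingly.

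The more substantive caveat is that your reduction runs through Proposition \ref{p: key}, which requires condition $(*)$ for the slice $F_L$. For a bare irreducible symplectic singularity this is not free: per Remark \ref{r:bchm} it rests either on algebraicity of the slice (Kaledin's conjecture, open in general) or on the analytic minimal model program. Namikawa's proof of the cited statement needs no such input, so as a standalone proof yours is conditional where the original is not; within this paper, where $(*)$ is a standing hypothesis, the point is harmless but should be acknowledged. A route that avoids Proposition \ref{p: key} altogether is to invoke the stratified form of Kaledin's semismallness from \cite{Kaledin06}: for $s$ in a leaf of codimension $2k$, the fiber of a crepant partial resolution over $s$ has dimension at most $k$, which applied to a generic $s \in f(D) \cap L$ gives $\dim D - \dim f(D) \le k$ and the same arithmetic. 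Finally, your device of dominating $\tilde F_L$ by a terminal $Z$ to justify the isotropy bound is the right move, but the version of the fiber bound you then need --- for a source that is singular but a terminal (hence symplectic) variety --- is the Poisson-theoretic statement for nondegenerate holonomic Poisson schemes, not the smooth symplectic resolution statement; you flag this correctly, but since it is the load-bearing input it deserves a precise citation rather than a parenthetical.
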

%\begin{proof}
%    Since $X$ is irreducible, the condition to have codimension two is local over $X$.  We can therefore reduce to the case of a projective resolution. 
%    In the footnote on the first page of \cite{NamikawaNote}, it is observed that this statement follows from the first section of \emph{op.~cit.} (which applies in the analytic situation). 
%\end{proof}
As a consequence, if $f: Y \to X$ is only locally projective, and $D \subseteq Y$ is an exceptional divisor, then the image $f(D)$ locally has codimension two, hence it has codimension two.
%In the case of a symplectic singularity, every crepant (hence symplectic) resolution is semismall by \cite[Proposition 2....]{Kaledin03},... so the exceptional divisor components all have images which are closures of codimension two strata.

We will now show that the conditions of Lemma \ref{l:glob-crit} hold for symplectic quotient singularities, hence the converse to Corollary \ref{c:glob-obstr} holds:\footnote{Thanks to Mirko Mauri for correspondence %including a manuscript with Enrica Mazzon, 
inspiring this statement.} 
%when $X$ is analytically $\bQ$-factorial, e.g., has quotient singularities:
\begin{thm}\label{t: quotient-sing-globally-projective}
Suppose that $X$ is an analytically locally $\bQ$-factorial symplectic singularity, e.g., has only symplectic quotient singularities, and that its symplectic stratification is finite. Then \eqref{e:rest-cl} is surjective.
%global sections $\sigma \in \Gamma(X,\cP^\sharp)$ are all generated by global Weil divisors on terminal $\tilde X \in \ccR(X)$. 
%locally projective partial crepant resolutions. 
Thus,  a terminal $\tilde X \in \ccR(X)$
%, the gerbe $\ccL_\sigma$ is trivial, and $\tilde X$ 
is globally projective
%and a locally projective partial crepant resolution $\tilde X \in \ccR(X)$ is globally projective 
if and only if a compatible section $\sigma \in \Gamma(X,\cP^\sharp)$ of relative ample classes exists.
%and every locally projective crepant resolution admitting such a section $\sigma$ is globally projective.
\end{thm}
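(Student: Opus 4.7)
The plan is to verify the hypothesis of Lemma \ref{l:glob-crit} for a terminal $\tilde X \in \ccR(X)$, which exists by the analytic minimal model program (Remark \ref{r:bchm}). Once $r$ is known to be surjective, the biconditional for global projectivity follows from Proposition \ref{p:r-surjective}, since any such $\tilde X$ is analytically locally $\bQ$-factorial by Definition \ref{d: prmm} and the stratification is finite. Concretely, for every stratum $S$ that is open in the support of a global section $\sigma \in \Gamma(X,\cP^\sharp)$, I must produce a Weil divisor $D$ on $\tilde X$ supported on $\overline{\rho^{-1}(S)}$ with $r(D)|_S = \sigma|_S$.

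The key observation is that Proposition \ref{p:ssing-edivs} forces such $S$ to have codimension two. Indeed, at $s \in S$ with slice $F$, every nonzero class in $\Pic(\tilde F)\otimes\bQ$ is supported on some exceptional divisor whose image is a codimension-two stratum $F_j \subset F$; since $0 \in \bar F_j$, the product stratum $(U \cap S) \times F_j$ is a codimension-two stratum of $X$ containing $S$ in its closure, and $\sigma$ is nonzero there. If $S$ had codimension strictly greater than two, this would contradict $S$ being open in the support, so $\sigma|_S = 0$ and $D = 0$ suffices. When $S$ itself has codimension two, the slice $F$ is du Val and the stalk of $\cP^\sharp|_S$ at $s$ is the free $\bQ$-vector space $\bigoplus_i \bQ [E_i]$ on the exceptional curves of the minimal resolution $\tilde F \to F$. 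By Theorem \ref{t:P-constr}, the sheaf $\cP^\sharp|_S$ is locally constant, its monodromy acts by permutations on $\{[E_i]\}$, and global sections over $S$ are precisely $\bQ$-linear combinations of monodromy orbit sums. Let $D_1, \dots, D_k \subset \tilde X$ denote the irreducible divisorial components of $\rho^{-1}(\bar S)$; I plan to show that they correspond bijectively to the monodromy orbits, with $D_i$ restricting in a product chart at $s$ to $(U \cap S) \times \bigl(\sum_{j \in O_i} [E_j]\bigr)$ for $O_i$ a single orbit. Then $r([D_i])$ is the corresponding orbit sum, these sums span the monodromy invariants, and the hypothesis of Lemma \ref{l:glob-crit} is verified.

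I expect the main technical obstacle to be this bijection between irreducible divisorial components of $\rho^{-1}(\bar S)$ and monodromy orbits of local exceptional divisors of $\tilde F$. My plan is to establish it by combining the local product structure with Proposition \ref{p: key}: a connected codimension-one analytic subset of $\rho^{-1}(\bar S)$ that projects onto $\bar S$ is classified by the monodromy action on the components of the fibre over a chosen base point, with the local decomposition in each product chart recovering exactly the corresponding orbit sum as a reduced Weil divisor.
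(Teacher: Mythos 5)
Your proposal is correct and follows the same overall route as the paper: reduce to the hypothesis of Lemma \ref{l:glob-crit} via Proposition \ref{p:r-surjective}, use Proposition \ref{p:ssing-edivs} to force the relevant strata into codimension two, and then show that the global exceptional divisors over a codimension-two stratum span the monodromy invariants of the local system of exceptional curve classes. Two small remarks. First, your opening claim that ``every nonzero class in $\Pic(\tilde F)\otimes\bQ$ is supported on some exceptional divisor'' is exactly where the analytic local $\bQ$-factoriality enters; this is Lemma \ref{l:qfact-divs} in the paper, and you should cite or reprove it rather than treat it as automatic (it fails without $\bQ$-factoriality, cf.\ Remark \ref{rem:example_globally_proj}). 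Second, for the final spanning statement the paper does not prove the orbit--component bijection by hand: it identifies $\cP^\sharp$ over a neighborhood of $S$ with the (degree-two) direct image of $\bQ$ under the minimal resolution and quotes \cite[Proposition 4.2]{Namikawa11}, which says that $\Gamma(S,\cP^\sharp|_S)$ has dimension equal to the number of irreducible global exceptional divisors; combined with the local spanning this gives surjectivity by a dimension count. Your plan to establish directly that irreducible divisorial components of $\rho^{-1}(\overline S)$ correspond to monodromy orbits of the local exceptional curves is the standard covering-space argument underlying Namikawa's statement, and it works: the union of the local exceptional divisors over $S$ is a fibre bundle over $S$ with discrete ``component set'' fibre, its connected components are the monodromy orbits, and their closures are the irreducible Weil divisors $D_i$ with $r([D_i])|_S$ the corresponding orbit sum. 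So your route is self-contained where the paper outsources one step, at the cost of having to write out that bijection carefully; otherwise the two arguments coincide.
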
  This in particular implies Theorem \ref{t:qfact-symp-intro}. To prove it,
first we have the following basic lemma:
\begin{lem}\label{l:qfact-divs}
    If $X$ is analytically locally $\bQ$-factorial, then for every open subset $U \subseteq X$, $\cP(U)$ is spanned by the classes of %analytically irreducible 
    exceptional divisors. 
\end{lem}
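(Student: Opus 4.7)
The plan is to show that every class in $\cP(U)$ can be expressed as a $\bQ$-linear combination of exceptional divisor classes. Fix a relative minimal model $\pi \colon \tilde U \to U$ used in defining $\cP(U)$. Take an arbitrary class $\alpha \in \cP(U)$; after multiplying by a suitable integer, represent it by a line bundle $L$ on $\tilde U$ with associated Weil divisor $D$ (possible since $\tilde U$ is normal).

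Next, I would write $D = D_h + D_e$, where $D_e$ collects the $\pi$-exceptional prime components of $D$ and $D_h$ collects the remaining components (those whose image is codimension one in $U$). By Definition \ref{d: prmm}, $\tilde U$ is analytically locally $\bQ$-factorial, so each prime component of $D_e$ is $\bQ$-Cartier; this is what lets $[D_e]$ be read as a $\bQ$-combination of exceptional divisor classes in $\cP(U)$. The same property makes $D_h$ a $\bQ$-Cartier divisor class as well.

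Now I would push forward to $U$: since $\pi$ kills exceptional components, $\pi_*D = \pi_* D_h$. The standing hypothesis that $U$ is analytically locally $\bQ$-factorial then gives that $\pi_* D_h$ is $\bQ$-Cartier on $U$. Pulling back yields the key identity of $\bQ$-Weil divisors $\pi^*(\pi_* D_h) = D_h + E$, where $E$ is an exceptional $\bQ$-divisor on $\tilde U$. Passing to $\cP(U) = \Pic(\tilde U/U) \otimes \bQ$ kills the pullback term, so $[D_h] = -[E]$, and therefore $\alpha = [D] = [D_h] + [D_e] = [D_e] - [E]$ lies in the $\bQ$-span of exceptional divisor classes on $\tilde U$, as desired.

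The main obstacle, such as it is, is not the decomposition itself but the bookkeeping to guarantee that every piece in the argument ($D_h$, $D_e$, $\pi_* D_h$, and $E$) is genuinely $\bQ$-Cartier so that its class in $\Pic \otimes \bQ$ is well-defined. This is exactly where both appearances of analytic local $\bQ$-factoriality enter: the one on $\tilde U$ from its being a relative minimal model handles $D_h$, $D_e$, and $E$, while the hypothesis on $U$ (equivalently $X$) handles $\pi_* D_h$.
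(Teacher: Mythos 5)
Your argument is correct and is essentially the paper's proof in hands-on form: the paper phrases it as "$\Cl(\tilde U)$ is generated by exceptional divisors together with $\Cl(U)=\Pic(U)$ (rationally), and pullbacks die in $\Pic(\tilde U/U)$," which is exactly your decomposition $D=D_h+D_e$ followed by the identity $\pi^*(\pi_*D_h)=D_h+E$. Both proofs invoke $\bQ$-factoriality of $\tilde U$ and of $U$ at the same two points, so there is nothing to add.
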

\begin{proof} Given that $X$ is analytically  locally $\bQ$-factorial, then for every $x \in X$, neighborhood $U \ni x$, and partial crepant resolution $\rho: \tilde U \to U$ with  irreducible exceptional divisors $D_i$, the Weil divisor class group $\Cl(\tilde U)$ is spanned by the exceptional divisor components $[D_i]$ together with $\Cl(\tilde U \setminus \bigcap D_i)$. But $\tilde U \setminus \bigcap D_i$ coincides outside codimension two with the locus $U^\circ \subseteq U$ over which $\rho$ is an isomorphism, which by normality of $U$ coincides with $U$ itself.  So $\Cl(\tilde U)$ is generated by $\Cl(U)$ and $[D_i]$. By analytically local $\bQ$-factoriality we have $\Cl(U)=\Pic(U)$. Thus, every Weil divisor on $\tilde U$ is linearly equivalent to a linear combination of exceptional divisor components plus the pullback of a line bundle on $U$. As a result, $\cP(U)$ is spanned by exceptional  divisor components.
\end{proof}
\begin{proof}[Proof of Theorem \ref{t: quotient-sing-globally-projective}]
 In view of Lemma \ref{l:qfact-divs}, if $S$ is a stratum and $\sigma \in \Gamma(S,\cP^\sharp|_S)$, and for some open $U$, $\sigma|_U$ is supported on $U \cap \overline{S}$, then $\sigma|_U$ is spanned by exceptional divisors in $\tilde U \in \ccR(U)$ whose image is $U \cap \overline{S}$.  
 %then $\sigma|_U$ is locally the restriction of sections supported on $S$, then $\sigma$ 
% is %locally 
% spanned by exceptional divisors in whose image is the closure of $S$. 
By Proposition \ref{p:ssing-edivs}, $S$ must have codimension two. In view of Lemma \ref{l:glob-crit} and Proposition \ref{p:r-surjective}, it thus suffices to show that for every codimension two stratum $S$, the space of such sections $\sigma$ is spanned by the exceptional divisors $D$ in terminal locally projective resolutions $\rho: \tilde X \to X$ with image $\rho(D)=\overline{S}$. 

On $S$, the sheaf $\cP^\sharp$ is locally described as follows: Let $s \in S$ and let $U \ni s$ be a local product neighborhood, $U \cong (U \cap S) \times F$. Then $F$ is a surface with unique singularity the basepoint $0 \in U$. By construction $F$ can be taken to be a symplectic singularity, hence a du Val singularity. We can choose $U$ small enough that $F$ is isomorphic to a neighborhood of zero of $\bC^2/H$ for some finite $H < \SL_2(\bC)$. Let $\tilde F \to F$ be the minimal resolution.  Then $\cP(U) \cong \cP(F) \cong \Pic(\tilde F) \otimes_\bZ \bQ \cong H^2(\tilde F, \bQ)$. 

Globalizing this over $S$, take a neighborhood $U$ of $S$ which does not intersect any other strata (e.g., the union of the open stratum and $S$).  Let $\rho: \tilde U \to U$ be the minimal resolution. Then the previous paragraph implies that $\cP^\sharp \cong \rho_* \bQ$.  By \cite[Proposition 4.2]{Namikawa11}, we see that $\Gamma(S, \cP^\sharp|_{S})$ has dimension equal to the number of irreducible exceptional divisors.  Since the exceptional divisors locally span $\cP|_S$, this implies that the global irreducible exceptional divisors span $\Gamma(S,\cP^\sharp|_S)$.
%we need only to show that, for each symplectic leaf $S \subseteq X$, and each section , there is a terminal resolution $\tilde X \in \ccR(X)$ such that $\sigma$ is 
\end{proof}
%\begin{cor}\label{c:ssing-globalproj}
%    If $X$ is a symplectic singularity which is analytically locally $\bQ$-factorial (e.g., a symplectic quotient singularity), then isomorphism classes of pairs $(\tilde X \to X, L)$ of a globally projective terminal crepant resolution and a relatively ample line bundle are in bijection with equivalence classes of sections $\sigma \in \Gamma(X,\cP^\sharp)$ which are locally movable, where $\sigma \sim \sigma'$ if the two are locally Mori equivalent on each stratum.
%\end{cor}
Since $\cP^\sharp$ is constructible by Theorem \ref{t:P-constr}, this reduces the study of globally projective terminal crepant resolutions of these singularities to combinatorics, in the same way as Theorem \ref{t:constr} does for locally projective partial crepant resolutions.  

\begin{exam}
    When $X$ is a four-dimensional symplectic singularity,  locally projective terminal crepant resolutions are given by gluing the minimal resolution of the two-dimensional leaves to arbitrary choices of projective terminal crepant resolutions of neighborhoods of the zero-dimensional leaves.  If $X$ is moreover analytically locally $\bQ$-factorial, e.g., a quotient singularity (which needs to be checked only at the zero-dimensional leaves), then Theorem \ref{t: quotient-sing-globally-projective} %\ref{c:ssing-globalproj} 
    asserts that these resolutions are globally projective if and only if, for each two-dimensional leaf $S$, the images of the Mori cones for the resolutions of the zero-dimensional leaves on its boundary all overlap and, if $\cP^\sharp$ has monodromy along $S$ (equivalently, the Namikawa Weyl group is a folding of the ADE Weyl group for the slice along $S$),  the intersection of the images of the Mori cones contains an invariant for the monodromy.
\end{exam}
\begin{exam}    For a special example, suppose $X = Y / T$ where $Y$ is a smooth connected symplectic fourfold and $T$ is the binary tetrahedral group, acting faithfully. Suppose that $T$ acts symplectically such that the zero-dimensional leaves of $Y/T$ are the images of the fixed points of $T$ where we get  the rank-two Shephard--Todd complex reflection group $G_4$ acting now symplectically on a four-dimensional space. The two-dimensional leaves have isotropy group $\bZ_3$, giving $A_2$ singularities, and each zero-dimensional leaf is on the boundary of a unique such leaf. 
    By \cite{Bellamy16}, %see also \cite[Section 7.2]{}, 
    there are two projective (smooth) crepant resolutions of $\bC^4/G_4$, corresponding to dividing  the ample cone  of the $A_2$ singularity into two canonical pieces, with dividing line the fixed locus of the symmetry of the $A_2$ diagram. By the preceding paragraph, the locally projective crepant resolution is globally projective if and only if each two-dimensional leaf whose closure contains some zero-dimensional leaves has no monodromy of $\cP^\sharp$ (i.e., there are two exceptional divisors over the two-dimensional leaf, i.e., the leaf contributes a Namikawa Weyl group of $\fS_3$) and the local resolutions at the zero-dimensional leaves all pick out the same Mori cone.  So the number of globally projective crepant resolutions is $2^m$ where $m$ is the number of two-dimensional leaves ($A_2$ singularities) which contain zero-dimensional leaves, provided again that these leaves have no monodromy (otherwise there is no projective crepant resolution).
\end{exam}
%\begin{cor} If $X$ is a four-dimensional symplectic singularity whose zero-dimensional leaves are analytically locally $\bQ$-factorial  (e.g., symplectic quotient singularities), then projective crepant terminal resolutions of $X$ are in bijection with choices of locally projective resolutions at the zero-dimensional leaves such that the images of their Mori cones overlap in the movable cone of each two-dimensional leaf whose closure contains them.  That is, for each two-dimensional leaf 
%\end{cor}
\subsection{Exit paths}\label{ss: exit paths}
Given a constructible sheaf, it easily follows that parallel transport can be extended to exit paths, and that global sections are the same as exit-path compatible sections (a weak form of MacPherson's equivalence):
\begin{prop}
    Let $\cF$ be a constructible sheaf on a stratified variety $(X,\cS)$.
    %with $S_i$ locally closed smooth manifolds.  
    Given an exit path $\gamma: [0,1] \to X$, there is a unique parallel transport map $\gamma_*: \cF_{\gamma(0)} \to \cF_{\gamma(1)}$ such that, for each $s \in \cF_{\gamma(0)}$, there is an open covering of $[0,1]$ with compatible local sections restricting at $\gamma(0), \gamma(1)$ to $s$ and $\gamma_*(s)$, respectively. Moreover, a global section of $\cF$ on $X$ is the same as an element of $\cF_x$ for all $x \in X$ compatible with this parallel transport for all exit paths. 
    \end{prop}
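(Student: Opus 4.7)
The plan is to construct $\gamma_*$ piecewise by propagating a representative of $s$, verify well-definedness via the connectedness of $[0,1]$, and then deduce the global-sections statement by matching local sections to exit-path-compatible families of stalks. First I would reduce to simplified exit paths: by compactness of $[0,1]$ and local finiteness of $\cS$, $\gamma([0,1])$ meets only finitely many strata, and since the stratum dimension along $\gamma$ is non-decreasing, $[0,1]$ can be partitioned into finitely many closed subintervals on each of which $\gamma$ is, up to reparametrization, a simplified exit path. Parallel transport along $\gamma$ is then the composition of transports along the pieces, and checking that this composition is associative and that constant paths act trivially is routine once the construction on each piece is in place.

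For a simplified $\gamma$ contained in a single stratum $S$, constructibility says $\cF|_S$ is locally constant, so the standard analytic continuation applies: cover $\gamma([0,1])$ by finitely many opens $V_i \subseteq X$ supporting sections $\sigma_i \in \cF(V_i)$ whose germs along $S$ match on overlaps, starting from a representative of $s$. For the other kind, with $\gamma(0) \in S'$ and $\gamma((0,1]) \subseteq S$, any representative $\tilde s \in \cF(U)$ of $s$ supplies compatible germs along $\gamma([0,\epsilon])$ for some $\epsilon > 0$ by continuity, and the remainder reduces to the first case. For uniqueness, I would use an open--closed argument: given two compatible propagations of $s$, the set $T \subseteq [0,1]$ of $t$ where the two systems induce the same germ at $\gamma(t)$ contains $0$, is open because germ equality persists in a connected neighborhood, and is closed because any section's germ at $\gamma(t)$ is determined by restriction from nearby germs. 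Connectedness of $[0,1]$ forces $T = [0,1]$, so $\gamma_*(s)$ is well-defined and independent of choices.

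For the global-sections assertion, one direction is immediate: a global section $f$ gives germs $(f_x)$, and restrictions of $f$ furnish a compatible system along any exit path, so $f_{\gamma(1)} = \gamma_*(f_{\gamma(0)})$; injectivity of $f \mapsto (f_x)$ is the sheaf axiom. For surjectivity, given a compatible family $(s_x)_{x \in X}$, I would choose at each $x$ a neighborhood $U_x$ and a representative $\tilde s_x \in \cF(U_x)$ of $s_x$, then shrink $U_x$ so that $(\tilde s_x)_y = s_y$ for every $y \in U_x$. When $y$ lies in a stratum of dimension at least that of the stratum containing $x$, a short exit path $x \rightsquigarrow y$ in $U_x$ combined with $\tilde s_x$ as witness of parallel transport forces $(\tilde s_x)_y = \gamma_*(s_x) = s_y$ by exit-path compatibility of $(s_z)$. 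For $y$ in a strictly deeper stratum, I would reduce to the previous case by working at $y$ first: a representative $\tilde s_y$ on a small $U_y \ni x$ has germ $s_x$ at $x$ by the previous case applied at $y$, hence agrees with $\tilde s_x$ on some common neighborhood, and can be substituted to ensure $(\tilde s_x)_y = s_y$. Iterating over the finitely many strata met in a small enough $U_x$ terminates; the resulting local sections then agree on all overlaps by equality of germs and glue to the desired global section of $\cF$.

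The main obstacle I expect is this last surjectivity step: the lemma that compatible germs at all nearby points really do arise from a single local representative. The difficulty is that exit paths flow only from deeper to shallower strata, so the representative at $x$ is naturally extended from representatives at deeper points rather than the other way around, and one must check that this extension procedure is consistent under all the overlaps produced. The resolution is to propagate representatives starting from locally deepest strata and restricting outward, using local finiteness of the stratification to terminate the induction. Making this fully rigorous in a general stratification may require invoking the local product hypothesis available in the paper's setting; conceptually it is the equivalence between the axiomatic ``stalk compatible with exit paths'' and the sheaf-theoretic ``germ of a local section on a sufficiently small neighborhood,'' which is precisely what distinguishes a constructible sheaf from an arbitrary assignment of stalks.
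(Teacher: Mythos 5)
Your proposal is correct and follows essentially the same route as the paper: define transport on simplified exit paths by choosing a representative of the stalk near $\gamma(0)$, extending via local constancy within the stratum, composing, and deducing uniqueness from uniqueness of transport in the local system; then identify global sections with exit-path-compatible families of stalks using the fact that germ agreement on overlaps is detected by exit paths ending at the point in question. The only cosmetic difference is that you organize the gluing step point-by-point with an induction on stratum depth, whereas the paper works with neighborhoods of whole strata chosen (via the local product condition) to meet only strata whose closures contain the given one — which is exactly the resolution of the difficulty you flag at the end.
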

    \begin{proof}
To define parallel transport along a simplified exit path $\gamma: [0,1] \to X$, we first note that for each local section $s \in \cF_{\gamma(0)}$, there is some $\varepsilon$ for which the section is defined on $\gamma[0,\varepsilon]$. We can then extend to the whole path using parallel transport along the stratum.  Since all exit paths are compositions of simplified exit paths, this defines parallel transport for all exit paths.  To see that the parallel transport is unique, note that the parallel transport along a given stratum is unique, as is restriction of a local section to a smaller section. Now if we have a section over $\gamma([0,\varepsilon])$ and restrict it to a section over $\gamma([0,\delta])$ with $0 < \delta < \varepsilon$, the parallel transport along $[\delta,\varepsilon]$ recovers the same section as the latter parallel transport is unique. 

A global section of a constructible sheaf on $(X,\cS)$ is given by a collection of sections $f_i$ on neighborhoods $U_i$ of each stratum $S_i$ which agree on overlaps.  We can choose $U_i$ so that $U_i$ intersects only those strata $S_j$ such that $S_i \subseteq \overline{S_j}$ (note that by the local product condition, this is equivalent to $S_i \cap \overline{S_j} \neq \emptyset$).  The sections on each stratum are the same as choices of stalks at each point compatible with parallel transport along the stratum (by the equivalence between local systems and functors from the fundamental groupoid to sets). These sections are compatible if and only if they restrict to the same local section in some neighborhood of each point $s$ in the overlap. The latter can be checked by compatibility with an exit path ending at $s$, in view of the uniqueness of the parallel transport. It follows that any choice of local sections compatible under exit paths glues to form a global section. The converse is clear from uniqueness of parallel transport.
\end{proof}
    
We can moreover restrict to simplified exit paths, since all exit paths are compositions of these. We can be more restrictive and check compatibility only with certain paths:

\begin{prop} \label{p:compatibility-limited-exitpaths} 
Let $(X,\cS)$ be a %finitely
stratified variety by strata satisfying Definition \ref{d:local product}. Compatibility of a collection $(f_x \in \cF_x)$ of elements of stalks with all exit paths can be checked on simplified exit paths beginning at a fixed basepoint $s_i$ of each stratum $S_i$. Moreover we can restrict to exit paths of the following form, for each stratum $S_i$: 
    \begin{enumerate}
        \item For each $q \in S_i$, a single path from $s_i$ to $q$;
        \item A set of closed paths generating  the fundamental group $\pi_1(S_i, s_i)$;
        \item For each stratum $S_j \neq S_i$ such that $\overline{S_j} \supseteq S_i$, any choice of neighborhood $U_i$ of $s_i$ in $X$, and each component  of $U_i \cap S_j$, a single simplified exit path from $s_i$ to the given component.
    \end{enumerate}
\end{prop}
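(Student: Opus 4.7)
The plan is to reduce compatibility with arbitrary exit paths to the three listed families by three successive reductions, each peeling off one kind of complexity.

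\textbf{Step 1: Reduce to simplified exit paths starting at a basepoint.} By the previous proposition, every exit path decomposes as a concatenation of simplified exit paths, and parallel transport respects concatenation, so it suffices to check compatibility on simplified exit paths. To force the starting point to be one of the $s_i$, observe that if $\gamma: p \to q$ is a simplified exit path with $p \in S_i$, and $\gamma_p$ is any chosen path from $s_i$ to $p$ in $S_i$, then $\gamma_p \cdot \gamma$ is again an exit path (a concatenation of two simplified exit paths), and compatibility with $\gamma$ is equivalent to compatibility with both $\gamma_p$ and $\gamma_p \cdot \gamma$. Hence after Step 2 handles $\gamma_p$, we may as well assume all simplified exit paths start at some $s_i$.

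\textbf{Step 2: Paths within a stratum.} The restriction $\cF|_{S_i}$ is a local system, so exit-path compatibility on $S_i$ is equivalent to the stalks defining a section of this local system. A section is determined by its value at $s_i$ and the $\pi_1(S_i, s_i)$-equivariance condition. Any path in $S_i$ from $s_i$ to $q$ is homotopic (in $S_i$) to the concatenation of the chosen reference path $\gamma_q$ from (1) with a loop at $s_i$, which is a product of generators listed in (2). Compatibility within $S_i$ thus reduces to compatibility with (1) and (2).

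\textbf{Step 3: Simplified exit paths leaving $S_i$.} Fix $S_j$ with $\overline{S_j} \supseteq S_i$ and a simplified exit path $\gamma$ from $s_i$ to some $q \in S_j$ with $\gamma((0,1]) \subseteq S_j$. Choose $\varepsilon>0$ so small that $\gamma([0,\varepsilon]) \subseteq U_i$, a local product neighborhood $U_i \cong (U_i \cap S_i) \times F$ with $U_i \cap S_i$ contractible. The tail $\gamma|_{[\varepsilon,1]}$ lies in $S_j$ and its contribution is absorbed by Step 2 applied to $S_j$. Under the product decomposition, $U_i \cap S_j \cong (U_i \cap S_i) \times F_j$ and its connected components correspond bijectively to connected components $C'$ of $F_j$. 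The truncated path $\gamma|_{[0,\varepsilon]}$ is a simplified exit path from $(s_i,0)$ into $(U_i \cap S_i) \times C'$; contracting the $U_i \cap S_i$-factor to $s_i$ and then interpolating the endpoint within $C'$ produces a tame homotopy, through simplified exit paths, to the chosen reference path from (3) for that component. This reduces compatibility with $\gamma$ to compatibility with the paths in (1), (2), and (3).

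The main obstacle is verifying the tame homotopy in Step 3: that two simplified exit paths from $s_i$ ending in the same connected component of $U_i \cap S_j$ are related through a family of simplified exit paths whose domain triangulates compatibly with the stratification. This is a local computation in the stratified product $(U_i \cap S_i) \times F$, where contractibility of $U_i \cap S_i$ and path-connectedness of $C'$ supply the needed interpolations and the triangulability is automatic from the product structure.
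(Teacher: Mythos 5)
Your Step 2 and the local comparison inside $U_i$ in Step 3 line up with the paper's argument, but there is a genuine gap in Step 1, and it sits exactly where the main content of the proposition lives. A simplified exit path $\gamma$ may begin at a point $p \in S_i$ far from the basepoint $s_i$ --- in particular outside the chosen neighborhood $U_i$. Your reduction ``compatibility with $\gamma$ is equivalent to compatibility with $\gamma_p$ and $\gamma_p\cdot\gamma$'' is a tautology that makes no progress: $\gamma_p\cdot\gamma$ is not a simplified exit path, and the only way to deduce compatibility with it from the listed reference paths is to decompose it into its simplified pieces, one of which is $\gamma$ itself. Step 3 then only ever treats simplified exit paths that start at $s_i$ and can be truncated to lie in $U_i$, so simplified exit paths departing from arbitrary $p \in S_i$ are never handled, and the first sentence of the proposition (that checking at the basepoints suffices) is not proved. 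What is actually needed --- and what the paper's proof supplies --- is a propagation argument: cover a path in $S_i$ from $s_i$ to $p$ by product neighborhoods $U$ on which $\cF|_{U \cap S_i}$ is constant; inside each such $U$, a simplified exit path $\beta$ from one point of $U \cap S_i$ into a component of $U \cap S_j$ and a simplified exit path $\alpha$ from another point into the same component both transport the germ to the value of the unique local extension of that germ to a section over $U$, so, using the already-established compatibility within $S_j$ to connect their endpoints, compatibility with $\beta$ is equivalent to compatibility with $\alpha$; one then chains these equivalences along the finitely many neighborhoods covering the path.

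A secondary issue: in Step 3 you invoke a ``tame homotopy through simplified exit paths'' and implicitly assume that parallel transport along exit paths is invariant under such homotopies. That invariance is part of MacPherson's equivalence, which the paper deliberately refrains from using; its parallel transport is defined through choices (an $\varepsilon$ and a representing local section), and homotopy invariance is not free. Within a single product neighborhood the comparison you want can be made directly by the unique-extension argument above, so this part is repairable, but as written it appeals to an unproved (and, in the paper's self-contained setup, unavailable) fact rather than to the elementary mechanism that actually does the work.
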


\begin{proof}
    First, using paths in (1) and (2) we can obtain all homotopy classes of paths in the stratum $S_i$. It is standard that parallel transport for a local system along homotopic paths is the same, so these are enough to guarantee we obtain a global section on $S_i$. 
    
    It remains to show that the sections defined on each stratum are compatible using (3). Every simplified exit path can be written as a concatenation of such paths and exit paths which lie in an arbitrarily small ball about the initial point.  So all we need to show is that we can take this initial point to be a fixed basepoint $s_i$ of its stratum $S_i$.  Let $q \in S$ be another point of the same stratum, and let $\gamma$ be a path in $S$ from $s_i$ to $q$.  
    
    We can find an open covering $\mathcal{U}$ in $X$ of the stratum $S_i$ by open product neighborhoods such that $\cF|_{U \cap S_i}$ is constant for each $U \in \mathcal{U}$ (since each point of $S_i$ is contained in such a $U$). For each subinterval $[t_1,t_2] \subseteq  [0,1]$ whose image under $\gamma$ lies in a neighborhood $U \in \mathcal{U}$, we claim that compatibility with all simplified exit paths beginning on $\gamma([t_1,t_2])$ follows from compatibility with simplified exit paths beginning at any fixed point of $\gamma([t_1,t_2])$.  For any $t_3 \in [t_1,t_2]$ and any simplified exit path $\beta$ from $\gamma(t_3)$ to a point $x \in U \cap S_j$ for $j \neq i$, we can form a simplified exit path $\alpha$ from $\gamma(t_1)$ to a point $y \in U \cap S_j$ in the same stratum as $x$ and a path $\gamma'$ from $x$ to $y$ in $U \cap S_j$. Since $\cF|_{U \cap S_i}$ is constant, the two composite paths from $\gamma(t_3)$ to $y \in U$ must have parallel transport yielding the unique extension of $f_{\gamma(t_3)}$ to a section of $U$.  Since we already have sections defined on all strata, it follows that compatibility with $\beta$ is equivalent to compatibility with $\alpha$.  

    Since $[0,1]$ is covered by subintervals $[t_1, t_2]$ as in the preceding paragraph, we obtain that compatibility with exit paths departing $\gamma(0)$ is equivalent to compatibility at $\gamma(1)$.   
   %  , there exists $t' < t$ such that $\gamma[t',t]$ lies in a local product neighborhood. Thus for every simplified exit path $\alpha$ beginning at $\gamma(t)$ and ending at a point $x$ of the local product neighborhood, we can form another one $\alpha'$ beginning at $\gamma(t')$ and ending at $x'$, such that the composition of $\gamma|_{[t',t]}$ with $\alpha$ induces the same parallel transport as $\alpha'$ followed with a path from $x$ to $x'$ in the stratum containing them.
 %Idea: we can take a path from an arbitrary point to the basepoint and exit and go back, and get the same transport as if we exited at the first point.
\end{proof}

In the situation at hand, given a simplified exit path $[0,1] \to X$, the parallel transport of $s \in \cF_{\gamma(0)}$ along $[0,\varepsilon]$ is given by restricting a local partial resolution to neighborhoods of nearby points  to $\gamma(0)$, followed by parallel transport in the stratum.   The same argument works when including also the datum of a relatively ample line bundle.

We note that it is a consequence of the constructibility of the sheaves and \cite[Theorem 1.2]{TrEP} that the above actually defines functors $\Ex(X,\cS) \to \bf{Sets}$ such that sections are given by stalks compatible by exit paths, although we do not need to use this.  (This statement, in addition to the above, says that the above parallel transport for exit paths does not depend on any choices and is independent of tame homotopy of exit paths.)

\subsection{Compatibility across strata} \label{ss:compatibility}
For a stratified variety $(X, \cS)$, the set of strata has a partial order given by $S \leq S'$ if $S \subseteq \overline{S'}$. We recall a few elementary notions for partially ordered sets. 

\begin{defn} 
The \emph{Hasse diagram} for a partially ordered set $(X, \leq )$ is a directed\footnote{Note that most authors draw the Hasse diagram as an \emph{undirected} graph, but place the vertex for $x$ below the vertex for $x'$ if $x < x'$ to indicate direction.} graph with vertex set $X$ and an arrow $x \rightarrow x'$ if $x < x'$ and no $x''$ satisfies $x < x'' < x$. 
\end{defn}

\begin{defn} 
Fix a partially ordered set $(X, \leq)$ and two elements $x_1, x_2 \in X$. When they exist, the \emph{join} $x_1 \vee x_2$ is the supremum (i.e., least upper bound) of $x_1$ and $x_2$, and the \emph{meet} $x_1 \wedge x_2$ is the infimum (i.e., greatest lower bound) of $x_1$ and $x_2$. 
\end{defn}

Thanks to our procedure, we can build resolutions up from the minimal strata.  Two distinct minimal strata have no meet, and the compatibility condition can be checked in their join. If the join is an open stratum, the compatibility condition is automatically satisfied since that stratum has a unique (partial) crepant resolution, namely the identity.  Similarly, the compatibility condition is satisfied for smooth crepant resolutions if the join has codimension at most two, because there there is a unique crepant resolution (the minimal one).

For constructing resolutions, in the absence of monodromy, one can always extend a crepant resolution up a chain $S_1 \leq S_2 \leq \cdots \leq S_n$. In the following special case involving products, we can construct crepant resolutions \emph{down} the Hasse diagram (with no obstructions):

\begin{equation} \label{v-diagram} \xymatrix{  S_1 && S_2  & \\  
& S_3  \ar[lu] \ar[ru] & 
} \end{equation}\\
\hspace*{3.2cm} \text{where the slice to $S_3$ is a product of slices to $S_1$ and $S_2$.}\\

Here, we mean that for $x \in S_3$, there is a local neighborhood of $x$ of the form $U_3 \times F_1 \times F_2$ where $U_3$ is a neighborhood of $x$ in $S_3$, and $U_3 \times F_1$ and $U_3 \times F_2$ map isomorphically to neighborhoods of $x$ in $\overline{S_2}$ and $\overline{S_1}$, respectively (so the slice to $x$ in $S_1$ is $F_1$ and the slice to $x$ in $S_2$ is $F_2$). The proof of this relies on the following relative, analytic analogue of \cite[Lemma 5.2]{BCS21}:
%\travis{This needs to be fixed: in the diagram we just want to say that the \emph{slice} to the bottom middle stratum is the product of the slices $S,S'$ refer to \emph{slices} to the strata $R,R'$, and in the bottom we could have $S \times S' \times R''$ where $R''$ is the stratum direction, so that the dimensions all add . Probably we should not use $S$ for slices, we should switch notation here...}

\begin{lem} \label{l:Pic_product}
    Let $\rho_i: \tilde{U}_{i} \rightarrow U_i$ be smooth projective crepant resolutions with $U_i$ contractible Stein manifolds, for $i = 1, 2$. Then there is an isomorphism of analytic relative Picard groups: 
    \[
    \Pic(\tilde{U}_1 \times \tilde{U}_2 / U_1 \times U_2) \cong \Pic(\tilde{U}_1/ U_1) \times \Pic(\tilde{U}_2 / U_2).
    \]
    Moreover, under this isomorphism, $\Mov(\tilde U_1 \times \tilde U_2/U_1 \times U_2) = \Mov(\tilde U_1/U_1) \times \Mov(\tilde U_2/U_2)$, with Mori equivalence relation taken to the product of the Mori equivalence relations.
\end{lem}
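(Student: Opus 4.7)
The plan is to reduce to an absolute Picard computation via contractibility, apply the exponential sequence together with coherent and topological Künneth, and then separately verify that the product structure is compatible with movable cones and Mori equivalence. To begin, since each $U_i$ is contractible Stein, Cartan B yields $H^{>0}(U_i,\cO)=0$ and contractibility yields $H^{>0}(U_i,\bZ)=0$, so the exponential sequence forces $\Pic(U_i)=0$, and likewise $\Pic(U_1\times U_2)=0$. Thus the relative Picard groups in the statement coincide with the absolute ones, and it suffices to prove that the external tensor product $(L_1,L_2)\mapsto \pi_1^* L_1 \otimes \pi_2^* L_2$ is an isomorphism $\Pic(\tilde U_1)\oplus \Pic(\tilde U_2) \xrightarrow{\sim} \Pic(\tilde U_1\times \tilde U_2)$.

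Next, the crepant hypothesis forces each $U_i$ to have rational (indeed canonical) singularities, so $R^{>0}\rho_{i*}\cO_{\tilde U_i}=0$, and coherent Künneth together with properness of $\rho_1\times\rho_2$ gives the analogous vanishing for the product. Combined with Cartan B on the Stein bases, Leray then yields $H^{>0}(\tilde U_i,\cO)=0$ and $H^{>0}(\tilde U_1\times\tilde U_2,\cO)=0$, and a second application of the exponential sequence identifies each Picard with $H^2(-,\bZ)$. The topological Künneth formula then gives
\[
H^2(\tilde U_1\times \tilde U_2,\bZ) \cong H^2(\tilde U_1,\bZ) \oplus H^2(\tilde U_2,\bZ) \oplus \bigl(H^1(\tilde U_1,\bZ)\otimes H^1(\tilde U_2,\bZ)\bigr),
\]
and the cross term vanishes provided $H^1(\tilde U_i,\bZ)=0$. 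I would deduce the latter from contractibility of $U_i$ combined with a simple connectedness result for resolutions of rational singularities (Takayama's theorem and its analytic variant), giving $\pi_1(\tilde U_i)=\pi_1(U_i)=1$.

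For the movable cone statement, I would use Künneth for relative sections, $(\rho_1\times\rho_2)_*(L_1\boxtimes L_2)^{\otimes m} = (\rho_{1*}L_1^m)\boxtimes(\rho_{2*}L_2^m)$, whence the relative stable base locus of $L_1\boxtimes L_2$ equals $B(L_1)\times \tilde U_2 \,\cup\, \tilde U_1\times B(L_2)$ and so has codimension at least two if and only if each $B(L_i)$ does. This matches $\Mov(\tilde U_1\times\tilde U_2/U_1\times U_2)$ with $\Mov(\tilde U_1/U_1)\times \Mov(\tilde U_2/U_2)$. For Mori equivalence, the Segre-product identity $(\tilde U_1\times \tilde U_2)(L_1\boxtimes L_2) \cong \tilde U_1(L_1) \times_{U_1\times U_2} \tilde U_2(L_2)$ for relative Proj shows the birational modification factors through each $\varphi_{L_i}$; restricting an isomorphism of such modifications to fibers of either projection recovers the Mori equivalence on each factor, yielding the claimed product relation.

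The hardest step will be pinning down $H^1(\tilde U_i,\bZ)=0$ cleanly in the analytic (possibly non-algebraic) Stein setting, where standard algebraic statements about $\pi_1$ of resolutions may need adaptation; once that vanishing is secured, the remaining identifications are functoriality of the Proj construction and standard Künneth bookkeeping.
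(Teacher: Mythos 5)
Your proposal follows the paper's proof almost verbatim in its overall structure: reduce to absolute Picard groups via contractibility of the Stein bases, use $R^{>0}\rho_{i*}\cO_{\tilde U_i}=0$ plus Leray to get $H^{>0}(\tilde U_i,\cO)=0$, identify $\Pic\cong H^2(-,\bZ)$ via the exponential sequence, kill the K\"unneth cross term using $H^1(\tilde U_i,\bZ)=0$, and then handle movability via the base locus of $\pi_1^*L_1\otimes\pi_2^*L_2$ being the union of the pulled-back base loci and Mori equivalence via $(\tilde U_1\times\tilde U_2)(L_1\boxtimes L_2)\cong \tilde U_1(L_1)\times\tilde U_2(L_2)$. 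The one place you diverge is exactly the step you flag as hardest: you propose to get $H^1(\tilde U_i,\bZ)=0$ from simple connectedness of resolutions of rational singularities (Takayama's theorem), which is genuinely heavy machinery and whose analytic Stein variant would need justification. The paper closes this with a two-line elementary argument that you should adopt instead: compare the exponential sequences of $\tilde U_i$ and $U_i$. Since $\rho_i$ is proper birational onto a normal base, $H^0(\tilde U_i,\cO)=\Gamma(U_i,\cO)$ and $H^0(\tilde U_i,\cO^\times)=\Gamma(U_i,\cO^\times)$, and you have already shown $H^1(\tilde U_i,\cO)=0$; hence
\[
H^1(\tilde U_i,\bZ)\cong \operatorname{coker}\bigl(H^0(\tilde U_i,\cO)\to H^0(\tilde U_i,\cO^\times)\bigr)\cong \operatorname{coker}\bigl(\Gamma(U_i,\cO)\to\Gamma(U_i,\cO^\times)\bigr)=0,
\]
the last vanishing because $H^1(U_i,\bZ)=0$ for the contractible base. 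With that substitution your argument is complete; the only other (cosmetic) issue is that $\tilde U_1(L_1)\times_{U_1\times U_2}\tilde U_2(L_2)$ should just read $\tilde U_1(L_1)\times\tilde U_2(L_2)$, as the two factors live over the separate factors of the base.
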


\begin{proof}
First, as explained in Remark \ref{r:absolute-relative}, in this case the absolute Picard groups equal the relative ones.  By Grauert--Riemenschneider vanishing, $R\rho_* \cO_{\tilde U_i} \cong \cO_{U_i}$. Since the $U_i$ are Stein, this implies that $H^{>0}(\tilde U_i, \cO_{\tilde U_i})=0$. The same is true for the product $\tilde U_1 \times \tilde U_2$. By the exponential sequence we therefore obtain $\Pic(\tilde U_1 \times \tilde U_2) \cong H^2(\tilde U_1 \times \tilde U_2, \bZ)$, and $\Pic(U_i) \cong H^2(\tilde U_i, \bZ)$. 

Next, for any crepant resolution $\tilde U \to U$ with $U$ contractible Stein, we claim that $H^1(\tilde U, \bZ) = 0$ (cf.~\cite[Corollary 1.5]{KaledinSurvey}). To see this, we apply the exponential sequences for $\tilde U$ and $U$:
\[
\xymatrix{
H^0(\tilde U, \cO_{\tilde U}) \ar@{=}[d] \ar[r] & H^0(\tilde U, \cO^\times_{\tilde U}) \ar[r] \ar@{=}[d] & H^1(\tilde U, \bZ) \ar[d] \ar[r] & H^1(\tilde U, \cO_{\tilde U}) \ar@{=}[d] \\
\Gamma(U, \cO_U) \ar[r] & \Gamma(U, \cO_U^\times) \ar[r] & 0 \ar[r] & 0.}
\]
with the first two equalities coming from properness of $\rho$.  We conclude that
\[H^1(\tilde U, \bZ) \cong \operatorname{coker}(H^0(\tilde U, \cO_{\tilde U}) \to H^0(\tilde U,\cO_{\tilde U}^\times)) \cong \operatorname{coker}(\Gamma(U,\cO_U) \to \Gamma(U, \cO_U^\times)) = 0.\]

By the K\"unneth theorem and connectedness of $\tilde U_i$  we therefore have $H^2(\tilde U_1 \times \tilde U_2) \cong H^2(\tilde U_1) \oplus H^2(\tilde U_2)$, which by the first paragraph establishes the first assertion. 

For the second assertion, let $\pi_1,\pi_2: \tilde U_1 \times \tilde U_2 \to \tilde U_i$. We note that, for line bundles $L_i$ on $\tilde U_i$, the base locus $B(\pi_1^* L_1  \otimes \pi_2^* L_2)$ is the union of the pullbacks of base loci, $\pi_1^{-1}B(L_1)$ and $\pi_2^{-1} B(L_2)$. Finally, for $L_i \in \Mov(\tilde U_i/U_i)$, we have, for modifications (see Definition \ref{d: mod-mori}):
\[
\tilde U_1 \times \tilde U_2 (\pi_1^* L_1 \otimes \pi_2^* L_2) \cong \tilde U_1(L_1) \times \tilde U_2(L_2).
\]
As a result, the Mori equivalence relation for $\tilde U_1 \times \tilde U_2$ is the product of the Mori equivalence relations for $\tilde U_1$ and $\tilde U_2$. \qedhere
%since $H^0(\tilde U, \cO) \cong H^0(U, \cO)$ by the Stein condition, 
\end{proof}
\begin{rem}
More generally, we can allow $\tilde U_i$ to be singular and we don't even need $\rho_i: \tilde U_i \to U_i$ to be crepant, as long as $U_i$ and $\tilde U_i$ have rational singularities (e.g., canonical singularities by \cite[Theorem 5.22]{KollarMori}),
and $\rho_i$ is locally projective and birational.  Under the latter hypotheses, $R (\rho_i)_* \mathcal{O}_{\tilde U_i} \cong \cO_{U_i}$ by \cite[Theorem 1.4]{Kovacs-rational}, so the result of Lemma \ref{l:Pic_product} and the proof holds. In particular, this is true for any $X$ satisfying (*) and any $\rho_i: \tilde U_i \to U_i$ in $\ccR(U_i)$.
\end{rem}

As a consequence of the lemma (since we have a basis of contractible Stein neighborhoods, as noted in Remark \ref{r:absolute-relative}), a local projective crepant resolution at a point of the stratum $S_3$ is uniquely determined by local projective crepant resolutions at nearby points of  $S_1$ and  $S_2$. Therefore, we need not consider such strata when classifying crepant resolutions.

\begin{rem}
Under a mild condition, a diagram as in \eqref{v-diagram} with the product condition for slices exists whenever the strata $S_3$ has decomposable slice. Namely, suppose that all the slices to the strata have exceptional basepoint (meaning that, in a neighborhood of the basepoint, all vector fields on the slice vanish at the basepoint). This includes the case that the %finite 
stratification is by iterated singular loci (since by locally integrating vector fields, singular loci are preserved). In particular, if $X$ is Poisson with a (locally finite) stratification by symplectic leaves, then the stratification is also the one by iterated singular loci, and the condition holds.
%; see, e.g., \cite[]{ES-coinvariants}).

Then such a diagram occurs whenever a stratum $S_3$ exists with slice decomposing as a product $F_1 \times F_2$: Simply let $S_1$ be the generic stratum which near $s_3 \in S_3$ lies in  $U_3 \times \{0\} \times F_2$, and similarly define $S_2$ as the
generic stratum which near $s_3$ lies in $U_3 \times F_1 \times \{0\}$. 
 
Without the assumption that basepoints of slices are exceptional, we can still define for a stratum $S_3$ with decomposable slice $F_1 \times F_2$ (with $F_1, F_2$ both singular) strata $S_1, S_2$ such that crepant resolutions for $S_i$ uniquely determine one for $S_3$. Indeed, 
by locally integrating  vector fields, after shrinking the neighborhood we can further decompose $F_i$ as $F_i' \times D_i$ for $D_i$ some disc, and the basepoint of $F_i'$ is exceptional.  In this case, there will have to be strata $S_i$ with slices $F_i' \times  D_1 \times D_2$. In this case, though $S_3$ will not be  a product of the slices to $S_1$ and $S_2$ (unless the $D_i$ are zero-dimensional), it is still true that crepant resolutions for $S_1$ and $S_2$ uniquely determine that of $S_3$, since local crepant resolutions of a variety and of a variety times a disc are in bijection. 
\end{rem}

Summarizing this subsection:
%The upshot of this discussion is that:
\begin{itemize}
    \item[(1)] a choice of local projective crepant resolutions at the minimal strata can determine at most one locally projective crepant resolution of $X$,
    \item[(2)] we need not check compatibility for strata whose slices are products of two singular varieties (provided we determine their local crepant resolutions from factor slices), and
    \item[(3)] compatibility is automatic across strata whose join is an open stratum, or in the case of full crepant resolutions, for strata whose join is in codimension two.
\end{itemize}

\subsection{Corollaries of the main theorem}
Let $(X, \cS)$ be as in Theorem \ref{t:constr}. We will record some consequences of Theorem \ref{t:constr}. For the first set of corollaries, we return to the setting of smooth projective crepant resolutions, as in the introduction. 

\begin{cor} \label{c: unique locally-proj resolution}
Let $\{ S_i \}_{i \in I}$ denote the \emph{closed} strata in $\cS$ and pick basepoints $s_i \in S_i$. If the germ of $X$ at $s_i$ has a unique local projective crepant resolution for all $i$, then they glue to form a global locally projective crepant resolution.

More generally, if this is true for all strata except for isolated singularities, then locally projective crepant resolutions of $X$ are in bijection with choices of local projective crepant resolutions of the isolated singularities. 
\end{cor}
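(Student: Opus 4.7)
The plan is to combine the constructibility of $\ccR^s$ (Theorem \ref{t:constr}) with the exit-path criterion of Proposition \ref{p:compatibility-limited-exitpaths}: a global section of $\ccR^s$ is specified by an element of the stalk $\ccR^s_s$ at a chosen basepoint $s$ of each stratum, compatible with the monodromy in each stratum and with a generating set of simplified exit paths between strata. For the first statement, the hypothesis identifies each $\ccR^s_{s_i}$ with a single element that, by Proposition \ref{p: key}, corresponds to the unique local projective crepant resolution $\tilde F_i \to F_i$ of the slice at $s_i$. For any non-closed stratum $T$ with basepoint $t$, I would pick a closed stratum $S_i \subseteq \overline{T}$ and take the element of $\ccR^s_t$ produced by the natural simplified exit path from $s_i$ into $T$, followed by parallel transport within $T$. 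Explicitly, near $s_i$ the local product reads $S_i \times F_i$, the stratum $T$ appears as $S_i \times T_{F_i}$ for some stratum $T_{F_i}$ of $F_i$, and by Proposition \ref{p: key} the resolution $\tilde F_i$ decomposes near a basepoint of $T_{F_i}$ as a product with a determined local projective crepant resolution of the slice $F_T$; this is the candidate value at $t$.

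The remainder of the argument is to verify the compatibility conditions of Proposition \ref{p:compatibility-limited-exitpaths}: $\pi_1(T,t)$-invariance, compatibility with further exit paths from $t$ to basepoints of higher strata, and independence from the choice of $S_i$ when $\overline{T}$ contains several closed strata. Monodromy invariance and compatibility with higher-stratum exit paths reduce, via Proposition \ref{p: key} applied at $s_i$, to the analogous statements one stratification level deeper inside the slice $F_i$, giving an inductive argument on the depth of strata whose base case, a singleton stalk at the deepest stratum, is trivial. Independence from the choice of $S_i$ I regard as the main obstacle: the two candidate elements $e^{(i)}_t, e^{(j)}_t \in \ccR^s_t$ arising from $\tilde F_i$ and $\tilde F_j$ are both values of sections of the locally constant sheaf $\ccR^s|_T$ (by Corollary \ref{c:local-product-res}) on the connected stratum $T$, obtained by the same propagation procedure from canonical sections over strata of strictly smaller depth in $\overline{T}$. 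Inductively, these propagations agree at every point of $T$ lying in a local product neighborhood of any common lower stratum in $\overline{S_i} \cap \overline{S_j} \cap \overline T$ already handled (with the smooth stratum serving as the ultimate common lower stratum, where both restrict to the identity); local constancy of $\ccR^s|_T$ on the connected stratum $T$ then forces $e^{(i)}_t = e^{(j)}_t$.

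For the second statement, I would run the identical argument with a single modification: at each isolated singularity $p_k$ we freely choose an element of $\ccR^s_{p_k}$ in place of the unique one. Since restriction of $\ccR^s$ to $p_k$ recovers that chosen element, distinct tuples of choices yield distinct global sections, giving injectivity. Surjectivity follows from the propagation and compatibility arguments above applied verbatim, with the chosen elements at isolated singularities playing the role of the unique elements at closed strata; the remaining (positive-dimensional) closed strata still carry unique local resolutions by hypothesis, so the inductive extension up the Hasse diagram of strata proceeds unchanged.
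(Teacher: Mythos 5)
Your overall framework---constructibility of $\ccR^s$ together with the exit-path compatibility criterion at basepoints of strata---is the same as the paper's, but the paper's actual proof is far shorter and avoids the machinery you build. The paper proves the second assertion directly: under that hypothesis the stalk of $\ccR^s$ at the basepoint of every non-isolated stratum is a singleton, hence (by local constancy along each connected stratum) so is the stalk at every point outside the isolated singularities. Every exit-path compatibility condition then lands in a singleton and is automatically satisfied, the monodromy actions are trivial, and the only remaining freedom is an arbitrary independent choice at each isolated singularity; Theorem \ref{t:constr} finishes the argument. The first assertion is then treated as the case of the second with no isolated singularities carrying multiple resolutions.

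The gap in your proposal is in the propagation step for the first assertion, where you assume singleton stalks only at the closed strata and try to manufacture canonical, compatible elements at all the other strata. Two of your verifications do not go through as stated. First, monodromy invariance of $e^{(i)}_t$ over a positive-dimensional stratum $T$ cannot be ``reduced to one level deeper inside $F_i$'': Proposition \ref{p: key} only controls loops contained in the single local product neighborhood $U_i\cap T \cong (U_i\cap S_i)\times T_{F_i}$, whereas $\pi_1(T,t)$ is generated by loops that leave any such neighborhood. Second, the independence argument is vacuous exactly where it matters: distinct closed strata $S_i$ and $S_j$ are disjoint, so $\overline{S_i}\cap\overline{S_j}\cap\overline{T}$ contains no common stratum, and falling back on the open stratum does not help because its stalk is a singleton---every element of every stalk restricts to the identity there, so agreement on the open stratum carries no information, and local constancy of $\ccR^s|_T$ cannot then be invoked to conclude $e^{(i)}_t=e^{(j)}_t$ from data on two non-overlapping small pieces of $T$. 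Example \ref{exam: incompatibility} shows that elements propagated from different minimal strata to a common higher stratum genuinely can disagree, so this is a real condition rather than a formality. The repair is the paper's route: work under the hypothesis of the second paragraph, where the stalks along all non-isolated strata are already singletons and nothing needs to be propagated or compared.
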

%Note that by Remark \ref{r: richard}, one can weaken the assumption here to the one that no two local projective crepant resolutions %elements of $\ccR_{s_i}$ 
%are related by an automorphism of the germ of $X$ at $s_i$.

\begin{proof} 
It suffices to prove the assertion of the second paragraph. We use the subsheaf of $\ccR$ of smooth crepant resolutions (not the partial ones).  All exit paths necessarily take the unique values everywhere except the isolated singularities, which can therefore take any value to get a global section. The result then follows from Theorem \ref{t:constr}.
\end{proof}

\begin{cor}
Let $X$ be a variety whose singularities are all either isolated, or have du Val singularities. 
Then locally projective crepant resolutions are in bijection with arbitrary choices of local projective crepant resolutions of the isolated singularities (when they exist). 
\end{cor}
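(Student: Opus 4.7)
The plan is to reduce this to the previous Corollary \ref{c: unique locally-proj resolution} by exhibiting a suitable stratification of $X$ and checking that non-isolated strata have unique local projective crepant resolutions.

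First I would build the stratification $\cS$ of $X$ as follows: take the smooth locus $X^{\sm}$ as the open stratum; for each isolated singular point, take a zero-dimensional stratum; and stratify the non-isolated singular locus by the ``type'' of du Val singularity in the transverse slice (i.e., by which finite subgroup $H < \SL_2(\bC)$ appears), taking connected components. By hypothesis (using the convention fixed in Section \ref{ss:conv}), along each such non-isolated stratum $S$, $X$ is analytically locally a product of a du Val singularity $F$ with a polydisc, so the local product condition of Definition \ref{d:local product} is satisfied, and the slice $F$ admits a projective crepant resolution $\tilde F \to F$ (the minimal resolution) with $H^1(\tilde F, \cO) = 0$, so $(*)$ holds. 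For the isolated singular strata, the local product condition is trivial (the slice is all of a neighborhood), and $(*)$ is exactly the existence hypothesis in the statement.

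Next I would verify uniqueness of the local projective crepant resolution along non-isolated strata. Fix such a stratum $S$ and a basepoint $s \in S$ with local product neighborhood $(U \cap S) \times F$ where $F$ is a du Val singularity. Since $F$ has a unique projective crepant resolution (the minimal resolution $\tilde F \to F$), Proposition \ref{p: key} implies that every local projective crepant resolution of a neighborhood of $s$ is isomorphic to $\Id \times \rho_F : (U \cap S) \times \tilde F \to (U \cap S) \times F$, hence is unique. In particular the stalk of $\ccR^s$ at $s$ is a singleton.

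With these verifications in hand, the hypothesis of the second paragraph of Corollary \ref{c: unique locally-proj resolution} is satisfied: every closed stratum other than the isolated singular points has a unique local projective crepant resolution. Applying that corollary gives the bijection between locally projective crepant resolutions of $X$ and arbitrary choices of local projective crepant resolutions at the isolated singular points. The only real step of substance is the uniqueness along non-isolated strata, which is immediate from Proposition \ref{p: key} together with the classical fact that a du Val surface singularity admits a unique projective crepant resolution; I do not expect any serious obstacle.
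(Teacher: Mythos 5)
Your proposal is correct and follows essentially the same route as the paper: stratify $X$ into the smooth locus, the (smooth, codimension-two) du Val locus, and the isolated singular points, observe that du Val singularities have a unique projective crepant resolution so the corresponding stalks of $\ccR^s$ are singletons, and invoke Corollary \ref{c: unique locally-proj resolution}. The only cosmetic difference is that you further refine the du Val locus by ADE type and spell out the verification of the local product condition and of $(*)$, which the paper leaves implicit.
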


Here, by ``having du Val singularities'' we mean that some neighborhood is isomorphic to the product of a disc of some dimension with a du Val singularity. (Since du Val singularities have no moduli, this is equivalent to being a codimension two singularity whose generic two-dimensional slice is du Val, see \cite[Corollary 1.14]{Reid-c3f}.)

\begin{proof}
    Let $Z \subseteq X$ be the singular locus.  This is a Zariski subset which, by assumption, has only isolated singularities.  At any smooth point, by assumption it is analytically locally the product of a du Val singularity and a smooth disc. So, $X$ is stratified, with three types of strata: the open strata, the strata which locally have du Val singularities, and the isolated strata. 
     Since du Val singularities all have unique projective crepant resolutions, the result now follows from Corollary \ref{c: unique locally-proj resolution}.
\end{proof}

\begin{cor}
Let $X$ be either a $4$-dimensional variety with symplectic singularities, or a $3$-dimensional canonical singularity. Then locally projective crepant resolutions of $X$ are in bijection with arbitrary choices of local projective crepant resolutions of the isolated singularities (when they exist). 
\end{cor}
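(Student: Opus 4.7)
The plan is to reduce directly to the previous corollary by showing that, in both of the stated cases, every non-isolated singularity of $X$ is du Val (in the sense defined in Section~\ref{ss:conv}, i.e., analytically locally a product of a du Val surface singularity with a polydisc).

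First I would handle the four-dimensional symplectic case. By Kaledin's theorem, $X$ admits a finite stratification by symplectic leaves, and I already know from Section~\ref{s: general results} that $X$ is locally a product along each leaf. The leaves are even-dimensional, so besides the open smooth stratum they have dimension $0$ or $2$. The zero-dimensional leaves are precisely the isolated singular points, which are allowed to contribute arbitrary choices. Along a two-dimensional leaf, the local slice is a two-dimensional symplectic singularity, and such singularities are classified as du Val (ADE quotients of $\bC^2$ by finite subgroups of $\SL_2(\bC)$); cf.\ \cite{Beauville}. Thus away from isolated points, the singularities of $X$ are du Val in our sense.

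Next I would handle the three-dimensional canonical case. By \cite[Corollary 1.14]{Reid-c3f} (already invoked in Section~\ref{s: general results}), $X$ has a stratification along which it is locally a product, with singular strata of dimension $0$ or $1$. The zero-dimensional strata are the isolated singularities. Along a one-dimensional stratum, the two-dimensional slice is a canonical surface singularity, hence du Val by the classical classification. So once again, the non-isolated singularities are du Val.

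In both cases, $X$ now satisfies the hypothesis of the preceding corollary: every singularity is either isolated or du Val. Applying that corollary yields the asserted bijection between locally projective crepant resolutions and arbitrary independent choices of local projective crepant resolutions at the isolated singularities. The main technical content is already packaged in the previous corollary, so the only real content here is the classification of two-dimensional slices; the mildest obstacle is verifying that the local product stratifications we use in the two settings genuinely have two-dimensional slices of du Val type, which in both cases is classical.
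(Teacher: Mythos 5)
Your proposal is correct and follows essentially the same route as the paper: both arguments reduce to the preceding corollary by observing that the non-open, non-isolated strata have codimension two (via the symplectic leaf stratification in the fourfold case and Reid's \cite[Corollary 1.14]{Reid-c3f} in the threefold case) and that the two-dimensional slices are du Val because two-dimensional symplectic, two-dimensional canonical, and du Val singularities coincide.
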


\begin{proof}
Note that two-dimensional canonical singularities, two-dimensional symplectic singularities, and du Val singularities all coincide. Also, canonical and symplectic singularities are normal by definition, hence smooth in codimension one.  Therefore, it suffices to show that canonical $3$-dimensional and symplectic $4$-dimensional singularities are stratified, with all non-isolated, non-open strata of codimension two.  For symplectic singularities, this follows from the stratification into symplectic leaves.  For canonical three-dimensional singularities, this was explained in \cite[Corollary 1.14]{Reid-c3f} (as cited already).
\end{proof} 

\begin{cor} \label{c: subset with same stratification}
Let $(X, \cS)$ be a stratified variety satisfying Definition \ref{d:local product} and $\cF$ a constructible sheaf.
%with isomorphism classes of partial crepant resolutions $\ccR_X$. 
Let $U \subset X$ be an open subset of $X$ such that for each stratum $S_i \in \cS$, $U \cap S_i$ is connected and nonempty and the inclusion map $\iota: U \cap S_i \hookrightarrow S_i$ induces a surjection $\pi_1(U \cap S_i) \twoheadrightarrow \pi_1(S_i)$ on fundamental groups. Then the restriction map induces a bijection 
$\Gamma(X,\cF) \to \Gamma(U,\cF)$.
%$\ccR_X \rightarrow \ccR_U$. 
\end{cor}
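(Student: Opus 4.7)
The strategy is to use the parallel transport description of constructible sheaves (as summarized before Proposition \ref{p:compatibility-limited-exitpaths}): a section of $\cF$ over an open set $V$ is the same as a compatible family of stalks $(f_x \in \cF_x)_{x \in V}$ with $f_y = \gamma_* f_x$ for every exit path $\gamma \colon x \to y$ in $V$. I will pick a basepoint $s_i \in U \cap S_i$ in each stratum (possible and unambiguous by the nonemptiness and connectedness hypotheses) and show directly that the data of a section on $X$ and the data of a section on $U$ are both determined by, and can be constructed from, compatible choices of stalks at the $s_i$.

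\medskip

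\textbf{Injectivity.} Suppose $f,g \in \Gamma(X,\cF)$ agree on $U$. Then $f_{s_i} = g_{s_i}$ for every $i$. For any $x \in S_i$ choose a path $\gamma \colon s_i \to x$ in $S_i$. Since $\cF$ is constructible, its restriction to $S_i$ is locally constant, and parallel transport along $\gamma$ inside $S_i$ gives $f_x = \gamma_* f_{s_i} = \gamma_* g_{s_i} = g_x$. As the $S_i$ cover $X$, we obtain $f = g$.

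\medskip

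\textbf{Surjectivity.} Let $f \in \Gamma(U,\cF)$. I will build $\tilde f \in \Gamma(X,\cF)$ restricting to $f$ by extending stratum by stratum and then verifying exit-path compatibility via Proposition \ref{p:compatibility-limited-exitpaths}. For each stratum $S_i$, the stalk $f_{s_i}$ is invariant under the monodromy action of $\pi_1(U \cap S_i, s_i)$ on $\cF_{s_i}$, because $f|_{U \cap S_i}$ is a section of the local system $\cF|_{U \cap S_i}$ on the connected set $U \cap S_i$. The surjectivity hypothesis $\pi_1(U \cap S_i, s_i) \twoheadrightarrow \pi_1(S_i, s_i)$ promotes this to invariance under $\pi_1(S_i, s_i)$. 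Hence, for any $x \in S_i$, the assignment $\tilde f_x := \gamma_* f_{s_i}$ is independent of the path $\gamma \colon s_i \to x$ chosen in $S_i$, giving a well-defined extension of $f$ to each stratum.

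\medskip

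\textbf{Exit-path compatibility and the main obstacle.} The only nontrivial point, and the place where I expect the bulk of the bookkeeping, is checking compatibility of $\tilde f$ with exit paths across strata. By Proposition \ref{p:compatibility-limited-exitpaths}, it suffices to verify condition (3) there: for each pair $S_i < S_j$ and each component $C$ of $U_i \cap S_j$ for some neighborhood $U_i$ of $s_i$, compatibility along a single simplified exit path $\gamma \colon s_i \to q$ with $q \in C$. Since $s_i \in U$ and $U$ is open, I shrink $U_i$ so that $U_i \subset U$; then $\gamma$ lies in $U$ and $q \in U \cap S_j$. Because $f \in \Gamma(U,\cF)$, the identity $f_q = \gamma_* f_{s_i}$ already holds. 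It remains to compare $f_q$ with the extension $\tilde f_q = \delta_* f_{s_j}$ defined via a path $\delta \colon s_j \to q$ in $S_j$. Choose instead a path $\delta' \colon s_j \to q$ entirely inside $U \cap S_j$, using connectedness; the section property on $U$ gives $\delta'_* f_{s_j} = f_q$, while the $\pi_1(S_j,s_j)$-invariance of $f_{s_j}$ established above yields $\delta_* f_{s_j} = \delta'_* f_{s_j}$. Thus $\tilde f_q = f_q = \gamma_* f_{s_i} = \gamma_* \tilde f_{s_i}$, so condition (3) holds, and Proposition \ref{p:compatibility-limited-exitpaths} then certifies $\tilde f$ as a global section. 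By construction $\tilde f|_U = f$, completing the proof.
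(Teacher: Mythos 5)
Your proof is correct and follows essentially the same route as the paper's: both reduce to Proposition \ref{p:compatibility-limited-exitpaths} with basepoints chosen inside $U$, use the $\pi_1$-surjectivity hypothesis to handle the monodromy condition (2), and use openness of $U$ to arrange that the simplified exit paths in condition (3) lie inside $U$. Your version just spells out the bookkeeping more explicitly.
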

Note that, under our assumptions, the subvariety $U$ inherits a %finite 
stratification by locally closed connected strata $U \cap S_i$ satisfying Definition \ref{d:local product}.

\begin{proof}
Sections on $U$ and on $X$
%$\ccR_U$ and $\ccR_X$ 
can both be identified with choices of sections at every point compatible under the exit paths listed in Proposition \ref{p:compatibility-limited-exitpaths}.  If we choose basepoints of $X$ to be in $U$, then the list of exit paths can be chosen to be exactly the same (for item (2), it's because of the assumption on surjectivity of $\pi_1(S_i \cap U) \to \pi_1(S_i)$, together with the bijection between the connected strata, and for item (3), it's because an open neighborhood of a point of $s_i$ is contained in both $U$ and $X$, along with the bijection of connected strata), 
% Note here that we don't require the generators of pi_1 to be minimal in any sense; we might need more in U than in X, but that's ok.
except we additionally need for sections on $X$  to have compatibility with a single path from the basepoint of each stratum $S_i$ to each point of $S_i \setminus U$.  The latter just says that the sections at $X \setminus U$ are uniquely determined from those on $U$.
\end{proof}

\begin{cor} \label{cor:unique_minimal}
    Let $(X, \cS)$ be a stratified variety with a unique minimal stratum given by a point $S_{\min} = \{s \}$, satisfying Definition \ref{d:local product}. Then there is an injection $\ccR(X) \hookrightarrow \ccR(U_s)$ for $U_s$ any neighborhood of $s$, and the same holds replacing $\ccR$ with $\widetilde{\ccR}$ and $\cP$. If there exists a neighborhood $U_s$ isomorphic to the neighborhood of $0$ in a conical variety $Y$, then there is an injection $\ccR(X) \hookrightarrow \ccR(Y)$. 
\end{cor}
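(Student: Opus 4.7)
The plan is to apply the exit-path characterization of Proposition \ref{p:compatibility-limited-exitpaths}: since $\{s\}$ is the unique minimal stratum, $s \in \overline{S_i}$ for every stratum $S_i$, so any neighborhood $U_s$ of $s$ meets every stratum of $X$. I would pick a basepoint $s_i \in U_s \cap S_i$ for each $i$ (with $s_0 = s$). By Theorem \ref{t:constr} together with Proposition \ref{p:compatibility-limited-exitpaths}, a global section of $\ccR$ is determined uniquely by its stalks at the $s_i$ subject to compatibility with the listed generating exit paths. Two global sections agreeing on $U_s$ share the same stalks at every $s_i$, and parallel transport then forces them to agree on every stalk of $X$; hence the restriction $\ccR(X) \to \ccR(U_s)$ is injective. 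The identical argument applies to $\widetilde{\ccR}$ and to $\cP^\sharp$ (constructible by Theorem \ref{t:P-constr}); the case of $\cP$ itself follows by factoring through the sheafification map, since $\cP^\sharp \cong \cP$ on sufficiently small neighborhoods of each $s_i$ where a relative minimal model exists.

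For the conical case, identify $U_s$ with an open neighborhood $U_0$ of $0 \in Y$. The contracting action $\mu: \bC^* \times Y \to Y$ extends the stratification on $U_0$ to a $\bC^*$-invariant stratification on all of $Y$: pulling back the local product decomposition along the automorphisms $\mu_\lambda$ keeps singularity type constant along orbits, and $\{0\}$ remains the unique minimal stratum. The first part applied to $Y$ gives an injection $\ccR(Y) \hookrightarrow \ccR(U_0)$. My plan is to prove this is in fact a bijection, yielding $\ccR(X) \hookrightarrow \ccR(U_s) \cong \ccR(U_0) = \ccR(Y)$. To extend a given $\tau \in \ccR(U_0)$ to a section of $\ccR(Y)$, I would choose basepoints $s_j^Y \in U_0 \cap S_j^Y$ of each stratum of $Y$ and extend within each stratum by parallel transport. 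The essential input is that the $\bC^*$-homotopy $h_t(z) = \mu_{(1-t) + t\lambda}(z)$ (for $\lambda \in \bC^*$ sufficiently small) contracts any loop in a $\bC^*$-stable stratum $S_j^Y$ into $U_0 \cap S_j^Y$, so the inclusion induces surjections $\pi_1(U_0 \cap S_j^Y) \twoheadrightarrow \pi_1(S_j^Y)$ on each connected component; a $\pi_1(U_0 \cap S_j^Y)$-invariant stalk is therefore automatically $\pi_1(S_j^Y)$-invariant and extends to a section of $\ccR^Y|_{S_j^Y}$.

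The main obstacle is cross-stratum compatibility for the extended section. This reduces via Proposition \ref{p:compatibility-limited-exitpaths}(3) to simplified exit paths departing from each $s_j^Y$, which may be taken to lie entirely inside $U_0$ and are therefore already respected by $\tau$. Equivalently, once the $\bC^*$-homotopy has verified the hypotheses of Corollary \ref{c: subset with same stratification} for the pair $(U_0, Y)$, the surjectivity of the restriction $\ccR(Y) \to \ccR(U_0)$ follows directly from that corollary. The same argument, using the constructibility of $\widetilde{\ccR}$ and $\cP^\sharp$, furnishes the analogous injections for those sheaves, and hence also for $\cP$ via sheafification.
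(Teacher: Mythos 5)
Your proof is correct and follows essentially the same route as the paper: the first injection is the immediate consequence of constructibility (choosing basepoints inside $U_s$, which meets every stratum since $s$ lies in every stratum closure), and the conical case rests on the same observation that the contracting $\bC^\times$-action pushes all loops and exit paths into $U_0$, so that the restriction $\ccR(Y) \to \ccR(U_0)$ is a bijection. The only cosmetic difference is that you route the latter step explicitly through Corollary~\ref{c: subset with same stratification}, whereas the paper phrases the same contraction argument directly in terms of conical strata and winding numbers about the cone point.
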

In particular, if $Y= \cM_{0, 0}(Q, d)$  
    is a (conical) Nakajima quiver variety (see Subsection \ref{ss:MQV} for the definition) for an unframed quiver $Q$ with some $d_i = 1$ such that there exists a simple representation,
    %and $\Pi(Q)$ has a $\theta$-stable $d$-representation 
    then the set $\ccR(\cM_{0, 0}(Q, d))$ into which $\ccR(X)$ injects can be described by GIT chambers following \cite[Corollary 4.7]{BCS22}.
\begin{proof}
The first statement is an immediate consequence of the constructibility of the sheaves (or of the exit path description of global sections).
For a conical variety $X$ with cone point 0, one can choose a stratification $\cS$ where each stratum $S_i$ is conical (i.e., $0 \in \overline{S}_i$). Therefore, compatibility can be checked in any neighborhood of 0. 

Next notice that if a local resolution has monodromy along the loop $\gamma$ in a stratum $S$, then the non-triviality of the homotopy class $[\gamma]$ implies $\gamma$ has a non-zero winding number about 0. Consequently the resolution does not arise as an extension of one in a neighborhood of the cone point. 

Therefore, any resolution of any neighborhood of 0 extends uniquely to $X$. 
%\dan{Fix and finish this proof. It seems like uniqueness of the minimal stratum is unnecessary for injectivity.}
\end{proof}

\subsection{Reduction to combinatorics and linear algebra} \label{ss: combinatorics}
%\dan{This section still needs work. Here is a rough to do list: (1) explain relation with normality, (2) explain relation with Zariski Main Theorem, (3) add examples}

Our main result says the assignment of $U \subset X$ open to the set $\ccR(U)$ of isomorphism classes of projective crepant resolutions on $U$ is an $\cS$-constructible sheaf. Such an $\cS$-constructible sheaf can in turn be described as a functor $F: \Ex(X, \cS) \rightarrow \bf{Sets}$. By choosing the auxiliary data of a basepoint $s \in S$ for each stratum in $\cS$ and a generating set of simplified exit paths $\{ \gamma_{s, s'} \}$, such a functor is a system of sets $F(s) := \ccR_{s}$ with generalization maps $\gen_{s, s'} := F(\gamma_{s, s'}): \ccR_{s} \rightarrow \ccR_{s'}$ if $\overline{S'} \cap S \neq \emptyset$.

In nice cases, for each pair of strata, it suffices to choose only one  exit path between them (as a consequence, we only need to do this for neighboring strata), as we now explain.  Suppose $s_i \in S_i$ for $i=1, 2$ and $s_2 \in \overline{S}_1$. Suppose $s_2$ has a neighborhood basis $\{ U_i \}$ such that $U_i \cap S_1$
%$\overline{S}_1) \backslash (U_i \cap S_2)$ 
is connected (hence path-connected, since $S_1$ is a manifold). Then any two simplified exit paths from $s_2$ into $S_1$ differ by an invertible morphism in the exit path category. A stronger, sufficient condition that ensures the need for only a single exit path from $s_2$ into $S_1$ is that $\overline{S}_1$ is \emph{topologically unibranch} at $s_2$. 

\begin{defn} \cite[Definition (3.9)]{Mumford81}
Let $X$ be a variety
%\footnote{Note that Mumford considered the setting of affine varieties} 
and fix $x \in X$. We say $X$ is \emph{topologically unibranch} at $x$ if for all Zariski closed subsets $Y \subset X$, $x$ has a neighborhood basis $\{ U_n \}$ in the complex topology such that $U_n \backslash (U_n \cap Y)$ is connected. 
\end{defn}
This property is equivalent to the statement that the fiber of $x$ in the the normalization of $X$ is a single point (see the exposition in \cite[Section 5.6]{MT15} on Zariski's main theorem \cite{Zariski43}, where this is the claim U3 $\iff$ U4).
%[find ref: e.g., \url{https://www2.math.upenn.edu/~chai/624_08/mumford-oda_chap1-6.pdf}, Chapter V Section 6, pp.~181--2, and the references therein].
%which uses the Zariski--Samuel theorem that completions of integrally closed domains are also integrally closed domains].

Note that, by local triviality along a stratum and the assumption that strata are connected, these properties hold at  $s_i$
if and only if they hold at any other point of the same stratum.  In this situation, we can refine Proposition \ref{p:compatibility-limited-exitpaths} to only need a single simplified exit path at each $s_i$ to each stratum $S_j$ with $\overline{S_j} \ni s_i$. As we observed already, these are the same as the strata with
%Note that again by local triviality these are the same as the strata with 
$\overline{S_j} \supseteq S_i$.  Finally,  by composing these with an arbitrary path from the endpoint of the simplified exit path to the basepoint $s_j$, we obtain the following:
\begin{cor} Consider a stratified variety satisfying Definition \ref{d:local product}. Suppose that for every pair of strata $S_i, S_j$ with $\overline{S_j} \supseteq S_i$, $\overline{S_j}$ is topologically unibranch at $s_i$ (or just that there is a neighborhood basis $\{U_k\}$ of $\overline{S_j}$ at $s_i$ such that $U_k \cap S_j$ is connected for all $k$).  Then global sections of a constructible sheaf $\cF$ are in bijection with choices of sections at basepoints $s_i$ which are compatible with parallel transport along:
\begin{enumerate}
\item[(1)] Any set of closed paths generating the fundamental groups $\pi_1(S_i, s_i)$;
\item[(2)] For each pair of distinct strata $S_i, S_j$ with $\overline{S_j} \supset S_i$, any single exit path from $s_i$ to $s_j$.
\end{enumerate}
\end{cor}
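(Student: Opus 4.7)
The plan is to derive the corollary from Proposition \ref{p:compatibility-limited-exitpaths} by simplifying its three families of generating exit paths under the unibranch hypothesis. The forward direction---that a global section of $\cF$ restricts to a collection of basepoint sections compatible with (1) and (2)---is immediate from functoriality of parallel transport along exit paths. The substance is the converse.

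First I would absorb item (1) of Proposition \ref{p:compatibility-limited-exitpaths} (within-stratum paths from $s_i$ to all other points of $S_i$) into the data of the present corollary: given a $\pi_1(S_i,s_i)$-invariant element $f_{s_i}\in\cF_{s_i}$, parallel transport along within-stratum paths produces a canonical section $f_{S_i}$ of the local system $\cF|_{S_i}$, automatically satisfying items (1) and (2) of \emph{loc.~cit.}~for the stratum $S_i$. Thus the basepoint data plus condition (1) of the present corollary already yield a section on each entire stratum.

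Next, I would use the unibranch hypothesis to collapse item (3) of Proposition \ref{p:compatibility-limited-exitpaths}. For each pair with $\overline{S_j}\supseteq S_i$, the point $s_i$ admits a neighborhood basis $\{U_k\}$ with $U_k\cap S_j$ connected, hence path-connected since $S_j$ is a manifold. So item (3) of \emph{loc.~cit.}~reduces to a single simplified exit path $\beta_{ij}\colon s_i\to q$, with $q\in U_i\cap S_j$, per such pair.

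The main remaining step, which I expect to be the principal obstacle, is to replace $\beta_{ij}$ with an arbitrary chosen exit path $\gamma_{ij}\colon s_i\to s_j$ (where $\gamma_{ij}$ may pass through intermediate strata). Pick any path $\delta$ in $S_j$ from $q$ to $s_j$; then $\delta\cdot\beta_{ij}$ is an exit path from $s_i$ to $s_j$, and since $f_{S_j}$ is already a section of the local system on $S_j$, compatibility with $\beta_{ij}$ is equivalent to compatibility with $\delta\cdot\beta_{ij}$. Thus it suffices to show that compatibility with $\delta\cdot\beta_{ij}$ is equivalent to compatibility with $\gamma_{ij}$. This amounts to proving that $(\gamma_{ij})_*=\beta_*\circ(\delta\cdot\beta_{ij})_*\circ\alpha_*$ for some $\alpha\in\pi_1(S_i,s_i)$ and $\beta\in\pi_1(S_j,s_j)$; granted this, the desired equivalence follows because $f_{s_i}$ and $f_{s_j}$ are $\pi_1$-invariant. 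To establish the identity, decompose $\gamma_{ij}$ into simplified segments $\gamma_n\cdots\gamma_1$, and, using the local product structure together with the unibranch hypothesis applied at each intermediate pair of strata, show that each $\gamma_m$ is tame-homotopic through exit paths to a standard simplified exit path between the same pair of strata, picking up only correction paths within strata. Consolidating these within-stratum corrections at the endpoints into loops in $\pi_1(S_i,s_i)$ and $\pi_1(S_j,s_j)$ then yields the required identity.
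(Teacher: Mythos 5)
Your reduction to Proposition \ref{p:compatibility-limited-exitpaths} is exactly the paper's route: condition (1) plus within-stratum transport produces a section of the local system $\cF|_{S_i}$ on each stratum; the unibranch (or connected-neighborhood-basis) hypothesis collapses item (3) of that proposition to a single simplified exit path $\beta_{ij}$ per comparable pair; and one then reaches the basepoint $s_j$ by appending a path $\delta$ inside $S_j$. Up to that point you match the paper, which treats these steps in the two paragraphs preceding the corollary at about the same level of detail.

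The gap is in your final step. The identity $(\gamma_{ij})_*=\beta_*\circ(\delta\cdot\beta_{ij})_*\circ\alpha_*$ with $\alpha\in\pi_1(S_i,s_i)$ and $\beta\in\pi_1(S_j,s_j)$ is not true in general when $\gamma_{ij}$ passes through an intermediate stratum $S_k$: decomposing $\gamma_{ij}$ into simplified segments and replacing each by a standard one does only produce within-stratum corrections, but those corrections sit at points of the \emph{intermediate} strata, sandwiched between two generization maps, and they cannot be commuted past the generizations out to the endpoints. (They do act trivially on the given data, because each $f_{S_k}$ is a section of a local system and hence fixed by all loops --- but to invoke that you must already know that the partial transport of $f_{s_i}$ into $S_k$ agrees with $f_{S_k}$, i.e., compatibility for the pair $(S_i,S_k)$, which is part of what is being proved; as written the consolidation step is circular.) The repair is the one implicit in the paper and made explicit in the remark following the corollary: if $S_i$ and $S_j$ are \emph{neighbors} in the closure order, an exit path from $s_i$ to $s_j$ cannot meet a third stratum (any stratum it visited would, by the transition analysis and the local product condition, be strictly intermediate between $S_i$ and $S_j$), so it has the form (path in $S_j$)$\cdot$(simplified exit path)$\cdot$(path in $S_i$) and your identity does hold there; for a non-neighboring pair one argues along a maximal chain $S_i=S_{k_0}<\cdots<S_{k_n}=S_j$ of neighbors that compatibility for all neighboring pairs, together with the stratum sections, already forces compatibility with \emph{every} exit path from $s_i$ to $s_j$ (using that the restriction maps of $\cF$ compose within a single small neighborhood of $s_i$), so the condition imposed for that pair is redundant whichever path is chosen.
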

Note that in (2) in the corollary, we can take the path to be a simplified exit path.  Moreover, since compositions of exit paths are exit paths, we can also restrict in (2) to pairs of \emph{neighboring} strata, i.e., strata such that there does not exist an intermediate stratum $S_k \notin \{S_i, S_j\}$ with $\overline{S_j} \supset S_k, \overline{S_k} \supset S_i$.  These are the strata that are adjacent in the poset of strata under inclusion of closures, (i.e., adjacent in the Hasse diagram).

In view of %Corollary \ref{c:pic},
Theorem \ref{t:P-constr}, for each of the paths above, we only need to compute the parallel transport as a linear map on the sheaf $\cP^\sharp$ of local relative Picard spaces. Then, the global locally projective resolutions are given by choices of Mori cones at each basepoint compatible under this transport, and the global projective resolutions with fixed relatively ample bundle are given by choices of movable line bundles compatible under this transport. In this sense, we have reduced the classification of crepant resolutions to combinatorics and linear algebra, provided we have computed the local classification and the parallel transport.

%\travis{Add detail: - if $S_2$ intersects $\overline{S_1}$ then $S_2 \subseteq \overline{S_1}$. In this case, any exit path from $S_2$ to $S_1$ can be decomposed as a path in $S_2$, a simplified exit path departing the basepoint $s_2 \in S_2$, and a path in $S_1$. Finally, $\overline{S_1}$ is unibranch along $S_2$ iff it is true at a single point $s_2 \in S_2$.}

\begin{rem}
The unibranch condition is not always satisfied in examples.  For example, if $X$ is the nilpotent cone of a semisimple Lie algebra, stratified by the adjoint orbits, it can happen that an orbit closure is not unibranch. This is true even though $X$ admits a (unique) projective crepant resolution, the Springer resolution. 

For instance, let $S_1$ be the nilpotent orbit in $\mathfrak{so}_7$ labeled by the partition $(3,2,2)$, of dimension $12$. The orbit closure $\overline{S_1}$ is not unibranch at a point of the orbit $S_2$ labeled by $(2,1^4)$ of dimension $10$ (see \cite[p.~595]{KP-norm-cl}). %(As $S_2$ is dense in the boundary $\partial S_1$, in fact $\overline{S_1}$ is not unibranch anywhere on $\partial S_1$.)  
\end{rem}

%The upshot of this discussion is a generating set of homotopy classes of exit paths in $\Ex(X, \cS, \{ s_i \})$, the full subcategory of $\Ex(X, \cS)$ with objects the set of basepoints $\{ s_i \}$. Indeed, this category is generated by (1) the fundamental groups $\pi_1(S_i, s_i)$ and (2) simplified exit paths. Moreover, any simplified exit path is itself a composition of simplified exit paths into adjacent strata under the poset of closure inclusion, i.e., the Hasse diagram. Furthermore, if the strata closures are unibranch, in particular if they are normal, then we need only one simplified exit path between each pair of adjacent strata.

%FINISH - crepant resolutions related by small (more adjectives) modifications, in nice cases given by VGIT - fundamental domain for Weyl group action given by movable cone, so suffices to count chambers in movable cone.
%If $X$ is further conical, then \cite[Prop 2.17]{BPW16} says $\Mov(\tilde{X}/X)$ is a fundamental domain for the Nakayama Weyl group action on the chambers.

%\subsection{The constructible sheaf of Picard groups
%A birational geometer's perspective
%}
%\dan{This section needs some elaboration. If we don't have more to say, it could become a Remark.}
%While much of the paper is written without the use of birational geometry, the reader familiar with the subject may find the alternative approach of this section more illuminating. 

\section{Examples} \label{s: examples}

\subsection{Monodromy and incompatibility}
We now construct examples with monodromy obstructions and examples with compatibility obstructions to gluing local crepant resolutions.
%We show by example that we need both the monodromy-free and compatibility assumptions in our main result. 

The below examples are either Nakajima quiver varieties or nilpotent orbit closures in $\mathfrak{sl}_n(\bC)$. For the first type it is well known that the slice to the strata are also Nakajima quiver varieties. In the case where variation of stability produces a smooth symplectic resolution, it also produces one for each of the slices, so that condition (*) is satisfied.  For the second type, the Springer resolution produces resolutions for the Kostant--Slodowy slices to each of the strata, so that again (*) is satisfied.  (It is also true that, since these examples are algebraic and have canonical singularities, (*) is a general consequence of \cite{BCHM10}, see Remark \ref{r:bchm}.)

\begin{exam} \label{ex: monodromy} (Monodromy can occur)
Let $Q$ be the quiver with a single vertex and $g \geq 3$ loops. Pick the dimension vector $\alpha = (2)$. By Bellamy--Schedler \cite{BS21}, the quiver variety (see Subsection \ref{ss:MQV})
\[
\mathcal{M}_{0, 0}(Q, (2)) = \left \{ (X_1, Y_1, \dots,  X_g, Y_g) \in  \End_{\bC}(\bC^2) \ \middle | \ \sum_{i=1}^g [X_i, Y_i] =0  \right \} /\!/ GL_2(\bC)
\]
is Zariski locally factorial, terminal, and singular and hence does not admit an algebraic symplectic resolution. The same holds for the variety $X := \mathcal{M}_{0, 0}(Q, (2)) \backslash \{ 0 \}$, which we claim has an analytically local symplectic resolution around every point. To see this notice that the stratification of $\mathcal{M}_{0, 0}(Q, (2))$ into its symplectic leaves is given on semisimple representations (i.e., elements of closed $\GL_2$-orbits in $\mu^{-1}(0)$) by:
\[
    S_0 := \{ \rho = \rho_1 \oplus \rho_2 \ | \ \rho_1 \cong \rho_2 \} \hspace{1cm} 
    S_1 := \{ \rho = \rho_1 \oplus \rho_2 \ | \  \rho_1 \not \cong \rho_2 \} 
    \hspace{1cm}
    S_2 := \{ \rho \text{ simple} \}.
    %\item \textcolor{green}{$\rho \cong \rho_1^2$}, this is the same strata as zero since the stabilizer is $\GL_2$.
\]
This variety has dimension $2-4(2-2g)=8g-6$.
The minimal stratum $S_1$ of $X$ has local quiver
\[
\begin{tikzcd}[ampersand replacement=\&]  \arrow[loop left, distance=2em, start anchor={[yshift=-1ex]west}, end anchor={[yshift=1ex]west}, "g"]  \bullet  \arrow[r, "2g-2"] \& \bullet \arrow[loop right, distance=2em, start anchor={[yshift=1ex]east}, end anchor={[yshift=-1ex]east}, "g"] \end{tikzcd} 
\]
 with $4g-2$ total arrows and dimension vector $(1, 1)$. This is not smooth: its singular locus is the minimal leaf  corresponding to all the arrows of the doubled quiver between two distinct vertices being zero. This singular locus has dimension $4g$, corresponding to the loops of the doubled quiver being arbitrary scalars.   It admits a symplectic resolution since the dimension vector $(1, 1)$ is indivisible, i.e., the greatest common divisor of its entries is one.  As the singular locus has codimension $4g-6 > 2$, and hence the exceptional locus has codimension greater than one, the symplectic resolutions are small (contract no divisors). By \cite[Theorem 1.2]{BCS22}, there are two non-isomorphic projective symplectic resolutions, given by the two inequivalent nonzero stability conditions $(1,-1)$ and $(-1,1)$.  The complement of this leaf is the smooth open leaf of simple representations.
%, of dimension $2-4(2-2g)=8g-6$. 
%The local quiver for the dense open leaf of simple representations is $\bullet \circlearrowleft g'$ (i.e., the quiver with a single vertex and $g'$ arrows). Therefore, the local quiver variety at a smooth point is the affine space $\bC^{2g'}$. 
Hence $X$ admits an analytically local symplectic resolution in some neighborhood of every point, yet these do not glue to a global algebraic symplectic resolution. Furthermore, there is a monodromy obstruction to an analytic locally projective crepant resolution: a path in the variety which swaps the isomorphism classes of $\rho_1$ and $\rho_2$ will swap the two stability conditions, thus interchange the two inequivalent resolutions.
%As there is a single minimum stratum, there is no incompatibility obstruction, so the obstruction must be monodromy.
\end{exam}

\begin{exam} \label{exam: incompatibility} (Incompatibility can occur)
Let $\overline{\cO}$ be the minimal nilpotent orbit closure of $\fsl_3(\bC)$, defined explicitly as $\{ A \in \fsl_3(\bC) : A^2 = 0 \}$. Note that $\overline{\cO}$ has a stratification into $\{ 0 \}$ and $\cO$. Neither stratum can have monodromy since $\pi_1(\{ 0 \} , 0) =1$ and $\ccR_s$ is a singleton set for $s \neq 0$. (In fact, it is also true that $\pi_1 \cO = 1$, since the partial Springer resolution provides an isomorphism $\cO \cong (T^* \bP^2) \setminus \bP^2$, but we do not need this.)
%Both stratum have trivial fundamental group as $\overline{\cO}$ is conical (and $0$ is codimension two in $\cO$). 
Further, $\overline{\cO}$ admits two symplectic resolutions of the form $T^*(\SL_3(\bC)/P) \rightarrow \overline{\cO}$ corresponding to the two parabolic subgroups of $\SL_3(\bC)$. We can also write these resolutions as $T^*\text{Gr}(1,3)$ and $T^*\text{Gr}(2,3)$, or the cotangent bundle of $\bP^2$ and its dual.  So the product $\overline{\cO} \times \overline{\cO} \times \overline{\cO}$ has strata with no monodromy and admits $2^3$ symplectic resolutions.

 Let $Y = \overline{\cO} \times \overline{\cO} \times \overline{\cO} \backslash \{ (0, 0, 0) \}$. There are three minimal leaves: $\overline{\cO} \times 0 \times 0$,  $0 \times \overline{\cO} \times 0$, and  $0 \times 0 \times \overline{\cO}$. Each has slice isomorphic to $\cO \times \cO$ and hence has $4$ symplectic resolutions. Of the $4^3 = 64$ possible choices, only 8 can result in an actual symplectic resolution. Indeed, the choices can only be compatible when they come from a triple of choices by restriction, i.e., from a choice of resolution at the cone point we threw out. This can be seen by restricting from each pair of minimal leaves to their product. %Moreover, there is no monodromy for resolutions along the strata, as each is a product of strata without monodromy.  %Why, explain this?
\end{exam}

\begin{exam} \label{exam:artificial} (Constructing monodromy)
   Let $X$ be a symplectic singularity with two symplectic resolutions $X_1$ and $X_2$ that differ by composing with an automorphism $\phi: X \to X$ of the singularity (in particular, $X_1$ is isomorphic to $X_2$ as varieties).
   %\travis{Note: I don't think that the latter necessarily implies the former in general!}
   %are abstractly isomorphic as varieties. Write $\phi: X \rightarrow X$ for the automorphism induced by the abstract isomorphism $X_1 \cong X_2$. 
   Let $S$ be symplectic with an automorphism $\psi$ whose non-identity powers $\{ \psi^{n} \hspace{.01cm} \mid \hspace{.01cm} \psi^{n} \neq \text{id}_{S} \}_{n \in \bN}$ have no fixed points. Assume further that the order of $\phi$ divides the order of $\psi$ (or the order of $\psi$ is infinity). The quotient  $(X \times S) / G$ by the group $G := \langle \phi \times \psi \rangle$ has non-trivial monodromy along a path from $(x, s)$ to $(\phi(x), \psi(s))$ in $X \times S$.

   For example, for $n, m \in \bN$ with $m < n/2$ define 
   \[
   X_{n, m} = \{ A \in \fsl_n(\bC) \ \mid \ A^2 = 0, \text{ rank}_{\bC}(A) \leq m \}.
   %\min \{ m, n-m \} 
   \]
   There are non-isomorphic Springer resolutions $T^*(\text{Gr}(m, n)), T^*(\text{Gr}(n-m, n) )\rightarrow X_{n, m}$ whose sources are isomorphic as varieties (via an inner product $\langle - , - \rangle$ on $\bC^n$ that identifies each $m$-plane with its orthogonal complement $(n-m)$-plane), and this isomorphism induces a nontrivial automorphism of $X_{n,m}$, namely the transpose. 
   %The induced automorphism on $X_{n, m}$ is $\phi(A) = A^{\vee}$ where if $A = (a_{ij})$ and $A^{\vee} = (a^{\vee}_{ij})$ then $\langle a_{ij}, a^{\vee}_{ij} \rangle = 1$. 
   Note that $\phi$ squares to the identity. Let $S = (\bC)^2 \backslash \{ (0, 0) \}$ with action of $C_2 = \langle \psi \rangle$ taking $(x, y) \mapsto (-x, -y)$. Then $(X_{n,m} \times S) / ( (A, x, y) \sim (A^{\vee}, -x, -y))$ has monodromy taking $T^*(\text{Gr}(m, n))$ to $T^*(\text{Gr}(n-m, n))$ along a path from $(A, x, y)$ to $(A^{\vee}, -x, -y)$ in $X \times S$.
\end{exam}

\subsection{Symmetric powers of surfaces with du Val singularities} \label{ss: symmetric_powers}

Let $Y$ be a surface with only du Val singularities and a symplectic form on the smooth locus. Consider $X := \Sym^n(Y) := Y^{\times n} / \fS_n$ where $\fS_n$ denotes the symmetric group on $n$-letters acting by permuting the factors. %Write $Y := \Sym^n(X)$ and 
Then $X$ has a stratification by symplectic leaves (equivalently, the singularity stratification):
%Consider a stratification of $X$ as 
\[
X = \bigsqcup_{
\resizebox{0.1\textwidth}{!}{
$\begin{array}{c} f: Y^{\sing} \rightarrow \bN \\ \sum f(z) \leq n \end{array}$
}
} X_{f} \hspace{1cm} 
X_f := \bigsqcup_{
\resizebox{0.1\textwidth}{!}{
$\begin{array}{c} f: Y^{\sing} \rightarrow \bN \\ \sum_z f(z) \leq n \end{array}$
}
} 
\Sym^{n- \sum_z f(z)}(X \backslash X^{\sing} ) \times \prod_{z \in Y^{\sing}} (z, \dots, z) \ ( f(z) \text{ times}).
\]
%Fix $\{ U^j_z \}_{j \in \bN}$ a contractible neighborhood basis for the isolated singularity in $X$. Then each $X_f$ has an neighborhood basis $\{U^j_f \}_{j \in \bN}$ given by
%\[
%U^j_f := \Sym^{n - \sum f(z)} \left ( X \backslash \bigsqcup_{z \in X^{\sing}} U^j_z \right ) \times \prod_{z \in X^{\sing}} \Sym^{f(z)}( U^j_z )
%\]
%\travis{We are cheating here, we should replace this by an infinite union of actual product neighborhoods, where the ball around the singularity $z$ gets smaller and smaller, and we have Sym of the complement.}
%Since $z$ is du Val, the surface $Y$ is locally analytically isomorphic at $z$ to a neighborhood of zero in $\bC^2/\Gamma_z$ 
%its formal neighborhood in $X$, denoted $\hat{X}_z$, is of the form $\hat{X}_z = \widehat{\bC^2/ \Gamma_z}$ 
%for some $\Gamma_z \subset \text{SL}_2(\bC)$. It follows that resolutions of $U^N_f$ for $N \gg 0$ correspond to a choice of resolution of $\Sym^{f(z)}(\bC^2/ \Gamma_z)$ for all $z \in X^{\sing}$. 

Note that the singularities satisfy (*) since the slice to the strata are products of symmetric powers of du Val singularities (or $\bC^2$), and each of these can be resolved  by the composition $\Hilb^{m}(\widetilde{\bC^2/\Gamma}) \to \Sym^m(\widetilde{\bC^2/\Gamma}) \to \Sym^m(\bC^2/\Gamma)$ of the Hilbert--Chow morphism and the symmetric power of the minimal resolution of the surface.  (Also, analytically locally the variety is isomorphic to complex algebraic varieties, so  Remark \ref{r:bchm} applies to this case.)
 
\begin{prop}\label{p:sym-duval}
Every locally projective partial crepant resolution of $X$ is uniquely determined by its restriction to open neighborhoods of the points $(z, \dots, z)$ ($n$ times) for $z \in Y^{\sing}$. Consequently, for $U_z$ disjoint neighborhoods of the singularities $z$ of $Y$,
  \[
  \ccR(Y) \cong \prod_{z \in Y^{\sing}} \ccR(\Sym^{n}( U_z ))  
  \cong \prod_{z \in Y^{\sing}} \ccR(\Sym^{n}( \widehat{\bC^2/ \Gamma_z})) 
  \]
  for some $\Gamma_z \subset \text{SL}_2(\bC)$. 
  Hence each locally projective partial crepant resolution of $Y$ is locally given by variation of stability parameter. 
  
  Moreover, if $Y$ has finitely many singularities, then every locally projective partial crepant resolution of $X$ is globally projective. 
  %and given by a linear combination of the exceptional divisors of %by \cite[Corollary 1.3]{BC20}.
\end{prop}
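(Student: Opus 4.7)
The plan is to apply Theorem~\ref{t:constr} together with the product and chain analyses of Section~\ref{ss:compatibility}, reducing the sheaf $\ccR$ to its stalks at the isolated points $(z,\dots,z)\in X$ for $z\in Y^{\sing}$, and then to invoke Theorem~\ref{t: quotient-sing-globally-projective} to handle global projectivity. Throughout, the essential structural input is that at a basepoint of the stratum $X_f$, the slice decomposes as $\prod_{z:\, f(z)>0}\Sym^{f(z)}(\bC^2/\Gamma_z)$, coming from the local product $Y\cong(\bC^2/\Gamma_z)\times D$ at each du Val point combined with the symmetric power functor.

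First I would dispose of the strata $X_f$ with $\#\{z:f(z)>0\}\geq 2$. Their slices are products of two or more singular factors, so the discussion around diagram~\eqref{v-diagram} together with Lemma~\ref{l:Pic_product} identifies their local partial crepant resolutions canonically with tuples of local resolutions of the factor-type strata, with Mori equivalence factoring through the product. Iterating this, all the data collapses to the single-support strata $X_{f_m^{(z)}}$ (with $f_m^{(z)}(z)=m$ and $f_m^{(z)}(z')=0$ for $z'\ne z$), which assemble into the chain
\[
X_{f_n^{(z)}}\;\subseteq\;\overline{X_{f_{n-1}^{(z)}}}\;\subseteq\;\cdots\;\subseteq\;\overline{X_{f_1^{(z)}}},
\]
whose minimal member is the single point $X_{f_n^{(z)}}=\{(z,\dots,z)\}$. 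A simplified exit path from $(z,\dots,z)$ to any given point of $X_{f_m^{(z)}}$ is obtained by moving $n-m$ of the $n$ copies of $z$ off $Y^{\sing}$, and Theorem~\ref{t:constr} says parallel transport along it identifies the restriction of a resolution at $(z,\dots,z)$ with the corresponding resolution at $X_{f_m^{(z)}}$.

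Next I would show that $\ccR|_{X_{f_m^{(z)}}}$ has trivial monodromy, so the choices at different points $(z,\dots,z)$ are genuinely independent. The stratum $X_{f_m^{(z)}}$ is biholomorphic to $\Sym^{n-m}(Y\setminus Y^{\sing})\times\{(z,\dots,z)\}$, and every local product neighborhood of a point $p\in X_{f_m^{(z)}}$ takes the form (open in the stratum) $\times\,\Sym^m(\bC^2/\Gamma_z)$, with the slice factor canonically attached to the fixed du Val singularity $z$ and unaffected by the positions of the $n-m$ free points. Hence the local system of slice Picard groups is canonically trivial, and combining with the previous step yields
\[
\ccR(X)\;\cong\;\prod_{z\in Y^{\sing}}\ccR_{(z,\dots,z)}\;\cong\;\prod_{z\in Y^{\sing}}\ccR(\Sym^n(\widehat{\bC^2/\Gamma_z})),
\]
where the last identification uses the analytic germ isomorphism of $Y$ at $z$ with $\widehat{\bC^2/\Gamma_z}\times D$. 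Each germ $\Sym^n(\widehat{\bC^2/\Gamma_z})\cong\widehat{\bC^{2n}/(\Gamma_z\wr\fS_n)}$ has a Nakajima-style GIT presentation, so \cite[Theorem~1.2]{BCS22} realizes its crepant resolutions by variation of stability parameter.

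Finally, for global projectivity when $Y^{\sing}$ is finite, the symplectic stratification of $X=\Sym^n(Y)$ is finite; $X$ is a symplectic singularity, as a finite quotient (by $\fS_n$) of the symplectic variety $Y^{\times n}$ via \cite[Proposition~2.4]{Beauville}; and $X$ is analytically locally $\bQ$-factorial since each germ $\bC^{2n}/(\Gamma_z\wr\fS_n)$ is $\bQ$-factorial as a finite symplectic linear quotient, while the other germs are products of du Val singularities, also analytically locally $\bQ$-factorial. Theorem~\ref{t: quotient-sing-globally-projective} therefore reduces global projectivity of any $\tilde X\in\ccR(X)$ to producing a section $\sigma\in\Gamma(X,\cP^\sharp)$ of relatively ample classes. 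I would build $\sigma$ from $\tilde X$ by choosing a rational ample class in the Mori chamber of $\tilde X$ at each $(z,\dots,z)$, propagating it along the chains (no monodromy by the previous step), and assembling across mixed-support strata via the product decomposition of Lemma~\ref{l:Pic_product}. The main technical obstacle is showing that the parallel transport on $\cP^\sharp$ (not merely on the sheaf of Mori chambers) is trivial along each stratum $X_{f_m^{(z)}}$, so that local rational ample classes really glue into one section of $\cP^\sharp$ rather than just a compatible family of Mori-equivalent classes; the canonical identification of slice Picard groups along $X_{f_m^{(z)}}$ established above is what delivers this.
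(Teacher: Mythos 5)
Your classification argument follows essentially the same route as the paper: mixed-support strata are collapsed via the product analysis of Subsection \ref{ss:compatibility} and Lemma \ref{l:Pic_product}, and monodromy is killed because the only possible source is the smooth-points factor, where local crepant resolutions are unique. One caution on that step: your assertion that ``the local system of slice Picard groups is canonically trivial'' because the slice is attached to the fixed point $z$ is the right intuition but not yet a proof --- a loop in $X_{f_m^{(z)}}$ moves and permutes the $n-m$ free points, and one must check that the induced identification of germs acts trivially on the resolution data. The paper makes this precise by passing to the disjoint cover $\Sym^n(V \sqcup (\sqcup_i U_i))$ and invoking Corollary \ref{c: subset with same stratification} to reduce the monodromy computation to strata of $\Sym^m V$, where uniqueness of the local resolution forces triviality; you should supply that reduction rather than assert canonical triviality.

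For global projectivity you take a genuinely different route: the paper argues directly that $\Gamma(X,\cP^\sharp)$ is spanned by global Weil divisors (the Hilbert--Chow exceptional divisor together with strict transforms of the exceptional curves of the minimal resolutions) and applies Lemma \ref{l:glob-crit} and Proposition \ref{p:r-surjective}, whereas you invoke Theorem \ref{t: quotient-sing-globally-projective}. The paper's remark immediately after its proof confirms your route is viable, but only ``in the terminal case'': as stated, that theorem concludes global projectivity for \emph{terminal} $\tilde X \in \ccR(X)$, while the proposition asserts it for all partial crepant resolutions, so you would still need the surjectivity of $r$ plus Proposition \ref{p:r-surjective} (and global domination by a terminal model, which the first part of the proposition supplies) to cover the non-terminal members of $\ccR(X)$. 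More seriously, you have misidentified where the difficulty lies in producing the section $\sigma$: the obstruction is not parallel transport along the strata $X_{f_m^{(z)}}$, which you already handled, but the compatibility of the ample classes chosen at \emph{different} singular points $(z,\dots,z)$. The full diagonal $\{(x,\dots,x) : x \in Y^{\sm}\}$ contains every $(z,\dots,z)$ in its closure and has relative Picard group of rank one, spanned by the Hilbert--Chow exceptional class; a section of $\cP^\sharp$ is locally constant along this connected stratum, so the images of all your local ample classes under the projection to the exceptional coefficient must coincide. The paper arranges this by rescaling the arbitrary initial choices; without that normalization your assembly step does not produce a section of $\cP^\sharp$, only a compatible family of Mori chambers.
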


By \cite[Corollary 1.3]{BC20}, $\ccR(\Sym^{n}( \widehat{\bC^2/ \Gamma_z}))$ can be identified with chambers in the GIT fan modulo the Weyl group action. The later can be computed explicitly, for $W$ with Coxeter number $h$ and exponents $e_1, \dots, e_{\ell}$
\[
\# \ccR(\Sym^{n}( \widehat{\bC^2/ \Gamma_z}))   = \prod_{i=1}^{\ell} \left ( \frac{(n-1)h}{e_i +1} + 1\right )
\]
see \cite[Proposition 1.2]{Bellamy16}. For example, in type $E$, writing $T, O, I$ for the binary tetrahedral, binary octahedral, and binary icosahedral groups respectively, we have
\begin{align*}
\# \ccR(\Sym^{n}( \widehat{\bC^2/ T}))  &= \frac{n}{30} ( 1728 n^5 - 4320 n^4 + 4140 n^3 - 1900 n^2 + 417 n - 35) \\
\# \ccR(\Sym^{n}( \widehat{\bC^2/ O}))  &= \frac{n}{280} ( 59049 n^6 - 183708 n^5 + 229635 n^4 - 147420 n^3 + 51156 n^2 - 9072 n + 640) \\
\# \ccR(\Sym^{n}( \widehat{\bC^2/ I}))  &= \frac{n}{1344} ( 1265625 n^7 - 4725000 n^6 + 7323750 n^5 - 6100500 n^4 + 2943325 n^3 \\
& \hspace{2cm} - 820260 n^2 + 121796 n - 7392).
\end{align*}

\begin{proof}
Denote the singularities of $Y$ by $z_i$, i.e., $Y^{\sing}=\{z_i\}$. For each $z_i \in Y$ pick $U_i \subseteq Y$ a contractible neighborhood, such that the collection $\{ U_i \}$ is pairwise disjoint. 

Let $V = Y \backslash \left ( \sqcup_i \overline{U_i} \right )$ where $\overline{U_i}$ denotes the closure of $U_i$ in $S$. The inclusion map $\iota_V: V \rightarrow Y$ induces a surjection on the fundamental group of the smooth locus $Y^{\sm}$ and the inclusion $\iota_U: \left ( \sqcup_i U_i \right ) \rightarrow Y$ induce surjections on the fundamental groups of the singular strata, $Y^{\sing} = \{ z_i \}$. So their union $\iota: V \sqcup \left ( \sqcup_i U_i \right ) \rightarrow X$ induces surjections on the fundamental group of each stratum. Therefore, the induced inclusion on the $n$th symmetric power $\Sym^n(\iota): \Sym^n(V \sqcup \left ( \sqcup_i U_i \right )) \rightarrow \Sym^n(Y)$ induces surjections on the fundamental groups of each stratum. For each stratum in $\Sym^n(Y)$, there is a stratum in $\Sym^n(V \sqcup \left ( \sqcup_i U_i \right ))$
%$ \rightarrow \Sym^n(Y)$ 
mapping into it which is of the form 
\[S \times \prod_i (z_i,\ldots,z_i) \subseteq \Sym^m V \times \prod_i \Sym^{m_i} U_i \subseteq \Sym^n(V \sqcup \left ( \sqcup_i U_i \right ) ).
\]
By Corollary \ref{c: subset with same stratification}, we can detect monodromy of a stratum of $\Sym^n(Y)$ by monodromy on the source stratum. This is a product of $S$ with a point, so the monodromy is detected by the monodromy along $S \subseteq \Sym^m V$. Since there is locally a unique crepant resolution along this stratum, there can be no monodromy.
%in the open subset $\Sym^n(V \sqcup \left ( \sqcup_i U_i \right ))$. This is a disjoint union of varieties of the form $\Sym^m V \times \prod_i \Sym^{m_i} U_i$. In the first factor, there is locally a unique crepant resolution everywhere.  In the other factors, we are 
%For each stratum $S$ in this open subset, there is an open neighborhood $U$ of $S$ such that the pair $(U,S)$ is a product of pairs of the form $(U',S')$ where either $U'$ has a unique crepant resolution (for $U' \cong \Sym^m V$), or else $S'$ is a point (for $U' \cong \Sym^m U_i$). 
%which is a product of varieties which either have #strata in this open subset are products of This variety consists of products with each piece (1) contractible or (2) having a unique crepant resolution of singularities. 
%So in either case, the fundamental group action on the set of crepant resolutions is trivial. 
Consequently, there is no obstruction extending local crepant resolutions to entire stratum. 

Let $z_1$ and $z_2$ be du Val singularities in the surface $Y$. Fix $0 <m < n$. Define the strata in $X = \Sym^n(Y)$
\[
S_{a, b, c} := \{ (\underbrace{z_1, \dots, z_1}_\text{a copies}, y_1, y_2, \dots, y_b, \underbrace{z_2, \dots, z_2}_\text{c copies}) \mid y_i \in Y^{\sm}, y_i \neq y_j \text{ for } i \neq j \} \hspace{1cm} a + b + c = n.
\]
The four chains: 
\[
S_{n, 0, 0} \rightarrow S_{n-1, 1, 0} \rightarrow \cdots \rightarrow S_{m, n-m, 0}
\hspace{1cm}
S_{m, 0, n-m} \rightarrow S_{m, 1, n-m-1} \rightarrow \cdots \rightarrow S_{m, n-m, 0} 
\]
\[
S_{0, 0, n} \rightarrow S_{0, 1, n} \rightarrow \cdots \rightarrow S_{0, m, n-m}
\hspace{1cm}
S_{m, 0, n-m} \rightarrow S_{m-1, 1, n-m} \rightarrow \cdots \rightarrow S_{0, m, n-m}
\]
form an M-configuration in the Hasse diagram
\[
\xymatrix@C=-30pt{
 S_{m, n-m, 0} = \{ (\overbrace{x_1, \dots, x_1}^\text{m copies}, y_1, \dots, y_{n-m}) \} && S_{0, m, n-m} = \{ (y_1, \dots, y_{m}, \overbrace{x_2, \dots, x_2}^\text{n-m copies}) \} & \\
S_{n, 0, 0} = \{ (x_1, \dots, x_1) \} \hspace{1cm} \ar[u] & S_{m, 0, n-m} = \{ (\underbrace{x_1, \dots, x_1}_\text{m copies}, \underbrace{x_2, \dots, x_2}_\text{n-m copies}) \} \ar[ru] \ar[lu] & \hspace{1cm} S_{0, 0, n} := \{ (x_2, \dots, x_2) \} \ar[u].
}
\]
The slice to $S_{m,0,n-m}$ is a product of slices to $S_{m,n-m,0}$ and $S_{0,m,n-m}$.
So as explained in Subsection \ref{ss:compatibility} in general, a choice of crepant resolutions in neighborhoods of the points $(x_1, \dots, x_1)$ and $(x_2, \dots, x_2)$ determine crepant resolutions in neighborhoods of the point in $S_{m, 0, n-m}$ for all $0 < m < n$. Note that we can check compatibility across $S_{n, 0, 0}$ and $S_{0, 0, n}$ in the diagonal stratum $\{ (x, x, \dots, x) : x \in S \}$ as it is the join $S_{n, 0, 0} \vee S_{0, 0, n}$. The diagonal stratum has a unique crepant resolution given by the Hilbert scheme, so all choices of crepant resolutions glue compatibility in the diagonal. 

We have seen that an arbitrary choice of local crepant resolutions of the minimal strata $\Sym^n \{z_i\}$  determine at most one local resolution at all minimal strata. They therefore determine at most one local crepant resolution  at all strata.  We also saw that these resolutions cannot have any monodromy. To conclude that there is a unique locally projective crepant resolution determined by our choice of local resolutions at the strata $\Sym^n \{z_i\}$, it remains to show that there exists a compatible choice of local crepant resolutions for all the strata.  Each stratum, as above, is of the form $S \times \prod_i \Sym^{m_i} \{z_i\}$.  The slice here is a product of slices to $S \subseteq \Sym^{n-\sum_i m_i} V$ and to $\Sym^{m_i} \{z_i\} \subseteq \Sym^{m_i} U_i$. The former has a unique crepant resolution and the latter resolutions are uniquely determined by our resolutions at the strata $\Sym^n \{z_i\}$. So again as explained in Subsection \ref{ss:compatibility}, we obtain a resolution for this stratum, and by construction these are all compatible. This determines a unique locally projective crepant resolution of $\Sym^n Y$.

Finally, we show that
%to conclude that 
the locally projective resolutions are globally projective, assuming $Y$ has finitely many singularities.
%, we run the argument again, this time extending local ample line bundles to a global ample line bundle. Unlike for resolutions, we are not checking that \emph{all} possible choices of bundles extend, but rather that \emph{some} choice extends. Since ampleness can be checked locally on the singularity, we only need that the line bundles glue to a global line bundle. 
 Let $Z := \bC^2/G$ be a du Val singularity, and let $\widetilde Z$ be its minimal resolution (which is crepant). Recall that the Hilbert-Chow morphism $\Hilb^{n}(\widetilde Z) \rightarrow \Sym^n(\widetilde Z)$ is a symplectic resolution of singularities. %The same is true for the composite with 
%\dan{Travis, were you writing something here?}
There is a map $\text{exc}: \Pic(\Hilb^{n}(\widetilde Z)) \rightarrow \bZ$ projecting to the exceptional divisor. For different choices of movable
%relatively ample
bundle at the most singular points to glue to a section of $\cP^\sharp$ (and to fit into an M-configuration) we need their images under $\text{exc}$ to agree. This can be arranged e.g., by rescaling an arbitrary collection, so arbitrary choices of movable bundles at the most singular points glue. 

%Since line bundles on a contractible neighborhood of a point are trivial, 
By the same argument as for $\ccR$, there is no monodromy of $\cP^\sharp$, unless there is on strata of $\Sym^m V$. Using $M$-configurations we can reduce to the case of the diagonal stratum $V \subseteq \Sym^m V$. But there the local relative Picard group is $\bZ$, spanned by the globally defined exceptional divisor of the Hilbert--Chow resolution. So the local sections of $\cP$ on the diagonal stratum all extend, and there is no monodromy. 

We thus conclude that $\Gamma(X,\cP^\sharp)$  is isomorphic to the direct sum of $\bZ$ (for the diagonal stratum) with the sum over all singularities $z \in Z$ of the kernel of $\text{exc}$ in the local sections of $\cP$ at $n\cdot z$. This is spanned by global divisors: the exceptional one together with the strict transform of the exceptional
divisors of the minimal
resolution of $Z$. The exceptional divisor is supported on all of $X$, whereas the strict transforms of exceptional divisors of $\tilde Z$ at a du Val singularity $z\in Z$ are supported on the closed codimension-two stratum of schemes which intersect $z$. Linear combinations are supported on the corresponding union of these closed strata. Thus, Lemma \ref{l:glob-crit} and Proposition \ref{p:r-surjective} apply, and all global sections of $\cP^\sharp$ are classes of global line bundles. Hence all locally projective partial crepant resolutions are globally projective. \qedhere

%\dan{To do: finish argument}

 %\travis{To do this, we pick at each of the strata concentrated at a single singularity of $X$ a relatively ample line bundle for the resolution. Since on a contractible open neighborhood of a smooth point, all line bundles are trivial, exactly the same argument applies to show all of the preceding choices extend uniquely.} 

\end{proof}

\begin{rem}
    Since symmetric powers of du Val singularities are quotient singularities, in the terminal case, we could also have applied Theorem \ref{t: quotient-sing-globally-projective} to replace the argument in the last paragraph.  
    %at the end: all locally projective, terminal crepant resolutions are globally projective since there exists an associated section $\sigma \in \Gamma(X,\cP^\sharp)$. 
    %because we can rescale arbitrary choices of relatively ample line bundles so that they glue to a section of $\sigma$. This can replace the last paragraph of the proof.
\end{rem}

\subsection{Hilbert schemes of a surface with du Val singularities}
Let $\Gamma \subset \text{SL}_2(\bC)$ be a non-trivial, finite subgroup and let $n >1$. The $n$th symmetric power $\Sym^n(\bC^2/\Gamma)$ has multiple non-isomorphic symplectic resolutions of singularities. One such resolution is given by 
\[
n\Gamma\text{-}\Hilb(\bC^2) := \{ \Gamma\text{-invariant ideals } I \subset \bC[x, y] \mid  \bC[x,y]/I \cong \bC[\Gamma]^{\oplus n} \text{ as } \Gamma\text{-representations} \}.
\] 
where $\bC[\Gamma]$ denotes the regular representation. Additionally, $\Sym^n(\bC^2/\Gamma)$ has a \emph{partial} resolution of singularities given by 
\[
\Hilb^{n}(\bC^2/\Gamma) := \{ \text{ideals } I \subset \bC[x, y]^{\Gamma} \mid \dim_{\bC}(\bC[x,y]^{\Gamma}/I) = n \}. 
\]
Craw--Gammelgaard--Gyenge--Szendr\H{o}i \cite[Theorem 1.1]{CGGS19} and Craw--Yamagishi \cite[Theorem 1.2]{CY-Hilbert} prove that the invariants map
\[
n\Gamma\text{-}\Hilb(\bC^2) \rightarrow \Hilb^{n}(\bC^2/\Gamma) \hspace{1cm} I \mapsto I \cap \bC[x, y]^{\Gamma}
\]
is the unique projective symplectic resolution of singularities for $\Hilb^{n}(\bC^2/\Gamma)$.
%\cite[Theorem 1.1]{CGGS19}.

Now let $Y$ be a surface with finitely many du Val singularities and let $X := \Hilb^{n}(Y)$ be its Hilbert scheme. By assumption, for each $z_i \in Y^{\sing}$ there is an analytic neighborhood $U_{i}$ which is either smooth or du Val, locally isomorphic to $\bC^2/\Gamma_i$. In each case, for every $m \geq 1$, $\Hilb^{m}(U_i)$, which is smooth if $U_i$ is smooth by \cite[Theorem 2.4]{Fogarty68}, has a unique symplectic resolution of singularities given by $m\Gamma_i\text{-}\Hilb(\tilde U_i) \to \Hilb^m(U_i)$ for some open $\tilde U_i \subseteq \bC^2$ with $U_i \cong \tilde U_i/\Gamma_i$. By the same argument as in the previous subsection, these glue uniquely to a locally projective symplectic resolution of $\Hilb^n(Y)$:  any locally projective crepant resolution is uniquely determined by its restriction to $\Hilb^n (V \sqcup (\sqcup_i U_i))$, and the preceding defines compatible locally projective crepant resolutions for the connected components,  $\Hilb^{m-\sum_i m_i} (V) \times \prod_i \Hilb^{m_i} U_i$.  Call this resolution $\tilde X \to X := \Hilb^n Y$.

Composing these with the Hilbert--Chow maps $\Hilb^{m_i} U_i \to \Sym^{m_i} U_i$ produces one of the locally projective crepant resolutions of $\Sym^{m_i} U_i$ constructed in the preceding subsection, which we showed to actually be globally projective. A relatively ample line bundle for the resulting resolution $\tilde X \to \Sym^n Y$ is automatically relatively ample over $\Hilb^n Y$. So we see that $\tilde X \to \Hilb^n Y$ is also globally projective. By construction it is the unique globally projective crepant resolution, since it restricts to the unique one for neighborhoods $\Hilb^n U_i$.

We conclude:
\begin{prop}
There is a unique globally projective crepant resolution of $\Hilb^n Y$, given by gluing the resolutions $m\Gamma_i\text{-}\Hilb \tilde U_i \to \Hilb^m U_i$.  In the case that $Y$ is a global quotient $Y \cong \tilde Y / \Gamma$, this resolution is isomorphic to $n\Gamma\text{-}\Hilb(\tilde Y)$.
\end{prop}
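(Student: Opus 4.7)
My plan is to follow the template already sketched in the paragraphs immediately preceding the proposition, making the gluing, global projectivity, and global-quotient identification explicit.

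First I would establish existence and uniqueness of a locally projective crepant resolution of $X = \Hilb^n(Y)$ by mimicking the argument of Proposition \ref{p:sym-duval}. Choose disjoint contractible analytic neighborhoods $U_i$ of the du Val points $z_i \in Y^{\sing}$ together with $V = Y \setminus (\sqcup_i \overline{U_i})$; as in the symmetric-powers case, the inclusion $\Hilb^n(V \sqcup (\sqcup_i U_i)) \hookrightarrow \Hilb^n(Y)$ hits every stratum and induces surjections on the fundamental group of each stratum. By Corollary \ref{c: subset with same stratification}, any locally projective crepant resolution of $\Hilb^n(Y)$ is determined by its restriction to this open subset, which decomposes as a product $\Hilb^{m-\sum m_i}(V) \times \prod_i \Hilb^{m_i}(U_i)$. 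The first factor is smooth (Fogarty), and on each factor $\Hilb^{m_i}(U_i)$ the result of \cite{CGGS19,CY-Hilbert} supplies a \emph{unique} projective symplectic resolution $m_i \Gamma_i\text{-}\Hilb(\tilde U_i)$. Hence local resolutions exist uniquely at every stratum; there is no monodromy (uniqueness), and the compatibility across strata on overlapping product neighborhoods follows from Lemma \ref{l:Pic_product} exactly as in Proposition \ref{p:sym-duval}. This produces the unique locally projective crepant resolution $\rho: \tilde X \to X$.

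Next I would verify global projectivity. I would compose $\rho$ with the Hilbert--Chow morphism $\Hilb^n(Y) \to \Sym^n(Y)$ to obtain a locally projective crepant resolution of $\Sym^n(Y)$. By Proposition \ref{p:sym-duval}, every locally projective crepant resolution of $\Sym^n(Y)$ is globally projective, so there exists a line bundle $L$ on $\tilde X$ that is relatively ample over $\Sym^n(Y)$. Since the Hilbert--Chow morphism $\Hilb^n(Y) \to \Sym^n(Y)$ is projective, $L$ twisted by (the pullback of) a sufficiently ample line bundle on $\Hilb^n(Y)$ relative to $\Sym^n(Y)$ is relatively ample over $\Hilb^n(Y)$; equivalently, relative ampleness over $\Sym^n(Y)$ already implies relative ampleness over $\Hilb^n(Y)$ for the composition, since the Hilbert--Chow map is itself relatively ample. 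Thus $\tilde X$ is globally projective over $X$.

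Finally, for the case $Y \cong \tilde Y / \Gamma$, I would identify the glued resolution with $n\Gamma\text{-}\Hilb(\tilde Y)$. The natural map $n\Gamma\text{-}\Hilb(\tilde Y) \to \Hilb^n(\tilde Y/\Gamma) = \Hilb^n(Y)$ is projective, crepant, and symplectic (it is a symplectic resolution wherever the target is singular, as its local structure around each $\Gamma$-cluster reduces to the local model). Restricting over $\Hilb^m(U_i) \subseteq \Hilb^n(Y)$ recovers exactly the local model $m\Gamma_i\text{-}\Hilb(\tilde U_i) \to \Hilb^m(U_i)$ (where $\Gamma_i$ is the stabilizer of a preimage of $z_i$), since the $\Gamma$-invariant ideals supported over $U_i$ are the same data as $\Gamma_i$-invariant ideals on the slice. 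By the uniqueness proven in the first step, this map must coincide with $\rho$.

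The main obstacle I anticipate is the global-projectivity step: while the preceding discussion asserts that a relatively ample line bundle over $\Sym^n(Y)$ is automatically relatively ample over $\Hilb^n(Y)$, making this precise requires knowing that the Hilbert--Chow morphism $\Hilb^n(Y) \to \Sym^n(Y)$ itself carries a relatively ample line bundle (the determinant/Pl\"ucker bundle in the smooth case) and that the composition of relatively ample bundles along a composable pair of projective morphisms can be combined by a sufficiently high twist. Once this is invoked, the rest reduces to bookkeeping.
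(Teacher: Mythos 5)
Your proposal is correct and follows essentially the same route as the paper: reduce to $\Hilb^n(V \sqcup (\sqcup_i U_i))$ via Corollary \ref{c: subset with same stratification}, invoke the uniqueness of the local resolutions $m\Gamma_i\text{-}\Hilb(\tilde U_i) \to \Hilb^m(U_i)$ to rule out monodromy and glue, deduce global projectivity by composing with Hilbert--Chow and citing Proposition \ref{p:sym-duval}, and identify the result with $n\Gamma\text{-}\Hilb(\tilde Y)$ by uniqueness. The ``main obstacle'' you flag is not actually one: you only need the easy direction that a line bundle relatively ample for the composite $\tilde X \to \Sym^n(Y)$ is automatically relatively ample for the intermediate morphism $\tilde X \to \Hilb^n(Y)$, which holds for any factorization through a (quasi-)separated morphism and requires no twisting and no relatively ample bundle on the Hilbert--Chow map itself.
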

%This symplectic resolution is locally an n$\Gamma$-Hilbert scheme and if $Y$ is a global quotient, then this description holds globally, establishing (global) projectivity.  In the general case, global projectivity follows from 
%In the general case, the resolution is also (globally) projective, since it is projective over $\Sym^n Z$ by 
%Proposition \ref{p:sym-duval}, since projectivity over $\Sym^n Y$ implies the same over $\Hilb^n Y$.
%we can take relatively ample line bundles for local resolutions at each singularity, and they all restrict to some power of the ample generator of $\Hilb^{n}(Z^\circ)$ where $Z^\circ$ is the smooth locus. Then, up to replacing the relatively ample bundles at the singularities by suitable powers, these bundles are compatible and hence glue to a global ample line bundle. \travis{This must be fixed: so far this is not a proof we have a global ample line bundle!}
%\dan{To do: check this}

\begin{rem}
It follows from the above that, if $X = \Hilb^{n}(Z)$ for some surface $Z$, then $X$ admits a crepant resolution precisely when $Z$ has at worst du Val singularities. The preceding gives the existence of a unique projective crepant resolution when $X$ has at worst du Val singularities.  Conversely, note that all of the singularities of $Z$ appear in $X$, and already in the set of distinct $n$-tuples of points of $Z$. But it is well known that the canonical surface singularities are precisely the du Val singularities. In order to have a crepant resolution, the singularities must be canonical.
\end{rem}
\begin{rem}
    We did not check whether there exists non-projective, locally projective crepant resolutions of $\Hilb^n Y$. This seems like an interesting problem. We claim any such resolution $\pi$ of $\Hilb^n Y$, gives a proper crepant resolution $\pi'$ of $\Sym^n Y$ that is \emph{not} locally projective. Explicitly $\pi'$ is the composition of $\pi$ with the Hilbert--Chow morphism. If $\pi'$ is locally projective, then the final sentence of Proposition \ref{p:sym-duval} implies that $\pi'$ is globally projective. It follows that $\pi$ is globally projective and hence isomorphic to the unique globally projective crepant resolution constructed above. 
    %To see this then the composition with the Hilbert--Chow morphism would give a proper crepant resolution of $\Sym^n Y$ which is not locally projective. A locally projective crepant resolution of $\Sym^n Y$ is globally projective since otherwise it would be globally projective over $\Sym^n Y$, hence over $\Hilb^n Y$, hence isomorphic to the unique globally projective one constructed above.  
\end{rem}

\subsection{Multiplicative quiver varieties} \label{ss:MQV}

Let $Q$ be a quiver, $k$ an algebraically closed field of characteristic zero, and $A$ a $k$-algebra with a $kQ_0$-bimodule structure. Notice that an $A$-module $M$ has a dimension vector $d := (d_i) := ( \dim(e_i \cdot M)) \in \bN^{Q_0}$. The group $G_d := \prod_i \GL_{d_i}(k)$ acts on the space of $A$-modules by conjugation. Following King \cite{King94}, define $\theta \in \Hom_{\text{grp}}( G_d, k^*) = \bZ^{Q_0}$ to be a character of $G_d$. The quiver variety for $A$ is the moduli space $\cM_{d, \theta}(A)$ of  $d$-dimensional, $\theta$-semistable representations of $A$.

If $A = \Pi^{\lambda}(Q)$ is the deformed preprojective algebra of the quiver $Q$, then $\cM_{d, \theta}(A)$ is the Nakajima quiver variety. In this case we write $\cM_{\lambda, \theta}(Q, d) := \cM_{d, \theta}( \Pi^{\lambda}(Q))$ in order to conform to the more standard notation. 
If $A = \Lambda^q(Q)$ is the \emph{multiplicative preprojective algebra} of Crawley-Boevey and Shaw \cite{CBS06}, then $\cM_{d, \theta}(A)$ is the \emph{multiplicative quiver variety}. 

Multiplicative quiver varieties includes character varieties of Riemann surfaces with monodromy conditions (as open subsets, which are the entire variety in genus zero), and modifications of du Val singularities. We showed in \cite[Theorem 5.4]{KS20} that the formal local structure of multiplicative quiver varieties agrees with that of Nakajima quiver varieties.   Consequently, for each multiplicative quiver variety $\cM_{d, \theta}(\Lambda^q(Q))$ and each module $M$ one can classify symplectic resolutions for a sufficiently small open neighborhood $U_M$ of $M$. Moreover, the slices to the strata are also Nakajima quiver varieties, and whenever variation of stability produces a symplectic resolution or relative minimal model, then the slice is also resolved, verfying (*). By \cite{BS16}, this is always true except in the ``(2,2)'' case, where a further blow-up of the reduced singular locus produces a crepant resolution of singularities. It follows that all multiplicative quiver varieties have stratifications satisfying (*) (which is also a consequence of \cite{BCHM10}, see Remark \ref{r:bchm}).

The main results of this paper give a theoretical technique to classify global symplectic resolutions, provided one can compute the symplectic leaves and calculate the monodromy and compatibility constraints. Without monodromy computations, we can still prove partial results.

For dimension vectors $d, d' \in \bN^{Q_0}$ we write $d' < d$ if $d - d' \in \bN^{Q_0}$. The decomposition type of a semisimple representation is the unordered collection of dimension vectors of simple summands with multiplicities. Note that every point of a moduli space $\cM_{d,0}(A)$ of a quiver algebra $A$ is represented by a unique semisimple representation up to isomorphism. Taking connected components of the loci with fixed decomposition type, we obtain a finite stratification of $\cM_{d,0}(A)$ by connected strata (although these strata need not be smooth).  
\begin{prop} \label{prop:mqv}
Let $A$ be an augmented $kQ_0$-algebra, i.e., $A$ is equipped with a $kQ_0$-algebra homomorphism $A \twoheadrightarrow kQ_0$.
%$I \subseteq (kQ_1)$. 
Equip $\cM_{d,0}(A)$ with the stratification above. Let the zero representation of a given dimension vector denote the one factoring through the augmentation. %quotient $A/(kQ_1) \cong kQ_0$. 
The following conditions are equivalent:
\begin{itemize}
\item[(1)] The zero representation lies in the closure of every stratum (i.e., symplectic leaf) of $\cM_{d,0}(A)$.
\item[(2)]  The variety $\cM_{d',0}(A)$ is connected for all $d' < d$.
\end{itemize} 
\end{prop}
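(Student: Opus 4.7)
The plan is to realise each stratum closure through a direct-sum morphism and then transport the zero representation across dimensions via a closed embedding that adds vertex simples. For a decomposition type $\tau=\{(e_j,n_j)\}_j$ with $\sum_j n_j e_j=d$, the closure $\overline\tau\subset \cM_{d,0}(A)$ is the image of the proper morphism
\[
\sigma_\tau:\ \prod_j \Sym^{n_j}\overline{\cM^s_{e_j,0}(A)}\ \longrightarrow\ \cM_{d,0}(A),\qquad (N_{j,k})\mapsto \bigoplus_{j,k} N_{j,k},
\]
where $\cM^s_{e_j,0}(A)$ denotes the open locus of simples. Because the zero representation of dimension $d$ decomposes uniquely into vertex simples $S_i$ with multiplicity $d_i=\sum_j n_j(e_j)_i$, it lies in $\overline\tau$ if and only if the zero representation of dimension $e_j$ lies in $\overline{\cM^s_{e_j,0}(A)}$ for each $j$. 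So the proposition reduces to showing that this condition holds for every $e_j\leq d$ precisely when each $\cM_{e_j,0}(A)$ is connected.

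For $(1)\Rightarrow(2)$, I use the closed embedding
\[
\iota:\ \cM_{d',0}(A)\hookrightarrow \cM_{d,0}(A),\qquad \rho\mapsto \rho\oplus \bigoplus_i S_i^{d_i-d'_i},
\]
which sends the zero representation to the zero representation and matches a stratum $\tau'$ of $\cM_{d',0}(A)$ with the stratum $\tau=\tau'\cup\{(\mathbf{e}_i,d_i-d'_i)\}_i$ in $\cM_{d,0}(A)$. A representation of type $\tau$ carries a canonical direct summand $\bigoplus_i S_i^{d_i-d'_i}$ coming from the extra vertex-simple multiplicity forced by the decomposition type, so any degenerating family of type $\tau$ limiting to the zero representation of $d$ pulls back canonically to a degenerating family of type $\tau'$ limiting to the zero representation of $d'$. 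Consequently (1) at $d$ propagates to (1) at every $d'\leq d$, and then every point of $\cM_{d',0}(A)$ lies in some stratum whose closure contains the zero representation, forcing $\cM_{d',0}(A)$ to coincide with the connected component of the zero representation.

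For $(2)\Rightarrow(1)$, I need to verify the zero representation of each $e_j\leq d$ lies in $\overline{\cM^s_{e_j,0}(A)}$. Since $\cM^s_{e_j,0}(A)$ is open and non-empty whenever $(e_j,n_j)\in\tau$, its closure $\overline{\cM^s_{e_j,0}(A)}$ is a non-empty closed subvariety. The refinement partial order on decomposition types (strata degenerate only to strata with finer decompositions) shows that any irreducible component of $\cM_{e_j,0}(A)$ meeting the simple locus is contained in $\overline{\cM^s_{e_j,0}(A)}$, while any component not meeting it sits in the complement. Connectedness of $\cM_{e_j,0}(A)$ from (2) then collapses this clopen decomposition, forcing the complement to be empty and so the zero representation to be in $\overline{\cM^s_{e_j,0}(A)}$. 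Feeding tuples of zero representations through $\sigma_\tau$ produces a family in $\overline\tau$ limiting to the zero representation of $d$.

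The main obstacle is justifying the clopen decomposition in $(2)\Rightarrow(1)$: the claim that every irreducible component of $\cM_{e_j,0}(A)$ is either entirely contained in $\overline{\cM^s_{e_j,0}(A)}$ or entirely disjoint from it. This is where the specific structure of moduli of semisimple representations enters, since for general varieties a non-empty open subvariety need not have clopen closure; careful stratum-chasing using the refinement partial order on decomposition types, together with the properness of $\sigma_\tau$ and its analogues for lower strata, appears to be what is needed to make this argument watertight.
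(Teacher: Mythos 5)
Your direction (1) $\Rightarrow$ (2) is essentially the paper's argument: the image of $\cM_{d',0}(A)$ in $\cM_{d,0}(A)$ under adding vertex simples is a closed union of strata, each stratum is connected by definition, and closures of connected sets all containing the zero representation have connected union; so that half is fine (if slightly more elaborate than necessary).

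The genuine gap is in (2) $\Rightarrow$ (1), and it is exactly the one you flag at the end: the claimed clopen decomposition of $\cM_{e_j,0}(A)$ does not exist as stated. An irreducible component not meeting the simple locus need not be disjoint from the \emph{closure} of the simple locus, so your two pieces can overlap and connectedness of $\cM_{e_j,0}(A)$ by itself gives nothing; for a general connected variety the closure of a nonempty open subset is not clopen. In fact the statement you are reducing to --- that the zero representation lies in $\overline{\cM^s_{e_j,0}(A)}$ --- cannot be deduced from connectedness of the single moduli space $\cM_{e_j,0}(A)$; it genuinely needs connectedness of $\cM_{d'',0}(A)$ for all $d''\le e_j$ at once. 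The paper supplies the missing idea via a minimal-counterexample (descending induction on the dimension vector): take $d'$ minimal such that some open stratum $\tau$ of $\cM_{d',0}(A)$ has $0\notin\overline{\tau}$. Every non-open stratum $\sigma$ of $\cM_{d',0}(A)$ is assembled, via your map $\sigma_\tau$, from open strata of moduli with strictly smaller dimension vectors, where by minimality the zero representation lies in every closure; hence $0\in\overline{\sigma}$ for every non-open $\sigma$. But any stratum contained in $\overline{\tau}\setminus\tau$ is non-open and has closure inside $\overline{\tau}$, hence cannot contain $0$ --- a contradiction. Therefore $\overline{\tau}=\tau$ is a proper nonempty clopen subset of $\cM_{d',0}(A)$, contradicting (2). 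This minimality step is what makes the ``stratum-chasing'' close up; without it your argument does not go through.
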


\begin{proof}
(1) $\implies$ (2): By definition, every stratum is connected. Since the closure of a connected set is connected, and the union of intersecting connected sets is connected, we establish that $\cM_{d,0}(A)$ is connected. Since $\cM_{d',0}(A)$ appears as a union of strata in $\cM_{d, 0}(A)$ for all $d' < d$, we have that all such multiplicative quiver varieties are connected.  \\
(2) $\implies$ (1): By contrapositive, assume that the closure of some stratum of 
does not contain the zero representation. This stratum is a product of (Zariski) open strata in quiver varieties of dimension vectors adding to $d$, corresponding to the decomposition type of the representations in the stratum. Thus for some $d'<d$, there is an open stratum whose closure does not contain the zero representation. Assume $d'$ is minimal with this property. Take the union $X$ of all such closures, and let $Y$ be the union of closures of open strata containing the zero representation. We claim that $X$ and $Y$ are open, so the
moduli space is disconnected. Since these are closed sets whose union is the whole variety, it suffices to show that $X \cap Y = \emptyset$.  If not then the intersection contains a non-open stratum whose closure does not contain the zero representation. This contradicts minimality of $d'$.
\end{proof}
We apply the preceding result in the case of $A=\Lambda^1(Q)$. We need to take $q=1$ for the relation $\prod_{a \in Q_1} (1+aa^*)(1+a^*a)^{-1} - q$ to lie in the ideal $(k Q_1)$, which we take to be the augmentation ideal. Note that, in general, multiplicative quiver varieties may be reducible or even disconnected. There is no trouble applying our theory to this case (and indeed we did not assume irreducibility), as birational morphisms make sense in the reducible setting.
\begin{cor} \label{cor:mqv_res}
Suppose that $d$ is a dimension vector such that $\cM_{d', 0}(\Lambda^1(Q))$ is connected for all $d' <d$. Then $\ccR(\cM_{d, 0}(\Lambda^1(Q))) \hookrightarrow \ccR(\cM_{d, 0}(\Pi(Q)))$.
%, and the same is true replacing $\ccR$ by %$\widetilde{\ccR}$ and 
%$\cP^\sharp$.
\end{cor}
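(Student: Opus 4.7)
The plan relies on three ingredients from the preceding material: Proposition~\ref{prop:mqv} to pinpoint the zero representation as the unique minimal stratum, Corollary~\ref{cor:unique_minimal} to pull back sections of $\ccR$ to a neighborhood of $0$, and \cite[Theorem 5.4]{KS20} to identify that neighborhood with one in the corresponding Nakajima quiver variety.

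First I would verify the input of Proposition~\ref{prop:mqv}. The algebra $A = \Lambda^1(Q)$ is augmented: since $q = 1$, the defining relation $\prod_a (1+aa^*)(1+a^*a)^{-1} - 1$ lies in the augmentation ideal $(kQ_1)$, so the $kQ_0$-algebra map $\Lambda^1(Q) \twoheadrightarrow kQ_0$ killing all arrows is well-defined. The hypothesis that $\cM_{d',0}(\Lambda^1(Q))$ is connected for all $d' < d$ is exactly condition (2) of Proposition~\ref{prop:mqv}, so its equivalent condition (1) also holds: the zero representation lies in the closure of every symplectic leaf of $\cM_{d,0}(\Lambda^1(Q))$. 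Since this zero representation corresponds to the unique (up to isomorphism) semisimple $\bigoplus_i S_i^{\oplus d_i}$, it is a single point stratum, and it is the unique minimal stratum.

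Next I would invoke the first half of Corollary~\ref{cor:unique_minimal} to obtain the injection $\ccR(\cM_{d,0}(\Lambda^1(Q))) \hookrightarrow \ccR(U_0)$ for any neighborhood $U_0$ of $0$. By \cite[Theorem 5.4]{KS20}, the analytic local structure of the multiplicative quiver variety at $0$ agrees with that of the Nakajima quiver variety $\cM_{d,0}(\Pi(Q))$ at its zero representation, since the slice to the point stratum at $0$ is precisely that Nakajima quiver variety. Hence $U_0$ may be taken isomorphic to a neighborhood of $0$ in the Nakajima quiver variety, which is conical (as it is an affine GIT quotient of a representation space by a group action commuting with the standard $\bC^\times$-scaling on arrows). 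Applying the second half of Corollary~\ref{cor:unique_minimal} now yields the desired injection $\ccR(\cM_{d,0}(\Lambda^1(Q))) \hookrightarrow \ccR(\cM_{d,0}(\Pi(Q)))$.

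The main subtlety I expect is ensuring that the local isomorphism supplied by \cite[Theorem 5.4]{KS20} is sufficiently strong for Corollary~\ref{cor:unique_minimal}, namely an isomorphism of pointed analytic germs rather than merely a formal one. Since the preceding discussion invokes the same result to verify condition (*) for multiplicative quiver varieties and to transport slice descriptions, the statement we need is exactly what is already being used. A minor secondary check is that the stratification by decomposition type used in Proposition~\ref{prop:mqv} refines (or agrees with) the symplectic leaf stratification along which the local product decomposition holds, so that Corollary~\ref{cor:unique_minimal} applies; this is standard for quiver varieties.
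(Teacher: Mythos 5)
Your proposal is correct and follows essentially the same route as the paper: Proposition~\ref{prop:mqv} to place the zero representation in the unique minimal point stratum, the local identification of a neighborhood of $0$ with one in the (conical) Nakajima quiver variety from \cite{KS20} (the paper cites \cite[Corollary 5.22]{KS20} rather than Theorem 5.4, noting that the Ext-quiver of the zero representation is $Q$ itself with dimension vector $d$), and then both halves of Corollary~\ref{cor:unique_minimal}. Your added checks (the augmentation of $\Lambda^1(Q)$ and the conicality of the Nakajima quiver variety) are consistent with the remarks the paper makes just before stating the corollary.
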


More explicitly, the map is given by restricting to a neighborhood of the zero representation, identifying with a neighborhood of the zero representation in an additive quiver variety and extending to the entire variety. 
%\dan{This seems to use a slightly stronger statement that what we originally proved in the 2-CY paper. Do we know that the neighborhood of zero in the multiplicative quiver variety corresponds to the neighborhood of zero in the quiver variety with the same quiver $Q$ and dimension vector $d$.}

\begin{proof}
    It suffices to establish the hypotheses of Corollary \ref{cor:unique_minimal} in the case $X =\cM_{d, 0}(\Lambda^1(Q))$, $s$ is the zero representation, and $Y = \cM_{0, 0}(Q, d)$. By Proposition \ref{prop:mqv} the zero representation $s \in \cM_{d, 0}(\Lambda^1(Q))$ is contained in the unique minimal stratum. Additionally, there exists a neighborhood of zero in $\cM_{d, 0}(\Lambda^1(Q))$ that is isomorphic to a neighborhood of zero in $\cM_{0, 0}(Q, d)$ by \cite[Corollary 5.22]{KS20} (noting that $Q' = Q$ and $d' =d$ in this case since the Ext-quiver of the zero representation is the original quiver). So Corollary \ref{cor:unique_minimal} gives the inclusion $\ccR(\cM_{d, 0}(\Lambda^1(Q)))\hookrightarrow \ccR(\cM_{0, 0}(Q, d))$, completing the proof. 
    %The same holds for the sheaves $\widetilde{\ccR}$ and $\cP$.
\end{proof}

The original motivation for this paper was to extend the classification of symplectic resolutions of quiver varieties \cite{BS16, BCS22} to multiplicative quiver varieties. For quiver varieties the symplectic leaves are given by representation type. Multiplicative quiver varieties have a stratification by representation type but these strata need not be connected and the symplectic leaves may be a finer stratification. In examples below, we establish that the stratifications into symplectic leaves and representation types agree. Hence the classification of symplectic resolutions of the multiplicative quiver variety $\cM_{d, 0}(\Lambda^1(Q))$ agree with the known classification of symplectic resolutions of the quiver variety $\cM_{0, 0}(Q, d)$.\\

%\begin{rem}
%For $Q$ ADE Dynkin, Crawley--Boevey established an isomorphism of algebras $\Lambda^1(Q) \cong \Pi(Q)$ over $\bC$ \cite[Corollary 1]{CB13}. This isomorphism holds more generally if $k$ is a field of good characteristic for $Q$ (i.e., not 2 if $Q = D_n$, not 2, 3 if $Q= E_6$, $E_7$, and not 2, 3, 5 if $Q = E_8$) \cite{Kaplan23}. In this case, for each dimension vector $d$ and stability parameter $\theta$, the multiplicative and additive quiver varieties are isomorphic: $\cM_{0, \theta}(Q, d) \cong \cM_{d, \theta}(\Lambda^1(Q))$. Hence the varieties have the same classification of symplectic resolutions.     
%\end{rem}

\underline{Type $\tilde{A}$}
For $Q = \widetilde{A}_{n-1}$, the cycle with $n$-vertices, $q=1$, $d=1$, and $\theta = 0$, our previous work \cite[Corollary 6.14, Proposition 6.19]{KS20} building on Shaw \cite[Theorem 4.1.1]{Shaw05} describes $\cM_{1, 0}(\Lambda^1(Q))$ as the spectrum of the commutative ring $R_n := k[X, Y, Z]/(Z^{n} + XY + XYZ)$. Note that $(1+Z)$ is invertible in $R_n$ as the relation can be rewritten:
\begin{align*}
Z^n + XY + XYZ &= Z^n + XY(1+Z) = Z^n + 1 -1 + XY(1+Z) = (Z^n+1) + XY(1+Z) - 1 \\
&= (1+Z)( Z^{n-1}- Z^{n-2} + \cdots + (-1)^{n-1} + XY) - 1.
\end{align*}
Hence there is an isomorphism of rings
\[
k[X, Y, Z][(1+Z)^{-1}]/(Z^n + XY)  \rightarrow k[X, Y, Z]/(Z^{n} + XY + XYZ)\  \ X \mapsto X(1+Z), \ Y \mapsto Y, \ Z \mapsto Z.
\]
Since the vanishing of $Z^n + XY$ is the du Val singularity for $\widetilde{A}_{n-1}$, we conclude that this multiplicative quiver variety is a Zariski-open subset of the (additive) quiver variety. In particular, $\cM_{1, 0}(\Lambda^1(Q))$ has a unique symplectic resolution of singularities. %\dan{To do: maybe add a little more detail here}

%We claim that for each symplectic leaf $S$ of the quiver variety, the inclusion map $S \backslash V(1+Z) \hookrightarrow S$ induces a surjection $\pi_1(S \backslash V(1+Z)) \twoheadrightarrow \pi_1(S)$. Indeed, only the open dense leaf, $S_{\text{big}}$ intersects $V(1+Z)$ and the codimension of $V(1+Z)$ in $S_{\text{big}}$ is two, which implies any path and homotopy can be taken to avoid $V(1+Z)$, as desired. Hence the hypotheses of Corollary \ref{c: subset with same stratification} are satisfied and we conclude that the symplectic resolutions of $\cM_{d, 0}(\Lambda^1(\widetilde{A}_{n-1}))$ are given by symplectic resolutions of the (additive) quiver variety. 

\begin{exam}
    Let $Q = \tilde{A_2}$, $d = (2, 3, 5)$, and $\lambda = 0 = \theta$. Then the canonical decomposition of $d$ is 
    \[
    (2, 3, 5) = 2(1, 1, 1) + (0, 1, 0) + 3(0, 0, 1).
    \]
    The decomposition on the level of quiver varieties, is
    \begin{align*}
    \cM_{0, 0}( Q, d) & \cong \Sym^{2}( \cM_{0, 0}(Q, (1, 1, 1))) 
    \times  \cM_{0, 0}(Q, (0, 1, 0))
    \times \Sym^{3}( \cM_{0, 0}( Q, (0, 0, 1))) \\
    &\cong \Sym^{2}( \cM_{0, 0}( Q, (1, 1, 1))) 
    \end{align*}
    since $\cM_{0, 0}(Q, e_i)$ is a single point, for any elementary vector $e_i = (0 ,\dots, 0, 1, 0, \dots, 0)$. This realizes 
    \[
    \cM_{0, 0}(Q, d) = \Sym^2( \cM_{0, 0}(Q, (1, 1, 1))) = \Sym^2( \bC^2/C_2 )
    \]
    as a symmetric product of a surface with a du Val ($A_1$) singularity. Hence, $\cM_{d, 0}(\Pi(Q))$ has two symplectic resolutions of singularities given by $\Hilb^{2}( \widetilde{\bC^2/C_2})$ and $2 C_2$-$\Hilb(\bC^2)$.

    We claim that the multiplicative quiver variety $\cM_{d, 0}(\Lambda^1(Q))$ has the same classification of symplectic resolutions. In fact, it has the same stratification into symplectic leaves by representation type and hence is a Zariski-open subset 
    \[
    \cM_{d, 0}(\Lambda^1(Q)) = \Sym^2( \cM_{1, 0}(\Lambda^1(Q)) \subset \Sym^2( \bC^2/C_2 )
    \cong  \cM_{d, 0}( \Pi(Q)).
    \]
\end{exam}
%The Poisson bracket is given by $\{/X, Y \} = -X$, $\{ X, Z \} = Y$, and $\{ Y, Z \} = (n+1)Z^{n}$. This bracket is non-degenerate if $Y \neq 0$, but degenates if $Y=0$ as $\{ X, Z \} = 0$ despite $X, Z \neq 0$. If $Y=0$, then the defining equation $Z^{n} + XY =0$ becomes $Z^n = 0$, which does not intersect $Z + 1 = 0$, as desired. 

We conclude that all symplectic resolutions of this multiplicative quiver variety are given by variation of geometric invariant theory quotient (VGIT), since the same holds in the additive case by Bellamy--Craw \cite[Corollary 1.3]{BC20}. \\

\underline{Hyperpolygon spaces}
Let $Q_n$ be the star-shaped quiver with a central vertex $v$, $n$ external vertices $w_1, \dots, w_n$, and an arrow from each external vertex to the central vertex. Let $d$ be the dimension vector $(2, 1, 1, \dots, 1)$ where the vertices are ordered $(v, w_1, \dots, w_n)$. The Nakajima quiver varieties $\cM_{0, 0}(Q_n, d)$ for these data are called \emph{hyperpolygon spaces} due to being a hyperk\"ahler analogue of moduli spaces of polygons in $\bR^3$, see \cite{HausmannKnutson}. In \cite[Corollary 1.4]{BCRSW21}, the authors show that all crepant resolutions of  the hyperpolygon spaces are given by variation of GIT and consequently can be counted explicitly in terms of GIT chambers. Further \cite[Proposition 3.8, Remark 3.9]{BCRSW21} counts the crepant resolutions for $n \leq 9$.
%; in particular there are 1, 81, and 1684 when $n =$ 4, 5 and 6 respectively.
 We claim that these results are valid in the multiplicative setting.

\begin{prop} \label{prop:hyperpolygon}
There is a bijection $\ccR(\cM_{d, 0}(\Lambda^1(Q_n))) \cong \ccR(\cM_{0, 0}(Q_n, d))$ given by the inclusion map from Corollary \ref{cor:mqv_res}. In particular, \cite[Remark 3.9]{BCRSW21} applies to give the number of locally projective crepant resolutions, which are all globally projective. 
\end{prop}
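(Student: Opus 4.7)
The plan is to upgrade the injection of Corollary \ref{cor:mqv_res} to a bijection and then deduce global projectivity from the VGIT realization. First I would verify the connectedness hypothesis of Corollary \ref{cor:mqv_res}: $\cM_{d', 0}(\Lambda^1(Q_n))$ is connected for each $d' < d := (2, 1, \ldots, 1)$. The relevant $d'$ have central vertex dimension in $\{0, 1, 2\}$ and external vertex dimensions in $\{0, 1\}$; these multiplicative quiver varieties are either single points (when the central dimension is zero), Zariski-open subsets of du Val type surfaces (by \cite[Proposition 6.19]{KS20}, cf.\ the $\widetilde{A}$ discussion preceding this proposition), or smaller hyperpolygon-type spaces whose connectedness follows by induction on dimension or by comparison with the corresponding connected additive hyperpolygon varieties via \cite[Theorem 5.4]{KS20}.

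Having verified the hypothesis, Corollary \ref{cor:mqv_res} provides an injection $\iota: \ccR(X) \hookrightarrow \ccR(Y)$, where $X := \cM_{d, 0}(\Lambda^1(Q_n))$ and $Y := \cM_{0, 0}(Q_n, d)$. By Proposition \ref{prop:mqv} the zero representation is the unique minimal stratum of $X$, and since $Y$ is conical, Corollary \ref{cor:unique_minimal} yields a bijection $\ccR(Y) \cong \ccR_0(Y)$ with germs of resolutions at $0$. Combining this with the analytic local isomorphism of a neighborhood of $0 \in Y$ with a neighborhood of the zero representation in $X$ from \cite[Theorem 5.4]{KS20}, bijectivity of $\iota$ reduces to showing that every germ of crepant resolution at the zero representation of $X$ extends to a global resolution.

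For the extension, I would apply Theorem \ref{t:constr} to the sheaf $\ccR^X$, verifying triviality of monodromy on each stratum and compatibility of gluing across strata. Both follow from \cite[Theorem 5.4]{KS20}, since each slice to a stratum of $X$ is analytically locally isomorphic to the corresponding slice of $Y$ (itself a hyperpolygon-type Nakajima variety, up to taking a product with a smooth factor). Since \cite[Corollary 1.4]{BCRSW21} establishes that every germ of crepant resolution at $0 \in Y$ extends globally without monodromy or gluing obstructions via VGIT chambers, the matching of constructible sheaves forces the same extension on $X$. For global projectivity, every VGIT parameter $\theta$ yields a projective morphism $\cM_{d, \theta}(\Lambda^1(Q_n)) \to \cM_{d, 0}(\Lambda^1(Q_n))$, and via the bijection these exhaust $\ccR(X)$ (since they exhaust $\ccR(Y)$ by \cite[Corollary 1.4]{BCRSW21}); counting GIT chambers up to the relevant Weyl group action as in \cite[Remark 3.9]{BCRSW21} gives the enumeration. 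The principal obstacle lies in matching not only the stalks of $\ccR$ but also the fundamental-group actions on corresponding strata of $X$ and $Y$: the formal isomorphism of \cite[Theorem 5.4]{KS20} controls local structure, but a topological analysis of each multiplicative stratum is needed to confirm that the global monodromy representations on $X$ coincide with those on $Y$.
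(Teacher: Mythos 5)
There are two genuine gaps. First, your verification of the connectedness hypothesis of Corollary \ref{cor:mqv_res} does not go through: for $d'$ with central-vertex dimension $2$, the space $\cM_{d', 0}(\Lambda^1(Q_n))$ is a moduli space of tuples of unipotent $2\times 2$ matrices with product the identity, and its connectedness is a genuinely global statement that cannot be extracted from the analytically local isomorphism with an additive quiver variety in \cite[Theorem 5.4]{KS20} (a space can be locally modelled on connected varieties at every point and still be disconnected), nor is there an evident induction on dimension. The paper supplies a dedicated global argument (Lemma \ref{lem:irred}): it proves irreducibility of the locus of tuples of invertible matrices with fixed characteristic polynomials whose product has prescribed trace, by rescaling the nilpotent part of one Jordan decomposition and observing that the trace is a non-constant affine function of the scaling parameter on a dense open subset; the case $m=2$, $\chi_i=(x-1)^2$, trace equal to $2$ gives exactly the required connectedness.

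Second, your route to surjectivity --- matching the monodromy representations of $\ccR$ on the strata of $X=\cM_{d,0}(\Lambda^1(Q_n))$ with those of $Y=\cM_{0,0}(Q_n,d)$ --- is incomplete, as you yourself acknowledge: \cite[Theorem 5.4]{KS20} only controls local structure and says nothing about $\pi_1$ of a multiplicative stratum or its action on the stalk. The paper sidesteps monodromy entirely by running your final observation in the opposite logical direction: the hypotheses of \cite[Theorem 1.5]{ST19} hold, so every generic stability parameter $\theta$ yields a global projective crepant resolution $\cM_{d,\theta}(\Lambda^1(Q_n))\to X$; since the GIT chamber structure and the local model at the zero representation agree with the additive case, these global resolutions already restrict to every germ in $\ccR(Y)$, which forces the injection of Corollary \ref{cor:mqv_res} to be surjective (and simultaneously shows all resolutions are given by VGIT, hence globally projective). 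In your write-up the exhaustion by VGIT resolutions is deduced ``via the bijection,'' i.e.\ after the fact, so as written it cannot be used to close the monodromy gap; reordered, it makes the topological analysis you flag at the end unnecessary.
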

As stated above, $n=4, 5, 6$, $\cM_{d, 0}(\Lambda^1(Q_n))$ has 1, 81, and 1684 locally  projective crepant resolutions respectively, which are all globally projective.

\begin{lem} \label{lem:irred}
    Fix $n, m \in \bN$ with $m,n>1$ and  monic polynomials $\chi_i \in \bC[x]$ of degree $m$. Let $X  \subseteq \GL_m(\bC)^n$ the subset of matrices such that the $i$-th matrix has characteristic polynomial $\chi_i$ for all $i$.  Let $Y_c \subseteq \GL_m(\bC)$ denote the subset of matrices of trace $c$. Define the locus of $n$-tuples of $m \times m$ matrices
    \[ 
    Z := \{  (A_1, A_2, \dots, A_n,B) \in X \times Y_c \ | \  
    %\GL_m(\bC)^n : 
    A_1 A_2 \cdots A_n B = I \}.
    \]
    Then, $Z$ is irreducible.
    %, it is
    %The variety $\ccRL_{n, m}$ is 
    %irreducible.
\end{lem}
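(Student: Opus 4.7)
The plan is to realize $Z$ as the preimage, under a morphism with irreducible fibres, of an irreducible subvariety of $\GL_m$.

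Since the relation $A_1 \cdots A_n B = I$ forces $B = (A_1 \cdots A_n)^{-1}$, the projection onto the first $n$ factors identifies $Z$ with the closed subvariety
\[
\widetilde{Z} := \{(A_1, \ldots, A_n) \in X : \tr((A_1 \cdots A_n)^{-1}) = c\} \subseteq X.
\]
First, I would establish that $X = X_1 \times \cdots \times X_n$ is irreducible: each factor $X_i$ is the fibre over $\chi_i$ of the characteristic polynomial map $\GL_m \to \bC^m$, and these fibres are classically irreducible of dimension $m^2 - m$ (a consequence of Kostant's theorem on the adjoint quotient of $\mathfrak{gl}_m$). Hence $X$ is irreducible of dimension $n(m^2 - m)$.

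Next, I would introduce the product map $\phi : X \to D$, $\phi(A_1, \ldots, A_n) := A_1 \cdots A_n$, where $D := \{C \in \GL_m : \det C = d\}$ with $d := \prod_i (-1)^m \chi_i(0)$ the common value of $\det(A_1 \cdots A_n)$. Being a coset of $\SL_m$, $D$ is irreducible of dimension $m^2 - 1$. Writing the characteristic polynomial of $C$ as $\chi_C(x) = x^m + a_{m-1}(C) x^{m-1} + \cdots + a_1(C) x + a_0(C)$, direct computation gives $\tr(C^{-1}) = -a_1(C)/a_0(C)$; since $a_0$ is fixed to $(-1)^m d$ on $D$, the condition $\tr(C^{-1}) = c$ is equivalent to fixing $a_1(C)$ to a specific scalar. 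Consequently
\[
W := \{C \in D : \tr(C^{-1}) = c\}
\]
is the fibre of the coefficient map $\GL_m \to \bC^2$, $C \mapsto (a_0(C), a_1(C))$, and hence irreducible of dimension $m^2 - 2$ by Kostant's theorem applied to this pair of coefficients. Since $\widetilde{Z} = \phi^{-1}(W)$, it suffices to show that $\phi$ is surjective onto $D$ with irreducible fibres of constant dimension.

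The fibre $\phi^{-1}(C)$ parametrizes factorizations $C = A_1 \cdots A_n$ with $A_i$ in the prescribed characteristic polynomial fibre $X_i$. These are ``multiplicative Deligne--Simpson''-type varieties; by arguments generalizing Crawley-Boevey's irreducibility results for the Deligne--Simpson problem, they are irreducible of the expected dimension $(n-1)m^2 - nm + 1$ whenever non-empty, and non-empty for every $C \in D$ under the hypotheses $n, m > 1$ (by a dimension count combined with genericity). Granting this, the pullback of the irreducible $W$ under the surjection $\phi$ with irreducible equidimensional fibres yields irreducibility of $\widetilde Z \cong Z$.

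The main obstacle is the Deligne--Simpson-type irreducibility of the fibres of $\phi$. Unlike in the classical setting, each $X_i$ is a union of conjugacy classes (the full fibre of the characteristic polynomial map) rather than a single conjugacy class, albeit with the regular class dense. One must reduce to the regular locus, where each $X_i^{\mathrm{reg}}$ is a single $\GL_m$-orbit; verify irreducibility of the regular part of $\phi^{-1}(C)$ via a transitive group-action argument on a dense open subset; and then check that this regular part is dense in $\phi^{-1}(C)$ via a tangent space / deformation argument at boundary points.
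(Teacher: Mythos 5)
Your reduction of $Z$ to $\widetilde{Z}=\{(A_1,\dots,A_n)\in X : \tr((A_1\cdots A_n)^{-1})=c\}$ and the irreducibility of $X$ and of $W$ are fine, but the proof as written has a genuine gap at its central step: the claim that the product map $\phi:X\to D$ is surjective with irreducible fibres of constant (expected) dimension. This is precisely a multiplicative Deligne--Simpson-type statement, and it is not a routine consequence of Crawley-Boevey's work: his results address the \emph{existence} of irreducible solutions for prescribed conjugacy classes (via root-theoretic criteria that genuinely fail for some choices of data), whereas you need \emph{irreducibility of the whole factorization variety} $\phi^{-1}(C)$ for \emph{every} $C\in D$, including non-regular and central $C$, together with non-emptiness and equidimensionality (without equidimensionality the standard ``irreducible base, irreducible equidimensional fibres $\Rightarrow$ irreducible total space'' theorem does not apply, and fibre dimensions of such factorization loci can jump over special $C$). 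You acknowledge this is ``the main obstacle,'' but the sketched repair (pass to the regular locus, transitive-action argument, then a tangent-space/deformation argument at boundary points) is itself a substantial unproven program, arguably harder than the lemma it is meant to establish. So the proposal is a reduction to an unestablished and stronger-looking statement, not a proof.

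For comparison, the paper's argument is short and elementary and avoids fibering over the product entirely. It takes the Jordan--Chevalley decompositions $A_i^{-1}=S_i+N_i$ and observes that replacing $N_i$ by $\lambda N_i$ preserves the characteristic polynomial of each factor (so stays in $X$) while making $\tr\bigl((S_n+N_n)\cdots(S_i+\lambda N_i)\cdots(S_1+N_1)\bigr)$ an affine-linear function of $\lambda$. On the dense open subset $U\subseteq X$ where this linear function is non-constant (shown to be non-empty by an explicit choice of the $S_i,N_i$), each trace level set meets every scaling orbit exactly once, so the level set $\{\tr=c\}$ contains a dense irreducible subset dominated by the irreducible $U$; hence it is irreducible. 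If you want a complete proof along your lines you would need to actually carry out the Deligne--Simpson analysis; otherwise the paper's one-parameter scaling trick is the efficient route.
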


\begin{proof}
First, note that the variety embeds into the subset of $X$ of $n$-tuples such that the inverse of the product has a fixed value $c$ of the trace. %It is clear also that if $n = 1$, then given $Z$ is nonempty, it is all of $X$. So we can assume $n > 1$.

Consider matrices $A_1, A_2, \ldots, A_n$ with Jordan decompositions of the inverses $A_i^{-1} := S_i+N_i$, where $S_i$ is semisimple and $N_i$ is nilpotent. For each $i$ and $\lambda \in \bC$ consider 
\[
X_{i,\lambda}:= (S_n+N_n) \cdots (S_{i-1}+N_{i-1}) (S_i+\lambda N_i) (S_{i+1}+N_{i-1}) \cdots (S_1+N_1)
\]
rescaling the nilpotent part of the $A_i^{-1}$. Now $\tr(X_{i,\lambda})$ is a linear function of $\lambda$, so either it obtains all values once, or it is constant. The condition to be constant, $\tr(X_{i,\lambda})= \alpha$ a fixed value, is closed on the $n$-tuples of matrices, hence also on $X$. 
It is not the entire space since we can take all $S_1,\ldots, S_n$ to be diagonal, take two matrices $N_i, N_j$ to have $\tr(N_i N_j) \neq 0$, and all other $N_k$ with $k \notin \{i,j\}$ to be zero. 
%If it is the entire space $X$, then $Z \cong X$ is irreducible. If not,
%Moreover this closed subset is not the entire space if $m>1$. The
%then parameter space $\GL_m(\bC)^n$ for the tuples of matrices is irreducible. 

So, we get that on a dense open subset $U \subset X$, 
%(we can fix conjugacy class closures if we like), 
the locus with $\tr(X_{i,\lambda})= \alpha$ is given by a unique choice of $\lambda$. So the map
\[
 U \times \bC^\times \rightarrow \bC, \quad (X,\lambda) \mapsto \tr(X_{i,\lambda})
\]
has fibers which project %under the first factor 
isomorphically to $U$. In particular, the fibers are irreducible. 

Applying the multiplication map $U \times \bC^\times \rightarrow U$, the image of this fiber gives an open dense irreducible set of $n$-tuples which have the given value of trace. The whole locus of desired tuples is thus irreducible. 
\end{proof}

\begin{proof}[Proof of Proposition \ref{prop:hyperpolygon}]
    To obtain the inclusion map of Corollary \ref{cor:mqv_res}, we need to establish that $\cM_{d', 0}(\Lambda^1(Q_n))$ is connected %contains the zero representation 
    for each $d' < d$. This is a consequence of Lemma \ref{lem:irred} for $m=2$ and characteristic polynomials equal to $(x-1)^2$, since  a product of unipotent $2 \times 2$ matrices is unipotent if and only if it has trace two.
    %(also, we note that the lemma is easier in this setting as $m=2$ (and the characteristic polynomials are $(x-1)^2$).
    %\travis{This requires an argument, which I sent over WhatsApp, we can use the description as a character variety of tuples of unipotent $2 \times 2$ matrices with unipotent product, and using that the condition to be unipotent is that the trace is two, and that trace is linear, we can show that this locus is connected...} 
    %Since $Q_n$ is acyclic, this is automatic. 

    For the injection to be surjective, we need to show that every local crepant resolution near zero extends to a global one. We will show these are in fact given by VGIT and hence globally projective, proving simultaneously the last statement.
     
 %   $\cM_{d, 0}(\Lambda^1(Q_n))$ has no monodromy and all choices of resolution are compatible. 

    In \cite[Theorem 5.4]{KS20}, we establish that $\cM_{d, 0}(\Lambda^1(Q_n))$ is locally a quiver variety. Note that the hypotheses of \cite[Theorem 1.5]{ST19} are satisfied,
    %\dan{for non-emptiness, I guess we should write down an example of a stability parameter and a representation}
    %\travis{The non-empty hypothesis only needs to be applied to the case of zero stability condition, where it follows by taking the zero representation.}
    so $\cM_{d, 0}(\Lambda^1(Q_n))$ has a crepant resolution given by variation of stability parameter (for a generic choice of parameter). 
    %Note that, for the non-emptiness of  the VGIT quotient,  the stable locus in the local model of the representation space (before taking the GIT quotient) is nonempty, since $\cM_{0,0}(Q,d)$ is nonempty. 
    The varieties $\cM_{d, 0}(\Lambda^1(Q_n))$ and $\cM_{0, 0}(Q, d)$ have the same choices of stability parameter and the same symplectic leaves since the stratification by representation type is connected. It follows that the injection of resolutions $\ccR(\cM_{0, 0}(Q, d)) \hookrightarrow \ccR(\cM_{d, 0}(\Lambda^1(Q_n)))$ is surjective and we have the same classification of symplectic resolutions.  \qedhere
    %\travis{Instead, we should be able to show that all choice of stability condition for the additive variety come from a choice of stability for the multiplicative variety, so that we get a global extension of all the resolutions.}
%As we mostly said before, the loci where we get a decomposition into two ones vectors is irreducible: for every subset $I$ of $\{1,…,n \}$, every choice of parameter $q$, and every decomposition of $\bC^2$ into two lines, we get a unique representation giving the decomposition into the ones vectors according to the subset $I$  and the decomposition of $\bC^2$, having parameter $q$ (i.e., fixing conjugacy class closure of the two-cycles at the central node).

%So it would then follow that all decomposition types determine (connected) symplectic leaves. There is therefore a bijection between symplectic leaves of the multiplicative variety and the additive one, with the same local structure. So provided there is no monodromy, we get the same classification of SR’s (or partial resolutions). But the resolutions in this case can all be constructed globally by VGIT, so there is no monodromy obstruction to extending the local resolutions. So we get the classification, exactly the same as in the additive case.
\end{proof}
\begin{rem}
    According to the interpretation of multiplicative quiver varieties as character varieties \cite{CBS06}, \cite[Theorem 3.6]{ST19}, the multiplicative quiver varieties above can also be interpreted as the moduli spaces of rank two local systems on Riemann spheres punctured at $n$ points with unipotent monodromies about the punctures. Thus Proposition \ref{prop:hyperpolygon} gives a complete classification of the (locally) projective crepant resolutions of these character varieties. One can deduce from this that distinct crepant resolutions are connected by sequences of local Mukai flops, precisely as in \cite{BCRSW21}. 
\end{rem}
The proof of  Proposition \ref{prop:hyperpolygon} holds in the following more general case:
\begin{cor}
    Suppose $Q$ is a quiver and $d$ is a dimension vector satisfying:
    \begin{itemize}
        \item $\cM_{d', 0}(\Lambda^1(Q))$ is connected for all $d' < d$,
        \item The hypothesis of \cite[Theorem 1.2]{BCS22} holds for %Condition 3.3 in \cite{BCS22} holds for 
        $\cM_{0, 0}(Q, d)$ showing it is a Mori dream space with all 
        %by \cite[Theorem 1.1]{BCS22}, and all 
        projective crepant  resolutions given by VGIT.
        %and
        %\item the hypothesis of \cite[Theorem 1.5]{ST19} are satisfied, showing some resolution of $\cM_{d, 0}(\Lambda^1(Q))$ is given by VGIT. 
    \end{itemize}
    Then $\ccR(\cM_{d, 0}(\Lambda^1(Q))) \cong \ccR(\cM_{0, 0}(Q, d))$ and all resolutions are given by VGIT (hence globally projective). 
\end{cor}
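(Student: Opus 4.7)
The plan is to mirror the two-part argument of Proposition \ref{prop:hyperpolygon}, replacing only the inputs that used $Q = Q_n$ with the specific hypotheses of the corollary.

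First, I would apply Corollary \ref{cor:mqv_res}: the connectivity hypothesis on $\cM_{d',0}(\Lambda^1(Q))$ for all $d' < d$ is exactly what is needed to invoke Proposition \ref{prop:mqv}, placing the zero representation in the closure of every symplectic leaf. This yields the injection
\[
\ccR(\cM_{d, 0}(\Lambda^1(Q))) \hookrightarrow \ccR(\cM_{0, 0}(Q, d)),
\]
given by restricting a globally defined resolution to an analytic neighborhood of the zero representation and identifying it, via \cite[Corollary 5.22]{KS20} (with $Q' = Q$, $d' = d$), with a neighborhood of zero in the additive quiver variety, then invoking Corollary \ref{cor:unique_minimal}.

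Next, for surjectivity, I would argue that every projective crepant resolution of $\cM_{0,0}(Q,d)$ lifts to a globally projective crepant resolution of $\cM_{d,0}(\Lambda^1(Q))$. By the second hypothesis (the input of \cite[Theorem 1.2]{BCS22}), every such resolution is obtained from a generic stability parameter $\theta$ by VGIT, giving $\cM_{0,0}(Q,d,\theta) \to \cM_{0,0}(Q,d)$. The same parameter $\theta$ makes sense for the multiplicative preprojective algebra $\Lambda^1(Q)$ and yields a projective morphism $\cM_{d,\theta}(\Lambda^1(Q)) \to \cM_{d,0}(\Lambda^1(Q))$. I would verify that this morphism is crepant and birational by checking it locally: by \cite[Theorem 5.4]{KS20} the formal neighborhoods of $\cM_{d,0}(\Lambda^1(Q))$ and $\cM_{0,0}(Q,d)$ at corresponding points agree as Poisson varieties (with slices both being Nakajima quiver varieties), and under this identification the two VGIT constructions coincide. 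Consequently $\cM_{d,\theta}(\Lambda^1(Q)) \to \cM_{d,0}(\Lambda^1(Q))$ is a globally projective crepant resolution restricting to the chosen one near the zero representation.

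Finally, I would observe that the two stratifications by representation type match: both varieties have the zero representation in their unique minimal stratum and, by the local isomorphism of \cite[Theorem 5.4]{KS20}, the slices to each stratum are the same Nakajima quiver varieties on both sides, so $\ccR$ has identical stalks and identical (exit-path) monodromy on the two stratified spaces. Combined with Corollary \ref{cor:unique_minimal}, this upgrades the injection to a bijection and shows that all resolutions of $\cM_{d,0}(\Lambda^1(Q))$ arise from VGIT, hence are globally projective. The main obstacle I anticipate is the crepancy/birationality verification of the lifted VGIT morphism; the formal local identification from \cite[Theorem 5.4]{KS20} is what makes this go through, but one must be careful that the identification preserves the chosen stability parameter and the Poisson structure simultaneously.
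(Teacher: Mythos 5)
Your overall strategy---the injection via Corollary \ref{cor:mqv_res} from the connectivity hypothesis, then surjectivity by lifting each additive VGIT resolution to a multiplicative one---is exactly the route the paper takes: the corollary is proved by observing that the proof of Proposition \ref{prop:hyperpolygon} goes through verbatim once the connectivity input (there supplied by Lemma \ref{lem:irred}) is replaced by the first bullet of the hypotheses.

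The one genuine gap is in your surjectivity step. You propose to verify that $\cM_{d,\theta}(\Lambda^1(Q)) \to \cM_{d,0}(\Lambda^1(Q))$ is a crepant resolution ``by checking it locally,'' using the formal/analytic identification of \cite[Theorem 5.4]{KS20} and asserting that under it ``the two VGIT constructions coincide.'' That identification is an isomorphism of pointed (Poisson) germs of the GIT quotients; it does not by itself intertwine the two GIT presentations or their stability parameters, so one cannot deduce from it that the global multiplicative VGIT morphism is birational and crepant with smooth (or terminal) source, nor that its restriction near the zero representation agrees with the additive VGIT resolution for the same $\theta$. You correctly flag this as the main obstacle, but it is precisely the content that needs an independent input. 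The paper closes it by citing \cite[Theorem 1.5]{ST19}, which directly establishes that for generic $\theta$ the morphism $\cM_{d,\theta}(\Lambda^1(Q)) \to \cM_{d,0}(\Lambda^1(Q))$ is a projective symplectic (hence crepant) resolution; as noted after the corollary, the hypotheses of that theorem (indivisibility of $d$, membership in $\Sigma_{0,0}$) are automatically satisfied because they are among the conditions of \cite[Theorem 1.2]{BCS22} assumed in the second bullet. With that citation in place of your local verification, the remainder of your argument---matching stability parameters and symplectic leaves on the two sides and invoking Corollary \ref{cor:unique_minimal} to upgrade the injection to a bijection---is the paper's argument.
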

Note here that the hypotheses of \cite[Theorem 1.5]{ST19} for the multiplicative variety $\cM_{d, 0}(\Lambda^1(Q))$ to admit a symplectic resolution by VGIT are automatically satisfied when the conditions of \cite[Theorem 1.2]{BCS22} hold for $\cM_{0,0}(Q,d)$, since the latter include the facts that the dimension vector is indivisible and contained in $\Sigma_{0,0}$.

\subsection{Moduli spaces of objects in 2-Calabi--Yau categories}

A $k$-linear triangulated category with finite-dimensional hom spaces, $\bf{C}$, is 2-Calabi--Yau if it has a Serre functor given by shift by 2. Observe that 2-Calabi--Yau categories arise naturally in geometry including any full subcategory of:
\begin{itemize}
\item[(1)] the derived category of finite-dimensional modules for a 2-Calabi--Yau dg-algebra, which includes dg-preprojective algebras and multiplicative dg-preprojective algebras (see e.g., \cite{BCS21}),
\item[(2)] the derived category of coherent sheaves 
on a compact Calabi--Yau surface (K3 or abelian variety);
%for a 2-dimensional compact 
%\travis{I just informed myself that compact is enough and we don't need projective, by Grauert's theorem, see e.g. Corollary 39.12 of https://www.math.stonybrook.edu/~cschnell/pdf/notes/complex-manifolds.pdf} Calabi--Yau manifold, e.g., a K3 surface, 
\item[(3)] the wrapped Fukaya category of a real 4-dimensional Weinstein manifold \cite{Ganatra12},
\item[(4)] the cluster category of a finite quiver (see e.g., \cite{Keller08}), and 
\item[(5)] the category of semistable Higgs bundles of fixed slope on a closed Riemann surface $M_{g}$.
\end{itemize}
The Higgs bundle example follows from realizing this category is a subcategory of coherent sheaves on the symplectic manifold $T^*(M_g)$. Note that, analogously, a full subcategory of (1) for the multiplicative preprojective algebra includes categories of local systems on Riemann surfaces with punctures, fixing monodromy conjugacy classes at the punctures (see \cite[Theorem 3.6]{ST19} and the preceding discussion).

The key idea in this section is to build crepant resolutions for a variety $X$ by realizing $X$ as the moduli space of objects for $\bf{C}$, a 2-Calabi--Yau category, $X \cong \cM(\text{ob}(\bf{C}))$. Davison proved that such moduli spaces are \'etale-locally quiver varieties \cite[Theorem 5.11, Theorem 1.2]{Davison21}, at least over an open subset: Given a closed point $x \in X$ corresponding to a ``simple-minded collection'' of objects in $\mathbf{C}$, i.e., a direct sum of objects $\mathcal{F}_i^{\oplus r_i}$ such that $\dim \Hom(\mathcal{F}_i, \mathcal{F}_j) = \delta_{ij}$ and $\Ext^{<0}(\mathcal{F}_i, \mathcal{F}_j) = 0$ for all $i,j$, there is an \'etale neighborhood of 
%a closed point 
$x \in X$ which is isomorphic to an \'etale neighborhood of the zero representation in a conical Nakajima quiver variety.  These also have a symplectic form on the smooth locus %\travis{Brav--Dyckerhoff?}, 
\cite[Theorem 5.5]{BD-RelCY2}, so that the (open locus of simple-minded objects in) the coarse moduli space forms a symplectic singularity. The argument in the case of multiplicative quiver varieties then shows that the stratification by symplectic leaves satisfies (*).

Bellamy and the second-named author determine which quiver varieties admit projective, crepant (or equivalently symplectic) resolutions in \cite{BS21}. Recently, Bellamy, Craw, and the second-named author give a criterion for a variety $X$ presented as a GIT quotient to have all projective crepant resolutions given by variation of GIT \cite[Theorem 1.1, Condition 3.4]{BCS22}. Moreover, they prove that, under mild hypotheses, Nakajima quiver varieties satisfy this criterion, thus giving a classification of projective crepant resolutions by VGIT \cite[Theorem 1.2]{BCS22}.

Consequently, for $\cM(\text{ob}(\bf{C}))$ the moduli of objects in a 2-Calabi--Yau category, one can compute the local classification of projective, crepant resolutions in the \'etale neighborhood of any point, by first identifying the neighborhood with that of a quiver variety, where every such resolution can be described by VGIT. 

In more detail, consider the open subset of objects $M$ with $\End(M)$ semisimple. Then there exists a quiver $Q = (Q_0, Q_1)$ with (1) $\End(M)$ Morita equivalent to $kQ_0$ and (2) the Morita equivalence taking $\Ext^1(M, M)$ to $k Q_1$. The quiver $Q$ is called the Ext-quiver for $M$. And in the 2-Calabi--Yau case the entire Ext-algebra $\Ext^*(M, M)$ can be recovered (as a graded vector space) from $Q$ since $\Ext^2(M, M) \cong \Ext^0(M, M)^*$ and $\Ext^n(M, M) = 0$ for $n>2$. Note further that $\Ext^1(E_i, E_j) \cong \Ext^1(E_j, E_1)^*$ so $\dim(\Ext^1(E_i, E_j)) = \dim(\Ext^1(E_i, E_j))$ and hence $Q$ is a doubled quiver. The multiplication on $\Ext^*(M, M)$ is determined by the pairing on $\Ext^1(M, M)$ (together with vanishing in degree $\geq 3$). While, in general one would need to describe the entire $A_{\infty}$-structure on $\Ext^*(M, M)$, Davison \cite[Theorem 1.2]{Davison21} proved that this algebra is formal. 

By computing monodromy and compatibility, one could apply our local-to-global analysis, where the stratification $\cS$ is determined by Ext-quiver type (passing to connected components), to classify global projective symplectic resolutions of $X$.

Even without computing monodromy and compatibility we can still apply Corollary \ref{cor:unique_minimal} 
%in the case that we have a unique minimal stratum 
to obtain:

\begin{cor}
    Let $\bf{C}$ be a 2-Calabi--Yau category and let $\cM(\text{ob}(\bf{C}))$ denote its moduli space of objects. Assume $\cM(\text{ob}(\bf{C}))$ has a unique minimal stratum with basepoint $M$. Let $Q$ be the Ext-quiver for $M$ and $d = (d_i := \dim(\Ext^1( e_i M, e_i M)))$. Then
    \[
        \ccR(\cM(\text{ob}(\bf{C}))) \hookrightarrow \ccR(\cM_{0, 0}(Q, d)).
    \]
    More generally, we always have an embedding $\ccR(\cM(\text{ob}(\bf{C}))) \to \prod_i \ccR(\cM_{0, 0}(Q_i, d_i))$  into the product over all minimal strata of the set of local resolutions near a fixed basepoint of the stratum.% The same holds replacing $\ccR$ by $\widetilde{\ccR}$ and $\cP$. 
    %$\ccR_{\cM_{0, 0}(Q, d)}$ of local resolutions of $\cM(\text{ob} (\bf{C}))$ at a fixed basepoint of the stratum.
\end{cor}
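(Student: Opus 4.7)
The plan is to combine Davison's \'etale-local description of $\cM(\text{ob}(\bf{C}))$ with Corollary \ref{cor:unique_minimal}, and then upgrade to the product statement via the constructibility of $\ccR$ (Theorem \ref{t:constr}) and the exit-path description of global sections.

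Setting $X := \cM(\text{ob}(\bf{C}))$, for each minimal stratum basepoint $M_i$ I would invoke \cite[Theorem 1.2]{Davison21} to obtain an analytic neighborhood $U_i$ of $M_i$ analytically isomorphic to an analytic neighborhood of the zero representation in the conical Nakajima quiver variety $Y_i := \cM_{0,0}(Q_i, d_i)$; promoting Davison's \'etale-local isomorphism to an analytic-local one is standard once both sides have the same completion at the chosen points. Shrinking $U_i$ if needed so that it meets no other minimal stratum, $\{M_i\}$ becomes the unique minimal stratum of $U_i$ in its induced stratification, and Corollary \ref{cor:unique_minimal} then gives an injection $\ccR(U_i) \hookrightarrow \ccR(Y_i)$. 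In the unique minimal stratum case, applying the corollary to $X$ itself, using Davison's conical neighborhood of $0 \in Y$, directly yields the first displayed embedding $\ccR(X) \hookrightarrow \ccR(\cM_{0,0}(Q, d))$.

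For the general statement, I would show that the restriction map $r \colon \ccR(X) \to \prod_i \ccR(U_i)$ is injective. Given $\sigma_1, \sigma_2 \in \ccR(X)$ agreeing on each $U_i$, their stalks coincide at every $M_i$. For any $x \in X$ lying in a stratum $S$, some minimal stratum is contained in $\overline{S}$; fix an exit path $\gamma$ from its basepoint $M_i$ into $S$ and then within $S$ to $x$. Uniqueness of parallel transport along $\gamma$ for the constructible sheaf $\ccR$ forces the stalks of $\sigma_1$ and $\sigma_2$ at $x$ to coincide, so $\sigma_1 = \sigma_2$. Composing $r$ with the product of the maps $\ccR(U_i) \hookrightarrow \ccR(Y_i)$ yields the embedding claimed.

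The main obstacle will be verifying that Davison's \'etale-local model can indeed be promoted to an analytic-local isomorphism with a \emph{conical} neighborhood of $0$ in $Y_i$, as required by Corollary \ref{cor:unique_minimal}. Here one uses that the quiver variety $\cM_{0,0}(Q_i, d_i)$ is conical with cone point $0$ under the natural $\bC^\times$-scaling of representations, and that a sufficiently small analytic neighborhood of $0$ produced by Davison's local model may be taken inside a conical neighborhood. Once this compatibility is in place, the rest is a direct combination of constructibility with the corollary.
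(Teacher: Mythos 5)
Your proposal matches the paper's approach: the paper obtains this corollary by directly citing Corollary \ref{cor:unique_minimal} together with Davison's \'etale-local quiver-variety models (an \'etale-local isomorphism immediately yields an analytic-local one, so your worry about "promoting" it is not a real obstacle), and your exit-path/parallel-transport argument for the multi-minimal-stratum case is precisely the content of the first statement of that corollary. One small caution: when a minimal stratum $S_i$ is positive-dimensional, $\{M_i\}$ is not literally the unique minimal stratum of $U_i$ (that is $U_i \cap S_i$), but your own parallel-transport argument and the conical-extension property of $\cM_{0,0}(Q_i,d_i)$ do not need this, so the proof stands.
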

%\dan{How is this?}

\subsection{Finite symplectic quotients of symplectic tori}
Let $\bT = (\bC^\times)^n$ be the $n$-dimensional complex torus, and let $\mathfrak{t} \cong \bC^n$ be its Lie algebra.  To this we can associate two natural symplectic varieties: $T^*( \bT) \cong \bT \times \mathfrak{t}^*$ and $\bT \times \bT$.  

As explained in Section \ref{s: general results}, any finite symplectic quotient has a finite stratification by symplectic leaves, with a local product decomposition.   In this section we will look at the case where the actions are moreover group automorphisms of the torus. 
%This is equivalent, in the algebraic case at least, to fixing the identity, since the units in the Laurent algebra C[x_1,\ldots, x_n,x_1^{-1}, \ldots, x_n^{-1}] are the monomials. 
Recall that the group of multiplicative automorphisms of the torus $\bT$ is the group of automorphisms of the lattice $\Hom(\bT, \bC^\times) \cong \bZ^n$ of characters, i.e., $\GL_n(\bZ)$. These induce symplectic automorphisms of $T^*(\bT)$ and $\bT \times \bT$. We will also consider the larger group $\Sp_{2n}(\bZ)$ of symplectic group automorphisms of $\bT \times \bT$.

\subsubsection{Weyl group quotients} \label{ss:Weylgroup}
Let $G$ be a reductive group with maximal torus $\bT$ and Weyl group $W := N_G(\bT) / \bT$.  Here we consider the quotients $\bT \times \bT / W$ and $T^*(\bT) / W$. 

\begin{rem} 
  The three classes of quotients (1) $\bT \times \bT / W$,  (2) $T^*(\bT) / W$, and (3) $T^*(\mathfrak{t}) / W$ are of interest in part because of their descriptions as the connected component of the identity of the quasi-Hamiltonian reductions: (1)  $G \times G /\!/\!/ G$, (2) $T^*(G) /\!/\!/ G$, and (3) $T^*(\mathfrak{g}) /\!/\!/ G$. 
  These statements can be found in \cite[Theorem 1.1.2, Theorem 1.1.4]{LNY23} but note Ansatz 1.2.5 and see Remark 1.1.5 for previous partial results including \cite{Joseph} and \cite{EG02} who obtain the result only after quotienting out the nilradical, i.e., for reduced rings. 
  One could use the formula for $G \times G /\!/\!/ G$ in \cite[Theorem 1.1.4]{LNY23} to analyze the other connected components, which are quotients of smaller tori.
\end{rem}
%These are isomorphic to the reduced subvariety of the (quasi-)Hamiltonian reductions $G \times G /\!/\!/ G$ and $T^*(G) /\!/\!/ G$ by \cite{Joseph}. \travis{Joseph actually does the case $T^*(\mathfrak{g}) /\!/\!/ G$ so some argument is needed; maybe we can just use the explicit description as the commuting pairs modulo the adjoint action of $G$; for the quasi-Hamiltonian case see \cite[Example 6.1]{AMM}. Also in types A and C, thanks to Gan--Ginzburg and Losev, we can remove ``reduced subvariety", at least for $T^*(\mathfrak{g})/\!/\!/G$, perhaps it extends to all cases.}

Let us first  consider the case where $G$ is simply-connected and semisimple. Let  $\fS_n$ denote the symmetric group on $n$ letters and $C_n$ denote the cyclic group of order $n$. 

\underline{Type A ($\SL_{n+1}$)}: The quotient $\bT \times \bT/ W := (\bC^\times)^n \times (\bC^\times)^n / \fS_{n+1}$ can be viewed as the subset of $\Sym^{n+1}(\bC^2)$ given by 
\[
\left \{ (\lambda_1, \mu_1), (\lambda_2, \mu_2), \dots, (\lambda_{n+1}, \mu_{n+1}) \in \Sym^{n+1}(\bC^2) \ \middle | \ \prod_{i=1}^{n+1} \lambda_i = 1 = \prod_{i=1}^{n+1} \mu_i  \right \}.
\]
Explicitly, $\lambda_{n+1} = 1/ \prod_{i=1}^n \lambda_i$ and $\mu_{n+1} = 1/ \prod_{i=1}^n \mu_i$ and in particular each $\lambda_i, \mu_i \neq 0$. Hence each singularity in $\bT \times \bT/W$ can be identified with a singularity in $\Sym^{n+1}(\bC^2)$. Consequently, each singular point $x$ has a neighborhood $U_x$ with a unique crepant resolution given by the Hilbert--Chow map $\Hilb^{n+1}(U_x) \rightarrow \text{Sym}^{n+1}(U_x)$. By Corollary \ref{c: unique locally-proj resolution}, these unique local resolutions glue to a unique global, crepant resolution. This resolution is a priori only locally projective but in this case is given by the global Hilbert scheme 
\[
\Hilb_1^{n+1}(\bC^\times \times \bC^\times) :=  \left \{ I \in \Hilb^{n+1}(\bC^\times \times \bC^\times) \ \middle | \ \prod_{i=1}^{n+1} \lambda_i = 1 = \prod_{i=1}^{n+1} \mu_i \right \}. 
\]

\underline{Type B ($\SO_{2n+1}$) and type C ($\Sp_{2n}$):}  Note that the reduced quotients in these cases are the same torus quotients, given as follows:
%Note that Type B and C are the same in this case. 
\[
\bT \times \bT / W = (\bC^\times)^n \times (\bC^\times)^n / (C_2^n \rtimes \fS_n) = \Sym^n(\bC^\times \times \bC^\times / C_2 )
\]
where $C_2$ acts on $\bC^\times \times \bC^\times$ by $(z, w) \mapsto (z^{-1}, w^{-1})$ and hence fixes the four points $(\pm 1, \pm 1)$. These are the four singularities of $\bC^\times \times \bC^\times / C_2$ and each singularity is an $A_1$ singularity $\bC^2 / C_2$, where $C_2$ acts on $\bC^2$ by $(z, w) \mapsto (-z, -w)$. 

Denote by $\ccR(X)$ the set of isomorphism classes of symplectic resolutions of the variety $X$. It follows that we obtain the following formula:
\[
\# \ccR(\bT \times \bT/ W) = ( \# \ccR(\Sym^n(\bC^2/ C_2)))^4 = n^4.
\]
Similarly, the number of symplectic resolutions for $T^*(\bC^\times)^n / (C_2^n \rtimes \fS_n)$ is $n^2$, the square of the number of symplectic resolutions of $\Sym^n(\bC^2/ C_2)$.\\ 
\\
In all other types, the formal completion of the quotient at the identity will recover the quotient $T^* \mathfrak{t} /\!/\!/ W$, which does not admit a symplectic resolution by \cite{Gordon-baby,Bellamy09}.  

Next consider the general type $A$ case.  Then the existence of a symplectic resolution was classified in \cite[Theorem 1.10]{BS19}.  In particular, for type $A_n, n \geq 2$, and $G$ (almost) simple, a symplectic resolution only exists if $G=\SL_{n+1}(\bC)$ as above.  
\begin{exam}\label{ex:pgl3}
For example, for the case $G=\PGL_3(\bC)$, we obtain a quotient $\bT \times \bT / \fS_3$, for $\bT$ a two-dimensional torus, which does not admit a symplectic resolution.  One explicit way to think about the action is to consider $\bT = (\bC^\times)^3 / \bC^\times \subset \PGL_3(\bC)$ with the quotient action being the diagonal action, then to act by usual permutations. Then the element $(123)$ fixes the nine points $Y \times Y$ for $Y = \{(1,\zeta,\zeta^2) \mid \zeta^3=1\}$. But the reflections $(12)$ and $(23)$ only fix the subset $Y \times \{1\}$ and $\{1\} \times Y$, respectively. This leaves $9-(3+3-1) = 4$ nonresolvable $C_3$-singularities.
\end{exam}

\subsubsection{$\bT \times \bT/ \Gamma$} \label{ss:symp_torus_quot}
In this section again $\bT \cong (\bC^\times)^{n}$ but now $W$ is replaced by $\Gamma \subset \Sp_{2n}(\bZ)$ a finite group acting on $\bT \times \bT$. We first restrict to the case $n=1$, and $\Gamma \cong C_i$ for $i= 2$, $3$, $4$ or $6$.

Define $\Gamma_i := \langle \gamma_i \rangle \cong C_i$ where $\gamma_i$ is given by
\[
\gamma_6 = \left ( \begin{array}{cc} 0 & 1 \\  -1 &  1 \end{array} \right ) \hspace{.8cm}
\gamma_4 = \left ( \begin{array}{cc} 0 & 1 \\  -1 &  0 \end{array} \right ) \hspace{.8cm}
\gamma_3 = \gamma_6^2 = \left ( \begin{array}{cc} -1 & 1 \\  -1 &  0 \end{array} \right ) \hspace{.8cm}
\gamma_2 = \gamma_4^2 = \left ( \begin{array}{cc} -1 & 0 \\  0 &  -1 \end{array} \right ). 
\]
$\Gamma_i$ acts on $\bC^\times \times \bC^\times$ via the weights matrix:
\[
\left ( \begin{array}{cc} a & b \\  c &  d \end{array} \right ) \cdot (x, y) 
= (x^a y^b, x^c y^d)
\]
so
\[
\gamma_6 \cdot (x,y)=(y, x^{-1} y)\hspace{1cm}\gamma_4 \cdot (x,y)=(y, x^{-1}) \hspace{1cm}
\gamma_3 \cdot (x,y)=(x^{-1}y, x^{-1})\hspace{1cm} \gamma_2 \cdot (x, y) = (x^{-1}, y^{-1}). 
\]
Each action preserves the symplectic form $\omega = dx/x \wedge dy/y$, as each weight matrix has determinant 1.
The fixed points of each action are
\begin{align*}
(x,y) = (y, x^{-1} y) &\implies x = y, \ y = x^{-1} y \implies x=y=1 \\
(x,y) = (y, x^{-1}) &\implies x = y, \ y =x^{-1} \implies x = y = \pm 1 \\
(x,y)=(x^{-1}y, x^{-1}) &\implies x^2 = y, \ y = x^{-1} \implies x = y^{-1} = \zeta_3 \\
(x, y) = (x^{-1}, y^{-1}) &\implies x = x^{-1}, \ y = y^{-1} \implies x = \pm 1, \  y = \pm 1.
\end{align*}
Each fixed point gives a singularity in the orbit space that is locally du Val and hence has a unique crepant resolution. These glue to a unique crepant resolution of the entire variety. Moreover, this resolution is projective and so the set of projective crepant resolutions of $(\bC^\times)^2/ \Gamma_i$ is a singleton. 

These are the only finite group actions on $(\bC^\times)^2$ preserving $dx/x \wedge dy/y$.  To see this, the following lemma is useful:
\begin{lem} \label{lem:non-integral_reps}
    Suppose that $G < \Sp_{2m}(\bR)$ with complexified symplectic representation $\bC^{2m}$ symplectically irreducible.  Then $\bC^{2m}$ is reducible as a complex representation. 
\end{lem}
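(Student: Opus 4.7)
The plan is to argue by contradiction using Schur's lemma combined with a finite-group averaging argument; this implicitly uses the fact that the subgroups $G$ arising in the application are finite. I would suppose for contradiction that $V := \bC^{2m} = V_{\bR} \otimes_{\bR} \bC$ is irreducible as a complex $G$-representation, where $V_{\bR} := \bR^{2m}$ carries the $G$-invariant standard symplectic form $\omega_{\bR}$ coming from $G \subset \Sp_{2m}(\bR)$.

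The first step is to descend the endomorphism algebra to the real form. By Schur $\End_{G}(V) = \bC$, and since complexification commutes with $\Hom$, we have $\End_{G}(V_{\bR}) \otimes_{\bR} \bC \cong \End_{G}(V)$. Thus $\End_{G}(V_{\bR})$ is a real form of $\bC$, necessarily equal to $\bR$, and $V_{\bR}$ is irreducible as a real $G$-representation of ``real type''. The next step invokes compactness: since $G$ is finite, averaging an arbitrary positive definite form on $V_{\bR}$ over $G$ produces a $G$-invariant positive definite symmetric bilinear form $\langle-,-\rangle$ on $V_{\bR}$. Applying Schur over $\bR$ to the irreducible representation $V_{\bR}$ shows $\dim_{\bR} \Hom_{G}(V_{\bR}, V_{\bR}^{*}) \leq \dim_{\bR} \End_{G}(V_{\bR}) = 1$, so $\langle-,-\rangle$ spans the entire space of $G$-invariant bilinear forms on $V_{\bR}$ and every such form is symmetric.

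This will contradict the existence of the nonzero $G$-invariant antisymmetric form $\omega_{\bR}$ on $V_{\bR}$, since for $m \geq 1$ a nonzero antisymmetric form cannot be a scalar multiple of a symmetric one. Hence $V$ must be reducible as a complex representation. The main subtlety to flag is the tacit finiteness (or compactness) hypothesis, which enters only at the averaging step: without it, the conclusion fails, as illustrated by $\SL_{2}(\bR) \subset \Sp_{2}(\bR)$, whose complexified standard representation $\bC^{2}$ is both symplectically irreducible and irreducible as a complex representation. In the paper's setting of finite symplectic torus quotients this hypothesis is automatic, so the argument proceeds without issue.
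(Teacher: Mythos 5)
Your proof is correct and takes essentially the same route as the paper: averaging over the (tacitly finite) group to produce an invariant symmetric form, then using Schur's lemma to conclude that the space of invariant bilinear forms on an irreducible representation is one-dimensional, contradicting the presence of the symplectic form. Your observation that the finiteness of $G$ is needed (and is omitted from the lemma's statement, though present in the paper's proof and in all applications) is a fair and worthwhile remark.
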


\begin{proof}
     Observe that if a complex irreducible representation $V$ of a finite group is defined over $\bR$, it preserves a nondegenerate symmetric bilinear form (e.g., by averaging the standard inner product over the group), so it cannot preserve a symplectic form.
\end{proof}

\begin{prop} \label{prop:symp_tori_n=1}
     Let $\Gamma$ be a finite subgroup acting on $(\bC^\times)^{2}$ by group automorphisms preserving the invariant
    symplectic form $d x/x \wedge d y /y$. Then $\Gamma$ is cyclic of order $1,2,3,4$, or $6$ and the quotient  $(\bC^\times)^{2} / \Gamma$ admits a unique projective crepant resolution of singularities. 
\end{prop}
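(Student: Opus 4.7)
My plan has two stages: first classify $\Gamma$, then glue the local du Val resolutions using the paper's machinery.

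For the first stage, I would note that algebraic group automorphisms of $\bT = (\bC^\times)^2$ are given by integral matrices $A \in \GL_2(\bZ)$ acting on the character lattice, and a direct computation of the pullback shows $A^*(dx/x \wedge dy/y) = \det(A) \cdot dx/x \wedge dy/y$. So $\Gamma < \SL_2(\bZ) \subset \Sp_2(\bR)$, and Lemma \ref{lem:non-integral_reps} applies---its hypothesis is automatic in real dimension two, since every proper subspace of a two-dimensional symplectic space is isotropic. Thus the complexified representation $\bC^2$ of $\Gamma$ splits as $\chi \oplus \chi^{-1}$ for some character $\chi$ (the inversion forced by preservation of the form). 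This makes $\Gamma$ cyclic of some order $n$, with generator of trace $\chi + \chi^{-1} = 2\cos(2\pi k/n)$ for some $k$ coprime to $n$. Integrality of this trace is the classical constraint forcing $n \in \{1,2,3,4,6\}$.

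For the second stage, at each fixed point $p$ the tangent action is by a cyclic subgroup of $\SL_2(\bC)$ (still symplectic), so the analytic germ of $\bT/\Gamma$ at the image of $p$ is du Val. The fixed-point computations preceding the proposition show the singular locus is a finite set of isolated du Val points, each admitting a unique projective crepant resolution, namely the minimal resolution. Corollary \ref{c: unique locally-proj resolution} then produces a unique locally projective crepant resolution of $\bT/\Gamma$.

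The main step I expect to require the paper's tools is upgrading from locally to globally projective. I would apply Theorem \ref{t: quotient-sing-globally-projective}: $\bT/\Gamma$ is a symplectic quotient singularity (hence analytically $\bQ$-factorial) with finite symplectic stratification, so global projectivity reduces to the existence of a global section of $\cP^\sharp$ lying in the relatively ample cones. Since the only singularities are isolated du Val points, the stalks of $\cP^\sharp$ at distinct singular points are independent and no compatibility between singularities arises, so the required section exists tautologically. (A shortcut for this last step is to observe that $\bT/\Gamma$ is a normal quasi-projective surface with only du Val singularities and invoke classical surface theory, which yields projectivity of the minimal resolution directly.)
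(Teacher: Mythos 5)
Your proposal is correct and follows essentially the same route as the paper's proof: the classification of $\Gamma$ via Lemma \ref{lem:non-integral_reps} (reducibility of the complexified representation, hence diagonalizability, hence cyclic, then the integral-trace restriction to orders $1,2,3,4,6$), followed by gluing the unique local du Val resolutions via Corollary \ref{c: unique locally-proj resolution}. The only cosmetic difference is in the final step: the paper settles global projectivity by the classical observation that the resolution is an iterated blowup of the isolated singularities --- which is exactly your parenthetical shortcut --- rather than by invoking the heavier Theorem \ref{t: quotient-sing-globally-projective}, though your primary route through that theorem also works.
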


\begin{proof}
Such an action is given by a finite subgroup of $\SL_2(\bZ)$. These are cyclic of orders $1,2,3,4$, or $6$. An algebraic argument for this follows because $\SL_2(\bZ)$ is isomorphic to an amalgamated product $C_4 *_{C_2} C_6$. 

For a geometric argument, first by Lemma \ref{lem:non-integral_reps}, the group must act reducibly, hence it is conjugate to a subgroup of diagonal matrices therefore abelian. Next, the abelian finite subgroups of $\SL_2(\bC)$ are cyclic.  Finally, the cyclic subgroups with integral traces are precisely the ones of the given orders.  

For the final statement, all singularities of the quotient are du Val and hence admit unique local projective crepant resolutions of singularities.  So the quotient $(\bC^\times)^2/\Gamma$ admits a unique locally projective crepant resolution of singularities.  But also, this is globally projective, since it is well known that the resolutions can be obtained by iterated blowups of the (isolated) singularities.
\end{proof}

\begin{comment}
    If an element $g \in \Gamma$ acts by $g (x, y) = (x^a y^b, x^c y^d)$ then it preserves $d x/x \wedge d y /y$ if
    \begin{align*}
    \frac{d( x^a y^b)}{x^a y^b} \wedge \frac{d( x^c y^d)}{x^c y^d}
    &= \frac{ ax^{a-1} dx y^b + x^a by^{b-1} dy}{x^a y^b} \wedge 
    \frac{ cx^{c-1} dx y^d + x^c dy^{d-1} dy}{x^c y^d} \\
    &= \left( \frac{ adx}{x} + \frac{b dy}{y} \right ) \wedge \left( \frac{ cdx}{x} + \frac{d dy}{y} \right ) 
    = (ad-bc) \frac{dx}{x} \wedge \frac{dy}{y} = \frac{dx}{x} \wedge \frac{dy}{y}
    \end{align*}
    So we can identify $g$ with its matrix of weights, that necessarily lies in $\text{SL}_2(\bZ)$.
    We note that $\text{SL}_{2}(\bZ) \cong C_4 *_{C_2} C_6$ so the only finite subgroups of $\text{SL}_2(\bZ)$ are finite subgroups of $C_4$ of $C_6$. We have shown above in each case that the quotient $(\bC^\times)^{2} / C_i$ has exclusively isolated du Val singularities and hence has a unique projective crepant resolution of singularities. 
\end{comment}

Using these examples, we can extend the type B/C Weyl group quotients as follows:
\begin{exam}
Let $W = \fS_n \ltimes \Gamma$ be a wreath product with $\Gamma < \Sp_2(\bZ)$ and $\fS_n$ permuting the pairs $(x_1, y_1), \dots, (x_n, y_n)$ and hence preserving the symplectic forms $\sum_{i=1}^n dx_i/x_i \wedge dy_i/y_i$ and $\sum_{i=1}^n dx_i \wedge dy_i$. So $W < \Sp_{2n}(\bZ)$.  In particular we can let $\Gamma = C_m$ for $m \in \{1,2,3,4,6\}$. Then the quotient $\bT \times \bT / W$ is identified with $\Sym^n(\bC^\times \times \bC^\times / \Gamma)$. In particular, there is a symplectic resolution given by $\Hilb^{n} Y$ for $Y$ the minimal resolution of $\bC^\times \times \bC^\times / \Gamma$. 
\end{exam}

The above example actually produces all of the groups whose quotient admits a projective  crepant resolution:

\begin{prop} \label{prop:symplectic_tori}
    Let $\Gamma \subset \text{Sp}_{2n}(\bZ)$ be a finite subgroup acting on $(\bC^\times)^{2n}$ by group automorphisms preserving the invariant
    symplectic form $\sum_{i} d x_i/x_i \wedge d y_i /y_i$. The quotient  $(\bC^\times)^{2n} / \Gamma$ admits a locally projective symplectic (equivalently crepant) resolution of singularities only if $\Gamma \cong \prod_i \Gamma_i$ with each of the $\Gamma_i$ of the form $\fS_n \ltimes H$ with $H \in \{C_1,C_2,C_3,C_4,C_6\}$, where the derivative of the action near $1 \in (\bC^\times)^{2n}$ is given by the product of the usual reflection representations of each $\Gamma_i$.
\end{prop}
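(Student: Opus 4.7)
The plan is to localize at the identity $1 \in (\bC^\times)^{2n}$, apply Verbitsky's theorem to reduce to a symplectic reflection group, and then combine the Cohen classification of irreducible symplectic reflection groups with known results on crepant resolutions of symplectic quotient singularities, constrained by integrality.

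First I would pass to the analytic germ at $1$: since $\Gamma$ acts by group automorphisms fixing $1$, the exponential map identifies a neighborhood of the image of $1$ in $(\bC^\times)^{2n}/\Gamma$ with $\bC^{2n}/\Gamma$, where $\Gamma$ acts via its integer symplectic representation on the Lie algebra $T_1(\bC^\times)^{2n} \cong \bC^{2n}$ equipped with the induced form $\sum_i dx_i \wedge dy_i$. A locally projective crepant resolution of $(\bC^\times)^{2n}/\Gamma$ restricts to one of $\bC^{2n}/\Gamma$, so by Verbitsky's theorem $\Gamma$ must be generated by symplectic reflections.

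Next I would classify the integer symplectic reflections. Any symplectic reflection $s \in \Sp_{2n}(\bZ)$ has characteristic polynomial $(x - 1)^{2n - 2}(x^2 - cx + 1)$ with $c = \lambda + \lambda^{-1}$ for the non-unit eigenvalue $\lambda$. Finite order makes $\lambda$ a root of unity, and $c \in \bZ$ forces $c \in \{-2, -1, 0, 1\}$, so $s$ has order $2, 3, 4$, or $6$; on the $\bQ$-rational plane $\mathrm{im}(s - \Id)$, $s$ is conjugate to one of the generators $\gamma_2, \gamma_3, \gamma_4, \gamma_6$ exhibited in Proposition \ref{prop:symp_tori_n=1}.

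Then I would apply the standard structural decomposition of symplectic reflection groups, due to Cohen: there is a unique decomposition $\bC^{2n} = (\bC^{2n})^\Gamma \oplus \bigoplus_j V_j$ into a $\Gamma$-fixed summand and symplectic $\Gamma$-subrepresentations $V_j$ on each of which $\Gamma$ acts symplectically irreducibly (no proper $\Gamma$-stable symplectic subspace), together with a factorization $\Gamma = \prod_j \Gamma_j$ where $\Gamma_j$ acts symplectically irreducibly on $V_j$ and trivially on the other summands. The crepant resolution of $\bC^{2n}/\Gamma$ then splits as a product of crepant resolutions of the factors $V_j/\Gamma_j$, so each $\Gamma_j$ is an integer symplectic reflection group whose quotient admits a crepant resolution.

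Finally I would identify the factors $\Gamma_j$ using Cohen's classification of symplectically irreducible symplectic reflection groups together with the characterization of those whose quotient admits a symplectic resolution (Ginzburg--Kaledin and Bellamy, building on Gordon): the only possibilities are the wreath products $\fS_{n_j} \ltimes \Gamma_0^{n_j}$ with $\Gamma_0 \leq \SL_2(\bC)$ finite, acting on $(\bC^2)^{n_j}$ in the natural way (which for $\Gamma_0 \cong C_m$ yields the Shephard--Todd reflection group $G(m, 1, n_j)$), together with the exceptional complex reflection group $G_4$ in dimension $4$. The group $G_4$ has no faithful integer reflection representation (it is not crystallographic), so is ruled out by $\Gamma \subset \Sp_{2n}(\bZ)$. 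For the wreath products, integrality forces $\Gamma_0 \leq \SL_2(\bZ)$, and Proposition \ref{prop:symp_tori_n=1} then constrains $\Gamma_0 \in \{C_1, C_2, C_3, C_4, C_6\}$. This gives the claimed product decomposition, and by construction the derivative action at $1$ is the product of the usual reflection representations.

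The main obstacle is the last step, which relies on both the Cohen classification and the (now essentially complete) case-by-case list of irreducible symplectic reflection groups whose quotient admits a crepant resolution, in particular excluding the $22$ primitive exceptional groups in Cohen's list. Most of these either fail to admit a crepant resolution or fail to preserve an integer lattice, but checking each is a finite computation that has been carried out in the literature.
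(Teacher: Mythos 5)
Your overall strategy (localize at $1$, apply Verbitsky, decompose into symplectically irreducible factors, invoke the classification of symplectic reflection groups with resolvable quotients, then impose integrality) matches the paper's up to the final step, where there is a genuine gap. You dispose of the exceptional group $G_4$ (the binary tetrahedral group $T$ in its four-dimensional symplectic representation) by asserting it has no faithful integer symplectic representation. This is false: $T$ is the group of $24$ Hurwitz unit quaternions and preserves the rank-four Hurwitz lattice in $\bH$, so it does embed in $\Sp_4(\bZ)$ and does act on $(\bC^\times)^4$ by group automorphisms preserving the multiplicative symplectic form. Integrality therefore cannot rule it out, and since $\bC^4/G_4$ \emph{does} admit a projective symplectic resolution (by Bellamy, building on Lehn--Sorger), the germ at the identity gives no obstruction either. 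This exposes a structural limitation of your argument: for this group the obstruction does not live at $1 \in (\bC^\times)^{2n}$ at all. The paper handles it in Lemma \ref{l:bintet-nores} by analyzing the fixed points of elements of $T$ on the four-torus: the element $-I$ fixes the sixteen $2$-torsion points, of which twelve have stabilizer exactly $C_2 = \{\pm I\}$, and $\bC^4/\{\pm I\}$ admits no symplectic resolution; hence $(\bC^\times)^4/T$ has a nonresolvable singularity away from the identity. Your proof as written would incorrectly allow this case, or at best leaves it unaddressed.

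A secondary, non-fatal difference: where you invoke Cohen's full classification of symplectically irreducible symplectic reflection groups together with the case-by-case resolution results, the paper takes a shorter route via its Lemma \ref{lem:non-integral_reps} --- a symplectically irreducible representation of a finite subgroup of $\Sp_{2m}(\bR)$ must be complex-reducible, hence a sum of two invariant Lagrangians, hence $\Gamma$ is a \emph{complex} reflection group --- and then only needs the classification of complex reflection groups whose symplectic quotient is resolvable (wreath products $\fS_m \ltimes H^m$ with $H < \SL_2(\bC)$ cyclic, $\fS_{m+1}$, and $G_4$). Either route yields the same short list, but the paper's avoids the primitive exceptional groups in Cohen's list entirely. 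Your trace computation restricting integer symplectic reflections to orders $2,3,4,6$, and the integrality constraint $H \in \{C_1,C_2,C_3,C_4,C_6\}$ on the wreath products, agree with the paper.
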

\begin{proof}
Suppose that $(\bC^\times)^{2n}/\Gamma$ admits a crepant resolution for $\Gamma < \Sp_{2n}(\bZ)$. Then the local model at the identity is  $\bC^{2n}/\Gamma$ for the same group. Let us assume
%to show that most other quotients do not admit symplectic resolutions, the following basic lemma is useful:
     %Since we assume 
     the representation $\bC^{2n}$ is symplectically irreducible, otherwise the pair $(\bC^{2n},\Gamma)$ decomposes as a product of pairs.
     Thus, by Lemma \ref{lem:non-integral_reps}, $\bC^{2n}$ is isomorphic to a sum of $G$-invariant Lagrangian subspaces $L \oplus L'$.  
If $\bC^{2n}/G$ admits a locally projective symplectic resolution, then $G$ must be generated by symplectic reflections \cite{Verbitsky00}.  In turn, if $G$ preserves a Lagrangian $L$, it must be a complex reflection group. Thanks to \cite{Bellamy09} building on  Ginzburg--Kaledin \cite[Corollary 1.2.1]{GK04} (and see also Etingof--Ginzburg \cite[Corollary 1.1.4 (i)]{EG02} and Namikawa \cite[Corollary 2.1]{Namikawa11}) we have a complete list of complex reflection groups $\Gamma$ such that the symplectic quotient singularity $\bC^{2n}/\Gamma$ admits a projective symplectic resolution. This includes the wreath products $\fS_n \ltimes \Gamma^n$ for $\Gamma < \SL_2(\bC)$ cyclic. In turn, as in 
 Proposition \ref{prop:symp_tori_n=1}, to have integral traces this means $\Gamma$ is cyclic of order $1,2,3,4$, or $6$, since for $\gamma \in \Gamma$, a corresponding reflection in the wreath product has trace $2n-2+\tr(\gamma)$. Other than the wreath products and $\fS_{n+1}$, the only other complex reflection group with symplectic quotient admitting a symplectic resolution is the binary tetrahedral group.  In Lemma \ref{l:bintet-nores} below, we show that the quotient of $(\bC^\times)^4$ by the binary tetrahedral group does not admit a symplectic resolution of singularities, completing the classification.
\end{proof}

For each group $\Gamma \subset \text{Sp}_{2n}(\bZ)$ appearing in Proposition \ref{prop:symplectic_tori}, there exists an action of $\Gamma$ on $(\bC^\times)^{2n}$ such that the quotient $(\bC^\times)^{2n}/\Gamma$ admits a symplectic resolution of singularities. Thus, Proposition \ref{prop:symplectic_tori} completes the classification of the rationalizations of integral representations such that the corresponding torus quotient admits a symplectic resolution of singularities. But a rationalization of an integral representation may admit multiple inequivalent integral forms. 
%The preceding example illustrates this in the case of symplectically irreducible rational representations.  But, 
Moreover, for many fixed $\Gamma$, we can find an action on $(\bC^\times)^{2n}$ such that the resulting quotient does \emph{not} admit a symplectic resolution of singularities: see Example \ref{ex:pgl3}.
%\begin{exam}
%    If we view $\Gamma = \fS_3$ as the Weyl group $W$ acting on the cotangent bundle $T^*((\bC^\times)^2)$, then \cite{Gordon-baby}, shows that the quotient $(\bC^\times)^4/W$ obtained by restricting the action $(\bC^\times)^4 \subset (\bC^\times)^2 \times \bC^2 \cong T^*((\bC^\times)^2)$ does not admit a symplectic resolution of singularities (see Subsection \ref{ss:Weylgroup} on Weyl group quotients above).  
%\end{exam}
There can even be nontrivial ways to form an extension of two integral representations: 
\begin{exam} \label{exam:inequivalent_reps}
Let $C_4 = \langle i \rangle \subset \bC$ act on the lattice $\bZ^2 \cong \bZ_{1} \oplus \bZ_{i} \subset \bC$ by multiplication in $\bC$. The group $C_4 \times C_4$ acts on $\bZ^4 \cong \bZ^2 \oplus \bZ^2$ componentwise. It also acts on the equivalent lattice $\frac{1}{2} \bZ^4$, with each action giving the same integral representation $C_4 \times C_4 \rightarrow \GL_4(\bZ)$. Next we consider the intermediate lattice
\[
\bZ^4 \subsetneq L := \langle v_1 := (1, 0, 0, 0), \ v_2 := (0, 1, 0, 0), \ v_3 := (0, 0, 1, 0), \ v_4 := \frac{1}{2} (1, 1, 1, 1) \rangle  
\subsetneq \frac{1}{2} \bZ^4.
\]
Notice that $L$ contains $(0, 0, 0, 1) = 2 v_4 - (v_1 + v_2 + v_3)$ and therefore contains $( a/2, b/2, c/2, d/2)$ for $a, b, c, d \in 2 \bZ +1$. The action of $C_4 \times C_4$ on $\bZ^4$ restricts to $L$, giving a representation
\[
C_4 \times C_4 \rightarrow \GL(L) \cong \GL(\bZ^4)
\]
that is rationally (but not integrally) equivalent to to the original representation.  That it is not integrally equivalent follows because $L$ is not spanned by elements which are fixed by one of the two $C_4$ factors, whereas $\bZ^4$ is so spanned.  This is true even though there is a short exact sequence
\[
0 \to \bZ^2 \to L \to \bZ^2 \to 0, 
\]
with the first and third terms giving the standard rotation representations of the two factors of $C_4$. So there are nontrivial ways to extend integral representations.
\end{exam}

We complete the prove of Proposition \ref{prop:symplectic_tori} by ruling out the binary tetrahedral group. 

\begin{lem}\label{l:bintet-nores}
    The quotient $(\bC^\times)^4/ T$, where $T$ is the binary tetrahedral group acting symplectically on $(\bC^\times)^4$ does not admit a symplectic resolution of singularities. 
\end{lem}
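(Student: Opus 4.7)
The plan is to apply Verbitsky's theorem (a linear symplectic quotient $V/G$ admits a symplectic resolution only if $G$ is generated by symplectic reflections) to the tangent space model of $(\bC^\times)^4/T$ at a suitably chosen point of the torus. I will analyze the possible symplectic actions of $T$ on $(\bC^\times)^4$ by lattice automorphisms and, in each case, produce a point whose stabilizer acts on the tangent space by a group not generated by symplectic reflections.

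First I classify the $4$-dimensional symplectic representations of $T$ over $\bQ$. Writing $U$ for the faithful quaternionic $2$-dimensional representation of $T \subset \operatorname{SU}_2(\bC)$ and $\chi_1, \chi_2$ for the two nontrivial characters of the abelianization $T/[T,T] \cong C_3$, there are two such representations up to isomorphism: $(a)$ $U \oplus U$, on which an order-$3$ element $r$ acts with eigenvalues $(\zeta_3, \zeta_3^{-1}, \zeta_3, \zeta_3^{-1})$, and $(b)$ $V \oplus V^*$ with $V := U \otimes \chi_1$, where $r$ acts with eigenvalues $(1, \zeta_3, 1, \zeta_3^{-1})$. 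In case $(a)$ no nonidentity element of $T$ has a codimension-two fixed subspace in $\bC^4$, so $T$ contains no symplectic reflections at all, and Verbitsky already rules out a symplectic resolution of the local model $\bC^4/T$ at the identity $1 \in (\bC^\times)^4$. It remains to handle case $(b)$, which is Bellamy's exceptional case in which $T = G_4$ acts as a symplectic reflection group.

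In case $(b)$ the linear model at $1$ does admit a symplectic resolution, so I instead examine the $16$ fixed points of $-I$, namely the $2$-torsion of the torus. The heart of the argument is to show that some order-$3$ element $r \in T$ fails to fix all of them. Equivalently, $r$ should act nontrivially on the cocharacter lattice $\Lambda \cong \bZ^4$ modulo $2$: were it trivial, we would have $r - I = 2M$ for some $M \in \Mat_4(\bZ)$, whence $\tr(M) = (\tr(r) - 4)/2$. But $\tr(r) = 1 + \zeta_3 + 1 + \zeta_3^{-1} = 1$ (rational since the representation is defined over $\bQ$), giving $\tr(M) = -3/2 \notin \bZ$. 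This trace obstruction is the key input specific to the binary tetrahedral case; it would fail for, e.g., the cyclic groups in Proposition \ref{prop:symp_tori_n=1}, which explains why those quotients do admit resolutions.

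Consequently some $2$-torsion point $p$ has stabilizer $T_p \subsetneq T$ containing $\{\pm I\}$. The proper subgroups of $T$ containing $\{\pm I\}$ are, up to conjugacy, $\{\pm I\}, C_4, C_6$, and $Q_8$, and I will verify that none of these is generated by order-$3$ elements, which a direct eigenvalue computation shows are the only symplectic reflections in $V \oplus V^*$: the subgroups $\{\pm I\}, C_4$, and $Q_8$ contain no order-$3$ elements at all, while in $C_6 = C_2 \times C_3$ the order-$3$ elements generate only the $C_3$ factor. By Verbitsky applied to the local model $\bC^4/T_p$ at $p$, this model has no symplectic resolution, so neither does $(\bC^\times)^4/T$ in any neighborhood of the image of $p$.
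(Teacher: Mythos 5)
Your proof is correct, and it follows the same overall strategy as the paper's: both arguments localize at the $16$ points of order two on the torus, show that not all of them can be fixed by the whole of $T$, and then observe that a $2$-torsion point with proper stabilizer gives a linear quotient singularity whose group contains $-I$ but is not generated by symplectic reflections, contradicting Verbitsky's criterion. The difference lies in the pivotal computation. The paper fixes the representation $V\oplus V^*$, realizes $T$ inside $\bH$, and uses that any invariant lattice is a free $\bZ[i]$-module (resp.\ has a free $\bZ[\zeta_3]$-quotient) to count exactly four fixed points for the order-four and order-six elements; combined with the resolvability constraint this pins down twelve points with stabilizer exactly $C_2$, forming one $T$-orbit. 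You instead use the parity obstruction $\tr(r)-4=-3\notin 2\bZ$ to show an order-three element cannot act trivially on $\Lambda/2\Lambda$, and then run a short case analysis over the proper subgroups of $T$ containing $\{\pm I\}$ (namely $\{\pm I\}$, $C_4$, $C_6$, $Q_8$), none of which is generated by the order-three elements, which are the only symplectic reflections in $V\oplus V^*$. Your route is more lattice-independent — it needs only that the trace is an odd integer, not the PID/free-module structure — at the cost of not identifying the offending stabilizer or the orbit structure; the paper's route yields the sharper statement that there is a single orbit of twelve nonresolvable $C_2$-points. You also explicitly dispose of the other rational symplectic form $U\oplus U$ (no symplectic reflections at all, so the germ at the identity already obstructs), a case the paper handles upstream in the proof of Proposition \ref{prop:symplectic_tori} rather than inside the lemma. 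Both versions are complete; yours is a legitimate and slightly more elementary alternative.
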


\begin{proof}
First, observe that there may be many ways for $T$ to act symplectically on $(\bC^\times)^4$. We are fixing only the complex symplectic representation of $T$, so the embedding $T \subseteq \Sp_4(\bC)$. This is conjugate to a real representation, $T \subseteq \Sp_4(\bR)$, and this real representation is uniquely determined up to isomorphism.  One explicit way to understand it is to realize $T$ inside the quaternions $\bH$, as the group generated by $i,j,k,\frac{1}{2}(1+i+j+k)$.  Then, every integral representation, up to isomorphism, can be constructed by choosing a full (rank four) $T$-invariant lattice $\Lambda \subseteq \bH$.

We now consider the fixed points of $T$ on $(\bC^\times)^4$. First consider the element $i \in T$. Now $i$ acts on $\bQ^4$ without eigenvalues $\pm 1$, so this determines an action of the Gaussian integers, $\bZ[i] \cong \bZ[x]/(x^2+1)$, on $\Lambda$. This action is torsion free as $\Lambda$ is torsion free. Since $\bZ[i]$ is a PID, $\Lambda$ is a free $\bZ[i]$-module. So we can write the action in the standard way. The number of fixed points is four, the same as for block diagonal $90$-degree rotation matrices, which is the $\gamma_4$ case above.

Next, in order to admit a symplectic resolution, each such fixed point must also be fixed by a reflection. Since $i$ and any reflection generate $T$, we can assume that each of the above fixed points are fixed by all of $T$, at least to investigate the existence of a symplectic resolution.  

We next turn to elements of order six (i.e., $-1$ times a reflection).  For such an element $\gamma$ there is a saturated sublattice of rank 2, $\Lambda_0$, on which $\gamma$ acts by $-1$, and quotient lattice $\Lambda/\Lambda_0$ on which $\gamma$ acts as a $\bZ[\zeta]$-module, for $\zeta$ a primitive cube root of unity.  The latter is also a PID and hence $\Lambda/\Lambda_0$ is a free $\bZ[\zeta]$-module. So we can write the action of $\gamma$ in the standard way for an order six element. Overall the action of $\gamma$ is block-upper triangular with diagonal blocks $-I$ and the standard order-six action.  There are still 4 fixed points, since the order six action has no fixed points (the equations to be fixed give a unique solution, regardless of the choice of four fixed points corresponding to the sublattice, i.e., quotient torus).  By the preceding argument, each fixed point has stabilizer all of $T$.

However, the element $-I$ fixed sixteen points on the four-torus: all involutions.  So there are twelve points with stabilizer $C_2$; they form one $T$-orbit. Thus there is a nonresolvable $C_2$-singularity of $(\bC^\times)^4/T$.
\end{proof}

\begin{rem}
There is an obvious choice of lattice invariant under $T$ in the proof: the lattice of Hurwitz quaternions. For this choice one can see that the fixed points are exactly as in the proof.  Note that
%a priori, 
there could exist a different lattice with different fixed points: this would happen only if the elements $i,j,k$ do not all share the same four fixed points. In this case, one of the non-identity fixed points of $i$ would also have to be fixed for $j$ and $k$, hence for all of $Q_8$, hence for all of $T$ since $Q_8$ is normal. So we would get two fixed points for $T$, and two fixed points each for the groups $\langle i\rangle, \langle j \rangle, \langle k \rangle \cong C_4$, which all form a single orbit of $T$. Aside from these eight points we would have eight more fixed points for $-I$, breaking up into subsets of two fixed points for minus each reflection and its inverse. These sets of size two have stabilizers $C_6$, also nonresolvable. They form two orbits of size four. So in this model we would get one nonresolvable $C_4$ singularity and two nonresolvable $C_6$ singularities (instead of one nonresolvable $C_2$ singularity). It would be interesting to see if an appropriate lattice can be found to construct such a quotient.
\end{rem}

\appendix
\section{The Weinstein splitting theorem in the singular case} 
\subsection{The splitting theorem}\label{s:appendix_Weinstein}
The celebrated Weinstein splitting theorem \cite[Theorem 2.1]{Weinstein83} gives a local decomposition of a smooth Poisson manifold into a symplectic part and a part with zero rank at the basepoint. This result goes through exactly the same in the case of analytic varieties, but for sake of completeness, we provide the statement and repeat the proof.  Note that a version of this for formal neighborhoods was proved in \cite{Kaledin06}, as a consequence of  Proposition 3.3 therein. By passing from analytic to formal neighborhoods, the result below also recovers  such a decomposition. 

\begin{thm}
Let $(X,\sigma)$ be a (real or complex) analytic  Poisson variety and $x \in X$. Then there is a neighborhood $U \ni x$ of $X$ and a pointed Poisson isomorphism $(U,x) \cong (S,s) \times (Z,z)$ where  $S$ is symplectic and the Poisson structure of $Z$ vanishes at $z$. %Moreover, the germs of $S$ at $s$ and of $Z$ at $z$ are unique up to Poisson isomorphism. 
\end{thm}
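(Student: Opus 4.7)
The plan is to prove the theorem by induction on the rank $2k$ of the Poisson tensor at $x$, defined as the dimension of the image of the anchor map $\sigma^{\sharp}: T^*_x X \to T_x X$ (or equivalently, the maximum over local functions $p$ of the dimension of $\mathrm{Span}\{X_p|_x\}$ iterated). The base case $k=0$ is trivial: take $S$ to be a point and $Z = X$.

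For the inductive step, assume the rank at $x$ is at least $2$. First I would pick $p \in \cO_{X,x}$ such that the Hamiltonian derivation $X_p := \{p, -\}$ is nonzero at $x$, then choose $q_0 \in \cO_{X,x}$ with $X_p(q_0)|_x \neq 0$, and rescale so that $X_p(q_0)|_x = 1$. Next I would use the analytic flow $\phi^p_t$ of the derivation $X_p$ (which exists as a one-parameter family of automorphisms of a neighborhood of $x$, even in the singular analytic setting) together with the hypersurface $H := \{q_0 = q_0(x)\}$, which is transverse to $X_p$ at $x$, to upgrade $q_0$ to a function $q$ satisfying $X_p(q) \equiv 1$ on a neighborhood. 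Explicitly, set $q(\phi^p_t(y)) = t$ for $y \in H$ close to $x$; by construction $\{p,q\} = X_p(q) = 1$ identically. Consequently $[X_p, X_q] = X_{\{p,q\}} = 0$, so the joint flow $\phi^p_s \circ \phi^q_t$ defines a local action of $(\bC^2, 0)$ (or $(\bR^2,0)$ in the real case) on a neighborhood $U$ of $x$, with $X_p, X_q$ linearly independent at $x$ since $X_p(q) = 1$ and $X_q(q) = 0$.

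I would then take the slice $Z := \{p = p(x),\ q = q(x)\} \subseteq U$, a possibly singular analytic subvariety through $x$ transverse to the $(\bC^2,0)$-orbit. The action defines a pointed analytic isomorphism $(U, x) \cong (S, s) \times (Z, x)$, where $S$ is a $2$-dimensional polydisc with coordinates $p, q$ carrying the symplectic bracket $\{p,q\} = 1$. To verify this is a \emph{Poisson} splitting, I would check that any function $f \in \cO_Z$, pulled back to $U$ through the projection, is $(\bC^2)$-invariant, hence satisfies $\{f, p\}_X = X_p(f) = 0$ and $\{f, q\}_X = X_q(f) = 0$. This forces the bracket between pullbacks from $S$ and pullbacks from $Z$ to vanish, and shows that the bracket on pullbacks from $Z$ stays a pullback from $Z$ (so it defines an induced Poisson bracket on $Z$). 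Thus $(U,x) \cong (S,s) \times (Z,x)$ as Poisson varieties. The induced rank at $x \in Z$ drops by exactly $2$, so the induction hypothesis applied to $(Z,x)$ finishes the proof by absorbing any further symplectic factor into $S$.

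The main obstacle is the Poisson compatibility in the singular analytic setting: one must argue that the restricted bracket on $Z$ is well-defined, that the $(\bC^2,0)$-action is free and smooth enough to give a genuine analytic isomorphism $U \cong S \times Z$, and that analytic flows of derivations of $\cO_X$ exist and act by automorphisms even when $X$ is singular. These points are routine once one accepts existence of analytic flows for derivations on germs of analytic varieties, but they are the places where the proof genuinely differs in flavor from Weinstein's smooth original; the key technical input is the invariance of $Z$ under the $X_p$- and $X_q$-flows, which is automatic from $X_p(p) = X_p(q) - 1 = 0$ (respectively for $X_q$) and makes the algebraic splitting of the bracket force the geometric product decomposition.
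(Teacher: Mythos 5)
Your proposal is correct and follows the same overall strategy as the paper: induction on the rank, production of a pair of functions with $\{p,q\}=1$, commuting Hamiltonian vector fields, and splitting off a two\nobreakdash-dimensional symplectic factor. Where you genuinely diverge is in the technical implementation of the straightening step. The paper never integrates anything on the singular space $X$ itself: it embeds $U$ as a closed analytic subset of a ball $B \subseteq \bK^n$, extends the Hamiltonian vector fields to ambient vector fields tangent to $U$, arranges (after modifying the extension of $\xi_g$, using that $I_U$ is generated by functions independent of $x_1$) that the extensions commute on all of $B$, and then applies the smooth Frobenius integrability theorem in $B$; the product decomposition of $U$ is read off from the resulting ambient coordinates. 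You instead work intrinsically, invoking local flows of derivations of $\cO_X$ on the singular space and a flow-box isomorphism $D \times H \cong U$ along a possibly singular transversal $H$, and then again $S \times Z \cong U$ for the slice $Z = \{p = p(x), q = q(x)\}$. Your route has one genuine simplification: since you never leave $X$, the identity $[X_p,X_q] = X_{\{p,q\}} = 0$ holds exactly as derivations of $\cO_X$, so the paper's extra step of correcting the ambient extensions to commute is not needed. On the other hand, the existence of local analytic flows for derivations on singular analytic spaces, and especially the flow-box isomorphisms $D\times H \cong U$ and $S \times Z \cong U$ at a point where $X$, $H$, and $Z$ may all be singular, are exactly where the content of the singular version of the theorem lives; these are true (for complex spaces one can cite Kaup's integration of holomorphic vector fields, and the flow-box claim is most easily proved by extending to an ambient ball, i.e., by the paper's argument), but as written you assert them rather than prove them, so your proof is complete only modulo citing or reproving these facts. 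The rest of your argument --- the invariance of $Z$ under both flows, the vanishing of brackets between pullbacks from $S$ and from $Z$, and the rank drop by two --- is correct and matches the paper.
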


\begin{proof}
Let $\bK$ denote either $\bR$ or $\bC$ depending on whether we work in the real analytic or complex analytic setting. 
The proof is by induction on the rank of $\sigma$.  If this rank is zero, then $S$ must be a point, and the statement is obvious.  Assume the rank is nonzero. Restricting to a small enough analytic neighborhood $U$, we can find a Hamiltonian $f \in \mathcal{O}(U)$ such that $\xi_f|_x \neq 0$. Up to further restriction of $U$, we may assume it is a Zariski closed subset of a ball $B \subseteq \bK^n$ for some $n$ centered at $x=0$. Then there exists a vector field $\widetilde \xi_f$ on $B$  which is tangent to $U$ and restricts there to $\xi_f$.
%extends to a vector field %$\widetilde{\xi_f}$ on $\bK^n$, and 
Up to shrinking $B$ and $U$, we may pick coordinates $x_1, \ldots, x_n$ on $B$ centered at $0$ with $\widetilde{\xi_f}(x_1)=1$.  %Replacing $U$ with $U \cap V$, we may assume $U \subseteq V$. 
Let $g := x_1|_{U}$. Then $\{f,g\} = 1$, and hence $[\xi_f, \xi_g] = \xi_1 = 0$.  Since $\xi_f(f)=0=\xi_g(g)$, whereas $\xi_f(g)=1=-\xi_g(f)$, $\xi_f$ and $\xi_g$ are linearly independent at $0$. 

Since $\widetilde{\xi_f}$ is nonvanishing on $B$, by the Frobenius integrability theorem (for the holomorphic case, see \cite[2.26]{Voisin-hodge}), we can locally integrate the distribution spanned by $\widetilde{\xi_f}$ to a submersion $B \to \bK^{n-1}$ with $\widetilde{\xi_f}$ tangent to the fibers, up to shrinking $B$ and $U$.
%so that we can in fact assume that $\widetilde{\xi_f} = \partial_1$.  In more detail, by \emph{op.~cit.} we can find
This gives local coordinates $x_2, \ldots, x_n$ annihilated by $\widetilde{\xi_f}$. As a result,  $\widetilde{\xi_f} = h \partial_1$ for some function $h$, which by assumption is nonvanishing in $B$, and hence invertible. 
%Since $\widetilde{\xi_f}$ does not vanish at $x$, in some neighborhood $h$ is invertible.   
Then the coordinates $\int h^{-1} \text{d} x_1, x_2, \ldots, x_n$ have the property that $\widetilde{\xi_f}=\partial_1$.

Now, the ideal $I_X$ of $X$ is locally generated at $x=0$ by some functions $g_1,\ldots,g_k$ independent of $x_1$.  Shrinking $U$ and $B$ if necessary, we can assume these generate the ideal $I_U$ of $U \subseteq B$, globally on $B$. Extend $\xi_g$ to a vector field $\widetilde{\xi_g}$ on $B$ tangent to $U$. Note that $[\partial_1,\xi]$ vanishes on $U$ if and only if $\xi$ is a sum of a vector field constant in $x_1$ (i.e., commuting with $\partial_1$) and a vector field itself vanishing on $U$ (as $I_U$ is generated by functions independent of $x_1$).  So up to changing the extension $\widetilde{\xi_2}$ of $\xi_2$, we can assume that actually $[\widetilde{\xi_f}, \widetilde{\xi_g}]=0$.  

Then by the Frobenius integrability theorem again, we can find coordinates $x_3, \ldots, x_n$ with $\widetilde{\xi_f}(x_i)=0=\widetilde{\xi_g}(x_i)$ for $3 \leq i \leq n$ (again up to shrinking $B$ and $U$).
%$\widetilde{\xi_g} = \partial_2$ as well, and that $X$ is locally generated by elements independent of both $x_1$ and $x_2$.  That is, we have a system of coordinates $x_1, x_2, \ldots, x_n$ with $\widetilde{\xi_f}=\partial_1, \widetilde{\xi_g}=\partial_2$. 
Since $\widetilde{\xi_f}, \widetilde{\xi_g}$ are tangent to $U$ we get that $I_U$ is generated by functions in $x_3, \ldots, x_n$.  Moreover, we can further change the coordinates to $g, f, x_3, \ldots, x_n$, since $\widetilde{\xi_f}(g) = 1 = - \widetilde{\xi_g}(f)$, whereas $\widetilde{\xi_f}(f)=0=\widetilde{\xi_g}(g)$, so that the derivatives of $f,g$ are linearly independent at $x$ to those of $x_3, \ldots, x_n$.  

%$\partial_1 g = 1 = -\partial_2 f$. 
In this new coordinate system we get a Poisson isomorphism $(U,x) \cong (S,s) \times (U',x)$, with $(S,s)$ symplectic of dimension two and $(U',x)$ Poisson, since $\{f,g\}=1$ and $\{f,x_i\}=0=\{g,x_i\}, i \geq 3$.
 
Iterating the procedure, we will eventually obtain the theorem. \qedhere
 %\travis{Fill in the Poisson structures at the end, maybe review again the original proof...}

%They form an integrable distribution there, so we can find coordinates $y_1, y_2$  (??)
%(analytic/convergent version of Kaledin's argument in Poisson point of view, or possibly something else??)
\end{proof}
\subsection{A non-Poisson version of the splitting theorem}
Rewriting the essential part of the argument of the Weinstein splitting theorem in the non-Poisson setting we arrive at the following, which we would imagine is known:

\begin{lem}
    Let $X$ be a (real or complex) analytic variety and $\xi_1, \ldots, \xi_m$ vector fields on $X$ which commute. Suppose that at some $x \in X$, $\xi_1|_x, \ldots, \xi_m|_x \in T_x X$ are linearly independent. Then in some local analytic neighborhood $U$ of $x$, the $\xi_i$ integrate to give a decomposition $U \cong D \times Z$ for $D$ a polydisc of dimension $m$, with coordinates $x_1, \ldots, x_m$, such that $\xi_i = \partial_i$.
\end{lem}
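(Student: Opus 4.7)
This is essentially the flow-box theorem for commuting vector fields, transplanted from the smooth to the analytic-variety setting, and the argument is a direct adaptation of the straightening procedure used above in the proof of the Weinstein splitting theorem, stripped of the Poisson compatibility conditions. The plan is to extend the $\xi_i$ to commuting vector fields on an ambient smooth ball tangent to $X$, construct the product decomposition via their joint flow, and then restrict back to $X$.

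First I would locally embed $X$ as a closed analytic subvariety of an open ball $B \subseteq \mathbb{K}^n$ centered at $x = 0$, and extend each $\xi_i$ to an analytic vector field $\tilde\xi_i$ on $B$ tangent to $X$, after shrinking $B$ if necessary. To upgrade commutativity on $X$ to commutativity on $B$, I would proceed inductively as in the Weinstein theorem proof: straighten $\tilde\xi_1$ to $\partial_1$ via the Frobenius theorem for a single non-vanishing vector field; observe that because $\tilde\xi_1$ is tangent to $X$, the ideal $I_X$ is generated by functions independent of $x_1$ after shrinking; then invoke the characterization used in the Weinstein proof of vector fields whose bracket with $\partial_1$ vanishes on $X$ to modify each $\tilde\xi_i$ by a vector field vanishing on $X$ so that $[\partial_1,\tilde\xi_i]=0$ on all of $B$. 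Iterating this procedure, one obtains mutually commuting extensions $\tilde\xi_1,\ldots,\tilde\xi_m$ still tangent to $X$.

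Next, choose a smooth analytic submanifold $W \subseteq B$ through $x$ of codimension $m$, with $T_x W$ complementary to $\mathrm{span}(\tilde\xi_1|_x,\ldots,\tilde\xi_m|_x)$, and set $Z := W \cap X$, a closed analytic subvariety of $X$ through $x$. Define
$$\tilde\Phi: D \times W \to B, \qquad (t_1,\ldots,t_m, w) \mapsto \tilde\phi^{\tilde\xi_1}_{t_1} \circ \cdots \circ \tilde\phi^{\tilde\xi_m}_{t_m}(w),$$
for $D \subseteq \mathbb{K}^m$ a sufficiently small polydisc, where $\tilde\phi^{\tilde\xi_i}_t$ denotes the time-$t$ flow of $\tilde\xi_i$; the composition is independent of order by commutativity. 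At $(0, x)$, the derivative of $\tilde\Phi$ sends $\partial_i \mapsto \tilde\xi_i|_x$ on the $D$-factor and is the inclusion $T_x W \hookrightarrow T_x B$ on the $W$-factor, which is a linear isomorphism by the choice of $W$. By the holomorphic (resp.\ real-analytic) inverse function theorem, $\tilde\Phi$ is a local biholomorphism at $(0, x)$. Because each $\tilde\xi_i$ is tangent to $X$, its flow preserves $X$, so $\tilde\Phi^{-1}(X) = D \times Z$ in a neighborhood of $(0, x)$, and the restriction $\Phi := \tilde\Phi|_{D \times Z}$ gives the desired isomorphism $U \cong D \times Z$. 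By construction, $d\Phi(\partial_i) = \xi_i$ along the image, i.e., $\Phi^*\xi_i = \partial_i$.

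The main obstacle is the construction of the commuting extensions $\tilde\xi_i$ on $B$: the naive extensions only satisfy $[\tilde\xi_i,\tilde\xi_j] \in I_X \cdot T_B$, and the inductive straightening argument used in the Weinstein proof is what promotes this to genuine commutativity on $B$. Once commuting extensions are in hand, the flow-box construction is a routine application of the inverse function theorem in the ambient smooth space.
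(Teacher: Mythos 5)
Your proposal is correct, and the crux --- producing extensions $\tilde\xi_i$ of the $\xi_i$ to the ambient ball that are tangent to $X$ and genuinely commute on $B$, not just modulo $I_X$ --- is handled exactly as in the paper, by the interleaved Frobenius straightening from the Weinstein argument. You are right that this step cannot be decoupled from the straightening: the modification of $\tilde\xi_{k+1}$ by a vector field vanishing on $X$ so that it commutes with $\partial_1,\ldots,\partial_k$ uses that $I_X$ is generated by functions independent of $x_1,\ldots,x_k$, which is only available once $\tilde\xi_1,\ldots,\tilde\xi_k$ have already been straightened to $\partial_1,\ldots,\partial_k$. Where you diverge is the finish: the paper simply continues the induction until $\tilde\xi_i=\partial_i$ for all $i\le m$ and reads off $U\cong D\times Z$ from the resulting coordinates (with $Z$ cut out by the generators of $I_X$ in $x_{m+1},\ldots,x_n$), whereas you discard the coordinates and run the standard flow-box construction: a transverse slice $W$, the joint flow map $\tilde\Phi$, the inverse function theorem, and the observation that tangency implies the flows preserve $X$, so $\tilde\Phi^{-1}(X)=D\times(W\cap X)$. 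Both finishes are valid; the paper's is marginally more economical since the coordinates are already in hand by the time commutativity is established, while yours makes the geometric mechanism (commuting flows plus a transverse slice) more transparent and would apply verbatim to any commuting tangent extensions however obtained.
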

\begin{proof}
    We can take an analytic neighborhood $U$ of $x$ which is isomorphic to a Zariski closed subset of an open ball  $B \subseteq \bK^n$ centered at $0$, for $\bK = \bR$ or $\bC$. We will think of $U$ as a subset of $B$ with $x \in U$ equal to $0 \in B$.  We can pick $U$ small enough that $\xi_1, \ldots, \xi_m$ are linearly independent on $U$. Let $\widetilde{\xi_1}, \ldots, \widetilde{\xi_m}$ be vector fields on $B$ which are tangent to $U$ and restrict there to $\xi_1, \ldots, \xi_m$. Applying the Frobenius integrability theorem (for the holomorphic case, again see \cite[2.26]{Voisin-hodge}) to $\widetilde{\xi_1}$, we get a local submersion at $x=0$, $B \to \bK^{n-1}$ (up to shrinking $B$ and $U$) with $\widetilde \xi_1$ tangent to the fibers. This defines
    coordinates $x_1, x_2, \ldots, x_n$ on $B$ centered at $0$ with $\widetilde \xi_1 = h \partial_1$ for some function $h$ nonvanishing on $B$.  Changing coordinates $x_1 \mapsto \int h^{-1} \text{d} x_1$ (if necessary after shrinking $U$), we get $\widetilde \xi_1 = \partial_1$, as desired.

    Next, since $\widetilde \xi_1$ is tangent to $X$, the ideal $I_U$ is generated near $x \in \bK^n$ by some functions in $x_2, \ldots, x_n$ (i.e., independent of $x_1$). We can assume these globally generate up to shrinking $U$ and $B$. Then because $[\widetilde \xi_1, \widetilde \xi_2]$ vanishes on $U$, we see that $\widetilde \xi_2$ must be a sum of a vector field commuting with $\widetilde \xi_1 = \partial_1$ and one vanishing on $U$.  So we can find another choice of $\widetilde \xi_2$ (still restricting to $\xi_2$ on $X$)  such that $[\widetilde {\xi_1}, \widetilde {\xi_2}] = 0$. Now applying the Frobenius integrability theorem again, we can assume that $\widetilde{\xi_1}, \widetilde{\xi_2}$ both annihilate $x_3, \ldots, x_n$. As before we can assume $\widetilde{\xi_1}=\partial_1$. Since they are linearly independent, $\widetilde{\xi_2} = f \partial_1 + g \partial_2$  with $f,g$ independent of $x_1$ and $g$ nonvanishing on $B$.  Up to change of coordinates $x_1 \mapsto x_1 - \int f g^{-1} \text{d} x_2$, we can assume $f=0$, and up to change of coordinates $x_2 \mapsto \int g^{-1} \text{d} x_2$, we can assume $g=1$.  

    Inductively, if we have coordinates such that $\widetilde{\xi_1} = \partial_1, \ldots, \widetilde{\xi_k} = \partial_k$, then since these are tangent to $U$, the ideal $I_U$ is locally generated at $x$ by functions of $x_{k+1}, \ldots, x_n$.  We can make another choice of $\widetilde{\xi_{k+1}}$ (restricting to $\xi_{k+1}$ on $X$) which commutes with $\widetilde{\xi_1}, \ldots, \widetilde{\xi_k}$.  Applying the Frobenius integrability theorem we have new coordinates $x_{k+2}, \ldots, x_n$ annihilated by $\widetilde{\xi_1}, \ldots, \widetilde{\xi_{k+1}}$. We can  arrange that the latter vector fields are $\partial_1, \ldots, \partial_k$,  together with something of the form $f_1 \partial_1 + \cdots + f_{k+1} \partial_{k+1}$, with $f_{k+1}(0) \neq 0$. Now changes of coordinates as before replace $x_1, \ldots, x_{k+1}$ by new coordinates so that $\widetilde{\xi_i} = \partial_i$ for $1 \leq i \leq k+1$, completing the inductive step.

    By induction we thus find coordinates on a ball about $x=0$ in $\bK^n$ so that $\widetilde{\xi_i} = \partial_i$ for all $1 \leq i \leq m$. The ideal $I_X$ is generated by functions in the $x_i$ for $i > m$.  We obtain an analytic neighborhood of $x \in X$  isomorphic to a product of an open polydisc $D \subseteq \bK^m$  and another variety $Z$ (the vanishing locus of the generators of $I_X$ in a polydisc with coordinates $x_{m+1}, \ldots, x_n$), with $\xi_i$ the coordinate vector fields on $D$.
    \end{proof}

\bibliographystyle{alpha}
\bibliography{Local-to-Global}

\newcommand{\etalchar}[1]{$^{#1}$}
\begin{thebibliography}{BCHM10}

\bibitem[BC20]{BC20}
Gwyn Bellamy and Alastair Craw.
\newblock Birational geometry of symplectic quotient singularities.
\newblock {\em Invent. Math.}, 222(2):399--468, 2020.

\bibitem[BCHM10]{BCHM10}
Caucher Birkar, Paolo Cascini, Christopher~D. Hacon, and James McKernan.
\newblock Existence of minimal models for varieties of log general type.
\newblock {\em J. Amer. Math. Soc.}, 23(2):405--468, 2010.

\bibitem[BCR{\etalchar{+}}21]{BCRSW21}
Gwyn Bellamy, Alastair Craw, Steven Rayan, Travis Schedler, and Hartmut Weiss.
\newblock All 81 crepant resolutions of a finite quotient singularity are
  hyperpolygon spaces, 2021.

\bibitem[BCS22]{BCS22}
Gwyn Bellamy, Alastair Craw, and Travis Schedler.
\newblock Birational geometry of quiver varieties and other {G}{I}{T}
  quotients, 2022.

\bibitem[BCS23]{BCS21}
Tristan Bozec, Damien Calaque, and Sarah Scherotzke.
\newblock Calabi-{Y}au structures for multiplicative preprojective algebras.
\newblock {\em J. Noncommut. Geom.}, 17(3):783--810, 2023.

\bibitem[BD21]{BD-RelCY2}
Christopher Brav and Tobias Dyckerhoff.
\newblock Relative {C}alabi-{Y}au structures {II}: shifted {L}agrangians in the
  moduli of objects.
\newblock {\em Selecta Math. (N.S.)}, 27(4):Paper No. 63, 45, 2021.

\bibitem[Bea00]{Beauville}
Arnaud Beauville.
\newblock Symplectic singularities.
\newblock {\em Invent. Math.}, 139(3):541--549, 2000.

\bibitem[Bel09]{Bellamy09}
Gwyn Bellamy.
\newblock On singular {C}alogero-{M}oser spaces.
\newblock {\em Bull. Lond. Math. Soc.}, 41(2):315--326, 2009.

\bibitem[Bel16]{Bellamy16}
Gwyn Bellamy.
\newblock Counting resolutions of symplectic quotient singularities.
\newblock {\em Compositio Mathematica}, 152(1):99–114, 2016.

\bibitem[BS16]{BS16}
Gwyn Bellamy and Travis Schedler.
\newblock On the (non)existence of symplectic resolutions of linear quotients.
\newblock {\em Math. Res. Lett.}, 23(6):1537--1564, 2016.

\bibitem[BS21]{BS21}
Gwyn Bellamy and Travis Schedler.
\newblock Symplectic resolutions of quiver varieties.
\newblock {\em Selecta Math. (N.S.)}, 27(3):Paper No. 36, 50, 2021.

\bibitem[BS23]{BS19}
Gwyn Bellamy and Travis Schedler.
\newblock Symplectic resolutions of character varieties.
\newblock {\em Geom. Topol.}, 27(1):51--86, 2023.

\bibitem[CBS06]{CBS06}
William Crawley-Boevey and Peter Shaw.
\newblock Multiplicative preprojective algebras, middle convolution and the
  {D}eligne-{S}impson problem.
\newblock {\em Adv. Math.}, 201(1):180--208, 2006.

\bibitem[CGGS21]{CGGS19}
Alastair Craw, S{\o}ren Gammelgaard, \'{A}d\'{a}m Gyenge, and Bal\'{a}zs
  Szendr\H{o}i.
\newblock Punctual {H}ilbert schemes for {K}leinian singularities as quiver
  varieties.
\newblock {\em Algebr. Geom.}, 8(6):680--704, 2021.

\bibitem[Che05]{Cheltsov-factoriality}
Ivan Cheltsov.
\newblock On factoriality of nodal threefolds.
\newblock {\em J. Algebraic Geom.}, 14(4):663--690, 2005.

\bibitem[CY24]{CY-Hilbert}
Alastair Craw and Ryo Yamagishi.
\newblock The {L}e {B}ruyn--{P}rocesi theorem and {H}ilbert schemes.
\newblock 2024.

\bibitem[Dav21]{Davison21}
Ben Davison.
\newblock Purity and 2-{C}alabi-{Y}au categories, 2021.

\bibitem[EG02]{EG02}
Pavel Etingof and Victor Ginzburg.
\newblock Symplectic reflection algebras, {C}alogero-{M}oser space, and
  deformed {H}arish-{C}handra homomorphism.
\newblock {\em Invent. Math.}, 147(2):243--348, 2002.

\bibitem[Fog68]{Fogarty68}
John Fogarty.
\newblock Algebraic families on an algebraic surface.
\newblock {\em Amer. J. Math.}, 90:511--521, 1968.

\bibitem[Fuj22]{Fujino22}
Osamu Fujino.
\newblock Minimal model program for projective morphisms between complex
  analytic spaces, 2022.
\newblock \url{https://arxiv.org/abs/2201.11315}.

\bibitem[Gan12]{Ganatra12}
Sheel Ganatra.
\newblock {\em Symplectic {C}ohomology and {D}uality for the {W}rapped {F}ukaya
  {C}ategory}.
\newblock ProQuest LLC, Ann Arbor, MI, 2012.
\newblock Thesis (Ph.D.)--Massachusetts Institute of Technology.

\bibitem[Gil64]{Gilmartin-loc-contr}
M.~C. Gilmartin.
\newblock {\em Every analytic variety is locally contractible}.
\newblock ProQuest LLC, Ann Arbor, MI, 1964.
\newblock Thesis (Ph.D.)--Princeton University.

\bibitem[GK04]{GK04}
Victor Ginzburg and Dmitry Kaledin.
\newblock Poisson deformations of symplectic quotient singularities.
\newblock {\em Adv. Math.}, 186(1):1--57, 2004.

\bibitem[Gor03]{Gordon-baby}
Iain Gordon.
\newblock Baby {V}erma modules for rational {C}herednik algebras.
\newblock {\em Bull. London Math. Soc.}, 35(3):321--336, 2003.

\bibitem[HK97]{HausmannKnutson}
J.~Hausmann and A.~Knutson.
\newblock Polygon spaces and {G}rassmannians.
\newblock {\em Enseign. Math. (2)}, 43(1-2):173--198, 1997.

\bibitem[Jos97]{Joseph}
Anthony Joseph.
\newblock On a {H}arish-{C}handra homomorphism.
\newblock {\em Comptes Rendus de l'Académie des Sciences - Series I -
  Mathematics}, 324(7):759--764, 1997.
\newblock Serie I: Mathematique.

\bibitem[Kal03]{Kaledin03}
D.~Kaledin.
\newblock On crepant resolutions of symplectic quotient singularities.
\newblock {\em Selecta Math. (N.S.)}, 9(4):529--555, 2003.

\bibitem[Kal06]{Kaledin06}
D.~Kaledin.
\newblock Symplectic singularities from the {P}oisson point of view.
\newblock {\em J. Reine Angew. Math.}, 600:135--156, 2006.

\bibitem[Kal09]{KaledinSurvey}
D.~Kaledin.
\newblock Geometry and topology of symplectic resolutions.
\newblock In {\em Algebraic geometry---{S}eattle 2005. {P}art 2}, volume~80 of
  {\em Proc. Sympos. Pure Math.}, pages 595--628. Amer. Math. Soc., Providence,
  RI, 2009.

\bibitem[Kel08]{Keller08}
Bernhard Keller.
\newblock Calabi-{Y}au triangulated categories.
\newblock In {\em Trends in representation theory of algebras and related
  topics}, EMS Ser. Congr. Rep., pages 467--489. Eur. Math. Soc., Z\"{u}rich,
  2008.

\bibitem[Kin94]{King94}
A.~D. King.
\newblock Moduli of representations of finite-dimensional algebras.
\newblock {\em Quart. J. Math. Oxford Ser. (2)}, 45(180):515--530, 1994.

\bibitem[KM98]{KollarMori}
J.~Koll\'ar and S.~Mori.
\newblock {\em Birational geometry of algebraic varieties}, volume 134 of {\em
  Cambridge Tracts in Mathematics}.
\newblock Cambridge University Press, Cambridge, 1998.
\newblock With the collaboration of C. H. Clemens and A. Corti, Translated from
  the 1998 Japanese original.

\bibitem[Kov17]{Kovacs-rational}
S\'{a}ndor~J. Kov\'{a}cs.
\newblock Rational singularities, 2017.

\bibitem[KP82]{KP-norm-cl}
Hanspeter Kraft and Claudio Procesi.
\newblock On the geometry of conjugacy classes in classical groups.
\newblock {\em Comment. Math. Helv.}, 57(4):539--602, 1982.

\bibitem[KS23]{KS20}
Daniel Kaplan and Travis Schedler.
\newblock Multiplicative preprojective algebras are 2-{C}alabi-{Y}au.
\newblock {\em Algebra Number Theory}, 17(4):831--883, 2023.

\bibitem[LNY23]{LNY23}
Penghui Li, David Nadler, and Zhiwei Yun.
\newblock Functions on the commuting stack via {L}anglands duality, 2023.

\bibitem[Los17]{Losev17}
Ivan Losev.
\newblock Bernstein inequality and holonomic modules.
\newblock {\em Adv. Math.}, 308:941--963, 2017.
\newblock With an appendix by Losev and Pavel Etingof.

\bibitem[Mil68]{Milnor-singular-hypersurfaces}
J.~Milnor.
\newblock {\em Singular points of complex hypersurfaces}.
\newblock Annals of Mathematics Studies, No. 61. Princeton University Press,
  Princeton, N.J., 1968.

\bibitem[MN19]{McN-Springer}
Kevin McGerty and Thomas Nevins.
\newblock Springer theory for symplectic {G}alois groups, 2019.

\bibitem[MO15]{MT15}
David Mumford and Tadao Oda.
\newblock {\em Algebraic geometry. {II}}, volume~73 of {\em Texts and Readings
  in Mathematics}.
\newblock Hindustan Book Agency, New Delhi, 2015.

\bibitem[Mum81]{Mumford81}
David Mumford.
\newblock {\em Algebraic geometry. {I}}, volume 221 of {\em Grundlehren der
  Mathematischen Wissenschaften [Fundamental Principles of Mathematical
  Sciences]}.
\newblock Springer-Verlag, Berlin-New York, 1981.
\newblock Complex projective varieties, Corrected reprint.

\bibitem[Nam01]{NamikawaNote}
Y~Namikawa.
\newblock A note on symplectic singularities.
\newblock {\em arXiv}, 0101028, 2001.

\bibitem[Nam10]{Namikawa10}
Yoshinori Namikawa.
\newblock Poisson deformations of affine symplectic varieties, {II}.
\newblock {\em Kyoto J. Math.}, 50(4):727--752, 2010.

\bibitem[Nam11]{Namikawa11}
Yoshinori Namikawa.
\newblock Poisson deformations of affine symplectic varieties.
\newblock {\em Duke Mathematical Journal}, 156(1), jan 2011.

\bibitem[Nam22]{Namikawa-nilpotentcover}
Yoshinori Namikawa.
\newblock Birational geometry for the covering of a nilpotent orbit closure.
\newblock {\em Selecta Math. (N.S.)}, 28(4):Paper No. 75, 59, 2022.

\bibitem[Rei80]{Reid-c3f}
Miles Reid.
\newblock Canonical {$3$}-folds.
\newblock In {\em Journ\'{e}es de {G}\'{e}ometrie {A}lg\'{e}brique d'{A}ngers,
  {J}uillet 1979/{A}lgebraic {G}eometry, {A}ngers, 1979}, pages 273--310.
  Sijthoff \& Noordhoff, Alphen aan den Rijn---Germantown, Md., 1980.

\bibitem[Sha05]{Shaw05}
Peter Shaw.
\newblock Generalisations of preprojective algebras.
\newblock Ph.D. thesis, Univeristy of Leeds- available at
  \url{https://www.math.uni-bielefeld.de/~wcrawley/shaw-thesis.pdf}, 2005.

\bibitem[ST19]{ST19}
Travis Schedler and Andrea Tirelli.
\newblock Symplectic resolutions for multiplicative quiver varieties and
  character varieties for punctured surfaces, 2019.

\bibitem[Tre09]{TrEP}
David Treumann.
\newblock Exit paths and constructible stacks.
\newblock {\em Compos. Math.}, 145(6):1504--1532, 2009.

\bibitem[Ver00]{Verbitsky00}
Misha Verbitsky.
\newblock Holomorphic symplectic geometry and orbifold singularities.
\newblock {\em Asian J. Math.}, 4(3):553--563, 2000.

\bibitem[Voi02]{Voisin-hodge}
Claire Voisin.
\newblock {\em Hodge theory and complex algebraic geometry. {I}}, volume~76 of
  {\em Cambridge Studies in Advanced Mathematics}.
\newblock Cambridge University Press, Cambridge, 2002.
\newblock Translated from the French original by Leila Schneps.

\bibitem[Wei83]{Weinstein83}
Alan Weinstein.
\newblock The local structure of {P}oisson manifolds.
\newblock {\em J. Differential Geom.}, 18(3):523--557, 1983.

\bibitem[Zar43]{Zariski43}
Oscar Zariski.
\newblock Foundations of a general theory of birational correspondences.
\newblock {\em Transactions of the American Mathematical Society},
  53(3):490--542, 1943.

\end{thebibliography}

\end{document}